\DeclareMathAlphabet{\mathsf}{\encodingdefault}{phv}{m}{n}
\titleformat{\chapter}[display]{\sf}{\HUGE\filleft\thechapter}{12pt}{\Huge\filleft}
\def\ev{\mathrm{ev}}
\def\OO{\mathcal{O}}
\def\PP{\mathbb{P}}
\def\AA{\mathbb{A}}
\def\GG{\mathbb{G}}
\def\pr{\mathrm{pr}}
\def\sR{\mathcal{R}}
\def\sM{\mathcal{M}}
\def\sN{\mathcal{N}}
\def\sL{\mathcal{L}}
\def\sK{\mathcal{K}}
\def\sC{\mathcal{C}}
\def\sD{\mathcal{D}}
\def\sI{\mathcal{I}}
\def\sU{\mathcal{U}}
\def\sA{\mathcal{A}}
\def\sE{\mathcal{E}}
\def\sF{\mathcal{F}}
\def\sQ{\mathcal{Q}}
\def\sT{\mathcal{T}}
\def\sfA{\mathsf{A}}
\def\sfB{\mathsf{B}}
\def\sfC{\mathsf{C}}
\def\sfD{\mathsf{D}}
\def\sfE{\mathsf{E}}
\def\Gr{\mathsf{Gr}}
\def\LG{\mathsf{LG}}
\def\OG{\mathsf{OG}}
\def\fg{\mathfrak{g}}
\def\fp{\mathfrak{p}}
\def\fm{\mathfrak{m}}
\def\bir{\mathrm{bir}}
\def\free{\mathrm{free}}
\def\min{\mathrm{min}}
\def\arc{\mathrm{arc}}
\def\ss{\mathrm{ss}}
\def\sing{\mathrm{sing}}
\DeclareMathOperator\sHom{\mathcal{H}om}
\DeclareMathOperator\Sym{\mathrm{Sym}}
\DeclareMathOperator\Hom{\mathrm{Hom}}
\DeclareMathOperator\Isom{\mathrm{Isom}}
\DeclareMathOperator\End{\mathrm{End}}
\DeclareMathOperator\Aut{\mathrm{Aut}}
\DeclareMathOperator\Imm{\mathrm{Imm}}
\DeclareMathOperator\Spec{\mathrm{Spec}}
\DeclareMathOperator\Proj{\mathrm{Proj}}
\DeclareMathOperator\Spf{\mathrm{Spf}}
\DeclareMathOperator\Arc{\mathrm{Arc}}
\DeclareMathOperator\ArcHilb{\mathrm{ArcHilb}}
\DeclareMathOperator\Hilb{\mathrm{Hilb}}
\DeclareMathOperator\Pic{\mathrm{Pic}}
\DeclareMathOperator\id{\mathrm{id}}
\DeclareMathOperator\rk{\mathrm{rk}}
\DeclareMathOperator\ind{\mathrm{index}}
\DeclareMathOperator\codim{\mathrm{codim}}
\DeclareMathOperator\SL{\mathsf{SL}}
\DeclareMathOperator\GL{\mathsf{GL}}
\DeclareMathOperator\PGL{\mathsf{PGL}}
\DeclareMathOperator\Char{\mathrm{char}}
\DeclareMathOperator\Gal{\mathrm{Gal}}
\DeclareMathOperator\length{\mathrm{length}}
\theoremstyle{plain}
\newtheorem{thm}{Theorem}[section]
\newtheorem{lem}[thm]{Lemma}
\newtheorem{pro}[thm]{Proposition}
\newtheorem{cor}[thm]{Corollary}
\newtheorem*{thm*}{Theorem}
\newtheorem*{cor*}{Corollary}
\theoremstyle{definition}
\newtheorem*{defn}{Definition}
\title{Hwang--Mok rigidity of \\ cominuscule homogeneous varieties
\\ in positive characteristic \\ (second draft)}
\author{Jan Gutt \\ 
	 Stony Brook University}
\begin{document}
\setcounter{tocdepth}{1}
\sloppy

\begin{titlepage}
\begin{center}
\setlength{\baselineskip}{30pt}
\ \\ 
[0.5in]{\LARGE\bf Hwang--Mok rigidity of cominuscule homogeneous varieties 
in positive characteristic} \\
\setlength{\baselineskip}{24pt}
\vfill
A Dissertation Presented \\ 
by \\
[0.1in] {\bf Jan Aleksander Gutt} \\
[0.1in] to \\ 
The Graduate School \\ 
in Partial Fullfillment of the Requirements \\
for the Degree of \\
[0.1in] {\bf Doctor of Philosophy} \\
[0.1in] in \\
[0.1in] {\bf Mathematics} \\
\vfill Stony Brook University \\
May 2013 \\
\end{center}
\end{titlepage}


\pagenumbering{roman}
\setcounter{page}{2}
\begin{center}
\ \\
[0.1in] {\bf Stony Brook University} \\
[0.1in] The Graduate School \\
[0.3in] {\bf Jan Aleksander Gutt} \\
[0.3in] We, the dissertation committee for the above candidate for the
Doctor of Philosophy degree, hereby recommend acceptance of this dissertation.
\\
[0.8in] \line(1,0){0} \\
{\bf Jason Starr -- Dissertation Advisor} \\
{\bf Associate Professor, Department of Mathematics} \\ 
[0.5in] \line(1,0){0} \\
{\bf Samuel Grushevsky -- Chairperson of Defence} \\
{\bf Associate Professor, Department of Mathematics} \\ 
[0.5in] \line(1,0){0} \\
{\bf Radu Laza} \\
{\bf Assistant Professor, Department of Mathematics} \\ 
[0.5in] \line(1,0){0} \\
{\bf Aise Johan de Jong} \\
{\bf Professor of Mathematics, Columbia University} \\ 
[0.8in]
This dissertation is accepted by the Graduate School. \\
[0.5in] \line(1,0){0} \\
Charles Taber \\
Interim Dean of the Graduate School
\end{center}

\newpage
\begin{center}
\setlength{\baselineskip}{24pt}
\  \\
[0.1in] Abstract of the Dissertation \\
[0.1in] {\Large\bf Hwang--Mok rigidity of cominuscule homogeneous varieties 
in positive characteristic} \\
[0.1in] by \\
[0.1in] {\bf Jan Aleksander Gutt} \\
[0.1in] {\bf Doctor of Philosophy} \\
[0.1in] in \\
[0.1in] {\bf Mathematics} \\
[0.1in] Stony Brook University \\
2013 \\
\end{center}
\vfill
Jun-Muk Hwang and Ngaiming Mok have proved the rigidity of
irreducible Hermitian symmetric spaces of compact type under Kaehler
degeneration. I adapt their argument to the algebraic setting in
positive characteristic, where cominuscule homogeneous varieties serve
as an analogue of Hermitian symmetric spaces. The main result gives an
explicit (computable in terms of Schubert calculus) lower bound on the
characteristic of the base field, guaranteeing that a smooth
projective family with cominuscule homogeneous generic fibre is
isotrivial. The bound depends only on the type of the generic fibre,
and on the degree of an invertible sheaf whose extension to the
special fibre is very ample. An important part of the proof is a
characteristic-free analogue of Hwang and Mok's extension theorem for
maps of Fano varieties of Picard number 1, a result I believe to be
interesting in its own right.
\vfill
\newpage
\begin{flushright}
\ \\
\vfill
\textit{To green beans, rosemary, garlic and cherry tomatoes.}
\vspace{2cm}
\end{flushright}

\newpage
\tableofcontents

\chapter*{Acknowledgements}

I would like to express my deepest gratitude to my advisor,
Professor Jason Starr. I consider myself exceptionally lucky
to have been introduced to Algebraic Geometry through his
impeccable style. It is from him that I had learned about the
problem that has become my thesis project. The present Dissertation
would not be possible without his wise guidance, granting me
the freedom to explore, yet saving me from going astray.

I thank the members of my defence committee, Professors Samuel Grushevsky,
Radu Laza and Johan de Jong. Interacting with them at
various stages of this project gave me the very important opportunity
to see my work from other points of view, leading to questions
that wouldn't have occured to me otherwise.

Finally, it is  my pleasure to acknowledge the nurturing environment of the Mathematics
Department at Stony Brook, a place I am proud to have belonged to.

\newpage
\setcounter{page}{1}
\pagenumbering{arabic}

\chapter{Introduction}\label{chap:intro}

\section{Overview of the work of Hwang and Mok}

\subsection{The study of VMRT}
The results of~\cite{hwang-mok-rigidity} and~\cite{hwang-mok-extension}
forming a basis for the generalisation attempted in this dissertation,
it is the philosophy of the wider research programme due to Hwang and Mok that informs
its overall architecture. We shall thus begin with a review of some of the main concepts,
stating the prototypical theorems and reconstructing from~\cite{hwang-mok-rigidity,
hwang-mok-extension} the sketch of a proof that could serve as a point of departure
for our own argument. The lecture notes~\cite{hwang-trieste} provide an accessible
introduction to this circle of ideas.

By the celebrated theorem of Mori, nonsingular Fano varieties are uniruled, and
in fact chain-connected by rational curves. In the particularly simple case
of a nonsingular Fano variety $X$ of Picard number $1$, 
one can in fact reduce to a family of irreducible rational curves
of minimal degree. Since such curves cannot degenerate to reducible ones, the
family is \emph{unsplit}, that is, it becomes proper after taking a quotient
by the group of automorphisms of $\PP^1$ (or its subgroup fixing the origin $0\in \PP^1$). 
Associating to a rational curve immersed at a general point $x \in X$ its tangent direction
in the projectivised tangent space $\PP T_{X,x}$, one obtains a rational map
from the space of minimal degree rational curves through $x$ into $\PP T_{X,x}$,
called the \emph{tangent map} (a theorem of Kebekus~\cite{kebekus} shows that
in the setting we are to consider, the tangent map is in fact an everywhere-defined,
finite morphism). 
Its closed image, called the variety of minimal rational tangents (VMRT) at $x$,
is the principal object of study in Hwang and Mok's approach. 

Since $X$ is chain-connected by minimal rational curves,
the VMRT at general points of $X$ connect global information about the geometry of $X$
with the local data of a closed subvariety in a projectivised tangent space, and
its infinitesimal variation. More accurately, this is true in characteristic zero, where
one can integrate first order infinitesimal data.\footnote{In positive characteristic,
we will need to replace the VMRT with an object encoding infinite order information.}
For example, one can expect certain classification results for complex
Fano manifolds of Picard number $1$,
based on the behaviour
of VMRT. For the simplest such $n$-fold, $\PP^n_\mathbb{C}$, the VMRT are just
entire projectivised tangent spaces---that is, there is a rational curve of minimal
degree through every point, and in every direction---and indeed $\PP^n_\mathbb{C}$ is completely  
characterised by this property, implying Fano index $n+1$.
A similar result exists for quadrics, whose VMRT are quadrics themselves,
implying Fano index $n$ (cf. ~\cite{kobayashi-ochiai}). 
A more general class of complex Fano manifolds of Picard number $1$
 is provided  
by Hermitian symmetric spaces of compact type. 
One application of the study of the VMRT is then the following rigidity theorem~\cite{hwang-mok-rigidity}.
\begin{thm*}[Hwang--Mok]
Let $X \to \Delta$ be a proper family of smooth complex manifolds over the unit disc,
such that the fibres $X_t$, $t\neq0$ are biholomorphic to a fixed irreducible Hermitian symmetric
space $G/P$ of compact type. Assume $X_0$ is Kaehler. Then $X_0$ is biholomorphic
to $G/P$.
\end{thm*}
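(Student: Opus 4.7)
The plan is to exhibit a biholomorphism $X_0 \to G/P$ in two stages: first reconstruct on $X_0$, as a projective datum, the VMRT of $G/P$ at a general point, and then upgrade this local datum to a global biholomorphism via a Cartan--Fubini-type extension.

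First I would verify that $X_0$ is itself a smooth Fano manifold of Picard number $1$ sharing numerical invariants with $G/P$. Smoothness of the total space, properness of $X \to \Delta$, and the Kaehler hypothesis on $X_0$ together ensure that the Picard number, the anticanonical Hilbert polynomial, and the minimal anticanonical degree of a rational curve are preserved in the limit. Mori's theorem then supplies a covering family of minimal rational curves on $X_0$; Kebekus' results give a well-defined finite tangent map from the space $\sK_x$ of minimal rational curves through a general point $x \in X_0$ into $\PP T_{X_0,x}$, whose image is the VMRT $\sC_x$.

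Next I would show that $\sC_x \subset \PP T_{X_0,x}$ is projectively equivalent, for $x$ general, to the standard VMRT $\sC_o \subset \PP T_{G/P, o}$ at a reference point $o \in G/P$. The idea is to form a relative space of minimal rational curves over $\Delta$ together with a relative tangent map: on the generic fibres this is identified with the homogeneous data on $G/P$, and passing to the limit produces a flat specialisation of $\sC_o$ inside $\PP T_{X_0,x}$. One then invokes the classical projective rigidity of the VMRT of an irreducible Hermitian symmetric space: every smooth projective deformation of $\sC_o$ inside its ambient projective space is projectively equivalent to $\sC_o$. This yields a projective isomorphism $\sC_x \cong \sC_o$ realised in compatibly identified ambient projective spaces.

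With the VMRTs thus matched, I would apply the Hwang--Mok Cartan--Fubini extension theorem: a local biholomorphism between Fano manifolds of Picard number $1$ preserving VMRT extends to a global biholomorphism, provided an integrability/flatness condition on the VMRT distribution is satisfied. For $G/P$ irreducible Hermitian symmetric of compact type, the VMRT defines a holomorphic $G_0$-structure (with $G_0$ the Levi of $P$) whose Spencer cohomology vanishes, so the attached normal Cartan connection is flat on the model; transporting this flatness to $X_0$ via the VMRT isomorphism and chaining minimal rational curves then propagates the local isomorphism to the desired global biholomorphism $X_0 \to G/P$.

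The main obstacle is the middle step, controlling the specialisation of the VMRT. One must rule out extraneous components or jumps of dimension in the limit family of minimal rational curves, which reduces to showing flatness, reducedness and irreducibility of the relative Chow variety and compatibility of the relative tangent map with the limit. This is where the Kaehler hypothesis enters essentially, both through the Hodge-theoretic and Kodaira-vanishing control it provides on the degeneration and through its role in guaranteeing that the numerical invariants invoked in the first step are genuinely preserved under specialisation.
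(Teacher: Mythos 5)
Your proposal has the right overall architecture (specialise minimal rational curves, identify the VMRT, then extend locally-to-globally via Cartan--Fubini), but it treats as a black box precisely the step where all the work happens. You assert that ``every smooth projective deformation of $\sC_o$ inside its ambient projective space is projectively equivalent to $\sC_o$'' and call this a classical fact, but for a general Hermitian symmetric space this is not an off-the-shelf theorem: it is essentially equivalent to the rigidity theorem itself one dimension down. Hwang and Mok handle it by \emph{induction}: the VMRT of a cominuscule variety is again either a Segre variety or a (lower-dimensional) cominuscule variety, and one applies the rigidity theorem inductively to the abstract family $M \to \Delta$ of spaces of minimal rational curves through a section, concluding its isotriviality. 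Your appeal to ``projective rigidity of $\sC_o$'' hides this inductive loop, which is the logical core of the argument and cannot be suppressed.

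Even granting isotriviality of the abstract family $M\to\Delta$, your proposal still omits the second ingredient needed to match the \emph{embedded} VMRT with the model: one must prove that the VMRT at a general point of $X_0$ is linearly nondegenerate in $\PP T_{X_0,x}$. This is a genuinely separate step; Hwang and Mok prove it by extending the linear span of the VMRT to a meromorphic distribution, showing it is integrable, and using the resulting foliation together with Picard number $1$ to force the span to be everything. Nothing in your first or third stages supplies this. Finally, even after the pointwise VMRT is matched, one must produce a \emph{local biholomorphism} carrying the family of VMRTs on an open subset of $X_0$ to that on $G/P$ before Cartan--Fubini applies; your invocation of flatness of the normal Cartan connection is pointing in roughly the right direction (this is the $L$-structure/Ochiai step), but it belongs to that intermediate stage and needs the prior isotriviality and nondegeneracy results as input, not the other way around.
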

Its original proof used Ochiai's application of Cartan's equivalence method~\cite{ochiai}
to a flat $L$-structure defined by the VMRT on the central fibre $X_0$ (where $L$
is isogeneous to the Levi factor of $P$). An underlying prolongation procedure
becomes cumbersome in characteristic $p>0$, requiring an immediate introduction
of conditions on $p$.
Fortunately,
this approach has since been completely replaced by a more recent `Cartan-Fubini type' extension
theorem~\cite{hwang-mok-extension}.
\begin{thm*}[Hwang--Mok]
Let $(X,\sM)$ and $(Y,\sN)$ be complex Fano manifolds of Picard number $1$,
together with a choice of an irreducible component of rational curves of
minimal degree. Let $\sC\subset \PP T_X$ and 
$\sD\subset \PP T_Y$ be the corresponding families of VMRT, and
assume that the fibre $\sC_x \subset \PP T_{X,x}$ at a general point 
$x\in X$ is positive-dimensional,
with generically finite Gauss map (as an embedded projective variety). Let
$U \subset X$ and $V \subset Y$ be connected analytic open subsets
together with a biholomorphic map $\varphi : U \to V$ such that
$\varphi_* : \PP T_U \to \PP T_V$ maps
$\sC|_U$ isomorphically onto $\sD|_V$. Then $\varphi$ extends to
a biholomorphism $\phi:X \to Y$.
\end{thm*}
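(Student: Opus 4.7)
The plan is to construct the global extension by analytic continuation along minimal rational curves, using the hypothesis that $\varphi_*$ carries $\sC|_U$ isomorphically onto $\sD|_V$. The first observation is that if $\ell \subset X$ is a minimal rational curve meeting $U$ at a point $x$, with tangent direction $[v] \in \sC_x$, then $\varphi_*[v] \in \sD_{\varphi(x)}$, and by Kebekus' theorem this direction determines a (up to reparametrisation unique) minimal rational curve $\ell' \subset Y$ through $\varphi(x)$. Since $\ell$ and $\ell'$ are both isomorphic to $\PP^1$ and $\varphi|_{\ell \cap U}$ biholomorphically identifies an open subset of $\ell$ with an open subset of $\ell'$, the identity principle supplies a unique holomorphic extension $\ell \to \ell'$. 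Iterating this procedure along chains of minimal rational curves emanating from $U$ produces a tentative extension, and bootstrapping through the positive-dimensional family of minimal rational curves through each variable basepoint yields a holomorphic germ in a neighbourhood of every such chain. Since $X$ is chain-connected by minimal rational curves (of Picard number $1$), every point of $X$ is reached by some chain.

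The main obstacle is proving that the resulting germ is independent of the chain, equivalently, that the monodromy of the analytic continuation is trivial. Given two chains joining a base point $x_0 \in U$ to a common endpoint $z \in X$, the procedure produces two germs $\phi_1, \phi_2$ of biholomorphisms at $z$, both sending $\sC$ to $\sD$ by construction; in particular, their differentials agree pointwise on $\sC_z \subset \PP T_{X,z}$. Here the hypothesis that the Gauss map of $\sC_z$ is generically finite is essential: by a prolongation argument in the style of Cartan, a generically finite Gauss map implies that the graded Lie algebra of infinitesimal symmetries of the embedded projective subvariety $\sC_z \subset \PP T_{X,z}$ has vanishing first prolongation, so a formal automorphism of the germ of $X$ at $z$ preserving $\sC$ is pinned down by its $1$-jet. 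Applying this to $\phi_1 \circ \phi_2^{-1}$ shows $\phi_1 = \phi_2$.

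Once monodromy vanishing is established, the local extensions glue to a globally defined holomorphic map $\widetilde\varphi : X \to Y$ agreeing with $\varphi$ on $U$, and running the same construction on $\varphi^{-1} : V \to U$ produces a two-sided holomorphic inverse, so $\widetilde\varphi$ is the desired biholomorphism. I expect Steps 1 and 3 to be formal, the bootstrapping from curve-wise to neighbourhood extension in the construction of the tentative germ to be straightforward given the positive-dimensionality of the VMRT, and the monodromy vanishing to be the entire substance of the proof. That step is the one that uses all the geometric hypotheses simultaneously and, in the characteristic-zero setting of Hwang--Mok, draws directly on the theory of $G$-structures and their prolongations.
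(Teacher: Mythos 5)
Your overall architecture --- extend $\varphi$ by analytic continuation along chains of minimal rational curves --- matches the Hwang--Mok strategy, but both of your key steps have gaps, and you have mislocated where the generically-finite-Gauss-map hypothesis is consumed. The first gap is at the very start: you write that ``$\varphi|_{\ell\cap U}$ biholomorphically identifies an open subset of $\ell$ with an open subset of $\ell'$,'' but that is not a consequence of the hypothesis, which is purely first-order. Knowing that $\varphi_*$ carries $\sC_y$ onto $\sD_{\varphi(y)}$ for every $y\in U$ only tells you that the image curve $\varphi(\ell\cap U)$ is tangent to $\sD$ at each of its points; it does not tell you that it \emph{is} an arc of a minimal rational curve, let alone the one $\ell'$ picked out by the initial direction. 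Curves everywhere tangent to the cone structure defined by the VMRT need not be rational curves. Ruling this out is exactly the content of the paper's Step 1 (``$\varphi$ sends germs of $\sM$-curves to germs of $\sN$-curves''), and it is \emph{there} that generic finiteness of the Gauss map is used --- it is what lets Hwang and Mok recover a tangent direction from its affine tangent plane and then argue, at second order, that the image germ must coincide with $\ell'$. You cannot invoke the identity principle before that is established.

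The second gap is in your monodromy argument. You assert that the two continuations $\phi_1,\phi_2$ at $z$ have ``differentials agreeing pointwise on $\sC_z$'' and then invoke vanishing first prolongation. Neither claim holds. Nothing in the construction gives $\phi_1(z)=\phi_2(z)$, let alone $d\phi_1|_z=d\phi_2|_z$ on $\sC_z$: both continuations carry $\sC$ into $\sD$, but they could do so by different linear maps, a priori over different target points. And for the VMRT of a cominuscule $G/P$ other than $\PP^n$ the Gauss map is generically finite yet the first prolongation of $\mathrm{aut}(\hat\sC_z)\subset\mathfrak{gl}(T_{X,z})$ is \emph{not} zero: the unipotent radical $\sR_u P$ of the isotropy is abelian, acts trivially on $T_{eP}(G/P)$, and preserves the VMRT, so it furnishes a positive-dimensional family of formal automorphisms preserving $\sC$ with trivial $1$-jet. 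Thus preservation of $\sC$ does not pin down a formal automorphism by its $1$-jet, and your $\phi_1=\phi_2$ does not follow. Hwang and Mok avoid this entirely: the continuation defines a rational map $\tilde\phi:\tilde X\dashrightarrow Y$ from a generically \'etale cover $\tilde X\to X$, which they show is trivialised over general $\sM$-curves, and then a genericity argument plus simple-connectedness of $X$ (a global consequence of rational connectedness, not a local prolongation fact) rules out ramification and descends $\tilde\phi$ to a birational $\phi_0:X\dashrightarrow Y$. The final step --- comparing plurianticanonical systems on codimension-$\ge 2$ complements to promote $\phi_0$ to an isomorphism --- is also absent from your sketch, though it is routine.
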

We shall first sketch an argument reducing the rigidity theorem to the extension theorem,
and then describe the main steps in the proof of the latter. Along the way, we will indicate
some of the main difficulties that arise upon passage to positive characteristic.
The first issue is of course that the notion of a Hermitian symmetric space
is a complex-analytic one. The proper algebraic replacement is a \emph{cominuscule
homogeneous variety}, defined in Chapter \ref{chap:tools}. Complex cominuscule homogeneous
varieties are precisely irreducible Hermitian symmetric spaces of compact type, and
we will from now on use the former notion.

\subsection{Rigidity theorem}
In the setting $X \to \Delta$ of the rigidity theorem, after eliminating
the case where $G/P$ is a projective space, Hwang and Mok
study specialisations of rational curves of minimal degree on the
cominuscule homogeneous general fibre $G/P$ to the central fibre $X_0$.
These are in fact of degree $1$ with respect to the ample generator $\OO_{G/P}(1)$
of the Picard group of $G/P$.
Observing that $X_0$ is also Fano of Picard number one, we have
an ample invertible sheaf $\OO_X(1)$ on $X$ extending
$\OO_{G/P}(1)$, and the specialisations of degree one curves
on $G/P$ to $X_0$ are again irreducible rational curves of minimal degree. 
Now, the space of rational curves of minimal degree through a general point
of $X_0$ is smooth (i.e. all such curves are free; this can fail completely in positive
characteristic).
In particular, such curves deform to the general fibre: thus the family of degree
one rational curves through a general point of $X_0$ coincides with the family of
specialisations of degree one curves from $G/P$.

For a general section $s:\Delta \to X$
we have a family $M \to \Delta$ such that $M_t$ is the space of degree one
rational curves $\PP^1_\mathbb{C} \to X$ mapping $0 \in \PP^1_\mathbb{C}$ to $\sigma(t)$,
modulo the action of the group of automorphisms of $\PP^1_\mathbb{C}$ fixing $0$.
The map $M \to \Delta$ is projective and has smooth fibres.
A fundamental result about cominuscule homogeneous varieties states that $M_t$, $t\neq 0$
is either a Segre variety, or a cominuscule homogeneous variety itself.
Dealing separately with the Segre case, one sees that an inductive application
of the rigidity theorem allows us to conclude that $M \to \Delta$ is isotrivial (note that
$M_t$ have strictly lower dimension than $X_t$). This shows that
the space of degree one curves through a general point of $X_0$ is \emph{abstractly}
biholomorphic to that on the model $G/P$.

In order to conclude the same about the VMRT (as a subvariety
of a projectivised tangent space), Hwang and Mok show that
the latter is linearly nondegenerate. The argument uses integrability
of the meromorphic distribution defined by the linear span of the VMRT,
checking that linear degeneracy leads to the existence of an algebraic
foliation of $X_0$, whose properties would force the Picard number to
be greater than $1$ (in characteristic $p>0$ this becomes a
statement about a purely inseparable quotient, and we will need some
conditions on $p$ to derive a contradiction). 

Having shown that the VMRT at a general point of $X_0$, with its
embedding into the projectivised tangent space, is isomorphic to
the VMRT at a point of $G/P$ via a linear identification of tangent spaces,
there remains one more step needed to satisfy the hypotheses of
the extension theorem for $X_0$ and $G/P$ with their rational curves of degree one
(on $X_0$ we choose the unique dominating component; the VMRT at a general
point does satisfy finitness of the Gauss map).
It has to be checked that the family
of VMRT over a small analytic open subset of $X_0$ can be
identified with the family of VMRT over a biholomorphic
analytic open subset of $G/P$. A natural differential-geometric
approach is to associate with it an $L$-structure.\footnote{A reduction
of the frame bundle to a sub-bundle whose structure group
is the image of the Levi factor $L$ of $P$ in 
$\GL(\frak g/\frak p)$.}
By
prolongation theory the latter admits a well-defined notion of
curvature, whose vanishing on $X_t$, $t\neq0$
extends by continuity to $X_0$, implying local equivalence 
of $L$-structures, and thus of families
of VMRT on $X_0$ and $G/P$~\cite{ochiai}. 
It has 
already been pointed out that such differential-geometric
machinery is not convenient in positive characteristic.
However, as we will explain below, the object we shall use
in our version of the extension theorem will be a family
of arcs of infinite order, rather than just the VMRT.
In that setting, flatness will follow from a general
result on the `moduli' of families of formal arcs on
a formal disc (Chapter \ref{chap:tools}). 

\subsection{Extension theorem}
Let us now briefly outline the proof of the extension theorem. Recall
that we are in the setting of a pair of Fano manifolds $X$, $Y$ with
irreducible components of minimal degree rational curves $\sM$, $\sN$
and corresponding families of VMRT $\sC$, $\sD$. The theorem
states that an analytic-local biholomorphism $\varphi:U \to V$, compatible
with the VMRT, extends to a global biholomorphism $\phi:X\to Y$. The argument
consists of several steps.
\begin{enumerate}
\item One shows that $\varphi$ in fact sends holomorphic germs of
$\sM$-curves to holomorphic germs of $\sN$-curves. This relies on
differential-geometric methods, and in fact will never hold
in positive characteristic: for example, a `constant' family
of subvarieties of the projectivised tangent bundle of
a formal disc is not affected by Artin-Schreier type automorphisms, although
these will typically not preserve its lift to a family of formal arcs.

It is thus here that we set our point of departure.
Our version of the extension theorem will be a statement about
an isomorphism of formal neighbourhoods of general points compatible with
families of formal arcs, rather
than just the VMRT.

\item A procedure of `analytic continuation along minimal rational curves' is applied.
A rational curve is called \textit{minimal}\footnote{`Standard'
in~\cite{hwang-mok-extension}.} if its normal bundle, pulled back
to the normalisation,
splits into line bundles of degrees $0$ and $1$. General members of $\sM$ and $\sN$
have this property.
The assumption that
the VMRT at a general point be positive-dimensional implies that there is
at least one summand of degree $1$, so that a general $\sM$-curve
admits a deformation fixing precisely one point. Consider now a general
curve $C$ passing through a general point $x_0 \in U$. 
Its germ at $x_0$ is sent by $\varphi$ to a germ of a curve $D$
at $\varphi(x_0)\in V$. 

Choose a point $x\in C$. A deformation
of $C$ fixing $x$ induces a deformation of the germ of $C$ at $x$,
that is, a family of germs of $\sM$-curves in $U$. By Step 1, these
are sent by $\varphi_*$ to germs of $\sN$-curves in $V$, thus
giving rise to a family of $\sN$-curves. By the assumption on generality,
these $\sN$-curves intersect at a single point $y \in D$.  
This yields a map $C \to D$ extending
$\varphi|_C : C\cap U \to D\cap V$. Furthermore, $C \to D$
can be extended to an open neighbourhood of $C$ swept out by its small deformations.

There is a Zariski-dense open subset of $X$ that can be covered by chains of
general $\sM$-curves (of some fixed length)
with the first segment passing through $x_0$. Applying
the continuation procedure inductively, we obtain a map into $Y$ from the space
parametrising such chains together with the choice of a point on the last segment.
Furthermore, the map is constant on small deformations of a chain fixing the marked 
point. It then follows that the map defined on chains descends to
a rational map  $\tilde\phi : \tilde X \dashrightarrow Y$ 
from a generically \'etale cover $\tilde X \to X$. There is a biholomorphic lift
$\tilde U \subset \tilde X$ of $U \subset X$ 
such that $\tilde\phi$ is defined on $\tilde U$
and $\tilde\phi|_{\tilde U} = \varphi$ via the identification $\tilde U\simeq U$.
Finally, $\tilde\phi$ maps $\sM$-curves to $\sN$-curves.

With analytic neighbourhoods replaced by formal ones, and considering
families rather than closed points, this part of the argument works
in arbitrary characteristic.

\item
One now checks that $\tilde\phi$ can in fact be descended
to a \emph{birational} map $\phi_0:X \dashrightarrow Y$. 
Replacing $\tilde X$ with the graph of $\tilde\phi$,
a careful examination of the construction of $\tilde\phi$
shows that $\tilde X \to X$ is trivialised over
a general $\sM$-curve. Then, a genericity argument,
together with simply-connectedness of $X$, rules out
ramification in $\tilde X \to X$, so that the latter is birational.
A similar argument is used to check that
$\tilde X \dashrightarrow Y$ is unramified in codimension one,
so that $\phi_0: X \dashrightarrow Y$ is birational.

Simply-connectedness of complex Fano manifolds,
being a consequence of their (separable) rational connectedness,
will have to be added as a hypothesis in positive characteristic.

\item
The open subvariety of $X$ on which $\phi_0$ is defined
contains a free $\sM$-curve, and thus so does its image.
It follows that $\phi_0$ induces an isomorphism
of complements of closed subvarieties of codimension at least two.
A standard argument with plurianticanonical embeddings
shows that in this case $\phi_0$ extends to an isomorphism $\phi : X \to Y$.
\end{enumerate}

\section{Main results}
The basic elements of the reasoning laid out in the former section
have to be carefully recast in an algebro-geometric language suitable for
positive characteristic. Some carry through without much change,
some need additional preparation to avoid pathological situations,
while others will merely be in a relation of analogy to the actual
arguments. In particular, intuitive analytic-local constructions
are replaced with somewhat more technical methods of formal geometry.

Chapter \ref{chap:tools} sets up the necessary theoretical foundations:
spaces of formal arcs, families of rational curves, and cominuscule
homogeneous varieties. We work with unparametrised pointed arcs of infinite
order, and define corresponding parameter spaces intrinsically, rather
than as an inverse limit. 
Since no convenient reference seems to be
available for this setting, we work out some basic properties. Proposition
\ref{pro:closed-orbits} may be of independent interest here.
The section on rational curves is mostly concerned with introducing
the notation for different objects associated with a family
of curves, and gathering some standard facts. The main reference
is~\cite{kollar}. We also cite the theorem of Kebekus~\cite{kebekus}
in a suitable form. Proposition \ref{pro:linear-nondegeneracy}
is a positive-characteristic analogue of~\cite[Prop. 13]{hwang-mok-rigidity}.
The substantial difference is that we need to consider a purely inseparable
quotient instead of a foliation by subvarieties. The final section
introduces the class of cominuscule homogeneous varieties, their
VMRT and some auxiliary intersection numbers.
 
Chapter \ref{chap:extension} states and proves a characteristic-free
analogue of the extension theorem. Rather than starting with
a single formal isomorphism, we choose to work with an entire
bundle parametrising isomorphisms between formal neighbourhoods
of points on two varieties $X$, $Y$. The bundle admits a natural 
stratification,\footnote{In the sense of an identification of infinitesimally
close fibres.}
restricting to a subscheme cut out by the condition of compatibility
with arcs associated with given families $\sM$, $\sN$ of rational curves satisfying
suitable conditions.
The central result is Proposition \ref{pro:extension-m1}, showing
that the latter subscheme admits natural horizontal (generic) trivialisations
along $\sM$-curves. This plays a role analogous to the analytic continuation
discussed in the previous section. Following Step 2 and Step 3 of the original
argument, we arrive at Proposition \ref{pro:c-descent}, producing
a horizontal (generic) trivialisation over $X$. Finally, Step 4 leads
to Theorem \ref{thm:extension} and its Corollary.

Chapter \ref{chap:rigidity} states and proves our version
of the rigidity theorem over an algebraically closed field of
positive characteristic.
The main problem with applying the strategy
outlined in the previous section is potential inseparability of
various evaluation maps, with the most serious consequence being
failure of smoothness of the space of minimal degree rational curves
through a general point of a nonsingular variety, in this case
the special fibre of a degeneration. Such behaviour
can be ruled out by imposing a lower bound on the characteristic. We are not
aware of a universal bound that would not require the knowledge an explicit very
ample invertible sheaf on the variety. Hence the main result of this
chapter, Theorem \ref{thm:rigidity}, refers to the notion of $d$-rigidity:
we call a cominuscule homogeneous variety $G/P$ $d$-rigid, if 
it does not admit nontrivial smooth projective degenerations with
$\OO_{G/P}(d)$ extending to a very ample invertible sheaf on the special fibre.
An inductive application of the Theorem establishes $d$-rigidity of $G/P$
under the assumption that the characteristic be greater than an explicit
integer, depending only on $d$ and $G/P$, and computable in terms of Schubert
calculus on the latter.

\section{Further directions}
It would be interesting to investigate the possibility
of applying our method, using formal arcs
instead of just VMRT, to other of the multitude of results
obtained by Hwang and Mok. Examples include: rigidity of
generically \'etale morphisms to cominuscule homogeneous varieties,
Lazarsfeld's problem for morphisms
from cominuscule homogeneous varieties,
rigidity of non-cominuscule homogeneous varieties (all approachable
via the extension theorem, see~\cite{hwang-trieste} for a review).

Another outstanding issue is that of improving the bounds on the characteristic
in our version of the rigidity theorem. The way we had obtained them
being far from subtle, there should be an approach exploiting the particular
setting of the degeneration problem to a greater degree. We also do not know
how far our bounds are from being effective: a counterexample to
rigidity in low characteristic should be enlightening (we know none).
 
\section{Conventions and notation}
\label{sec:notation}

We work over an algebraically closed field $k$. We will mostly
be interested in the case $\Char k > 0$, although we do not
assume this until Chapter \ref{chap:rigidity}.
A \emph{presheaf} will mean a
presheaf of sets on the category of $k$-schemes. A
\emph{sheaf} will mean a sheaf for the \textit{fpqc} topology.
We do not employ any notational convention to distinguish
between presheaves, sheaves, formal schemes and schemes.

Given a morphism $X \to S$ of presheaves, 
$\underline\Aut_SX$ denotes the presheaf whose value
at $T$ is the set of pairs $(T\to S,\varphi)$ where
$\varphi \in \Aut_T X_T$.
If $f:Z \to X$ is a morphism of presheaves over $S$,
$\underline\Aut_S(X,f)$ denotes the sub-presheaf
of $\underline\Aut_SX$ whose value at $T$ is
the subset of $(T\to S,\varphi)$ such that
$\varphi \circ f_T=f_T$. Given a second morphism $Y \to S$ 
of presheaves, 
$\underline\Hom_S(X,Y)$ denotes the presheaf whose value
at $T$ is the set of pairs $(T\to S,\psi)$ where
$\psi \in \Hom_T(X_T,Y_T)$. If $g:Z \to Y$ is a morphism
of presheaves over $S$, we let
$\underline\Hom_S(X,Y; f, g)$ be the sub-presheaf
of $\underline\Hom_S(X,Y)$ whose value at $T$ is
the subset of $(T\to S,\psi)$ such that
$\psi\circ f_T = g_T$.
There is are sub-presheaves $\underline\Isom_S(X,Y)
\subset\underline\Hom_S(X,Y)$ and
$\underline\Isom_S(X,Y;f,g) \subset \underline\Hom_S(X,Y;f,g)$
whose values at $T$ are restricted to those $\psi$ which are isomorphisms.

Given a morphism $X \to S$ of presheaves,
$(X/S)^i$ denotes the $i$-fold product $X\times_S \dots \times_SX$,
together with the natural morphism to $S$.
For $X$ a presheaf equipped with a pair of structure morphisms
$X \rightrightarrows S$, referred to as the left and right structure map, 
$(S \backslash X / S)^i$ denotes the $i$-fold product
$X\times_S \dots \times_S X$, together with the pair
of structure morphisms into $S$ given by the left
structure morphism from the leftmost factor and
the right structure morphism from the rightmost factor.
Given morphisms $Y \to X \to S$ of presheaves, 
$\prod(Y/X/S)$ denotes the presheaf whose value at $T$
is the set of pairs $(T\to S, \sigma)$ where
$\sigma : X_T \to Y_T$ is a section of the pullback
of $Y\to X$. When applied to sheaves, all these constructions
yield sheaves.

Given a morphism of sheaves
$X \to S$ and a sheaf of groups $G$ over $S$ acting from the
right on $X$, we have
the quotient sheaf $X/G$ over $S$. Its value at $T$
is the set of equivalence classes of diagrams 
$$\begin{CD}
T' @>>> X \\
@VVV @VVV \\
T @>>> S
\end{CD}$$
where
$T'\to T$ is a covering, and $T' \to X$
is such that the induced morphism
$T'\times_TT' \to X\times_SX$ factors through
the action morphism $X\times_SG \to X\times_SX$.
Two diagrams, the other say with $T\leftarrow T'' \to X$, are identified if 
$T' \times_T T'' \to X\times_SX$ factors through the action morphism.
If instead $G$ acts on $X$ from the left, we have
an analogous construction of $G\backslash X$.

A morphism $Y \to X$ of presheaves is \emph{schematic}
if for every morphism $T\to X$ from a scheme, the
pullback $T\times_XY$ is a scheme. Furthermore,
a schematic morphism $Y \to X$ is \emph{affine}
if for every morphism $T \to X$ from a scheme,
$T\times_XY\to T$ is affine.

We will mostly work with locally Noetherian formal schemes.
Occasionally, we allow \emph{adic} morphisms $Y \to X$
from a (not necessarily locally Noetherian) formal scheme to a 
locally Noetherian formal scheme,
i.e. such that the pullback of the underlying scheme
of $X$ gives an underlying scheme of $Y$. Such a morphism
is in particular schematic, and the ideal of definition
in $\OO_Y$ is locally finitely generated.

If $X \to S$ is a morphism of locally Noetherian formal schemes,
we denote by $(X/S)^\sharp \rightrightarrows X$ 
the completion of $(X/S)^2$ along the diagonal, 
together with the induced pair of structure morphisms into $X$.
It is a formal subscheme of $(X/S)^2$, containing $X$ and sharing the same
underlying reduced scheme. We sometimes consider
$(X/S)^\sharp$ as a bundle over $X$, using the \emph{left}
structure map. 
In particular, we let
$(X/S)^\sharp_x = x^*(X/S)^\sharp$ for a point or geometric point
$x$ of $X$.
The relative tangent sheaf
$T_{X/S}$ is as usual defined to be the $\OO_X$-module
dual to $\sI/\sI^2$, where $\sI$ is the ideal of the
diagonal in $(X/S)^2$. We let
$T_{X/S,x} = x^*T_{X/S}$. Given a sheaf $\sE$ over $X$,
an $S$-stratification on $\sE$ is an isomorphism
$X^\sharp\times_X\sE \to \sE\times_XX^\sharp$ satisfying
the usual cocycle condition. For example,
$X^\sharp\times_X\sE$ is canonically stratified. A morphism
of stratified sheaves is \emph{horizontal} if
it is compatible (in an obvious manner) with the stratifications.


The base $\Spec k$ will be usually omitted from notation,
so that $\prod(Y/X) = \prod(Y/X/k)$,
$X^i = (X/k)^i$, $X^\sharp = (X/k)^\sharp$,
$T_X = T_{X/k}$, $T_{X,x} = T_{X/k,x}$, 
etc. unless explicitly defined to mean otherwise.
We fix an origin $0$ in $\PP^1$. 
Given a  reduced group scheme $G$ over $k$,
a morphism $X \to S$ together with a $G$-action is a \emph{$G$-principal bundle}
if $X \times G \to X\times_SX$ is an isomorphsm, and
there is an \'etale cover $S' \to S$ together
with a section $S' \to S'\times_SX$.

We will only consider inverse/direct systems
indexed by integers.
A morphism $X\to S$ of formal schemes
is of \emph{pro-finite type} if it is
the limit of an inverse system of morphisms $X_i\to S$
of finite type. 
A scheme is called \emph{pro-algebraic} 
if it is of pro-finite type over $k$.
An affine group scheme is called \emph{pro-unipotent}
if it is the limit of a countable sequence
of successive extensions by $\GG_a$ (starting with the trivial
group).
$\AA^\infty$ denotes the spectrum of a polynomial
algebra in countably infinitely many variables.
In particular, a pro-unipotent group is
isomorphic to $\AA^\infty$ as a scheme.


\chapter{Technical tools}\label{chap:tools}

\section{Formal arcs}

\subsection{Formal discs}
The arguments in Chapters~\ref{chap:extension} and
\ref{chap:rigidity} rely on the study of families of formal arcs
on a nonsingular variety. A formal arc is a morphism from 
a one-dimensional formal disc. A formal arc through a closed
point $x$ of a nonsingular variety $X$ factors through
the completion $\hat X$ of $X$ at $x$, a formal disc of dimension
$\dim X$. We thus begin with the description of spaces of morphisms
between formal discs. The reader is referred to Section~\ref{sec:notation}
for notation and conventions.

\begin{defn}
A formal scheme $\hat X$ is called a \emph{formal disc}
if it is isomorphic to $\Spf k[[z_1,\dots,z_n]]$ for some $n\ge0$.
Its unique $k$-point is called the \emph{origin} of $\hat X$.
The integer $n$ is referred to as the \emph{dimension} of $\hat X$.
\end{defn}

\begin{lem}\label{lem:hom-formal-discs}
Let $\hat X$ and $\hat Y$ be formal discs of dimensions
$n>0$ and $m>0$
with origins
$x$ and $y$. Then there is an isomorphism
$$ \underline\Hom(\hat X,\hat Y; x,y)\simeq\AA^\infty $$
such that the tangent map to $\underline\End(T_{\hat X,x},T_{\hat Y,y})
\simeq\AA^{nm}$ corresponds to a linear projection.
\end{lem}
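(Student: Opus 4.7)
The strategy is to compute the functor of points $\underline\Hom(\hat X,\hat Y; x,y)$ explicitly in coordinates and read off both statements. Fix isomorphisms $\hat X \simeq \Spf k[[z_1,\dots,z_n]]$ and $\hat Y\simeq\Spf k[[w_1,\dots,w_m]]$, so that the origins correspond to the ideals $(z_i)$ and $(w_j)$. A $T$-point of $\underline\Hom(\hat X,\hat Y;x,y)$ is, by definition, a morphism $\psi: \hat X\times T\to \hat Y\times T$ over $T$ intertwining the base-changed origin sections. Equivalently (since both sides are affine as formal schemes), it is a continuous $\OO_T$-algebra homomorphism $\psi^*$ from $\OO_T \hat\otimes k[[w_1,\dots,w_m]]$ to $\OO_T \hat\otimes k[[z_1,\dots,z_n]]$, with continuity taken for the adic topologies defined by the origin ideals.

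The origin-preserving condition $\psi\circ x_T = y_T$ translates to the requirement that the images $f_j := \psi^*(w_j)$ lie in $(z_1,\dots,z_n)\cdot \OO_T[[z_1,\dots,z_n]]$, i.e.\ have vanishing constant term. Conversely, any such tuple $(f_1,\dots,f_m)$ of constant-term-free power series determines a unique continuous $\OO_T$-algebra homomorphism by $w_j\mapsto f_j$, because $w_j\mapsto f_j$ extends uniquely to power series thanks to the $(z_i)$-adic completeness. Writing each $f_j = \sum_{|\alpha|\ge 1} c_{\alpha,j}\, z^\alpha$ with $c_{\alpha,j}\in \OO_T$ exhibits the datum as a $T$-point of the affine space on the countable coordinate set $\{(\alpha,j) : 1\le j\le m,\ |\alpha|\ge 1\}$. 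The resulting natural transformation to $\AA^\infty$ is easily checked to be an isomorphism of presheaves.

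For the tangent statement, choose the dual bases $\{\partial/\partial z_i\}$ and $\{\partial/\partial w_j\}$ of $T_{\hat X,x}$ and $T_{\hat Y,y}$, identifying $\underline\Hom(T_{\hat X,x},T_{\hat Y,y})$ with $\AA^{nm}$ via matrices $(a_{ij})$. The tangent map of $\psi=(f_j)$ is induced on $\fm/\fm^2$ by $\psi^*$, and is given by the matrix of linear Taylor coefficients $a_{ij}=c_{e_i,j}$. Under the coordinate identifications this is the projection of $\AA^\infty$ onto the finite subset of coordinates $\{c_{e_i,j}\}_{i,j}$, which is a linear (indeed coordinate) projection.

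The only nontrivial point is being careful about the topology on $\OO_T\hat\otimes k[[z_i]]$ for general $T$ (e.g.\ non-Noetherian or non-affine base). This can be dispensed with by reducing to affine Noetherian $T$ via a standard limit argument, using that the target is adically generated by finitely many topologically nilpotent elements $z_i$; after that reduction, specifying a continuous $R$-algebra map out of $R[[w_1,\dots,w_m]]$ really is the same as specifying $m$ elements of the maximal ideal of $R[[z_1,\dots,z_n]]$. I expect this bookkeeping to be the main (very mild) obstacle; everything else is a direct translation between formal-disc morphisms and their Taylor coefficients.
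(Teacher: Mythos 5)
Your proposal matches the paper's proof: both identify $T$-points of $\underline\Hom(\hat X,\hat Y;x,y)$ with $m$-tuples of constant-term-free power series in $Q[[z_1,\dots,z_n]]$, obtaining $\AA^\infty$ on the coefficient coordinates, and both read off the tangent map as the coordinate projection onto the linear coefficients $a_{ij}$. The only difference is stylistic — the paper simply restricts to affine $\Spec Q$ (arbitrary, not necessarily Noetherian) and writes the completed tensor product directly as $Q[[z_1,\dots,z_n]]$, so your proposed reduction to Noetherian bases is unnecessary.
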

\begin{proof}
Fix identifications $\hat X = \Spf k[[z_1,\dots,z_n]]$ and
$\hat Y = \Spf k[[w_1,\dots,w_m]]$.
Let $B^+$ be the set indexing non-constant coefficients
of a power series in $k[[z_1,\dots,z_n]]$, and let
$$P = k[a_{i\beta}\ |\  {1 \le i \le m, \beta \in B^+}]$$
be a polynomial algebra over $k$ with free generators indexed
by $\{1,\dots,m\}\times B^+$.
There is a
continuous homomorphism
$$ \Phi : k[[w_1,\dots,w_m]] \to P[[z_1,\dots,z_n]] $$
such that the $\beta$-th coefficient of $\Phi(w_i)$ is $a_{i\beta}$.
We claim that 
$$\underline\Hom(\hat X,\hat Y ; x,y) \simeq \Spec P $$
with the universal morphism
$\underline\Hom(\hat X,\hat Y;x,y) \times \hat X \to \hat Y$
corresponding to $$\Phi_*:\Spf P[[z_1,\dots,z_n]] \to \Spf k[[w_1,\dots,w_m]].$$ 
Indeed, by the universal property of
$\underline\Hom(\hat X,\hat Y;x,y)$, $\Phi_*$ induces a morphism
$$ \phi :\Spec P \to \underline\Hom(\hat X,\hat Y;x,y) $$
of sheaves. To construct its inverse, we can restrict
to affine schemes. Given a $k$-algebra $Q$, and an element
$f \in \underline\Hom(\hat X,\hat Y;x,y)(\Spec Q)$ defined by
$$f^* :  Q[[w_1,\dots,w_m]] \to  Q[[z_1,\dots,z_n]]$$
we let $\psi(f) : \Spec Q \to \Spec P$ be the morphism
such that
$$\psi(f)^* : k[a_{i\beta}\ |\ {1\le i\le m,\beta\in B^+}] \to Q$$
sends $a_{i\beta}$ to the $\beta$-th coefficient of $f^*w_i$.
This defines a morphism
$$ \psi : \underline\Hom(\hat X,\hat Y;x,y) \to \Spec P $$
such that $\phi$ and $\psi$ are mutual inverses.

Finally, denoting by $j \in B^+$ the coefficient of $z_j$, we have
that the tangent map to $$\underline\End(T_{\hat X,x},T_{\hat Y,y})
\simeq \Spec k[a_{ij}\ |\ {1\le i\le m, 1 \le j \le n}]$$ is induced
by the natural inclusion $k[a_{ij}] \to k[a_{i\beta}]$.
\end{proof}

\begin{lem}\label{lem:aut-formal-disc}
Let $\hat X$ be a formal disc of dimension $n>0$, with origin $x$.
Then $\underline\Aut(\hat X,x)$ is a pro-algebraic affine group scheme.
Furthermore, the action of origin-preserving automorphisms
on the tangent space at the origin induces an exact sequence
$$
0 \to \sR_u\underline\Aut(\hat X,x) \to \underline\Aut(\hat X,x)
\to \GL(T_{\hat X,x}) \to 1 
$$
with a pro-unipotent kernel.
\end{lem}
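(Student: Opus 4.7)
The plan is to bootstrap from Lemma \ref{lem:hom-formal-discs} applied with $\hat Y = \hat X$ and $y = x$. That lemma identifies $\underline\Hom(\hat X, \hat X; x, x)$ with $\Spec P$, for $P = k[a_{i\beta} : 1 \le i \le n,\ \beta \in B^+]$ the polynomial algebra in the coefficients of an origin-preserving power-series endomorphism; this is affine and pro-algebraic. Fixing the identification $\hat X = \Spf k[[z_1,\dots,z_n]]$, an origin-preserving endomorphism $\phi^*$ is an automorphism if and only if its linear part is invertible, a condition cut out by non-vanishing of the determinant of the matrix $(a_{ij})_{1 \le i, j \le n}$ (where $a_{ij}$ is the coefficient of $z_j$ in $\phi^*(z_i)$). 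Hence $\underline\Aut(\hat X, x)$ is the corresponding principal open subscheme of $\Spec P$, still affine and pro-algebraic.

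The group structure requires a short check. Composition of origin-preserving endomorphisms is substitution of power series with vanishing constant term, so for each $k \ge 1$ the degree-$\le k$ truncation of the composition depends polynomially on the degree-$\le k$ truncations of the two factors; likewise for inversion. Thus the group operations are algebraic at each finite level and pass to the inverse limit, so $\underline\Aut(\hat X, x)$ is a pro-algebraic affine group scheme. The tangent-space homomorphism $\underline\Aut(\hat X, x) \to \GL(T_{\hat X, x})$ then corresponds, under Lemma \ref{lem:hom-formal-discs}, to the restriction to the invertibility locus of the linear projection $\Spec k[a_{i\beta}] \to \Spec k[a_{ij}]$; it is surjective with section $A \mapsto (z_i \mapsto \sum_j A_{ij} z_j)$, yielding the exact sequence whose kernel $\sR_u\underline\Aut(\hat X, x)$ consists of those $\phi^*$ with $\phi^*(z_i) \equiv z_i \pmod{(z_1,\dots,z_n)^2}$.

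To verify pro-unipotence of the kernel, filter it by the closed subgroups $U_k$ ($k \ge 1$) defined by $\phi^*(z_i) \equiv z_i \pmod{(z_1,\dots,z_n)^{k+1}}$, so that $U_1 = \sR_u\underline\Aut(\hat X, x) \supset U_2 \supset \cdots$. A direct conjugation computation, using that $\psi^*$ preserves the $(z)$-adic filtration, shows each $U_{k+1}$ is normal in $U_1$, and the quotient $U_k / U_{k+1}$ is isomorphic as a group scheme to the finite-dimensional vector group parametrising degree-$(k{+}1)$ homogeneous correction terms $z_i \mapsto z_i + \epsilon_i$, with group law given by addition. Refining the filtration $\{U_k\}$ by a composition series inside each finite-dimensional abelian step exhibits $U_1$ as the inverse limit of a countable sequence of successive $\GG_a$-extensions of the trivial group, matching the definition of pro-unipotence adopted in Section \ref{sec:notation}. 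The main step requiring care is the normality and quotient analysis of this filtration; everything else follows essentially formally from Lemma \ref{lem:hom-formal-discs}.
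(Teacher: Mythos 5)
Your proof follows essentially the same route as the paper's: both identify $\underline\Aut(\hat X,x)$ as the locus in $\underline\Hom(\hat X,\hat X;x,x) \simeq \AA^\infty$ where the linear part is invertible (the paper phrases this via the Cartesian square $\GL_n \times_{\AA^{n^2}}\AA^\infty$, you via the principal open $\{\det(a_{ij})\neq 0\}$, which are the same subscheme), and both filter the kernel by the congruence subgroups "identity modulo $\fm^{k+1}$" and observe that the successive quotients are vector groups, hence the kernel is pro-unipotent. The argument is correct and matches the paper's in substance.
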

\begin{proof}
Recall first that an endomorphism
of a power series algebra is invertible if its linear part is
(cf.~\cite{bourbaki}). It follows that there is a Cartesian diagram
$$\begin{CD}
\underline\Aut(\hat X,x) @>>> \underline\Hom(\hat X,\hat X,x,x) \\
@VVV @VVV \\
\GL(T_{\hat X,x}) @>>> \underline\End(T_{\hat X,x})
\end{CD}$$
so that, by Lemma~\ref{lem:hom-formal-discs},
$$ \underline\Aut(\hat X,x) \simeq \GL_n \times_{\AA^{n^2}}\AA^\infty $$
an affine pro-algebraic group scheme, with an epimorphism
onto $\GL(T_{\hat X,x})$.

Let $X_r$ be the $r$-th infinitesimal neighbourhood
of $x$ in $\hat X$, so that
$\hat X = \varinjlim X_r$. 
Letting $K_r$ be the kernel in
$$ 0 \to K_r \to \underline\Aut(\hat X,x) \to \underline\Aut(X_r,x)
\to 1 $$
we have short exact sequences
$$ 0 \to K_r/K_{r+1} \to K_1/K_{r+1} \to  K_1/K_r \to 0 $$
Using the notation of the proof of Lemma~\ref{lem:hom-formal-discs},
applied to $\underline\Hom(\hat X,\hat X,x,x)$, we have
$ K_r/K_{r+1} \simeq \prod_{i,\beta}\GG_a $
where the product is over $1 \le i \le n$ and $\beta \in B^+$
such that $\deg \beta = r$. It follows that
$K_1/K_r$ is unipotent, so that
$$ \sR_u\underline\Aut(\hat X,x) = \varprojlim K_1 / K_r $$
is pro-unipotent.
\end{proof}

Morphisms from formal discs to schemes admit well-behaved parameter
spaces as well. The following result is sufficiently general for
our purposes.
\begin{lem}\label{lem:sec-over-formal-disc}
Let $\hat X$ be a formal disc of dimension $n>0$, with origin $x$.
Let $Y \to \hat X$ be an adic morphism from a (not necessarily
locally Noetherian) 
formal scheme, so that the fibre $Y_x=x\times_XY$ is a scheme.
Then $\prod(Y/\hat X)$ is a scheme, and the morphism
$e_x : \prod(Y/\hat X) \to Y_x$ given by evaluation at $x$
is affine. If $Y \to \hat X$ is of pro-finite type, then
so is $e_x$.
\end{lem}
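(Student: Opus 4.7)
The strategy is to filter $\hat X$ by its infinitesimal neighborhoods and reduce to a countable inverse limit of finite-order problems.

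First I would set up the filtration. Let $X_r \subset \hat X$ be the $r$-th infinitesimal neighborhood of $x$, so that $X_0 = \Spec k$ and $\hat X = \varinjlim X_r$. Since $Y \to \hat X$ is adic, each $Y_r := Y \times_{\hat X} X_r$ is a genuine scheme, the square
\[
\begin{CD}
Y_r @>>> Y_{r+1} \\
@VVV @VVV \\
X_r @>>> X_{r+1}
\end{CD}
\]
is Cartesian with $Y_r \hookrightarrow Y_{r+1}$ a closed immersion whose ideal is the pullback of the ideal $(\mathfrak{m}^{r+1}/\mathfrak{m}^{r+2}) \otimes_k \mathcal{O}_{X_r}$, and $Y = \varinjlim Y_r$ as formal schemes. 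For any test $T$, a section of $\hat X_T \to Y_T$ is the same as a compatible family of sections $\sigma_r : X_{r,T} \to Y_{r,T}$ (since both sides are identified as colimits over $r$). Hence functorially
\[
\prod(Y/\hat X) \;=\; \varprojlim_r \prod(Y_r/X_r),
\]
and $\prod(Y_0/X_0) = Y_x$, with $e_x$ the canonical projection to this $r=0$ stage.

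The core step is to show that each transition map $\pi_r : \prod(Y_{r+1}/X_{r+1}) \to \prod(Y_r/X_r)$ is representable by an affine morphism. Given a $T$-section $\sigma_r$, the fiber of $\pi_r$ over $\sigma_r$ parametrizes $A/\mathfrak{m}^{r+2}$-algebra retractions $\mathcal{O}_{Y_{r+1,T}} \to \mathcal{O}_{X_{r+1,T}}$ restricting to the prescribed $\sigma_r^\ast$ modulo $\mathfrak{m}^{r+1}$. By standard deformation theory of sections of a morphism along a square-zero thickening, this fiber is either empty or a torsor under
\[
\underline{\Hom}_{\mathcal{O}_{X_{r,T}}}\bigl(\sigma_r^\ast \Omega^1_{Y_{r+1}/X_{r+1}},\; \mathcal{O}_{X_r} \otimes_k (\mathfrak{m}^{r+1}/\mathfrak{m}^{r+2})\bigr),
\]
and the non-emptiness locus is cut out by closed conditions (an $\mathrm{Ext}^1$-class vanishing). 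Since the target of the $\Hom$ is a free $\mathcal{O}_{X_r}$-module of finite rank, this torsor is representable by an affine (vector) bundle, so $\pi_r$ is a closed subscheme of an affine bundle, hence affine. If moreover $Y \to \hat X$ is of pro-finite type, one may write $Y = \varprojlim Y^{(i)}$ with each $Y^{(i)} \to \hat X$ of finite type, and then each $Y^{(i)}_{r+1} \to X_{r+1}$ is of finite type, so the corresponding $\Omega^1$ is coherent, the affine bundle above is of finite rank, and the resulting transition map is of finite type.

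Finally, $\prod(Y/\hat X) \to Y_x$ is the inverse limit of the countable tower of affine (resp.\ finite type) morphisms $\dots \to \prod(Y_{r+1}/X_{r+1}) \to \prod(Y_r/X_r) \to \dots \to Y_x$. A countable inverse limit of affine morphisms over a scheme is affine, so $e_x$ is affine and $\prod(Y/\hat X)$ is a scheme; in the pro-finite type case, combining the two limits (over $r$ and over $i$) yields an inverse system of finite type morphisms, showing $e_x$ is of pro-finite type. The main obstacle is the verification in Step 2: one must identify the deformation-theoretic fiber precisely and check that the existence obstruction defines a genuine closed subscheme of $\prod(Y_r/X_r)$, especially in the general adic (non-finite-type) setting where the relevant sheaves need not be coherent and the ambient affine bundle may have countably infinite rank, as in Lemma~\ref{lem:hom-formal-discs}.
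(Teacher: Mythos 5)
Your decomposition $\prod(Y/\hat X) = \varprojlim_r \prod(Y_r/X_r)$ is valid and gives a genuinely different organization from the paper's argument: the paper reduces to $Y$ affine, writes $Y = \Spf(S/I)$ with $S$ a (possibly infinite) polynomial algebra over $k[[z_1,\dots,z_n]]$, and exhibits $\prod(Y/\hat X)$ in one shot as $\Spec P/J$ for an explicit polynomial algebra $P$ whose generators index power-series coefficients, with $J$ the ideal coming from $I$; you instead slice by infinitesimal order of $\hat X$ and try to propagate affineness across each square-zero thickening $X_r \hookrightarrow X_{r+1}$. Your final assembly step (a countable tower of affine transition maps over the scheme $Y_x$ has affine limit, and pro-finite-type follows by interleaving the $r$-limit with a pro-finite-type presentation of $Y$) is fine.

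The gap is exactly where you flag it. In the adic, not-necessarily-finite-type setting, the claim that ``the non-emptiness locus is cut out by closed conditions (an $\mathrm{Ext}^1$-class vanishing)'' is not a consequence of the torsor/obstruction formalism --- it is the thing to be proved. One would need to realize the obstruction to lifting a $T$-family of sections as a global section of a quasi-coherent sheaf on $\prod(Y_r/X_r)$, and nothing in the general setup gives you that. The robust fix mirrors what the paper does for the whole disc at once: work in coordinates. Affine-locally on $Y_x$ one has $Y_{r+1} = \Spec\bigl(A_{r+1}[y_\alpha]_{\alpha\in A}/I\bigr)$ with $A_{r+1}=k[z_1,\dots,z_n]/\mathfrak m^{r+2}$; fixing a set-theoretic lift $b^0_\alpha$ of a given $\sigma_r$, any other lift is $b_\alpha = b^0_\alpha + \epsilon_\alpha$ with $\epsilon_\alpha$ in the (finite-dimensional) square-zero ideal $\mathfrak m^{r+1}/\mathfrak m^{r+2}$, and the conditions $f(b_\alpha)=0$ for $f\in I$ expand, by square-zero Taylor expansion, to the affine-linear system $\sum_\alpha (\partial f/\partial y_\alpha)(b^0)\,\epsilon_\alpha = -f(b^0)$. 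This identifies the fiber of $\pi_r$ as a closed subscheme of a (possibly infinite-rank) affine bundle over $\prod(Y_r/X_r)$, giving affineness with no appeal to coherence. So: the skeleton is right, and the iterative route would work, but its central step needs the explicit computation rather than a citation to deformation theory --- at which point the paper's direct construction is arguably shorter.
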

\begin{proof}
If $Y_x = \bigcup U_i$ is a cover by open subschemes,
then $\prod(Y/\hat X) = \bigcup e_x^{-1}(U_i)$ is a 
cover by open subfunctors, and it is enough to
check that each $e_x^{-1}(U_i)$ is a scheme. We can thus assume
$Y$ is affine.
Identifying $\hat X$ with $\Spf k[[z_1,\dots,z_n]]$, we have
$Y = \Spf R$ where $R$ is a topological
algebra over $k[[z_1,\dots,z_n]]$, with topology induced
by $(z_1,\dots,z_n)R$. Consider a presentation
$R = S/I$
where $S$ is a polynomial algebra over $k[[z_1,\dots,z_n]]$,
possibly of infinite type, with topology induced by
$(z_1,\dots,z_n)S$. Let $A$ be the set
indexing free generators $y_\alpha$ of $S$, and let
$B$ be the set indexing coefficients of a power series
in $k[[z_1,\dots,z_n]]$.
Let $$P = k[a_{\alpha\beta}\ |\ {\alpha\in A,\beta\in B}]$$ 
be a polynomial algebra
over $k$ with free generators $a_{\alpha\beta}$ indexed by $A\times B$.
There is a homomorphism
$$ \Phi : S \to P[[z_1,\dots,z_n]] $$
such that for $\alpha\in A$, $\beta\in B$,
the $\beta$-th coefficient of $\Phi(y_\alpha)$
is $a_{\alpha\beta}$.
Let
$$ \Psi : I\times B \to P $$
be the map sending $(s,\beta)$ to the 
$\beta$-th coefficient
of the power series $\Phi(s)$. Finally let
$J \subset P$ be the ideal generated by the image $\Psi(I\times B)$.
Then $\Phi$ factors through
$$\bar\Phi : R=S/I \to (P/J)[[z_1,\dots,z_n]].$$
We claim that
$$ \prod(Y/\hat X) \simeq \Spec P/J $$
with the universal morphism
$ \prod(Y/\hat X) \times \hat X \to Y $
corresponding to $$\bar\Phi_*: \Spf (P/J)[[z_1,\dots,z_n]]\to
\Spf R. $$
Indeed, by the universal property of $\prod(Y/\hat X)$,
$\bar\Phi_*$ induces a morphism
$$\phi : \Spec(P/J) \to \prod(Y/\hat X)$$
of sheaves. To construct its inverse, we can restrict to affine schemes.
Given a $k$-algebra $Q$,
and an element
$f \in \prod(Y/\hat X)(\Spec Q)$
defined by
$$f^* : Q\otimes (S/I) \to Q[[z_1,\dots,z_n]]$$
we let $\psi(f) : \Spec Q \to \Spec P/J$ be the unique morphism
such that
$$\psi(f)^* : k[a_{\alpha\beta}\ |\ {\alpha\in A,\beta\in B}]/J
\to Q $$
sends $\bar a_{\alpha\beta}$ to the $\beta$-th coefficient
of $f^*y_\alpha$. Note that for each
$(s,\beta)\in I\times B$, $f^*s=0$ so that
is $\beta$-th coefficient is zero, so that $\psi(f)^*$
takes the generators of $J$ to zero. Hence $\psi(f)^*$
is well-defined.
This defines a morphism
$$ \psi : \prod(Y/\hat X) \to \Spec (P/J) $$
and that $\phi$ and $\psi$ are mutual inverses.

Finally, if $R$ is countably generated over $k[[z_1,\dots,z_n]]$,
then $A$ and $A\times B$ are countable, so that
$P/J$ is countably generated over $k$. Hence
$Y \to \hat X$ being pro-finite type implies the same for
$\prod(Y/\hat X) \to Y_x$.
\end{proof}

Naturally, one would like to have a relative notion
of a formal disc. For our purposes, the following is
the most convenient
(note that
in the definition below, dimensions of the fibres are
locally constant, i.e. $n_i=n_j$ unless $S_i$, $S_j$ are disjoint).
\begin{defn}A morphism $\hat X \to S$
of locally Noetherian formal schemes, together with a section
$S \to \hat X$,
is called a 
\emph{bundle of formal discs}
if there is a Zariski open cover $S = \bigcup S_i$ such that 
$S_i\times_S\hat X
\simeq S_i \times \Spf k[[z_1,\dots,z_{n_i}]]$ over $S_i$
for some $n_i\ge 0$,
and the section $S_i \to S_i\times_S \hat X$ corresponds
to the pullback of the origin $\Spec k \to \Spf k[[z_1,\dots,z_{n_i}]]$.
\end{defn}

\begin{lem}\label{lem:completion-at-section}
Let $X \to S$
be a smooth morphism of locally Noetherian schemes. Then
$(X/S)^\sharp$ together with the diagonal section
 is a bundle of formal discs
over $X$.
If, moreover, $S$ is reduced and $\sigma \in X(S)$ is a section, then
$\sigma^*(X/S)^\sharp$ is naturally isomorphic
to the completion of $X$ along $\sigma(S)$.
\end{lem}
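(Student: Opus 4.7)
The plan is to reduce both assertions to a coordinate computation performed étale-locally on $X$, and to extract the required identifications from the vanishing of the diagonal ideal in those coordinates.

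For the bundle statement, smoothness is étale-local on $X$, so I may cover $X$ by étale opens $U$ admitting $t_1,\dots,t_n \in \Gamma(U,\OO_U)$ defining an étale morphism $U \to \AA^n_S$. Setting $z_i = p_2^*t_i - p_1^*t_i$ in $\OO_{U\times_SU}$, the $z_i$ vanish along the diagonal; since $dt_1,\dots,dt_n$ trivialise $\Omega^1_{U/S}$, they cut the diagonal out as a regular sequence of length $n$. Passing to the completion along $(z_1,\dots,z_n)$, I would obtain, via the left structure map $p_1$, a canonical isomorphism $\OO_{(U/S)^\sharp} \simeq \OO_U[[z_1,\dots,z_n]]$, exhibiting $(U/S)^\sharp$ and hence $(X/S)^\sharp$ as a bundle of formal discs over $X$ with origin the diagonal section.

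For the section statement, I would first identify $S\times_{\sigma,X,p_1}(X\times_SX)$ with $X$ via the map $\tau = (\sigma\circ\pi,\id_X) : X \to X\times_SX$, so that $\sigma^*(X/S)^\sharp$ becomes identified with the formal completion of $X$ along $\tau^{-1}(\Delta)$. Since $\sigma\circ\pi$ is an idempotent endomorphism of $X$ with image $\sigma(S)$, this preimage is set-theoretically $\sigma(S)$. In the étale coordinates above, $\tau^*z_i = t_i - \pi^*\sigma^*t_i$; pulling back the ideal $(t_i - \sigma^*t_i)$ of the composed section $\sigma_0 : S\to\AA^n_S$ along the étale map $U \to \AA^n_S$ shows that these elements cut out $\sigma(S)$ scheme-theoretically in a neighbourhood. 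Passing to the completion then yields the claimed identification.

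The main subtle point is globalising these local identifications, and in particular ensuring that pullback along the non-flat morphism $\tau$ commutes with the inverse limit defining the formal completion. This is where reducedness of $S$ is used: it lets one check equality of the two coherent ideal systems on a dense open subset of $S$ (where smoothness, together with freeness of the relevant conormal sheaves, makes the coordinate computation transparent) and then propagate the resulting identification to all of $S$. Standard étale descent then patches the local trivialisations of Part 1 into the global bundle structure, and the two parts fit together so that $\sigma^*(X/S)^\sharp$ receives the expected formal disc structure from the completion along $\sigma(S)$.
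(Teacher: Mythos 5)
Your coordinate computation in the first two paragraphs is correct and close to the paper's own argument: the paper also reduces étale-locally to coordinates, though it carries the reduction two steps further (first to $X=\AA^n_S$ with $\sigma$ the zero section, then, since both sides split off a factor of $S$, to $S=\Spec k$). Your identity $\tau^*z_i = t_i - \pi^*\sigma^*t_i$, together with the observation that these elements cut out $\sigma(S)$ scheme-theoretically (because $\sigma$ is an open-and-closed immersion into the étale $S$-scheme $f^{-1}(0_S)$, so equality holds after shrinking), is exactly the point.

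The last paragraph, however, is not correct. First, $(X/S)^\sharp$ is the filtered \emph{colimit} $\varinjlim Z_n$ of the infinitesimal neighbourhoods of the diagonal, viewed as a sheaf (or ind-scheme), not an inverse limit of schemes; and in a sheaf category colimits are universal, so $\sigma^*\varinjlim Z_n = \varinjlim \sigma^*Z_n$ with no flatness hypothesis on $\sigma$ or $\tau$. Combined with $\tau^*(\sI_\Delta^n) = (\tau^*\sI_\Delta)^n = \sI_{\sigma(S)}^n$, which your coordinate computation already establishes Zariski-locally at every point of $\sigma(S)$, this gives the identification directly: the comparison map is canonically induced by $\tau$ and being an isomorphism is a local question, so there is no gluing subtlety to dispose of. Second, the specific argument you propose via reducedness --- checking agreement of two coherent ideal systems on a dense open of $S$ and then ``propagating'' --- is not a valid form of reasoning: the ideals $(t)$ and $(t^2)$ agree on the dense open $D(t)\subset\AA^1$ but not at the origin, and $\AA^1$ is reduced. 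You were trying to repair a gap that is not actually there, and the repair you offered would not work even if it were. (For what it is worth, the paper's proof also never invokes the reducedness hypothesis.)
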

\begin{proof}
Since the question is local, we can assume
there is an \'etale morphism
$f:X \to \AA^n_S$ and
$f\circ \sigma : S \to \AA^n_S$ is the
zero-section. Since $f$ is \'etale,
it induces an isomorphism $(X/S)^\sharp \simeq f^*(\AA^n_S/S)^\sharp$
and an isomorphism 
of the completion of $X$ along $\sigma(S)$
onto the completion of $\AA^n_S$ along $(f\circ\sigma)(S)$.
We can thus replace $X$ with $\AA^n_S$ and
$\sigma$ with the zero-section. Then $(X/S)^\sharp$ is identified with
$S\times (\AA^n)^\sharp$, and the completion of
$X$ along $\sigma(S)$ is the product of $S$ with
the completion of $\AA^n$ at $0$. We are thus finally
reduced to the case $S=\Spec k$, $X=\AA^n$, $\sigma=0$,
where the result follows by straightforward inspection.
\end{proof}

The condition that $(X/S)^\sharp$ be a bundle of formal discs is
a variant of what in ~\cite{SGA-I} and~\cite{EGA-IV-4} is caled
\emph{lissit\'e differentielle} (it coincides with the latter
at least for finite type morphisms of locally Noetherian schemes).
We thus make the following definition.
\begin{defn}
A morphism $f:X \to S$ of locally Noetherian formal schemes
is \emph{differentially smooth (with differential dimension $n$)} if 
$(X/S)^\sharp$, together with the diagonal section, 
is a bundle of formal discs (of dimension $n$) over $X$.
\end{defn}

By Lemma~\ref{lem:completion-at-section},
a smooth morphism of locally Noetherian schemes
is differentially smooth. Conversely,
one can show that a flat, finite type differentially
smooth morphism of locally Noetherian schemes is  smooth~\cite{SGA-I},
although we will not need this fact.
At the same time,
a bundle $\hat X\to S$ of formal discs is differentially smooth 
since $(\hat X/S)^\sharp = (\hat X/S)^2$.
Sections of a differentially smooth morphism to
a formal disc are parametrized by a particularly
simple object. 
\begin{lem}\label{lem:sec-of-diff-smooth}
Let $\hat X$ be a formal disc with origin $x$,
$Y$ a locally Noetherian formal scheme, and
$Y \to \hat X$ a differentially smooth
morphism with positive differential dimension. Then
$\prod(Y/\hat X) \to Y_x$ is a Zariski-locally trivial
$\AA^\infty$-bundle.
\end{lem}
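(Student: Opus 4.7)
The plan is to work Zariski-locally on $Y$ (equivalently on $Y_x$, since $Y$ is adic over $\hat X$) around a chosen closed point $y_0 \in Y_x$, and to reduce to the explicit model $\AA^n \times \hat X \to \hat X$, where $n$ is the differential dimension of $Y$ over $\hat X$. For this model, Lemma~\ref{lem:sec-over-formal-disc} identifies $\prod(\AA^n \times \hat X / \hat X)$ with the scheme of $n$-tuples of power series in $k[[z_1,\dots,z_m]]$, where $\hat X = \Spf k[[z_1,\dots,z_m]]$. Splitting each power series into its constant term and the rest identifies $\prod(\AA^n \times \hat X/\hat X) \simeq \AA^n \times \AA^\infty$ over $(\AA^n \times \hat X)_x = \AA^n$, a globally trivial $\AA^\infty$-bundle. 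Thus the lemma holds in the model case.

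The reduction to the general case goes in two steps. First, I would use differential smoothness to extract Zariski-local vertical coordinates. On a small open $V \ni y_0$, a trivialization of the formal disc bundle $(V/\hat X)^\sharp \to V$ produces $n$ functions on $(V/\hat X)^\sharp$ vanishing on the diagonal and giving formal-disc coordinates on each left-projection fibre. After possibly shrinking $V$, these can be arranged to come from differences $1 \otimes u_i - u_i \otimes 1$ of honest functions $u_i \in \OO_V$. Together with the structure map, the $u_i$ define a morphism $\varphi : V \to \AA^n \times \hat X$ over $\hat X$ which, by construction, pulls back $(\AA^n \times \hat X / \hat X)^\sharp$ to $(V/\hat X)^\sharp$ --- a formal-scheme analogue of \'etaleness. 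Second, this formal \'etaleness implies that the section-space functor transfers cleanly: a section $\sigma : \hat X \to V$ is equivalent to the data of a section $\tau = \varphi \circ \sigma$ of $\AA^n \times \hat X \to \hat X$ together with a lift of $\tau(x) \in \AA^n$ to a $k$-point of $V_x$. This follows by induction on the nilpotent thickenings of $x$ inside $\hat X$, using formal \'etaleness of $\varphi$ at each step to produce a unique lift, with adicness of $V \to \hat X$ assembling the limiting system into a morphism of formal schemes.

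Combining, one obtains $\prod(V/\hat X) \simeq V_x \times_{\AA^n} \prod(\AA^n \times \hat X / \hat X) \simeq V_x \times \AA^\infty$ over $V_x$, the required Zariski-local trivialization. The main obstacle lies in the first step: differential smoothness is phrased in terms of the completion $(V/\hat X)^\sharp$, and I need to convert this into the existence of functions $u_i$ on $V$ itself defining a formally \'etale morphism to the model. For smooth morphisms of schemes this is standard (cf.\ Lemma~\ref{lem:completion-at-section}), but in the adic formal setting I need to check that the coordinates can be lifted from the diagonal completion to $V$ itself, and that the resulting $\varphi$ respects the adic structure --- both of which should follow from adicness of $V \to \hat X$ combined with the local-triviality hypothesis, but require care.
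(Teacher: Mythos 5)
Your route is genuinely different from the paper's. The paper never reduces to a coordinate model: it rewrites $\prod(Y/\hat X)$ directly as $\underline\Hom_{Y_x}(Y_x\times\hat X,\ Y_x\times_Y(Y/\hat X)^\sharp;\ Y_x\times x,\ Y_x\times_Y\Delta_Y)$, using the observation that for any $T$-family of sections $f$, the pair $\langle(f\circ x)\times\hat X,\, f\rangle$ automatically factors through the diagonal completion $(Y/\hat X)^\sharp$ (because the two components agree on $(T\times\hat X)_\red$). Differential smoothness then makes $Y_x\times_Y(Y/\hat X)^\sharp$ a locally trivial bundle of formal discs over $Y_x$, and Lemma~\ref{lem:hom-formal-discs} finishes. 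No local coordinates, no model comparison, no induction on thickenings.

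There is a genuine soft spot in your step 2. Your notion of ``formally \'etale'' for $\varphi:V\to\AA^n\times\hat X$ --- that $\varphi$ pulls back $(\AA^n\times\hat X/\hat X)^\sharp$ to $(V/\hat X)^\sharp$ --- is a statement about completions and gives you $\Omega^1_{V/\AA^n\times\hat X}=0$, hence \emph{uniqueness} of the lift at each thickening $X_r\hookrightarrow X_{r+1}$. But \emph{existence} of the lift is an obstruction-vanishing statement (formal smoothness in the EGA sense), and that does not follow formally from an isomorphism on $\sharp$'s; you have established ``formally unramified'' but not ``formally smooth,'' and your proposal does not supply the missing half. The clean way to get existence is to observe that any section of $V\to\hat X$, paired with its value at $x$, already lands in the $\sharp$-completion, so that the iso on $\sharp$'s lets you transport the factored map of $\tau$ directly to a factored map for $\sigma$ over the chosen $y_0$ --- but that observation is precisely the paper's key move, and once one has made it the entire reduction to $\AA^n\times\hat X$ becomes unnecessary. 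Your step 1 (finding $u_i\in\OO_V$ with $du_i$ a basis of $\Omega^1_{V/\hat X}$, and verifying the induced $\varphi$ is an iso on $\sharp$'s via the filtration) is correct but is extra work the paper's route avoids.
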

\begin{proof}
Note that for any scheme $T \to \hat X$,
and a morphism $f:T\times \hat X \to T\times_{\hat X}Y$
over $T$, the induced morphism
$\langle (f\circ x)\times \hat X, f \rangle
: T\times\hat X \to T\times_{\hat X} (Y/X)^2$
factors through $T\times_{\hat X}(Y/X)^\sharp$.
We thus have an isomorphism
\begin{eqnarray*}
\prod(Y/\hat X) &\simeq& 
\underline\Hom_{Y}(Y \times\hat X, (Y/\hat X)^\sharp ; 
Y \times x, \Delta_Y) \\
&\simeq&
\underline\Hom_{Y_x}(Y_x \times\hat X, Y_x \times_Y (Y/\hat X)^\sharp ; 
Y_x \times x, Y_x\times_Y \Delta_Y)
\end{eqnarray*}
where $\Delta_Y : Y \to (Y/\hat X)^\sharp$ is the diagonal morphism.
By hypothesis, $Y_x\times_Y(Y/\hat X)^\sharp$ is a bundle
of positive-dimensional formal discs over $Y_x$. 
Since the problem is local, we
can assume that the bundle is trivial, so that
$$ \prod(Y/\hat X) \simeq Y_x \times \underline\Hom(\hat X,\hat Y ; x,y)
=Y_x\times\underline\Hom(\hat X,\hat Y;x,y) $$
where $\hat Y$ is a formal disc of positive dimension, with origin $y$.
Then, by Lemma~\ref{lem:hom-formal-discs}, we have that
$\prod(Y/\hat X) \simeq Y_x\times\AA^\infty$ as desired.
\end{proof}

\subsection{Space of unparametrised pointed arcs}

Let $\hat\PP^1$ denote the completion of $\PP^1$ at $0$.
It is a one-dimensional formal disc, so that in particular
we have the pro-algebraic group scheme 
$\underline\Aut(\hat\PP^1,0)$. Given a differentially smooth
morphism $X\to S$ with positive differential dimension
of locally Noetherian formal schemes,
we define
$$\underline\Imm_S(S\times\hat\PP^1,X) \subset
\underline\Hom_S(S\times\hat\PP^1,X)$$
to be the subsheaf whose value at $T/S$ consists
of morphisms $\gamma:T\times\hat\PP^1 \to X_T$
inducing a nowhere-vanishing map
$\gamma':\OO_T\otimes T_{\PP^1,0} \to \gamma|_{T\times 0}^* T_{X_T/T}$.
That is, $\underline\Imm_S(S\times\hat\PP^1,X)$
is the space of families of parametrised formal arcs in $X/S$, unramified
at the origin $0$. 
The map sending $\gamma$ to the image of $\gamma'$ 
induces a morphism $\underline\Imm_S(S\times\PP^1,X)
\to \PP T_{X/S}$ into the projectivised tangent bundle.
The automorphism group $\underline\Aut(\hat\PP^1,0)$
has a natural right action on $\underline\Imm_S(S\times\hat\PP^1,X)$,
reparametrising the arc while preserving
the tangent direction at $0$, and thus compatible with the morphism to
 $\PP T_{X/S}$.
We define the space of \emph{unparametrised
pointed arcs} in $X/S$ to be the quotient
$$ \Arc_{X/S} = \underline\Imm_S(S\times\hat\PP^1,X) /
\underline\Aut(\hat\PP^1,0), $$
in the category of sheaves over $\PP T_{X/S}$.
\begin{lem}\label{lem:arc-affine}
Let $X\to S$ be a differentially smooth morphism with positive
differential dimension
of locally Noetherian formal schemes. Then
$\Arc_{X/S} \to \PP T_{X/S}$ 
is a Zariski-locally trivial $\AA^\infty$-bundle.
\end{lem}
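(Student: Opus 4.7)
The plan is to work Zariski-locally on $\PP T_{X/S}$, use differential smoothness to choose adapted coordinates, produce a canonical normal form for each orbit of $\underline\Aut(\hat\PP^1,0)$, and then read off the fiber as $\AA^\infty$ via Lemma~\ref{lem:hom-formal-discs}.

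Since $X\to S$ is differentially smooth of positive differential dimension $n$, the bundle $(X/S)^\sharp \to X$ is, Zariski-locally on $X$, isomorphic to $X \times \Spf k[[z_1,\dots,z_n]]$, which in particular trivializes the tangent bundle $T_{X/S}$. Working over a sufficiently small Zariski open $U\subset \PP T_{X/S}$ contained in the affine chart where the tautological line subbundle is spanned by a section proportional to $\partial_{z_1}$ (after a linear change of coordinates depending on $U$), an unramified pointed arc over $U$ corresponds to an $n$-tuple $(f_1,\dots,f_n) \in (t\,k[[t]])^n$ with $f_1'(0)\neq 0$ and $f_i'(0)=0$ for $i>1$, so that $[\gamma'(0)]=[\partial_{z_1}]$.

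Next, since $f_1\in t\,k[[t]]$ has invertible linear term, it admits a unique compositional inverse $\phi=f_1^{-1}\in \underline\Aut(\hat\PP^1,0)$; replacing $\gamma$ by $\gamma\circ\phi$ yields the normal form $(t,g_2,\dots,g_n)$ with $g_i\in t^2 k[[t]]$. Uniqueness of $\phi$ shows that this normalization defines a morphism of sheaves picking out a canonical representative of each $\underline\Aut(\hat\PP^1,0)$-orbit over $U$; accordingly it identifies $\Arc_{X/S}|_U$ with the subsheaf of normal-form arcs. The space of tuples $(g_2,\dots,g_n)\in(t^2k[[t]])^{n-1}$ is, by the same explicit construction as in the proof of Lemma~\ref{lem:hom-formal-discs}, the spectrum of a polynomial algebra freely generated by the countably many Taylor coefficients, hence isomorphic to $\AA^\infty$. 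This gives a trivialization $\Arc_{X/S}|_U \simeq U\times\AA^\infty$.

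The main technical obstacle lies in verifying that the normal form construction is genuinely algebraic and functorial in families, i.e.\ that it produces a morphism of sheaves rather than just a bijection on closed points. This reduces to the standard fact that the compositional inverse of a power series with invertible linear part has coefficients given by universal polynomials in the coefficients of the original (with denominators that are powers of the linear coefficient, harmless on the open locus $f_1'(0)\neq 0$); together with the observation that the condition singling out $U$ in $\PP T_{X/S}$ is precisely the invertibility condition required for this normalization, no separate genericity argument is needed.
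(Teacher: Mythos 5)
Your argument is correct and closely parallels the paper's proof: both localize in $\PP T_{X/S}$, both exhibit a canonical $\underline\Aut(\hat\PP^1,0)$-normal form by reparametrizing so that one coordinate of the arc becomes the identity, and both then identify the remaining data with $\AA^\infty$ via (the method of) Lemma~\ref{lem:hom-formal-discs}. The organization differs slightly: the paper splits $\hat X \simeq \hat D\times\hat\PP^1$, identifies $U_n$ with $\underline\Hom(\hat\PP^1,\hat D;0,\bar x)\simeq\AA^\infty$ (keeping the components $\tilde f_i \in t\,k[[t]]$ \emph{unconstrained}, with $\tilde f_i'(0)$ recording the tangent direction), and then invokes the fact that the projection to $D(z_n)$ is a linear projection of $\AA^\infty$; you instead push the tangent-direction data into a \emph{fiberwise} linear change of coordinates (on the chart $D(z_1)$ this would be $w_1=z_1$, $w_i=z_i-c_i z_1$ where $c_i$ are the affine coordinates on $D(z_1)$), after which the constrained normal form $g_i\in t^2k[[t]]$ makes sense. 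That last point is the one place your write-up is genuinely imprecise: \emph{no} linear change of coordinates that is constant over the open set $U$ can make the tautological line everywhere proportional to $\partial_{z_1}$, since it varies over $U$; the phrase ``depending on $U$'' must be read as ``depending on the point of $U$,'' i.e.\ a morphism $U\to\GL_n$ acting on the trivialization of $(X/S)^\sharp$, and without that the claim ``$f_i'(0)=0$ for $i>1$'' is simply false on the whole chart. With that clarified, and granting (as you indicate) that the compositional inverse and the shear are given by universal polynomial formulas so that the normal form is a morphism of sheaves which descends through the fpqc quotient defining $\Arc_{X/S}$, the proof is sound; the paper avoids the moving shear altogether and instead checks explicitly that its map $\phi$ is both a monomorphism and an epimorphism of sheaves.
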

\begin{proof}
Since the problem is local, we can assume by differential smoothness
that there is a positive-dimensional formal disc $\hat X$ with origin $x$,
such that
$(X/S)^\sharp \simeq X \times \hat X$
inducing an isomorphism $\PP T_{X/S} \simeq X \times
\PP T_{\hat X}$. Arguing as in the proof Lemma~\ref{lem:sec-of-diff-smooth}
we then have an isomorphism
$$ \underline\Imm_S(S\times\hat\PP^1,X) \simeq X \times
\underline\Imm(\hat \PP^1,\hat X) $$
equivariant under the action of $\underline\Aut(\hat\PP^1,0)$,
and compatible with the morphisms to
$\PP T_{X/S}$ and $\PP T_{\hat X}$. It will thus be enough to prove
that
$$ \Arc_{\hat X} \to \PP T_{\hat X} $$
is a Zariski-locally trivial $\AA^\infty$-bundle.

Identifying $\hat X$ with the completion of
$\GG_a \times \dots \times \GG_a$ at identity, we turn
it into a formal group scheme. The action of $\hat X$
on itself trivialises $\Arc_{\hat X}$ and $\PP T_{\hat X}$
so that
$$ \Arc_{\hat X} \simeq \hat X \times \Arc_{\hat X,x},\quad
\PP T_{\hat X} \simeq \hat X \times \PP T_{\hat X,x} $$
compatibly with the morphism $\Arc_{\hat X}\to\PP T_{\hat X}$.
It is thus enough to check that $\Arc_{\hat X,x} \to \PP T_{\hat X,x}$
is a Zariski-locally trivial $\AA^\infty$-bundle.

Now, identifying $\hat X = \Spf k[[z_1,\dots,z_n]]$
so that $\PP T_{\hat X,x} = \Proj k[z_1,\dots,z_n]$,
we have that $\PP T_{\hat X,x}$ is covered by open affines
$D(z_i)$, and $\Arc_{\hat X,x}$ is covered by open subfunctors
$U_i = D(z_i)\times_{\PP T_{\hat X,x}}\Arc_{\hat X,x}$. It will be
enough to show that $U_i \simeq D(z_i) \times \AA^\infty$,
and by permuting the variables we only need to consider $i=n$.
Let $\hat D = \Spf k[[z_1,\dots,z_{n-1}]]$ and identify
$\hat\PP^1 = \Spf k[[z_n]]$, inducing isomorphism
$\hat D \times \hat \PP^1 \simeq \hat X$. There is a natural
isomorphism
$$ \underline\Hom(\hat\PP^1,\hat D) 
\times \underline\Hom(\hat\PP^1,\hat\PP^1) \simeq
\underline\Hom(\hat\PP^1,\hat D \times \hat \PP^1) $$
which, pulled back to the $k$-point
$\id_{\hat \PP^1}$ of $\underline\Hom(\hat\PP^1,\hat\PP^1)$,
and restricted over the origin $\bar x \in \hat D(k)$,
gives a monomorphism
$$ \tilde\phi:\underline\Hom(\hat\PP^1,\hat D ; 0,\bar x) \to 
D(z_n)\times_{\PP T_{\hat X,x}}\underline\Imm(\hat\PP^1,\hat X). $$
Composing with projection to the quotient by $\underline\Aut(
\hat\PP^1,0)$, we obtain
$$ \phi : \underline\Hom(\hat\PP^1,\hat D ; 0,\bar x)
\to U_n $$

We first check that $\phi$ is a monomorphism. 
Consider two $T$-points of $\underline\Hom(\hat\PP^1,\hat D;
0,\bar x)$, corresponding to morphisms
$b,c : T\times\hat\PP^1 \to T\times\hat D$. If $\phi(b)=\phi(c)$,
then there is a covering $T'\to T$ and an automorphism
$g:T'\times\hat\PP^1 \to T'\times\hat\PP^1$ such that
$\tilde\phi(b)_{T'}\circ g = \tilde\phi(c)_{T'}$. But then
$$g = \pr_{T'\times\hat\PP^1} \circ \tilde\phi(b)_{T'}\circ g
= \pr_{T'\times\hat\PP^1}\circ\tilde\phi(c)_{T'}
= \id_{T'\times\hat\PP^1}$$
so that $\tilde\phi(b)_{T'} = \tilde\phi(c)_{T'}$,
thus $b_{T'}=c_{T'}$ and $b=c$ by descent.

To check that $\phi$ is an epimorphism,
consider a $T$-point $f$ of $U_n$,
represented by a covering $T' \to T$ and a morphism
$f':T'\times\hat\PP^1 \to T'\times\hat D\times\hat\PP^1$ satisfying
the condition that 
$$\pr_1^*f' = \pr_2^*f' \circ g : T'\times_TT'\times\hat\PP^1\to 
T'\times_TT'\times\hat D\times\hat\PP^1$$
for some automorphism $g: T'\times_TT'
\times\hat\PP^1 \to T'\times_TT'\times\hat\PP^1$,
where
$\pr_1,\pr_2 : T'\times_TT'\to T'$ are the two projections.
Furthermore, since $\pr_{T'\times\hat\PP^1}\circ f' :
T'\times\hat\PP^1 \to T'\times\hat\PP^1$ is unramified at the
zero section, it is an automorphism. We then let
$$ c' = \pr_{T'\times\hat D} \circ f'\circ 
(\pr_{T'\times\hat\PP^1}\circ f')^{-1} : 
T'\times\hat\PP^1 \to T'\times\hat D, $$
defining a $T'$-point of $\underline\Hom(\hat\PP^1,\hat D;0,\bar x)$.
It is immediate that 
$\tilde\phi(c')=f' \circ (\pr_{T'\times\hat\PP^1}\circ f')^{-1}$, so that 
$\phi(c') = T'\times_T f$.
Pulling $c'$ back by $\pr_1,\pr_2:T'\times_TT' \rightrightarrows T'$, we have
\begin{eqnarray*}
\pr_1^*c' &=& 
\pr_{T'\times_TT'\times \hat D}
\circ \pr_1^*f' \circ (\pr_{T'\times_TT'\times\hat\PP^1} \circ \pr_1^*f')^{-1}
\\ &=&
\pr_{T'\times_TT'\times \hat D}
\circ \pr_2^*f' \circ g \circ g^{-1} \circ (\pr_{T'\times_TT'\times\hat\PP^1} \circ \pr_2^*f')^{-1}
\\ &=& \pr_2^*c'
\end{eqnarray*}
so that $c'$ descends to a $T$-point corresponding to
$c:T\times\hat\PP^1 \to T\times\hat D$.

We thus have an isomorphism
$U_n \simeq \underline\Hom(\hat\PP^1,\hat D;0,\bar x)$.
By Lemma~\ref{lem:hom-formal-discs}, $U_n \simeq \AA^\infty$
and the morphism to $D(z_i)\simeq\AA^n$ is a linear projection,
so that $U_n \simeq D(z_i)\times\AA^\infty$ as a $D(z_i)$-scheme.
\end{proof}

\begin{cor}\label{cor:arc-torsor}
In the setting of Lemma~\ref{lem:arc-affine},
$ \underline\Imm_S(S\times\hat\PP^1,X) \to \Arc_{X/S}$
is a Zariski-locally trivial $\underline\Aut(\hat \PP^1,0)$-torsor.
\end{cor}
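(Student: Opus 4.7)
The plan is to exhibit Zariski-local sections of the quotient map $\pi : \underline\Imm_S(S\times\hat\PP^1,X) \to \Arc_{X/S}$ and then combine them with freeness of the $\underline\Aut(\hat\PP^1,0)$-action to obtain local trivialisations.

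\textbf{Step 1: Freeness of the action.} First I would check that the action of $\underline\Aut(\hat\PP^1,0)$ on $\underline\Imm_S(S\times\hat\PP^1,X)$ is free. Given a $T$-point $\gamma : T\times\hat\PP^1 \to X_T$ which is unramified at the zero section, and an automorphism $g \in \underline\Aut(\hat\PP^1,0)(T)$ satisfying $\gamma\circ(\id_T\times g) = \gamma$, one recovers $g$ from $\gamma$ as follows: working locally on $X$ via differential smoothness and choosing a coordinate $z$ on $\hat\PP^1$ and a coordinate $t$ on a local factor of the target disc pulling back along $\gamma$ to $\alpha z + O(z^2)$ with $\alpha$ a unit on $T$, the equation $\gamma^*t = g^*\gamma^*t$ forces $g^*z = z$ by comparing coefficients order by order. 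Hence $g = \id$.

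\textbf{Step 2: Local sections.} Next I would recognise that the construction of the morphism $\tilde\phi$ in the proof of Lemma~\ref{lem:arc-affine} already provides the required sections. Indeed, in the notation of that proof (and after reducing to the model case where $X/S$ is a bundle of formal discs), the composition $\tilde\phi \circ \phi^{-1} : U_n \to D(z_n)\times_{\PP T_{\hat X,x}}\underline\Imm(\hat\PP^1,\hat X)$ is a section of $\pi$ over the Zariski open $U_n \subset \Arc_{\hat X,x}$. Translating back to $\Arc_{X/S}$ via the isomorphisms used in that proof, and covering $\PP T_{X/S}$ by the opens $D(z_i)$ in the obvious way, one obtains a Zariski-open cover of $\Arc_{X/S}$ on each member of which $\pi$ admits a section $\sigma$.

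\textbf{Step 3: From sections to torsor structure.} Given a local section $\sigma : U \to \pi^{-1}(U)$, the action map
\[
\alpha_\sigma : U\times\underline\Aut(\hat\PP^1,0) \longrightarrow \pi^{-1}(U),\qquad (u,g)\longmapsto \sigma(u)\cdot g
\]
is an isomorphism of sheaves. Injectivity follows from the freeness established in Step 1. For surjectivity as a sheaf map: given a $T$-point $\gamma$ of $\pi^{-1}(U)$ with $\pi(\gamma) = u \in U(T)$, the sheaf-quotient definition of $\Arc_{X/S}$ yields a covering $T'\to T$ and $g \in \underline\Aut(\hat\PP^1,0)(T')$ with $\sigma(u)|_{T'}\cdot g = \gamma|_{T'}$; freeness then forces $g$ to be $T'\times_TT'$-invariant, hence by descent $g$ comes from a $T$-point, proving $\gamma = \alpha_\sigma(u,g)$.

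\textbf{Main obstacle.} There is no serious obstacle: once one notices that the explicit construction in Lemma~\ref{lem:arc-affine} already packages the data of local sections (not merely local triviality of the quotient), the corollary reduces to the standard principle that a sheaf-theoretic quotient by a free group action with Zariski-local sections is a Zariski-locally trivial torsor. The only mildly delicate point is the descent argument for $g$ in Step 3, but it is immediate from freeness.
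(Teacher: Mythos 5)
Your proof is correct and follows essentially the same route as the paper's: local sections come from the morphism $\tilde\phi$ already constructed in Lemma~\ref{lem:arc-affine}, and the torsor structure then follows from freeness of the action (i.e.\ the pseudo-torsor property). The only stylistic difference is in Step~1: the paper observes directly that a formal arc unramified at $0$ is a formal closed immersion, hence a monomorphism, which yields freeness in one line; you instead derive freeness from an explicit order-by-order comparison of power-series coefficients, which amounts to the same fact in coordinates. Your Step~3 spells out the standard descent argument that the paper leaves implicit in the word ``pseudo-torsor.'' One small imprecision worth noting: in Step~1 you say you work ``locally on $X$,'' but since $\gamma\colon T\times\hat\PP^1\to X_T$ restricts on the reduced subscheme to a section $T\to X_T$, what one actually localises is $T$, using differential smoothness to trivialise the completion of $X_T$ along that section; this is harmless since freeness may be checked Zariski-locally on $T$.
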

\begin{proof}
A morphism from $\hat\PP^1$ to $X$ is a formal closed
immersion if it is unramified at $0$, so that
$\underline\Imm_S(S\times\hat\PP^1,X)$ is a pseudo-torsor
for $\underline\Aut(\hat\PP^1,0)$.
Zariski-local sections are provided by the morphism
$\tilde\phi$ of the former proof.
\end{proof}

Remaining in the previous context, we introduce the pointwise space
of families of arcs determined by their tangent directions.
Considering the fibration $\Arc_{X/S} \to \PP T_{X/S}$,
let $$ \Hilb \PP T_{X/S} \leftarrow \sU \rightarrow \PP T_{X/S}$$
be the universal family over the relative Hilbert scheme\footnote{
Note that, by differential smoothness, $\PP T_{X/S}$ is locally
trivial, so that $\Hilb\PP T_{X/S}$ is locally just a pullback
of $\Hilb \PP^{n-1}$, where $n$ is the differential dimension of $X/S$.
}
of the projectivised tangent bundle.
We then define $\ArcHilb_{X/S}$ to be the sheaf of sections
$$
\ArcHilb_{X/S}=\prod\left(\sU \times_{\PP T_{X/S}} \Arc_{X/S}\ /\ 
 \sU\ /\ \Hilb \PP T_{X/S}\right)
$$
so that its value at $f:T \to \Hilb\PP T_{X/S}$
is the set of sections of $T \times_X \Arc_{X/S}$
over the subscheme of $T\times_X\PP T_{X/S}$ defining $f$.
In particular, if $X$ is differentially smooth over $k$,
then $k$-points of $\ArcHilb_X$ correspond to
sections of $\Arc_X$
over closed subschemes in fibres of $\PP T_X$.
\begin{lem}\label{lem:archilb-affine}
Let $X\to S$ be a differentially smooth morphism with 
positive differential dimension of locally Noetherian
formal schemes. Then $\ArcHilb_{X/S}
\to \Hilb \PP T_{X/S}$ is a pro-finite type affine adic morphism from
a formal scheme.
\end{lem}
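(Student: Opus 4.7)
The claim is local on $\Hilb\PP T_{X/S}$, so I would reduce to the case where this base is affine. The universal family $p:\sU\to\Hilb\PP T_{X/S}$ is proper and flat, and by the footnote in the statement it is Zariski-locally on $X$ pulled back from the universal family of an ordinary Hilbert scheme of $\PP^{n-1}$. By Lemma~\ref{lem:arc-affine}, $\Arc_{X/S}\to\PP T_{X/S}$ is Zariski-locally a trivial $\AA^\infty$-bundle, hence affine; pulling this back along $\sU\to\PP T_{X/S}$ gives an affine morphism to $\sU$.

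My plan is to pass to an inverse limit of finite-type approximations. Using $\Arc_{X/S}=\underline\Imm_S(S\times\hat\PP^1,X)/\underline\Aut(\hat\PP^1,0)$, the group $G=\underline\Aut(\hat\PP^1,0)$ admits a decreasing filtration by the normal subgroups $G_r\subset G$ consisting of those automorphisms acting trivially on the $r$-th infinitesimal neighbourhood of $0\in\hat\PP^1$, with finite-dimensional quotient groups $G/G_r$ acting naturally on the truncated parametrised arc spaces $\Imm^{(r)}_{X/S}$. I would then set $\Arc^{(r)}_{X/S}=\Imm^{(r)}_{X/S}/(G/G_r)$, which should be affine of finite type over $\PP T_{X/S}$, and identify $\Arc_{X/S}=\varprojlim_r \Arc^{(r)}_{X/S}$.

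For each $r$, the pullback $\sU\times_{\PP T_{X/S}}\Arc^{(r)}_{X/S}\to\sU$ is affine of finite type. Combined with properness and flatness of $p$, the standard representability result for section functors of affine finite-type morphisms over proper flat bases (realised as an open subfunctor of the relative Hilbert scheme after a locally quasi-projective embedding) yields that
$$ \sP_r \;:=\; \prod\left(\sU\times_{\PP T_{X/S}}\Arc^{(r)}_{X/S}\ /\ \sU\ /\ \Hilb\PP T_{X/S}\right) $$
is representable by an affine $\Hilb\PP T_{X/S}$-scheme of finite type. Then $\ArcHilb_{X/S}=\varprojlim_r\sP_r$ is an inverse limit of affine finite-type $\Hilb\PP T_{X/S}$-schemes along affine transition morphisms (induced by the affine morphisms $\Arc^{(r+1)}_{X/S}\to\Arc^{(r)}_{X/S}$), so it defines a formal scheme with an affine adic pro-finite type morphism to $\Hilb\PP T_{X/S}$, as required.

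The main obstacles I anticipate are the clean construction of the truncations $\Arc^{(r)}_{X/S}$ as schemes (rather than merely stacks, which requires some genericity or a direct treatment of the $G/G_r$-action) together with the identification of their inverse limit with $\Arc_{X/S}$, and the precise invocation of representability of section functors in the formal-scheme setting needed here; I expect both to follow from standard techniques once the setup is in place. As a sanity check on the adic and affine structure, one may argue directly on an affine formal open $\Spf R\subset\Hilb\PP T_{X/S}$ with ideal of definition $I$: each $\sP_r$ is $\Spec$ of a finite-type $R$-algebra $A_r$, and the topology induced by $I$ on the colimit $\varinjlim_r A_r$ makes the $\Spf$ of its $I$-adic completion precisely the desired adic formal scheme over $\Spf R$.
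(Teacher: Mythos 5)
Your strategy is genuinely different from the paper's, though it converges on the same underlying representability input. The paper never constructs finite-level truncations $\Arc^{(r)}_{X/S}$; instead it writes $\sU\times_{\PP T_{X/S}}\Arc_{X/S}=\Spec_\sU\sA$ with $\sA$ a quasi-coherent $\OO_\sU$-algebra generated by a countably generated quasi-coherent sheaf, realises $\ArcHilb_{X/S}$ as a closed subfunctor of the $\OO_\sU$-module functor $\underline\Hom_\sU(\sA,\OO_\sU)$, and then filters the \emph{generating sheaf} by arbitrary coherent subsheaves; \cite[Thm.~7.7.6]{EGA-III-2} supplies a coherent $\sQ_i$ with $\prod(\sE_i^\vee/X/S)=\Spec_S\Sym\sQ_i$, and the inverse limit is $\Spec_S\varinjlim\Sym\sQ_i$. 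This sidesteps the whole question of whether the jet-level quotients $\Imm^{(r)}/(G/G_r)$ are well-behaved schemes and whether the quotient construction commutes with $\varprojlim_r$; these are real issues you flag yourself, and while they are plausibly resolvable using Corollary~\ref{cor:arc-torsor}, they add genuine work. What your approach would buy in return is a geometrically transparent inverse system where each level parametrises bounded-order jets, which is more in the spirit of classical jet/arc-space constructions.

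One point in your argument is, as written, a gap: the claimed affineness of $\sP_r$. Embedding $\sU\times_{\PP T_{X/S}}\Arc^{(r)}_{X/S}$ quasi-projectively and realising $\sP_r$ inside the relative Hilbert scheme gives representability as a scheme (in fact a locally closed subscheme of a Hilbert scheme), but it does not by itself yield that $\sP_r$ is \emph{affine} over $\Hilb\PP T_{X/S}$. To get affineness you would need the module-theoretic form of the result — that $T\mapsto\Hom_{\sU_T}(\sF_T,\OO_{\sU_T})$ is $\Spec$ of a symmetric algebra on a coherent sheaf — i.e., essentially the same \cite[Thm.~7.7.6]{EGA-III-2} that the paper invokes. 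Without affineness of the $\sP_r$ and of the transition maps, the inverse limit need not be a formal scheme in the intended sense, so this step should be made explicit. With that correction, and with care in verifying $\Arc_{X/S}=\varprojlim_r\Arc^{(r)}_{X/S}$ (the quotient does commute with the limit here because $\Imm^{(r)}\to\Arc^{(r)}$ is Zariski-locally a trivial $(G/G_r)$-torsor, so one should argue locally), your route should close.
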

\begin{proof}
By Lemma~\ref{lem:arc-affine}, $\sU\times_{\PP T_{X/S}}\Arc_{X/S}
\to \sU$ is a Zariski-locally trivial $\AA^\infty$-bundle. On the
other hand, $\sU \to \Hilb \PP T_{X/S}$ is
a projective adic morphism of locally Noetherian formal schemes.
Identify $$\sU\times_{\PP T_{X/S}}\Arc_{X/S} =
\Spec_\sU \sA$$ where $\sA$ is a quasi-coherent sheaf
of $\OO_\sU$-algebras
locally isomorphic to the symmetric algebra of a direct sum
of countably many copies of $\OO_\sU$.
Then
$$\ArcHilb_{X/S}(T \xrightarrow{} \Hilb \PP T_{X/S})
= \Hom_{\sU_T-\mathrm{alg}}(\sA_T,\OO_{\sU_T}) $$
Suppose one can show that the functor $\underline\Hom_\sU(\sA,\OO_\sU)$
 sending
$T \to \Hilb \PP T_{X/S}$ to the set of $\sU_T$-module
morphisms $\sA_T \to \OO_{\sU_T}$ is representable
by a formal scheme, 
affine, adic and of pro-finite type
over $\Hilb \PP T_{X/S}$. Then $\ArcHilb_{X/S}$
is naturally a closed subfunctor of $\underline\Hom_\sU(\sA,\OO_{\sU})$,
hence satisfies the same properties, as desired. 
To simplify notation, the claim that remains to be proven is
reformulated as the 
following Lemma (replacing $\Hilb \PP T_{X/S}$ with $S$, $\sU$ with $X$
and $\sA$ with $\sE$).
\end{proof}
\begin{lem}
Suppose $X\to S$ is a flat projective adic morphism of locally Noetherian
formal schemes, and $\sE$ a locally countably generated quasi-coherent sheaf
on $X$. Then 
$\prod(\sE^\vee/X/S)$ is representable by a formal scheme, affine, adic
and of pro-finite type over $S$.
\end{lem}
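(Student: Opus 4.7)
The statement is local on $S$, so we may assume $S$ is affine. Since $X \to S$ is projective, $X$ is quasi-compact, and the local countable generation of $\sE$ implies that $\sE$ is the filtered union of a countable directed system of coherent subsheaves $\sE_i$. For any $T/S$,
$$
\Hom_{\OO_{X_T}}(\sE_T, \OO_{X_T}) \;=\; \varprojlim_i \Hom_{\OO_{X_T}}(\sE_{i,T}, \OO_{X_T}),
$$
so $\prod(\sE^\vee/X/S) = \varprojlim_i \prod(\sE_i^\vee/X/S)$. Since a countable inverse limit of affine, adic, finite-type morphisms to $S$ is an affine, adic, pro-finite-type morphism to $S$, it suffices to prove the claim for $\sE$ coherent, with ``finite type'' in place of ``pro-finite type''.

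\textbf{Coherent case.} Fix a relatively ample line bundle $\OO_X(1)$. For $0 \ll n \le m$ one obtains by standard means a two-term resolution
$$
\OO_X(-m)^{\oplus b} \xrightarrow{\alpha} \OO_X(-n)^{\oplus a} \to \sE \to 0.
$$
For any $T/S$, applying $\Hom_{\OO_{X_T}}(-,\OO_{X_T})$ identifies $\Hom_{\OO_{X_T}}(\sE_T, \OO_{X_T})$ with
$$
\ker\bigl(H^0(X_T, \OO_{X_T}(n))^{\oplus a} \xrightarrow{\alpha^\vee} H^0(X_T, \OO_{X_T}(m))^{\oplus b}\bigr).
$$
For $n$ and $m$ sufficiently large, $\pi_*\OO_X(n)$ and $\pi_*\OO_X(m)$ are locally free coherent sheaves on $S$ that commute with base change, so the displayed kernel is representable by a closed formal subscheme of a linear scheme over $S$, and hence is affine, adic and of finite type over $S$, as required.

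\textbf{Main obstacle.} The essential input is a formal-scheme version of cohomology and base change: for a flat projective adic morphism $X \to S$ of locally Noetherian formal schemes equipped with a relatively ample line bundle $\OO_X(1)$, sufficiently high twists $\OO_X(n)$ have direct images $\pi_*\OO_X(n)$ that are locally free coherent on $S$ and commute with (adic) base change. This reduces, via Grothendieck's theorem on formal functions, to the classical analogue (EGA III) for each truncation $X_j \to S_j$, a flat projective morphism of Noetherian schemes; uniformity of the required twisting bound across $j$ follows because an ideal of definition of $X$ is the pullback of one from $S$, so the ample line bundles on the $X_j$ are compatible and of uniform ampleness. Once this input is granted, the representability argument above is formal.
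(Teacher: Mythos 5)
Your proof is correct and follows essentially the same route as the paper's: localize over $S$, filter $\sE$ by a countable directed system of coherent subsheaves $\sE_i$, represent each $\prod(\sE_i^\vee/X/S)$ as an affine adic finite-type formal $S$-scheme, and pass to the inverse limit. The only divergence is at the coherent step, where the paper cites EGA III, Th.~7.7.6 (via $\prod(\sE_i^\vee/X/S)=\Spec_S\Sym_S\sQ_i$ for a coherent $\sQ_i$) while you unpack that theorem via a two-term resolution by twists of $\OO_X$ and cohomology-and-base-change; both routes then hit the same formal-scheme subtlety, which you and the paper's footnote both resolve by the same truncation-to-$S_j$ argument, so this is a presentational rather than structural difference.
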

\begin{proof}
Since the problem is local on the base, we can assume $S$ is Noetherian.
Then $X$ is quasi-compact, and we can filter $\sE$
by \emph{coherent} subsheaves $\sE_1 \subset \sE_2 \subset\dots$
so that $\bigcup \sE_i = \sE$.
By ~\cite[Thm. 7.7.6]{EGA-III-2}, there is a coherent sheaf\footnote{
While the result in~\cite{EGA-III-2} is stated over a base scheme,
it extends easily to the formal case.
Letting $\sI$ be an ideal of definition of $S$,
we have suitable coherent sheaves $\sQ_i^{(r)}$ on 
$S_r=\Spec_S \OO_S/\sI^{r+1}$.
By the universal property of the functor of sections,
there are isomorphisms $\sQ_i^{(r+1)}|_{S_r} \to \sQ_i^{(r)}$,
so that $\sQ_i=\varprojlim \sQ_i^{(r)}$ is a coherent sheaf
on $S$. Since its pullback to any scheme factors through one
of $\sQ^{(r)}_i$, it has the desired property.
} $\sQ_i$ on
$S$ such that
$$\prod(\sE_i^\vee/X/S) = \Spec_S \Sym_S \sQ_i$$
where the relative $\Spec$ 
construction over the formal scheme $S$
is understood as locally taking formal spectra
with the adic topology induced from $\OO_S$.
Now,
$\sE^\vee = \varprojlim \sE^\vee_i$, so that
$$\prod(\sE/X/S) =\varprojlim\Spec_S\Sym_S\sQ_i 
= \Spec_S\varinjlim\Sym_S\sQ_i$$
This is a formal scheme, affine, adic and of pro-finite type over $S$.
\end{proof}

\subsection{Families of arcs on a formal disc}

We would now like to describe families of pointed arcs 
on a formal disc, up to manageable equivalence.
Suppose $\hat X$ is a formal disc of positive dimension, with origin
$x$. The action of $\underline\Aut(\hat X,x)$ on $\hat X$
lifts to $\ArcHilb_{\hat X}$, so that we can consider
$\underline\Aut(\hat X,x)$ acting on the space of sections
of $\ArcHilb_{\hat X}$ over $\hat X$. The morphism
$\ArcHilb_{\hat X} \to \Hilb \PP T_{\hat X}$
induces a morphism on the spaces of sections and, upon
evaluation at $x$, a morphism
$$ \prod\left(\ArcHilb_{\hat X}/\hat X\right) \to \Hilb \PP T_{\hat X,x}. $$
\begin{lem}\label{lem:sec-archilb}
Let $\hat X$ be a formal disc of positive dimension, with origin $x$. Then
$ \prod\left(\ArcHilb_{\hat X}/\hat X\right) \to \Hilb \PP T_{\hat X,x}
$ is a pro-finite type affine morphism of schemes.
\end{lem}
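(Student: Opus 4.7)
The plan is to apply Lemma~\ref{lem:sec-over-formal-disc} after localizing the base $\Hilb \PP T_{\hat X, x}$. Since being a scheme, affine, and pro-finite type are local conditions on the target, and $\Hilb \PP T_{\hat X, x}$ is a disjoint union of connected components of finite type (each indexed by a Hilbert polynomial), it suffices to verify the claim after base change to a finite-type quasi-compact open $H \hookrightarrow \Hilb \PP T_{\hat X, x}$ and then glue.

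First I would fix an identification $\hat X \simeq \Spf k[[z_1,\dots,z_n]]$, which trivializes $T_{\hat X}$ as a free $\OO_{\hat X}$-module and hence gives $\PP T_{\hat X} \simeq \hat X \times \PP T_{\hat X, x}$. Since relative Hilbert schemes commute with taking products with a fixed base, this promotes to an identification $\Hilb \PP T_{\hat X} \simeq \hat X \times \Hilb \PP T_{\hat X, x}$.

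Next, fix a finite-type open $H$ as above and form $Y = \ArcHilb_{\hat X} \times_{\Hilb \PP T_{\hat X, x}} H$. The morphism $Y \to \hat X \times H$ is an affine adic morphism of pro-finite type, being a base change of the affine adic pro-finite-type $\ArcHilb_{\hat X} \to \Hilb \PP T_{\hat X}$ provided by Lemma~\ref{lem:archilb-affine}. Composing with the finite-type projection $\hat X \times H \to \hat X$ exhibits $Y \to \hat X$ as adic and of pro-finite type. Its fiber at the origin, $Y_x = \ArcHilb_{\hat X, x}|_H$, is an affine scheme of pro-finite type over $H$ by the same Lemma. Lemma~\ref{lem:sec-over-formal-disc} then gives that $\prod(Y/\hat X)$ is a scheme and the evaluation $\prod(Y/\hat X) \to Y_x$ is affine of pro-finite type; composing with $Y_x \to H$ yields an affine pro-finite type morphism $\prod(Y/\hat X) \to H$.

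The final step is to identify this with the restriction of the map in question: I would check that $\prod(Y/\hat X)$ equals the base change $\prod(\ArcHilb_{\hat X}/\hat X) \times_{\Hilb \PP T_{\hat X, x}} H$. Unwinding definitions, a $T$-valued section of $Y \to \hat X$ is a $T$-valued section of $\ArcHilb_{\hat X} \to \hat X$ whose induced morphism $T \times \hat X \to \Hilb \PP T_{\hat X, x}$ factors through $H$; and since $\hat X$ is topologically a single point and $H$ is open, this factorization is equivalent to the evaluation at $x$ landing in $H$. The main obstacle is purely one of bookkeeping: tracking adicness and pro-finite type through these base changes and verifying the last identification; none of it is substantive, and the result follows by covering $\Hilb \PP T_{\hat X, x}$ by such finite-type opens $H$.
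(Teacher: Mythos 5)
Your argument is correct and follows essentially the same route as the paper: both proofs reduce to Lemma~\ref{lem:archilb-affine} (giving that $\ArcHilb_{\hat X}\to\Hilb\PP T_{\hat X}$ is affine, adic, pro-finite type) followed by Lemma~\ref{lem:sec-over-formal-disc} (giving that the section functor over a formal disc is a scheme with affine pro-finite-type evaluation), and then compose with the projection $\ArcHilb_{\hat X,x}\to\Hilb\PP T_{\hat X,x}$. The only difference is that you localize explicitly on a quasi-compact open $H\subset\Hilb\PP T_{\hat X,x}$ and verify the compatibility $\prod(Y/\hat X)\simeq\prod(\ArcHilb_{\hat X}/\hat X)\times_{\Hilb\PP T_{\hat X,x}}H$ by hand, whereas the paper applies Lemma~\ref{lem:sec-over-formal-disc} directly to $\ArcHilb_{\hat X}\to\hat X$ and lets the lemma's internal reduction to the affine case absorb the non-quasi-compactness of the Hilbert scheme; your extra localization is harmless and arguably clarifies the bookkeeping, but it does not change the substance.
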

\begin{proof}
By Lemma~\ref{lem:archilb-affine}, 
$\ArcHilb_{\hat X} \to \Hilb \PP T_{\hat X}$
is a pro-finite type affine adic morphism from a formal scheme,
and thus so is its composite with projection to $\hat X$.
We then have by Lemma~\ref{lem:sec-over-formal-disc} that
$\prod(\ArcHilb_{\hat X}/\hat X) \to \ArcHilb_{\hat X,x}$
is a pro-finite type affine morphism of schemes. Again
by Lemma~\ref{lem:archilb-affine}, the projection
$\ArcHilb_{\hat X,x} \to \Hilb\PP T_{\hat X,x}$
is a pro-finite type affine morphism of schemes.
Hence the composite
$\prod(\ArcHilb_{\hat X}/\hat X) \to \Hilb \PP T_{\hat X,x}$
is a pro-finite type affine morphism of schemes.
\end{proof}

The projection to $\Hilb\PP T_{X,x}$
 is preserved by the action of the pro-unipotent subgroup
$\sR_u\underline\Aut(\hat X,x)$ of automorphisms inducing
identity on $T_{\hat X,x}$. 
\begin{pro}\label{pro:closed-orbits}
Let $\hat X$ be a formal disc of positive dimension, with origin $x$.
Then the orbits of $k$-points under the
$\sR_u\underline\Aut(\hat X,x)$-action on
$\prod (\ArcHilb_{\hat X}/\hat X)$
are closed.
\end{pro}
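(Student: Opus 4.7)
The plan is to reduce to the classical Kostant--Rosenlicht theorem (that orbits of unipotent algebraic group actions on affine varieties are closed) by an inverse-limit argument adapted to the pro-unipotent, pro-finite-type setting.

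Write $W := \prod(\ArcHilb_{\hat X}/\hat X)$ and $G := \sR_u\underline\Aut(\hat X,x)$. By Lemma~\ref{lem:sec-archilb}, $W$ is pro-finite type affine over $\Hilb\PP T_{\hat X,x}$; since $G$ acts trivially on $T_{\hat X,x}$ it acts trivially on $\Hilb\PP T_{\hat X,x}$, so the orbit of a $k$-point $p$ lies in a single fibre $F$ of this projection, which is again a pro-finite type affine scheme. It thus suffices to show $G\cdot p$ is closed in $F$. By Lemma~\ref{lem:aut-formal-disc} one may write $G=\varprojlim_r G_r$ with each $G_r$ a finite-dimensional unipotent algebraic group, and $F=\varprojlim_s F_s$ as the limit over its finitely generated sub-$k$-algebras, so each $F_s$ is a finite-type affine variety with surjective transition maps. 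The co-action $\OO_F\to \OO_G\otimes\OO_F$, restricted to any finitely generated sub-algebra of $\OO_F$, lands in $\OO_{G_r}\otimes\OO_{F_{s'}}$ for appropriate $r,s'$ (finite sum of pure tensors); enlarging the $F_s$ one obtains a function $r(s)$ providing compatible algebraic actions $G_{r(s)}\curvearrowright F_s$ through which the $G$-action factors.

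By Kostant--Rosenlicht, the orbit $G_{r(s)}\cdot p_s$ of the image $p_s\in F_s(k)$ is closed in $F_s$ for every $s$. Now take a $k$-point $q\in F(k)$ in the closure of $G\cdot p$; its projection $q_s$ lies in $G_{r(s)}\cdot p_s$, so the set $T_s:=\{g\in G_{r(s)}(k):g\cdot p_s=q_s\}$ is a nonempty coset of $\Stab_{G_{r(s)}}(p_s)(k)$. The main obstacle is to produce an element of $\varprojlim_s T_s \subset G(k)$, which would furnish the required $g$ with $g\cdot p=q$. The transition morphisms $G_{r(s+1)}\to G_{r(s)}$ send $T_{s+1}$ into $T_s$; passing to the eventual image $T_s^\infty:=\bigcap_{s'\ge s}\mathrm{image}(T_{s'}\to T_s)$, each $T_s^\infty$ is a nonempty closed subset of the Noetherian affine scheme $G_{r(s)}$ (descending intersection of nonempty closed subsets, stabilising by Noetherianity), and the induced maps $T_{s+1}^\infty\to T_s^\infty$ are surjective on $k$-points. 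A diagonal choice then produces an element of $\varprojlim T_s^\infty$, establishing $q\in G\cdot p$. The factoring of the action through algebraic approximations is routine and the geometric content on each level comes from Kostant--Rosenlicht; the delicate point is this Mittag--Leffler-type limit argument.
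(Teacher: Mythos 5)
Your reduction to Kostant--Rosenlicht (equivalently Borel's fixed point theorem for unipotent groups) at finite algebraic levels, after filtering both the pro-unipotent group and the affine pro-algebraic fibre, is exactly the strategy the paper follows. The paper likewise restricts to a fibre over a $k$-point of $\Hilb\PP T_{\hat X,x}$ (via Lemma~\ref{lem:sec-archilb}), passes to finitely generated subalgebras of $\OO_F$ stable under the coaction, and factors the action of $U := \sR_u\underline\Aut(\hat X,x)$ through algebraic quotients $U_i$ acting on $\Spec A_i$.

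Where you diverge is the passage to the limit. You pass to eventual images $T_s^\infty$ of the nonempty coset-fibres and run a Mittag--Leffler argument, which forces you to know that each $\mathrm{image}(T_{s'}\to T_s)$ is \emph{closed} in $G_{r(s)}$; this is not automatic for morphisms of affine varieties, and you assert it without proof. It is in fact true here --- each $T_{s'}$ is a coset of a stabiliser subgroup, and the image of a closed subgroup under a homomorphism of algebraic groups is a closed subgroup, so the image of a coset is closed --- but this is a substantive point that your write-up elides, and the surjectivity claim for $T_{s+1}^\infty \to T_s^\infty$ needs the same device once more. The paper sidesteps all of this: given a $k$-point $x$ in the intersection $D=\bigcap\pi_i^{-1}(D_i)$ of preimages of the finite-level orbit closures $D_i=\varphi_i(U_i)$, it forms the fibres $V_i \subset U\otimes\kappa(x)$ of the orbit maps over $\pi_i(x)$, which are nonempty closed subschemes forming a descending chain, and concludes $\bigcap V_i \neq \emptyset$ simply because $U\otimes\kappa(x)\simeq\AA^\infty\otimes\kappa(x)$ is affine, hence quasi-compact. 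Quasi-compactness of the (infinite-dimensional, non-Noetherian) affine scheme does all the work; no stabilisation, no eventual images, no appeal to closedness of images of subgroups. The two routes prove the same thing, but the paper's is leaner and less fragile; if you keep your route, the closedness of images and the surjectivity of the $T_{s+1}^\infty\to T_s^\infty$ must be spelled out.
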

\begin{proof}
As we can restrict to fibres over
$k$-points of $\Hilb\PP T_{\hat X,x}$,
the Proposition is an immediate
corollary of Lemma~\ref{lem:sec-archilb} and
the following pro-algebraic
version of Borel's fixed point theorem.
\end{proof}
\begin{lem}
Suppose a pro-unipotent group scheme acts on an affine pro-algebraic scheme.
Then the orbits of $k$-points are closed.
\end{lem}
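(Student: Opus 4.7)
The plan is to reduce level-by-level to the classical Kostant--Rosenlicht theorem, asserting that an algebraic unipotent group acts with closed orbits on an affine scheme of finite type. Let $U$ denote the pro-unipotent group and $X$ the affine pro-algebraic scheme, with $x\in X(k)$. First I would replace $X$ by the scheme-theoretic closure $Z=\overline{U\cdot x}$, which is $U$-stable and integral (as the scheme-theoretic image of the irreducible $U$); it suffices to prove $U(k)\cdot x=Z(k)$. By local finiteness of the coaction $\OO(Z)\to\OO(Z)\otimes\OO(U)$, one writes $\OO(Z)=\varinjlim A_n$ as a filtered union of $U$-stable finitely generated subalgebras, each action factoring through an algebraic unipotent quotient of $U$. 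After re-indexing $U=\varprojlim U_n$ so that $U$ acts on $Z_n:=\Spec A_n$ through $U_n$, each $Z_n$ is an integral affine variety with dense $U_n$-orbit through the image $x_n$ of $x$; Kostant--Rosenlicht then gives $U_n(k)\cdot x_n=Z_n(k)$ for every $n$.

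Fix $y\in Z(k)$ and set $F_{y,n}:=\{u\in U_n : u\cdot x_n=y_n\}$, a non-empty coset of the algebraic stabilizer $H_n:=\Stab_{U_n}(x_n)$. Since inverse limits of affine schemes commute with fiber products, the scheme-theoretic fiber $F_y:=U\times_Z\{y\}$ equals $\varprojlim F_{y,n}$, so $F_y(k)=\varprojlim F_{y,n}(k)$. The key technical step is a stabilization argument: denoting by $\pi$ the composite transition maps, for each fixed $m$ the images $\pi(H_n)\subseteq H_m$ (as $n\ge m$ grows) form a decreasing chain of closed subgroups of the Noetherian algebraic group $H_m$, stabilizing to some $\bar H_m$; correspondingly, the decreasing chain of cosets $\pi(F_{y,n})\subseteq F_{y,m}$ stabilizes to a non-empty coset $\bar F_{y,m}$ of $\bar H_m$, with induced transitions $\bar F_{y,m+1}\to\bar F_{y,m}$ surjective by construction and hence surjective on $k$-points (being surjective morphisms of finite-type schemes over algebraically closed $k$). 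Countable dependent choice then yields a compatible sequence in $\varprojlim\bar F_{y,m}(k)\subseteq F_y(k)$, producing $u\in U(k)$ with $u\cdot x=y$.

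The hard part is precisely this passage to stable images. A naive inductive lifting of $u_n\in F_{y,n}(k)$ to $u_{n+1}\in F_{y,n+1}(k)$ level by level would demand that $\pi\colon H_{n+1}(k)\to H_n(k)$ be surjective, and this already fails in elementary examples (e.g.\ $\GG_a^2$ translating $\AA^2$ above a $\GG_a$-fixed point). Replacing $H_n$ by the stable image $\bar H_n$ restores exactly the surjectivity required to apply dependent choice; the remaining verifications---that $\bar F_{y,m}$ is non-empty and that the chain really stabilizes---are immediate from Noetherianity of $H_m$ together with standard properties of cosets.
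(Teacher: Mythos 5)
Your argument is correct, and it diverges from the paper at exactly the step you flag as the hard one. Both proofs perform the same initial reduction --- filter the coordinate ring by $U$-stable finitely generated subalgebras, factor through algebraic unipotent quotients $U_n$, and invoke Kostant--Rosenlicht at each finite level (the paper attributes this to ``Borel's Fixed Point Theorem,'' which is really for complete varieties; you cite the appropriate result). The difference is entirely in how one passes to the limit. The paper avoids the stabilizer-stabilization argument altogether: it lets $D_i=\varphi_i(U_i)$ be the closed orbit at level $i$, sets $D=\bigcap\pi_i^{-1}D_i$, observes $\varphi(U)\subseteq D$, and for $z\in D$ shows the fiber $\varphi^{-1}(z)=\bigcap V_i$ is non-empty because the $V_i$ form a descending chain of non-empty closed subsets of the \emph{affine, hence quasi-compact} scheme $U\otimes\kappa(z)$ --- the finite intersection property does the rest, with no Noetherianity of $H_m$ and no dependent choice required. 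Your Mittag--Leffler device (replacing the stabilizer images by their stable values $\bar H_m$, then lifting $k$-points through the resulting surjections of finite-type schemes) is more work but buys you a genuinely stronger conclusion: you produce an honest $k$-point of the fiber, i.e.\ $Z(k)=U(k)\cdot x$, whereas the paper's quasi-compactness argument only yields set-theoretic non-emptiness of $\varphi^{-1}(z)$ (which is all that ``the image $\varphi(U)$ is closed'' requires, and the paper accounts for the possible lack of a $k$-rational lift in the application by allowing a faithfully flat base change). Your preliminary reduction to the orbit closure $Z$ is harmless but not needed in the paper's formulation. In short: same levelwise input, same reliance on countable indexing, but the paper's limit step is cleaner via quasi-compactness while yours is sharper on $k$-points.
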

\begin{proof}
Let $U$ be a pro-unipotent group scheme acting
on $\Spec A$. Since $U$ is a limit of extensions
by $\GG_a$, we have $U\simeq\AA^\infty$ as a scheme,
so that $k[U]\simeq k[u_1,u_2,\dots]$
is a polynomial algebra in countably infinitely
many variables. Let
$$ \rho:A \to A[u_1,u_2,\dots] $$
be the action morphism. 

We claim that
for any finite collection $a_1,\dots,a_n \in A$
there is a finitely generated subalgebra $A_0 \subset A$
containing $a_1,\dots,a_n$
such that $\rho$ restricts to an action morphism
$A_0 \to A_0[u_1,u_2,\dots]$. Indeed, let
$A_0$ be the subalgebra generated by the coefficients
of the polynomials $\rho(a_1),\dots,\rho(a_n) \in A[u_1,u_2,\dots]$
(these include in particular $a_1,\dots,a_n$).
We have a commutatitive diagram
$$\begin{CD}
k[a_1,\dots,a_n] @>{\rho}>> A_0[v_1,v_2,\dots] \\
@V{\rho}VV @VV{\rho\otimes 1}V \\
A_0[u_1,u_2,\dots] @>{1\otimes\mu}>> A[u_1,u_2,\dots][v_1,v_2,\dots]
\end{CD}$$
where $\mu : k[u_1,u_2,\dots] \to k[u_1,u_2,\dots][v_1,v_2,\dots]$
is the multiplication morphism in $U$. The bottom morphism
factors through $A_0[u_1,u_2,\dots][v_1,v_2,\dots]$, so that
for each $\rho(a_i)$, a polynomial in the
$v_1,v_2,\dots$, we have that $\rho$ sends its coefficients
into $A_0[u_1,u_2,\dots]$. Since these coefficients generate $A_0$,
the claim follows.

Now, we can filter $A$ by a sequence of finitely generated algebras 
$A_1 \subset A_2 \subset \dots$, 
so that $A = \bigcup A_i$, and the action of $U$ descends to 
each $A_i$. Recalling that $U$ is a limit of extensions
by $\GG_a$, we can assume that for each $n>0$,
$$ U \simeq \Spec k[u_1,u_2,\dots] \to \Spec k[u_1,\dots,u_n] $$
is naturally an epimorphism onto a unipotent algebraic group.
For each $i$, choose the largest $n>0$ such that
$u_n$ appears in the polynomials $\rho(A_i) \subset A_i[u_1,u_2,\dots]$,
and set $U_i = \Spec k[u_1,\dots,u_n]$. 
Then $U_i$ is naturally
a unipotent algebraic quotient of $U$, and
the action of $U$ on $\Spec A_i$
factors through $U_i$. Since $\Spec A = \varprojlim \Spec A_i$,
the action of $U$ on $\Spec A$ factors through $\varprojlim U_i$.
We assume without loss of generality that $U = \varprojlim U_i$.

Fix an orbit map $\varphi:U \to \Spec A$,
given by pullback of the action morphism $U\times\Spec A
\to\Spec A$ to a $k$-point.
For each $i$ consider the commutative diagram
$$\begin{CD}
U @>{\varphi}>> \Spec A \\
@V{\theta_i}VV @V{\pi_i}VV \\
U_i @>{\varphi_i}>> \Spec A_i
\end{CD}$$
where $\pi_i$ and $\theta_i$ are the natural projections,
and $\varphi_i$ the factorisation of $\pi_i\circ\varphi$.
By Borel's Fixed Point Theorem, $D_i=\varphi_i(U_i)$ is closed
in $\Spec A_i$. Let $D = \bigcap \pi_i^{-1}D_i$, a closed
subset of $\Spec A$. We then have that $\varphi$ factors through $D$,
and it remains to check surjectivity. Given $x \in D$,
consider
the pullback
$$\begin{CD}
V_i @>>> U \\
@VVV @V{\varphi_i\circ\theta_i}VV \\
\Spec\kappa(x) @>{\pi_i\circ x}>> \Spec A_i
\end{CD}$$
defining a nonempty closed subscheme $V_i \subset U \otimes \kappa(x)$.
Noting that
$V_1 \supset V_2 \supset\dots$, we 
let $V = \bigcap V_i$, a closed subset of $U\otimes\kappa(x)$.
Since $U\otimes\kappa(x)$ is affine, and thus quasi-compact,
$V$ is nonempty. Hence its image in $U$ is nonempty,
so that $x \in \varphi(U)$. 
\end{proof}

We remark that in the setting of Proposition~\ref{pro:closed-orbits}
the $U_i$ and $\Spec A_i$ of the former proof can be explicitly
constructed by filtering $\hat X$ and $\hat\PP^1$ by infinitesmial 
neighbourhoods
of their origins (cf. the proof of Lemma~\ref{lem:aut-formal-disc}):
this induces a natural presentation of the action of 
$\sR_u\underline\Aut(\hat X,x)$
on $\prod(\ArcHilb_{\hat X}/\hat X)$ as a limit of an inverse
system of actions of unipotent algebraic groups on affine algebraic
schemes.

\section{Rational curves}\label{sec:ratcurves}
\subsection{Space of unparametrised pointed curves}

We now turn to the study rational curves
on a family of projective varieties. Let
$X \to S$ be a smooth morphism of Noetherian schemes.
The space of morphisms
$\underline\Hom_S(\PP^1_S,X)$
is a locally Noetherian $S$-scheme.
Relevant classes of its geometric points are defined as follows.
\begin{defn}
Let $\Spec L \to \underline\Hom_S(\PP^1_S,X)$ be a geometric point,
corresponding to a morphism $f:\PP^1_L \to X$.
Then
$f$ is called \emph{free} (resp. \emph{minimal}\footnote{
Called `standard' in~\cite{hwang-mok-rigidity}.
})
if $f^* T_{X/S}$ is a direct sum of
invertible sheaves of non-negative degree
(resp. a direct sum of $\OO(2)$ and
invertible sheaves of degree $0$ or $1$).
\end{defn}
There is an open subscheme
$$ \underline\Hom_{S,\bir}(\PP^1_S,X) \subset \underline\Hom_S(\PP^1_S,X) $$
whose geometric points correspond to morphisms
$\PP^1_L \to X$ birational to their images. We let
$\underline\Hom_{S,\bir}^n(\PP^1_S,X)$ be its normalisation.
The right action of $\underline\Aut(\PP^1)$ 
on $\underline\Hom_S(\PP^1_S,X)$
restricts to $\underline\Hom_{S,\bir}(\PP^1_S,X)$ and lifts
to $\underline\Hom_{S,\bir}^n(\PP^1_S,X)$.
The quotient morphisms
$$ 
\underline\Hom_{S,\bir}^n(\PP^1_S,X) \to
\underline\Hom_{S,\bir}^n(\PP^1_S,X) / \underline\Aut(\PP^1,0) $$
and
$$ 
\underline\Hom_{S,\bir}^n(\PP^1_S,X) \to
\underline\Hom_{S,\bir}^n(\PP^1_S,X) / \underline\Aut(\PP^1) $$
are principal bundles~\cite{kollar}. 

\begin{defn}
Given a smooth morphism $X \to S$ of Noetherian schemes,
a \emph{family of rational curves} on $X/S$ is a closed
$\underline\Aut(\PP^1)$-invariant subscheme of
$\underline\Hom_{S,\bir}^n$.
\end{defn}
Note that the definition does not assume irreducibility.
However, most families we consider will be in fact  
irreducible components of $\underline\Hom_{S,\bir}^n(\PP^1,X)$.
Given a family
$\sM \subset \underline\Hom_{S,\bir}^n(\PP^1_S,X)$,
we introduce the following notation for the quotients:
$$ \sM_0 = \sM / \underline\Aut(\PP^1) $$
$$ \sM_1 = \sM / \underline\Aut(\PP^1,0) $$
where $\sM \to \sM_0$, resp. $\sM \to \sM_1$,
is an $\underline\Aut(\PP^1)$-principal bundle,
resp. an $\underline\Aut(\PP^1,0)$-principal bundle.
It follows that $\sM_1 \to \sM_0$ is a $\PP^1$-bundle.
We also consider the associated $\PP^1$-bundle
$$ \sM_2 = \sM \times^{\underline\Aut(\PP^1,0)} \PP^1. $$ 
Let $\ev : \sM \times \PP^1 \to X$ be the evaluation morphism.
The composite $\ev \circ 0_\sM$ descends to a structure map
$\sM_1 \to X$. Its composite with the projection $\sM_2 \to \sM_1$
yields a morphism $\sM_2\to X$, which we consider as the \emph{left}
structure map. On the other hand, $\ev$ descends to a morphism
$\sM_2\to X$, which we consider as the \emph{right} structure map.
We thus have a double fibration $\sM_2 \rightrightarrows X$,
and think of $\sM_2$ as the space of unparametrised $2$-pointed
$\sM$-curves. Completion of $\sM_2$ along the $0$-section
$\sM_1\to \sM_2$ gives a bundle of formal discs $\hat\sM_2 \to \sM_1$. 
We also define the products
$$ \sM_2^i = (X \backslash \sM_2 / X)^i $$
together with morphisms $\sM_2^i \rightrightarrows X$ as
introduced in~\ref{sec:notation},
and think of them as spaces of $2$-pointed $i$-chains
of $\sM$-curves. 
Since $\sM \to \sM_0$ is a principal $\underline\Aut(\PP^1)$-bundle,
we have natural isomorphisms
$$ \sM_1 \simeq \sM\times^{\underline\Aut(\PP^1)}\PP^1,\quad
\sM_2 \simeq
\sM\times^{\underline\Aut(\PP^1)}(\PP^1 \times \PP^1). $$
The transposition
on $\PP^1\times\PP^1$
induces an involution
$\sM_2 \to \sM_2$
over $\sM_0$, swapping the two marked points (i.e. the two structure maps to $X$).

There are open subschemes
$\sM^\free$, $\sM^\min$, $\sM^\arc$ of $\sM$
such that the curves corresponding to their geometric points are 
free, resp. minimal, resp. unramified at $0$. Since $\sM^\free$
and $\sM^\min$ are $\underline\Aut(\PP^1)$-equivariant, they
descend to open subschemes $\sM_1^\free$, $\sM_0^\free$
and $\sM_1^\min$, $\sM_0^\min$. Since $\sM^\arc$ is
$\underline\Aut(\PP^1,0)$-invariant, it descends to an open subscheme
$\sM_1^\arc$. The inclusion $\hat\PP^1 \to \PP^1$ induces morphisms
$$ \sM_1^\arc \to \Arc_{X/S} \to \PP T_{X/S} $$
Finally, we let
$$ \sM_2^\free = \sM_1^\free \times_{\sM_1}\sM_2,\quad\quad
\sM_2^{i,\free} = (X\backslash\sM_2^\free/X)^i. $$
We remark that if $\sM$ is an irreducible component of $\underline\Hom_\bir^n(\PP^1,X)$,
then $\sM_1^\free \to X$ and both $\sM_2^\free \rightrightarrows X$ are smooth~\cite{kollar}.

\subsection{Rational curves on a variety}
Suppose now $X$ is an irreducible variety,
and $\sM \subset \underline\Hom_{\bir}^n(\PP^1,X)$ a
family of rational curves.
We will say that $\sM$ is \emph{dominating} if $\sM_1 \to X$ is dominant.
We will say that $\sM$ is \emph{unsplit} if $\sM_0$ is proper.
We will say that $X$ is \emph{chain-connected} by $\sM$-curves
if the induced morphism $\sM_2^i \to X\times X$
is dominant for some $i\ge0$.
In this case
$$ \coprod\sM_2^i \rightrightarrows X $$
is a transitive category-scheme, with composition
given by concatenation of chains. Given an object
over $X$ together with an isomorphism of its two pullbacks
along $\sM_2\rightrightarrows X$, 
we obtain an action of $\coprod\sM_2^i$,
i.e. a `parallel transport' along chains. We will use this
technique in Chapter~\ref{chap:extension} to extend
$\infty$-jets of morphisms between varieties chain-connected
by suitable families of rational curves. A simpler application
is the following Proposition.
\begin{lem}\label{lem:trivial-along-curves}
Let $X$ be a nonsingular variety, $\sL$ an 
invertible sheaf on $X$, and $\sM \subset\underline\Hom_\bir^n(\PP^1,X)$
a family of rational curves such that the pullback
of $\sL$ by the generic $\sM$-curve 
$\PP^1 \otimes k(\sM) \to X$
is trivial. Then the two pullbacks of $\sL$ along
$\sM_2 \rightrightarrows X$ are isomorphic.
\end{lem}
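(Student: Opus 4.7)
The plan is to consider the line bundle $\sL_0 = \pi_R^*\sL \otimes (\pi_L^*\sL)^{-1}$ on $\sM_2$, where $\pi_L, \pi_R \colon \sM_2 \to X$ are the left and right structure maps, and to show $\sL_0 \simeq \OO_{\sM_2}$ by exploiting the $\PP^1$-bundle $\rho \colon \sM_2 \to \sM_1$ together with its diagonal section $\Delta \colon \sM_1 \to \sM_2$, $[f] \mapsto [(f,0)]$ (well-defined since $0 \in \PP^1$ is fixed by $\underline\Aut(\PP^1,0)$).

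First I would promote the hypothesis from the generic $\sM$-curve to every closed point of $\sM$: the degree of $f^*\sL$ is locally constant on $\sM$ and vanishes at each generic point by assumption, hence vanishes everywhere, so $f^*\sL \simeq \OO_{\PP^1}$ for every closed point $f$. Next I restrict $\sL_0$ to a fibre of $\rho$. Since $\pi_L$ factors through $\rho$, the pullback $\pi_L^*\sL$ is trivial on every $\rho$-fibre. The fibre of $\rho$ over $[f]$ is a copy of $\PP^1$ mapping to $X$ via $f$ under $\pi_R$, so $\pi_R^*\sL$ likewise restricts to the trivial bundle by the preceding observation. Cohomology and base change applied to $\rho$, which is smooth proper with $\PP^1$-fibres, then yields an invertible sheaf $\sN := \rho_*\sL_0$ on $\sM_1$ together with a natural isomorphism $\rho^*\sN \to \sL_0$.

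Finally, on the image of $\Delta$ both marked points coincide at $0$, so $\pi_L \circ \Delta = \pi_R \circ \Delta$, giving $\Delta^*\sL_0 \simeq \OO_{\sM_1}$; combined with $\rho \circ \Delta = \id_{\sM_1}$ this forces $\sN \simeq \Delta^*\rho^*\sN \simeq \OO_{\sM_1}$, and hence $\sL_0 \simeq \OO_{\sM_2}$ as desired. I do not anticipate a serious obstacle: the essential content is the combination of fibrewise trivialities with the trivialisation along the section $\Delta$, and each ingredient is standard. If $\sM$ is not irreducible one simply runs the argument on each connected component.
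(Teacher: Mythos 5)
This is correct and essentially the same argument as in the paper: both rely on triviality of $\sL$ restricted to the $\PP^1$-fibres, cohomology and base change along the $\PP^1$-bundle, and the fact that the two evaluation maps agree on the zero section. The only difference is packaging --- you work directly on the descended bundle $\rho\colon\sM_2\to\sM_1$ with its section $\Delta$, whereas the paper works $\underline\Aut(\PP^1,0)$-equivariantly upstairs on $p_1\colon\sM\times\PP^1\to\sM$ with the retraction $0_\sM\circ p_1$ and then descends.
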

\begin{proof}
Considering the $\underline\Aut(\PP^1,0)$-equivariant diagram
$$ \sM \overset{p_1}{\underset{0_\sM}{\leftrightarrows}}
 \sM\times\PP^1 \xrightarrow{\ev} X $$
we have that $\ev^*\sL \simeq p_1^*\sK$ for some 
$\underline\Aut(\PP^1,0)$-equivariant
invertible
sheaf $\sK$ on $\sM$. There is then an
$\underline\Aut(\PP^1,0)$-equivariant
isomorphism
$$ \ev^*\sL \simeq p_1^*\sK =(0_\sM\circ p_1)^*p_1^* \sK
\simeq (0_\sM\circ p_1)^* \ev^* \sL = (\ev\circ 0_\sM\circ p_1)^*
\sL $$
descending to
$e_1^* \sL \simeq e_0^*\sL$
where $e_0, e_1 : \sM_2=\sM \times^{\underline\Aut(\PP^1,0)}\PP^1 \to X$
are the left and right morphisms to $X$.
\end{proof}

\begin{pro}\label{pro:picard-number}
Suppose $X$ is a nonsingular projective variety,
and $\sM \subset \underline\Hom_\bir^n(\PP^1,X)$
a connected unsplit family of rational curves, 
such that $X$ is chain-connected by $\sM$-curves.
Then $X$ has Picard number $1$.
\end{pro}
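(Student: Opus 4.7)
The plan is to prove $X$ has Picard number $1$ by showing any invertible sheaf is, up to numerical equivalence, a rational multiple of a fixed ample class. I will fix an ample $\sH$ on $X$, and let $h$ denote its degree on any $\sM$-curve; this is well-defined and positive because $\sM$ is connected and the degree of a pulled-back invertible sheaf on $\PP^1$ is locally constant in a flat family of morphisms $\PP^1\to X$. For an arbitrary invertible sheaf $\sL$ of degree $d$ on an $\sM$-curve, it will suffice to show that the auxiliary sheaf $\sL'=\sL^{\otimes h}\otimes\sH^{\otimes -d}$, which has degree $0$ on every $\sM$-curve, is numerically trivial: this would force $h[\sL]=d[\sH]$ in $N^1(X)$, so that $N^1(X)\otimes\mathbb{Q}$ is spanned by $[\sH]$.

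I will first observe that the pullback of $\sL'$ along the generic $\sM$-curve $\PP^1_{k(\sM)}\to X$ has degree zero on $\PP^1_{k(\sM)}$, hence is trivial, so that Lemma~\ref{lem:trivial-along-curves} supplies an isomorphism $e_0^*\sL'\simeq e_1^*\sL'$ between the left and right structure pullbacks on $\sM_2$. A straightforward induction --- pulling this isomorphism back through the two projections $\sM_2^i\to\sM_2$ of each successive fiber product and composing --- then extends it to an analogous isomorphism between the leftmost and rightmost structure pullbacks of $\sL'$ on $\sM_2^i$ for every $i\geq 1$.

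Next I will use the unsplit hypothesis: $\sM_0$ is proper, so the identification $\sM_2\simeq \sM\times^{\underline\Aut(\PP^1)}(\PP^1\times\PP^1)$ exhibits $\sM_2$ as a $\PP^1\times\PP^1$-bundle over $\sM_0$, hence proper, and the fiber products $\sM_2^i$ inherit properness. By chain-connectedness, some evaluation $\sM_2^i\to X\times X$ is dominant, hence surjective. Fixing such $i$ and a closed point $x_0\in X$, the fiber $F=e_0^{-1}(x_0)\subset\sM_2^i$ is proper, $e_0|_F$ is constant, and $e_1|_F:F\to X$ is surjective; restricting the isomorphism to $F$ then forces $(e_1|_F)^*\sL'$ to be trivial.

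The final step is a standard consequence of the projection formula: given any curve $C\subset X$, an irreducible component of $(e_1|_F)^{-1}(C)$ mapping surjectively onto $C$, cut down by hyperplane sections to dimension one if needed, yields a curve $C'\subset F$ with $(e_1|_F)_* C'=d'\cdot C$ for some $d'>0$, so that $d'(\sL'\cdot C)=((e_1|_F)^*\sL'\cdot C')=0$ and $\sL'$ is numerically trivial. The one point I expect to require the most care is verifying that pulling the isomorphism $e_0^*\sL'\simeq e_1^*\sL'$ back through successive fiber products really composes to an isomorphism between the extreme structure pullbacks on $\sM_2^i$; the remaining steps are routine.
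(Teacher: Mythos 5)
Your argument is correct and follows essentially the same route as the paper: reduce to showing that a line bundle of degree zero on $\sM$-curves is numerically trivial, apply Lemma~\ref{lem:trivial-along-curves} and induction to trivialize the two pullbacks on $\sM_2^i$, use unsplitness for properness of the chain space and chain-connectedness for surjectivity onto $X\times X$, then project a curve in the fiber over a point down to $X$. The only cosmetic difference is that you make the reduction via the auxiliary sheaf $\sL'=\sL^{\otimes h}\otimes\sH^{\otimes -d}$ explicit, whereas the paper states the key claim directly as ``a divisor meeting $\sM$-curves trivially is numerically trivial.''
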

\begin{proof}
We are going to show that if a divisor $D$ on $X$ intersects
some (hence every) $\sM$-curve trivially, then it is numerically trivial.
Indeed, we have by
Lemma~\ref{lem:trivial-along-curves} that there is an isomorphism
of the two pullbacks of $\OO(D)$ along $\sM_2\rightrightarrows X$.
It then follows that there is an isomorphism of
the pullbacks of $\OO(D)$ along $\sM_2^i\rightrightarrows X$
for each $i\ge0$. 
Let $i$ be such that 
$\sM_2^i \to X\times X$
is dominant, hence surjective. Fix a closed point $x \in X$.
Then the right evaluation morphism $e:x\times_X\sM_2^i \to X$
is surjective, and $e^*\OO(D)$ is trivial. Since the
components of $\underline\Hom(\PP^1,X)$
are quasi-projective, it follows by properness that $\sM_1$
is projective. Now, for any irreducible curve $C \subset X$,
there is an irreducible curve $\tilde C \subset x\times_X\sM_2^i$
surjecting onto $C$. Since $\deg_{\tilde C} e^*\OO(D)=0$,
we have $(\deg e|_{\tilde C}) (C.D) = 0$,
so that $C.D=0$ as desired.
\end{proof}

We say that an irreducible family of rational curves $\sM$ on $X$
is of degree $d$ with respect to
an invertible sheaf $\sL$ if the pullback of $\sL$
by the generic $\sM$-curve $\PP^1\otimes {k(\sM)} \to X$
has degree $d$. We say that $\sM$ is of minimal
degree with respect to $\sL$ if there is no
component of $\underline\Hom_\bir^n(\PP^1,X)$
of lower degree with respect to $\sL$.
Unsplit families arise from curves of minimal degree
with respect to an ample invertible sheaf (essentially
unique, \emph{a posteriori}, in case of a chain-connected
nonsingular variety).
\begin{lem}\label{lem:unsplit}
Suppose $X$ is a nonsingular projective variety, $\sL$ an ample invertible
sheaf, and $\sM \subset \underline\Hom_\bir^n(\PP^1,X)$
an irreducible family of rational curves of minimal degree with respect to $\sL$.
Then $\sM$ is unsplit.
\end{lem}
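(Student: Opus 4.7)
The plan is to verify the valuative criterion of properness for $\sM_0 \to \Spec k$, using that $\sM_0$ is separated and of finite type (as $\sM$ is an irreducible component of $\underline\Hom_\bir^n(\PP^1,X)$, and such components are quasi-projective by~\cite{kollar}).

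Let $R$ be a DVR with fraction field $K$ and residue field $\kappa$. Given a morphism $\Spec K \to \sM_0$, after a finite extension of $R$ I may lift it to a morphism $f_K \colon \PP^1_K \to X_K$ which is birational onto its image and of $\sL$-degree $d$. By properness of the Hilbert scheme of closed subschemes of $X$ with fixed Hilbert polynomial, the scheme-theoretic image of $f_K$ extends to a flat family of $1$-dimensional closed subschemes of $X$ over $\Spec R$. Applying semistable reduction to $f_K$ (available in any characteristic for a family of genus-zero curves, after at most a further finite base change), I extend $f_K$ to a morphism $\bar f \colon C \to X_R$ where $C \to \Spec R$ is a flat family of prestable curves of arithmetic genus zero whose special fibre $C_\kappa = \bigcup_i C_{\kappa,i}$ is a tree of smooth rational curves.

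Set $D_i = \bar f(C_{\kappa,i})$ and $e_i = \deg(C_{\kappa,i} \to D_i)$, with the convention $e_i = 0$ when $C_{\kappa,i}$ is contracted. Constancy of the $\sL$-degree in the family yields $\sum_i e_i \deg_\sL D_i = d$, leading to a dichotomy. In the first case, some $D_i$ satisfies $0 < \deg_\sL D_i < d$ — this happens whenever at least two components are non-contracted, or when the unique non-contracted component fails to be birational onto its image. Then $D_i \subset X$ is an irreducible rational curve of $\sL$-degree strictly less than $d$, and its normalisation $\PP^1 \to D_i \hookrightarrow X$ is a geometric point of $\underline\Hom_\bir^n(\PP^1, X)$ of $\sL$-degree less than $d$, lying in an irreducible component of strictly smaller degree and contradicting the minimality of $\sM$. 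In the second case, a single component $C_{\kappa, i_0}$ maps birationally onto an irreducible rational curve $D \subset X_\kappa$ of $\sL$-degree $d$, all other components being contracted; contracting the latter yields a birational morphism $\PP^1_\kappa \to X_\kappa$ of $\sL$-degree $d$, hence a $\kappa$-point of $\underline\Hom_\bir^n(\PP^1,X)$. Since $\sM$ is closed in $\underline\Hom_\bir^n(\PP^1,X)$ and this point is a specialisation of points of $\sM$, it lies in $\sM$, and its image in $\sM_0$ provides the required extension $\Spec R \to \sM_0$.

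The principal technical obstacle is the semistable reduction step: it is what guarantees that the limiting curve $C_\kappa$ is a tree of $\PP^1$'s, and hence that every $D_i$ is rational — without this, one could not rule out limits whose components have positive geometric genus. Once this is in place, the rest of the argument is elementary degree bookkeeping combined with the minimality hypothesis on $\sM$.
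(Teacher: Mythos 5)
Your argument takes a genuinely different route from the paper's. The paper specialises the \emph{image cycle} in the Hilbert scheme, decomposes the special fibre as a $1$-cycle $\sum a_i[C_i]$, and uses minimality of degree to force $r=1$, $a_1=1$; rationality of the $C_i$ is invoked without comment, and the extension to $\Spec R\to\sM_0$ is obtained by normalising the flat family to a $\PP^1$-bundle, trivialising it after an unramified base change $T'\to T$, and then \emph{descending} along $T'\to T$ via separatedness of $\sM_0$. You instead apply semistable reduction for genus-zero stable maps, getting a tree of $\PP^1$'s in the special fibre; this makes the rationality of all degenerate components automatic and the degree bookkeeping transparent, which is a real advantage. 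The first half of your argument (ruling out $0<\deg_\sL D_i<d$ by minimality) is correct and clean.

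The gap is in the final sentence. Producing a $\kappa$-point of $\underline\Hom_\bir^n(\PP^1,X)$ that ``is a specialisation of points of $\sM$'' does not yet produce the morphism $\Spec R\to\sM_0$ required by the valuative criterion: one needs an actual $R$-family, not just a candidate special fibre. To close this you must (i) contract the $\bar f$-contracted components of $C_\kappa$ inside $C$ \emph{over} $\Spec R$ to get a smooth proper family $C'\to\Spec R$ of genus-zero curves with $C'_K=\PP^1_K$ and through which $\bar f$ factors; (ii) observe that $C'\to\Spec R$ need not be $\PP^1_R$ (it is a Brauer--Severi form), so possibly pass to a further finite base change $T'\to\Spec R$ to split it and obtain a morphism $T'\to\sM\subset\underline\Hom_\bir^n(\PP^1,X)$ agreeing with $f_K$ at the generic point; and then (iii) descend $T'\to\sM_0$ along $T'\to\Spec R$, using that $\sM_0$ is separated. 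Steps (ii) and (iii) are exactly what the paper's proof does explicitly with the $\PP^1$-bundle $\tilde C$, the unramified cover $T'\to T$, and the diagram chase with $\delta f'$; without them, the proposal only proves quasi-properness, not properness.
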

\begin{proof}
We use the valuative criterion of properness.
Let $T$ be a spectrum of a discrete valuation ring,
with closed point $t_0$ and generic point $t_1$,
and let $f_1 : t_1 \to \sM_0$ be a morphism. The image
of $t_1 \times_{\sM_0}\sM_1 \to t_1 \times X$ defines
a morphism $t_1 \to \Hilb X$, extending by properness
of the Hilbert scheme to $\tilde f : T \to \Hilb X$.
Let $C \subset T\times X$ be the pullback of the universal family,
so that $p_1:C \to T$ is a flat family whose fibres are rational
cycles of dimension one. Write $[ t_0\times_T C ] = \sum_{i=1}^r a_i [C_i]$
where $C_i$ are integral rational curves. Since $\sM$ has minimal
degree with respect to $\sL$, 
we have
$$\deg_{C_j}p_2^*\sL \ge \sum_{i=1}^r a_i \deg_{C_i}p_2^*\sL $$
for each $j$. Hence $r=1$, $a_1=1$, and $t_0\times_T C$
is an integral rational curve. It follows that the normalization
$\nu:\tilde C \to C$  is a $\PP^1$-bundle over $T$. 
After unramified base change $T'\to T$,
we have $T'\times_T\tilde C \simeq T' \times \PP^1$.
Then $$\id_{T'}\times (p_2 \circ \nu) : T'\times_T\tilde C \simeq 
T\times\PP^1 \to T'\times X$$ induces
a morphism $T' \to \underline\Hom_{\bir}^n(\PP^1,X)$, necessarily
factoring through $\sM$. Its composite with $\sM \to \sM_0$
gives $$f':T' \to \sM_0.$$
Letting $q_1,q_2 : T'\times_TT' \to T'$ be the two projections,
consider $$\delta f' = \langle q_1^*f',q_2^*f'\rangle : T'\times_TT' \to \sM_0\times\sM_0.$$
Observing that $f'|_{t_1\times_TT'}$ is the pullback of $f_1$,
we have that $\delta f'|_{t_1\times_T(T'\times_TT')}$ factors through
the diagonal $\sM_0 \to \sM_0\times\sM_0$. But then, by separatedness
of $\sM_0$, so does entire $\delta f'$. Hence $f'$ descends along
the \'etale surjection $T'\to T$ to $f:T\to \sM_0$
such that $f|_{t_1}=f_1$.
\end{proof}

\begin{lem}\label{lem:free-intersection}
Let $X$ be a nonsingular projective variety
and $\sM \subset \underline\Hom_\bir^n(\PP^1,X)$ an
irreducible component such that the generic $\sM$-curve
$f : \PP^1\otimes k(\sM) \to X$ is free. Let $D \subset X$
be a reduced closed subscheme of codimension $1$, 
and $W \subset X$ a closed subscheme of codimension
$2$. Then $f^*D$ is reduced, and $f^*W$ is empty.
\end{lem}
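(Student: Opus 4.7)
The plan is to reduce both claims to standard properties of the universal evaluation morphism
$$ e : \sM\times\PP^1 \to X $$
at free curves. Freeness of $f:\PP^1_L\to X$ is equivalent, via the identification of the tangent space to $\underline\Hom_\bir^n(\PP^1,X)$ at $[f]$ with $H^0(\PP^1_L,f^*T_X)$, to surjectivity of the derivative of $e$ at every $([f],p)$: indeed $f^*T_X$ being a sum of line bundles of non-negative degree is globally generated, so its global sections surject onto every fibre. Consequently $e$ is smooth on a neighbourhood of $\sM^\free\times\PP^1$. By hypothesis the generic point $\eta_\sM$ lies in $\sM^\free$, so $f^*D$ and $f^*W$ coincide with the fibres over $\eta_\sM$ of $e^{-1}(D)\cap(\sM^\free\times\PP^1)$ and $e^{-1}(W)\cap(\sM^\free\times\PP^1)$ respectively.

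For the emptiness $f^*W=\emptyset$ I would argue by codimension. Smoothness of $e$ on $\sM^\free\times\PP^1$ forces $e^{-1}(W)$ to have pure codimension $2$ there, hence its image under the first projection has codimension at least $1$ in the integral scheme $\sM^\free$ and therefore misses the generic point $\eta_\sM$. The fibre $f^*W$ is then empty.

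For the reducedness of $f^*D$ I would invoke the fact that smooth pullback preserves reducedness: \'etale-locally $e$ looks like a projection from an affine space, so $e^{-1}(D)$ looks \'etale-locally like $D\times\AA^N$, hence is reduced. Passing to the fibre over $\eta_\sM$ amounts to inverting the nonzero elements of $\OO_{\sM^\free}$ in the affine coordinate rings of the pullback, i.e.\ to a localisation, which preserves reducedness. Therefore $f^*D$ is reduced.

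The only non-formal point is the freeness/smoothness dictionary; this is classical and requires no characteristic hypotheses, being a consequence of the isomorphism $T_{[f]}\underline\Hom_\bir^n(\PP^1,X)\simeq H^0(\PP^1,f^*T_X)$ together with global generation of $f^*T_X$ when $f$ is free. Once this is in place, both assertions are immediate codimension-and-reducedness arguments and present no real obstacle.
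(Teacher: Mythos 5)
Your proof is correct and takes the same route as the paper: the lemma reduces to smoothness of the evaluation morphism $\ev:\sM^\free\times\PP^1\to X$, which the paper simply cites from Koll\'ar~\cite[Cor.~3.5.4]{kollar}, while you rederive it via the identification of $T_{[f]}\underline\Hom_\bir^n(\PP^1,X)$ with $H^0(\PP^1,f^*T_X)$ and global generation. The codimension count for $f^*W=\emptyset$ and the localization argument for reducedness of $f^*D$ are exactly the intended (and in the paper unstated) consequences of that smoothness.
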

\begin{proof}
By \cite[Cor. 3.5.4]{kollar}, the evaluation
morphism $\ev:\sM^\free\times\PP^1 \to X$ is smooth.
\end{proof}

We end this subsection showing that one can
often restrict to chains of free curves.\footnote{I have learned this
argument from Jason Starr.}
\begin{lem}\label{lem:free-chains}
Suppose $X$ is a nonsingular projective variety of Picard
number $1$, and
$\sM\subset\underline\Hom_{\bir}^n(\PP^1,X)$ an irreducible component
such that the generic $\sM$-curve is free.
Then
$\sM_2^{i,\free}\to X\times X$ is dominant for some $i\ge0$.
\end{lem}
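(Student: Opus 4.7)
I would argue in two stages: first that $X$ is chain-connected by arbitrary $\sM$-curves (invoking Picard number $1$), then that the free locus is dense enough to lift this chain-connectedness to free chains.

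For the first stage I would construct the rational $\sM$-chain quotient $\pi : X \dashrightarrow Z$ whose general fibers are closures of the equivalence classes under the relation ``connected by a chain of $\sM$-curves''. By construction every $\sM$-curve is contained in a fiber of $\pi$. Equivalently, letting $W_i \subset X \times X$ denote the closure of the image of $\sM_2^i \to X \times X$, the ascending sequence stabilizes by Noetherianity at some closed $W = W_N$, which is symmetric (via the transposition involution on $\sM_2$) and transitive ($W \circ W \subset W_{2N} = W$), so $W$ is a closed equivalence relation. If its quotient $Z$ has positive dimension, pull back an ample class from $Z$ (after resolving $\pi$) and push down to obtain a nonzero divisor class $D$ on $X$ with $D \cdot C = 0$ for every $\sM$-curve $C$; since $\sM$ is irreducible all its curves are numerically proportional, and Picard number $1$ then forces $D$ to be numerically trivial, a contradiction. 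Hence $\dim Z = 0$, i.e., $\sM_2^i \to X \times X$ is dominant for some $i \ge 0$.

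For the second stage I would show that $\sM_2^{i,\free}$ is a dense open subscheme of $\sM_2^i$. Since the generic $\sM$-curve is free, $\sM^\free \subset \sM$ is a dense open subset, whence $\sM_2^\free = \sM_1^\free \times_{\sM_1}\sM_2$ is dense open in $\sM_2$. By~\cite[Cor.~3.5.4]{kollar} the right structure map $\sM_2^\free \to X$ is smooth, in particular flat. It follows by induction on $i$ that $\sM_2^{i,\free} = \sM_2^\free \times_X \cdots \times_X \sM_2^\free$ is dense open in $\sM_2^i = \sM_2 \times_X \cdots \times_X \sM_2$, since fiber products along a flat morphism preserve the property of a dense open subscheme being dense. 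Combining with the first stage, the composition $\sM_2^{i,\free} \hookrightarrow \sM_2^i \to X \times X$ is dominant.

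\textbf{Main obstacle.} The technical heart is the first stage: constructing the rational quotient $\pi$ rigorously in positive characteristic, and verifying that the pushdown of the pullback of an ample class from $Z$ indeed gives a nonzero divisor class on $X$ with the desired intersection property. One must be slightly careful about inseparability pathologies that could a priori affect the numerical class; however, since the argument ultimately only uses irreducibility of $\sM$ (to get numerical proportionality of $\sM$-curves) and rank-one Picard group, the reasoning goes through, and the second stage is then routine.
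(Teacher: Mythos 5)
Your Stage~2 has a genuine gap that Stage~1 cannot compensate for. You claim that $\sM_2^{i,\free}$ is dense open in $\sM_2^i$ because ``fiber products along a flat morphism preserve the property of a dense open subscheme being dense.'' But the induction you describe requires more than just flatness of $\sM_2^\free\to X$. Writing $\sM_2^{i+1} = \sM_2^i\times_X\sM_2$, the inclusion $\sM_2^i\times_X\sM_2^\free \hookrightarrow \sM_2^i\times_X\sM_2$ has dense image only if the projection $\sM_2^i\times_X\sM_2 \to \sM_2$ (equivalently, the evaluation $\sM_2^i \to X$) is flat or at least equidimensional --- and that is precisely what can fail. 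The fibers of $\sM_2\to X$ can jump in dimension over a proper closed subset of $X$, so $\sM_2^i\times_X\sM_2$ can acquire irreducible components of full dimension that are supported entirely over such a locus and never meet the free product $\sM_2^{i,\free}$. In positive characteristic the non-free locus can be large --- indeed that is exactly the pathology this paper is built to handle --- so the dominance you establish in Stage~1 via full chains could very well be carried entirely by such parasitic components, and you cannot transfer it to the free chains. Stage~1 also relies on constructing a rational chain quotient $\pi:X\dashrightarrow Z$ and pulling an ample class back from $Z$, which is delicate in this setting, but the fatal problem is Stage~2; even a rigorous Stage~1 would not give you the statement.

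The paper's proof never compares $\sM_2^{i,\free}$ with $\sM_2^i$ at all. It works entirely inside the free chain spaces: among all irreducible components of the various $\sM_2^{i,\free}$, it picks one, $M$, whose image $W\subset X\times X$ has maximal dimension, takes the fiber $W_x$ over the generic point $x\in X$, and shows that adding one more free segment cannot enlarge $W_x$ (by maximality together with smoothness of $X\leftarrow\sM_2^\free$). The family $\{W_x\}$ then defines a rational map $q:U\to\Hilb X$ on an open subset $U$ with codimension-$\ge 2$ complement. A generic free $\sM$-curve $f$ factors through $U$, and $f^*q^*\sL$ is trivial for any ample $\sL$ on $\Hilb X$ because $q$ is constant along fibers of $X\leftarrow W$; Picard rank $1$ then forces $q^*\sL$ numerically trivial, so $q$ is constant, hence $W_x = X\otimes\kappa(x)$ since $W_x$ contains the generic point. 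This bypasses the density question entirely: the Hilbert scheme does the job your quotient and density argument were meant to do, but in a way that never leaves the free locus.
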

\begin{proof}
Let $\sM_2^{i,\free} = \bigcup M_j$ be the irreducible
components. For each $j$, let $W_j \subset X\times X$
be the closed image subscheme of $M_j$ under
$\sM_2 \to X\times X$. Choose $j_0$ such that
$W_{j_0}$ has maximal dimension among all $M_j$.
Set $M=M_{j_0}$, $W=W_{j_0}$, with the pair of projections
$W \rightrightarrows X$.

Let $x$ be the generic point of $X$ and set $W_x = x\times_XW$.
Let $\eta$
be the generic point of $W_x$. By construction, 
$\eta\times_X \sM_2^\free \to X$
factors through $W_x$ (for otherwise $\dim W_j > \dim W$ for some $j$,
a contradiction). 
Since by freeness $X\leftarrow \sM_2^\free$
is smooth, we have that
$W_x\times_X \sM_2^\free$ is the closure of ${\eta\times_X\sM_2^\free}$
so that
$ W_x \times_X \sM_2^\free \to X $
factors through $W_x$ as well. 

As a closed subscheme of
$X \otimes\kappa(x)$,
$W_x$ defines an $x$-point of $\Hilb X$ which,
by properness of the Hilbert scheme, extends to
$$ q : U \to \Hilb X $$
over an open subscheme $U\subset X$ whose complement
has codimension at least $2$ in $X$.
It will be enough to show that $q$ is constant.
Note that it is constant at least on the fibres of $X \leftarrow W$,
in particular $q|_{W_x}$ factors through $q|_x$. Letting
$f:\PP^1 \otimes k(\sM) \to X$ be the generic $\sM$-curve,
we have by freeness that $f(0) = x$, so that by the previous
paragraph $f$ factors through $W_x$. On the other hand,
since $X\setminus U$ has codimension at least $2$, 
$f$ factors through $U$ by Lemma \ref{lem:free-intersection}. 
It then follows that for
any ample invertible sheaf $\sL$ on $\Hilb X$, the pullback
$f^* q^* \sL$ is trivial. Since $X$ has Picard number $1$,
it follows that $\sL$ is numerically trivial, so that
$q$ must be a constant morphism. But $W_x \to X$ is dominant,
so that $q$ must factor through the $k$-point of $\Hilb X$
corresponding to entire $X$.
\end{proof}

\begin{lem}\label{lem:free-stuff}
Suppose $X$ is a nonsingular projective variety,
and $\sM \subset \underline\Hom_\bir^n(\PP^1,X)$ an
irreducible component
such that the generic $\sM$-curve is free,
and
the generic fibre of $\sM_1^\free \to X$
is geometrically connected.
Then:
\begin{enumerate}
\item $X\leftarrow {\sM_2^{i,\free}}$ has geometrically integral generic fibre,
\item $\sM_2^{i+1,\free} \to \sM_2^{i,\free}$ is a smooth surjection,
\item if the generic $\sM$-curve is minimal (resp. unramified at $0$),
then the generic point of $\sM_2^{i,\free}$ is a chain
of minimal (resp. unramified at $0$) curves
\end{enumerate}
for all $i\ge0$.
\end{lem}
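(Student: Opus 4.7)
The plan is to induct on $i$, exploiting the structural identification
\[ \sM_2^{i+1,\free} \simeq \sM_2^{i,\free}\times_X\sM_2^\free, \]
where the fibre product uses the right structure map on the first factor and the left structure map on the second. The base case $i=0$ is tautological: $\sM_2^{0,\free}=X$ (the empty iterated product), so parts (1) and (3) hold vacuously, and part (2) reduces to the smoothness of $\sM_2^\free\to X$ noted after the definition of a family of rational curves. The three parts will be handled in the order (2), (1), (3) in the inductive step.

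Part (2) is immediate from the displayed identification, which presents $\sM_2^{i+1,\free}\to\sM_2^{i,\free}$ as the base change of the smooth morphism $\sM_2^\free\to X$ along the right evaluation. For surjectivity, the right endpoint of any $i$-chain of free curves lies on a free $\sM$-curve (namely its last segment), so via the involution on $\sM_2$ that swaps its two marked points it lies also in the image of the left evaluation $\sM_2^\free\to X$; hence every fibre of $\sM_2^{i+1,\free}\to\sM_2^{i,\free}$ is non-empty.

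For part (1), the generic fibre of $\sM_2^{i+1,\free}\to X$ (left evaluation on the first factor) over the generic point $\eta\in X$ equals $F_\eta\times_X\sM_2^\free$, where $F_\eta$ is the generic fibre of $\sM_2^{i,\free}\to X$, geometrically integral by induction. The right evaluation $F_\eta\to X$ is dominant (by iterated use of part (2) together with the involution interchanging left and right), so it sends the generic point of $F_\eta$ to $\eta$; consequently the generic fibre of $F_\eta\times_X\sM_2^\free\to F_\eta$ coincides with the generic fibre of the left evaluation $\sM_2^\free\to X$. The latter is geometrically integral because $\sM_2^\free$ is a $\PP^1$-bundle over $\sM_1^\free$ and $\sM_1^\free\to X$ has geometrically integral generic fibre --- being geometrically connected by hypothesis and smooth by the quoted remark. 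Geometric integrality of $F_\eta\times_X\sM_2^\free$ then follows from flatness of the pullback together with geometric integrality of both the base $F_\eta$ and its generic fibre.

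Part (3) will follow from parts (1) and (2), which together imply that $\sM_2^{i,\free}$ is irreducible with a unique generic point. For each $j=1,\ldots,i$ the $j$-th projection $\pi_j:\sM_2^{i,\free}\to\sM_2^\free$ is surjective, by iterated application of part (2) and the involution, which allows any free $2$-pointed curve to be extended to a chain on either side. The preimage under $\pi_j$ of the locus of $2$-pointed free curves whose underlying curve is minimal (resp.\ unramified at $0$) is a non-empty open subset of the irreducible $\sM_2^{i,\free}$, hence dense; intersecting these $i$ dense opens yields a non-empty open whose generic point corresponds to a chain of minimal (resp.\ unramified at $0$) curves. I expect the main subtlety in the whole argument to be the propagation of dominance of the right evaluation $\sM_2^{i,\free}\to X$, which is what allows the fibre-by-fibre strategy for part (1) to carry through the induction; this is essentially a symmetry statement via the involution combined with the irreducibility established by parts (1) and (2).
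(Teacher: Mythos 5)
Your proposal is correct and follows essentially the same route as the paper: induct on $i$ using the decomposition $\sM_2^{i+1,\free}\simeq\sM_2^{i,\free}\times_X\sM_2^\free$, establish part (2) as a base change of the smooth left evaluation $\sM_2^\free\to X$, derive part (1) from flatness together with geometric integrality of the base $F_\eta$ and of the generic fibre of $\sM_2^\free\to X$, and deduce part (3) from dominance of the factor projections on the irreducible $\sM_2^{i,\free}$. The only cosmetic differences from the paper's proof are that you check surjectivity in part (2) via the involution whereas the paper exhibits a section ``duplicating the rightmost link'' (these are the same construction in substance), and that in part (3) you argue through surjectivity of all projections and an intersection of dense opens, while the paper simply points out that a lift of the generic point of $\sM_2$ by a constant chain forces the rightmost projection (and hence, by the same reasoning, each projection) to be dominant.
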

\begin{proof}
Note that by freeness $X\leftarrow \sM_1^\free$ is smooth,
so that, being geometrically connected,
its generic fibre is geometrically integral.
The same holds for
$X \leftarrow \sM_2^\free$, since $\sM_2^\free \to \sM_1^\free$ is a $\PP^1$-bundle.
Assume by induction that $X \leftarrow \sM_2^{i,\free}$ has geometrically
integral generic fibre.
By freeness, $\sM_2^{i,\free} \to X$ is dominant.
On the other hand,
$X \leftarrow \sM_2^{\free}$ is smooth with geometrically connected generic fibre.
Hence
$$ \sM_2^{i+1,\free} = \sM_2^{i,\free}\times_X\sM_2^\free$$
has geometrically integral generic fibre, and the projection
$ \sM_2^{i+1,\free} \to \sM_2^{i,\free}$
is dominant and smooth. To check surjectivity, it is enough to note
that it has an obvious section, duplicating the rightmost link in the $i$-chain.

For the last statement, it is enough to note that projection
 $\sM_2^{i,\free} \to \sM_2$ to the rightmost factor is dominant for all $i\ge 0$:
indeed the generic point of $\sM_2$ lifts to a point in $\sM_2^{i,\free}$
corresponding to a chain consisting of $i$ copies of the single generic
$\sM$-curve.
\end{proof}

\subsection{The tangent map and VMRT}
Recall that given a nonsingular projective
variety $X$ and a family $\sM$ of rational
curves on $X$,
we have morphisms
$$ \sM_1^\arc \to \Arc_X \to \PP T_X. $$
The composite will be called the \emph{tangent map}.
The results of Kebekus~\cite[Thm. 3.3 and 3.4]{kebekus} describe\footnote{
There are concerns about the proof in~\cite{kebekus} when
the degree of the family with respect to the ample invertible sheaf
is divisible by $p$. This is clearly not the case here.
}
the
tangent map at the generic point of $X$.
\begin{pro}[Kebekus]
\label{pro:kebekus}
Let $X$ be a nonsingular projective variety with
 generic point $x$,
$\sL$ an ample invertible sheaf on $X$, and
$\sM$ a dominating irreducible family of rational curves on $X$ of
degree $1$ with respect to $\sL$ (hence unsplit by Lemma~\ref{lem:unsplit}). Then:\begin{enumerate}
\item $x^*\sM_1^\arc = x^*\sM_1$
\item $x^*\sM_1 \to \PP T_{X,x}$ is finite.
\end{enumerate}
\end{pro}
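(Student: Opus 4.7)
The plan is to apply a bend-and-break argument on the blowup of $X$ at a general closed point. By properness of $\sM_1 \to X$, which holds by unsplittedness (Lemma~\ref{lem:unsplit}), both claims at the generic point $x$ follow from their analogues at a general closed point $x_0 \in X$. At such an $x_0$, the generic $\sM$-curve through $x_0$ is free, $x_0^*\sM$ is smooth of the expected dimension, and $x_0^*\sM_1$ is projective. Let $\pi : \tilde X \to X$ denote the blowup at $x_0$, with exceptional divisor $E \simeq \PP T_{X,x_0}$; passing to strict transforms identifies $\sM$-curves through $x_0$ with certain rational curves on $\tilde X$ meeting $E$, and the tangent map at $x_0$ with the evaluation of the strict transform at its intersection point with $E$.

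For part (1), suppose some $f : \PP^1 \to X$ in $x_0^*\sM$ were ramified at $0$. The strict transform $\tilde f$ satisfies $\tilde f(\PP^1) \cdot E \ge 2$, so $\tilde f^*(\pi^*\sL(-E))$ has degree at most $-1$, while $\tilde f^*\pi^*\sL$ has degree $1$. Freeness of the generic $\sM$-curve gives a positive-dimensional family of deformations of $f$ through $x_0$, and hence a positive-dimensional family of strict transforms on $\tilde X$ all meeting $E$. Applying Mori's bend-and-break lemma (in the form recorded in~\cite{kollar}) to this family, while keeping the marked point on $E$ fixed, forces a degeneration into a reducible cycle. At least one irreducible component of such a cycle, pushed down to $X$, is a rational curve through $x_0$ of $\sL$-degree strictly less than $1$, contradicting ampleness of $\sL$ together with the minimality of $\sM$.

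For part (2), suppose the tangent map has a positive-dimensional fibre over some direction $v \in \PP T_{X,x_0}$, parametrised by an irreducible curve $T \subset x_0^*\sM_1$. Their strict transforms form a one-parameter family of rational curves on $\tilde X$ all passing through the single point $\tilde v \in E$ corresponding to $v$. Bend-and-break again forces the family to degenerate into a reducible cycle; the component meeting $E$ at $\tilde v$ pushes down to a rational curve through $x_0$ of $\sL$-degree strictly less than $1$, the same contradiction as before.

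The main obstacle is the degree bookkeeping on $\tilde X$: one must ensure that every irreducible component of the degenerated cycle, pushed down to $X$, is either contained in the exceptional locus or carries positive (and thus, by minimality, $\ge 1$) $\sL$-degree, and that the total degree is bounded by $1$. This is standard once one separates components contained in $E$ from those meeting $E$ transversally, using the fact that $\sM$ is of minimal $\sL$-degree~$1$ and $\sL$ is ample, but it is the place where the finiteness and the unramifiedness assertions each require a separate verification that the bend-and-break degeneration is non-trivial and produces honest rational components rather than a multiple structure on a single transform.
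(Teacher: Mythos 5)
The paper itself does not prove this proposition --- it cites Theorems~3.3 and~3.4 of Kebekus~\cite{kebekus}, with the footnote remarking that the characteristic-$p$ concerns in Kebekus's proof do not arise here since the degree is~$1$. So there is no proof in the text for you to have matched; you are attempting to reconstruct Kebekus's argument.

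Your bend-and-break-on-the-blowup strategy is close in spirit to Kebekus's proof of finiteness of the tangent map (his Thm.~3.3), but your sketch has a concrete gap in part~(1). You write that ``freeness of the generic $\sM$-curve gives a positive-dimensional family of deformations of $f$ through $x_0$,'' and then apply bend-and-break to that family; but nothing in the hypotheses guarantees such a family exists. A dominating, irreducible, unsplit family of degree~$1$ can perfectly well have $x_0^*\sM_1$ zero-dimensional --- lines on a smooth cubic threefold through a general point are an example --- and then there is no one-parameter family of strict transforms meeting $E$ to feed into bend-and-break, so your argument for~(1) never gets started. Even when the general fibre of $\sM_1 \to X$ is positive-dimensional, freeness of the \emph{generic} curve gives deformations of the generic curve through $x_0$, not of the particular ramified curve $f$ you are trying to rule out, so the existence of a non-trivial family passing through $f$ with the marked point fixed still needs a separate argument. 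Finally, the one-fixed-point bend-and-break you invoke for both parts is not an off-the-shelf statement: with only one marked point fixed and bounded degree, the family of maps extends over the compactified parameter curve, and showing that the extension must degenerate (rather than merely reparametrize, or extend trivially) is precisely the hard content of Kebekus's theorems. Your closing paragraph flags the ``degree bookkeeping'' as the remaining obstacle, but the more serious missing ingredient is the degeneration itself.
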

Under the hypotheses of Proposition~\ref{pro:kebekus},
we call the closed image scheme of $x^*\sM_1 \to \PP T_{X,x}$ the
 \emph{variety of $\sM$-rational tangents at x}.
Denote by $m_1$ the generic point of $\sM_1$.
If the generic $\sM$-curve is minimal, then
$x^*\sM_1 \to \PP T_{X,x}$ unramified at $m_1$,
and induces a well-defined morphism (cf.~\cite{hwang-mok-rigidity})
$$ \PP T_{\sM_1/X,m_1} \to \PP \Lambda^2 T_{X,x}. $$
The remainder of this subsection will be occupied by
a proof of the following Proposition,
an analogue of~\cite[Prop. 13]{hwang-mok-rigidity}.
\begin{pro}\label{pro:linear-nondegeneracy}
Assume $\Char k = p>0$. Let $X$ be a nonsingular projective Fano variety of Picard
number $1$ and with $\ind(X) < p$.
Let $\sM \subset \underline\Hom_\bir^n(\PP^1,X)$ an
irreducible component of degree $1$.
Denote by $x$ the generic point of $X$,
and by $m_1$ the generic point of $\sM_1$.
Assume that:
\begin{enumerate}
\item The generic $\sM$-curve is minimal.
\item The generic fibre of $\sM_1 \to X$ is geometrically
irreducible.
\item There is $i\ge0$ such that
$\sM_2^{i,\free}$
contains a subscheme separably dominating $X\times X$.
\item 
Letting 
$\sD_x \subset T_{X,x}$  
be the linear span of the variety of $\sM$-rational tangents
at $x$,
the image of the natural morphism
$$\PP T_{\sM_1/X,m_1} \to \kappa(m_1)\otimes_{\kappa(x)} \PP \Lambda^2 T_{X,x}$$
spans
$\kappa(m_1)\otimes_{\kappa(x)}\Lambda^2\sD_x$.
\end{enumerate}
Then the variety of $\sM$-rational tangents
at $x$ is linearly nondegenerate in $\PP T_{X,x}$.
\end{pro}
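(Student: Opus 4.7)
Argue by contradiction: suppose $\sD_x \subsetneq T_{X,x}$, set $d := \dim\sD_x < n := \dim X$, and globalize to a saturated subsheaf $\sD \subset T_X$ of generic rank $d$, the fibre of which at a general point is the linear span of the image of $x^*\sM_1 \to \PP T_{X,x}$. Following the characteristic-free part of \cite[Prop.~13]{hwang-mok-rigidity}, I first check that $\sD$ is closed under the Lie bracket at the generic point: the natural map $\PP T_{\sM_1/X, m_1} \to \kappa(m_1) \otimes \PP \Lambda^2 T_{X,x}$ carries an infinitesimal deformation of a pointed $\sM$-curve to the wedge of the curve's tangent direction with its infinitesimal displacement at $x$, and this wedge is a standard representative of the bracket pairing between sections of $\sD$. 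Hypothesis~(4), asserting that these wedges span $\Lambda^2 \sD_x$, is then the infinitesimal involutivity condition $[\sD,\sD] \subset \sD$ at $x$.

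In positive characteristic, involutivity alone is not enough; one must also close under the $p$-th power operation. Let $\tilde\sD \subset T_X$ be the restricted $p$-Lie closure of $\sD$, the smallest generically defined subsheaf containing $\sD$ and closed under both bracket and $p$-th power. Supposing $\tilde\sD \subsetneq T_X$, the Jacobson correspondence produces a height-$1$ purely inseparable morphism $q : X \dashrightarrow Y$ with $T_{X/Y} = \tilde\sD$, together with the formula $q^*(-K_Y) = -K_X - \det\tilde\sD$ in $\Pic X$. Any $\sM$-curve $C$ is tangent to $\sD \subset \tilde\sD$, so $q \circ i_C : C \to Y$ has identically vanishing derivative and factors through the Frobenius $F_C : C \to C^{(1)}$; consequently $p \mid q^*L \cdot C$ for every $L \in \Pic Y$.

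Intersecting the canonical formula with $C$ and using $\sL_X \cdot C = 1$ gives
$$ \ind(X) - \det\tilde\sD \cdot C \equiv 0 \pmod{p}. $$
The splitting $T_X|_C \simeq \OO(2) \oplus \OO(1)^{\ind(X)-2} \oplus \OO^{n-\ind(X)+1}$ of a minimal curve, combined with $T_C \subset \tilde\sD|_C \subset T_X|_C$, forces $2 \le \det\tilde\sD \cdot C \le \ind(X) < p$; thus the left-hand side lies in the range $\{0, \ldots, \ind(X)-2\} \subset [0,p)$, so the congruence forces it to vanish. Picard number one then upgrades this to $\det\tilde\sD = -K_X$ in $\Pic X$. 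The cokernel $T_X/\tilde\sD$ is then a rank $n-d > 0$ torsion-free sheaf with trivial determinant, whose restriction to each free $\sM$-curve is $\OO_C^{n-d}$ by the same splitting computation.

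The main obstacle, and where hypothesis~(3) enters, is converting this into a global contradiction. The plan is that separable dominance of $X \times X$ by chains of free $\sM$-curves provides parallel transport which propagates the constant local trivializations of $T_X/\tilde\sD$ along chains into $n-d$ consistent global sections, so that $T_X/\tilde\sD$ admits a non-zero map to $\OO_X^{n-d}$. Dualizing yields an inclusion $\OO_X^{n-d} \hookrightarrow \Omega_X^1$, contradicting $H^0(\Omega_X^1) = 0$, which is valid since $X$ is uniruled by $\sM$-curves. The boundary case $\tilde\sD = T_X$ at the closure step must be handled separately, by tracking the growth of the iterated $p$-power images of $\sD$ and showing that, under the hypothesis $\ind(X) < p$, they cannot fill $T_X$ without producing a degree-level contradiction of the same flavour on $C$: it is precisely this index bound that pins down the numerical congruence above.
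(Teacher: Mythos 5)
Your outline shares the paper's overall strategy (extend $\sD_x$ to a distribution $\sD \subset T_X$, pass to a purely inseparable height-one quotient, run a numerical argument against $C$, and use hypothesis~(3) to conclude), and your direct numerical shortcut — $\ind X - \deg(\det\tilde\sD|_C) \equiv 0 \pmod p$ together with the a priori bound $0 \le \ind X - \deg(\det\tilde\sD|_C) \le \ind X - 2 < p$ — is genuinely leaner than the paper's route through Lemma~\ref{lem:c1ty} (which bounds $c_1(\pi^*T_Y) \ge p$ by a Frobenius-splitting/duality argument and only afterwards derives $c_1(\sD)>0$ and triviality of $\sD^0$). But there are two genuine gaps.

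First, the $p$-closedness issue. The paper does not take a $p$-closure: it proves directly (Lemma~\ref{lem:d-integrable}) that the span $\sD$ of $\sM$-rational tangents is itself both involutive and $p$-closed. The proof of $p$-closedness is a separate, nontrivial construction — an unramified morphism $\hat g$ from a base-changed formal disc built out of a family of minimal curves through a transverse formal hypersurface, for which the smallest subspace of $T_{X,x}$ whose pullback contains $d\hat g(\partial/\partial t)$ is exactly $\sD_x$, yielding $\phi_x=0$. You sidestep this by replacing $\sD$ with its restricted $p$-Lie closure $\tilde\sD$, which shifts the whole burden onto the case $\tilde\sD = T_X$ (equivalently: $\sD$ not $p$-closed). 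Your remark that the iterated $p$-power growth can be ruled out "by a degree-level contradiction of the same flavour" is not a proof; without the paper's Lemma~\ref{lem:d-integrable} there is no reason the intermediate sheaves in the $p$-closure filtration restrict nicely to minimal curves, and the numerical congruence you derive only applies to the terminal $\tilde\sD$, not to the steps in between. This gap is essential; it is precisely the point where the argument departs from the characteristic-zero prototype. Relatedly, your canonical formula should read $q^*(-K_Y) = -K_X + (p-1)\det\tilde\sD$ rather than $-K_X - \det\tilde\sD$ (this is the determinant of the Ekedahl four-term sequence, with $\pi^*\sigma^* = F_X^*$ contributing the factor $p$); the congruence you deduce happens to be correct, but not from the formula as written.

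Second, the endgame. Having shown $T_X/\tilde\sD$ restricts trivially to free $\sM$-curves, you propose to propagate local trivializations along chains to produce global sections, then contradict $H^0(X,\Omega^1_X)=0$. This needs two things you do not supply: (a) an argument that the propagated trivialization has no monodromy (different chains joining two points could give different identifications), and (b) the vanishing $H^0(\Omega^1_X)=0$, which in characteristic $p$ is not automatic from Fano of Picard number one. The paper takes a cleaner route, entirely inside the given hypotheses: it proves (Lemma~\ref{lem:d-m2}, using the explicit splitting \eqref{eq:d-final}) that the preimages $(de_1)^{-1}(e_1^*\sD)$ and $(de_2)^{-1}(e_2^*\sD)$ agree at the generic point of $\sM_2$, propagates this along $\sM_2^{i,\free}$ by induction via Lemma~\ref{lem:free-stuff}, and observes that if some $W \subset \sM_2^{i,\free}$ maps separably and dominantly to $X\times X$ then the surjection $T_W \twoheadrightarrow T_{X\times X}$ forces $\sD\times T_X = T_X\times\sD$, i.e.\ $\sD = T_X$. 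This is exactly where hypothesis~(3)'s separability is used, and it involves no global section construction at all. I would suggest replacing your final paragraph with an argument of this type.
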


\begin{proof}
Note that Proposition
\ref{pro:kebekus} applies, so that hypothesis 4 makes sense.
We extend $\sD_x \subset T_{X,x}$
to a saturated subsheaf $\sD \subset T_X$,
a sub-bundle away from codimension $2$. 
Note that $\rk \sD >0$.
The argument relies on the existence of
a height one purely inseparable quotient of $X$
associated with $\sD$.
The proof of integrability is essentially due to Hwang and Mok.
In characteristic $p$ we need $p$-closedness as well (see
\cite{shen} for a related argument).
\begin{lem}\label{lem:d-integrable}
$\sD$ is integrable and $p$-closed.
\end{lem}
\begin{proof}
By \cite[Lemma 4.2]{ekedahl}, we need to check that the maps
$$ \theta : \Lambda^2 \sD \to T_X / \sD,\quad \theta(\xi_1\wedge\xi_2) = [\xi_1,\xi_2]+\sD$$
$$ \phi : F^*\sD \to T_X/\sD,\quad \phi(1\otimes\xi) = \xi^p + \sD $$
are 
zero, where $F : X\to X$ is the absolute Frobenius morphism,
$\theta$ is $\OO_X$-linear, and $\phi$ is $\OO_X$-linear if $\theta=0$.
Let $\bar m_1$ be the geometric generic point of $\sM_1$, and
$\bar x$ the geometric generic point of $X$.
The rational curve
$$f:\PP^1_{\bar m_1} \simeq \bar m_1 \times_{\sM_1}\sM_2 \to X$$ 
is minimal and unramified at $0$, factors through the locus over which
$\sD \subset T_X$ is a sub-bundle, and sends $0$ to $\bar x$.
Fix a splitting
$$ f^*T_X = \OO(2) \oplus \OO(1)^{\ind(X)-2}\oplus \OO^{\dim(X)+1-\ind(X)} $$
and a nonzero vector
$
u \in \kappa(\bar m_1) \otimes_{\kappa(x)} T_{X,x}$
in the image of $df|_0$. 
Note that $\sM_2^\free \times_X{\bar x}$ is nonsingular,
so that given a nonzero vector
$$v \in \kappa(\bar m_1) \otimes_{\kappa(x)} T_{X,x} \simeq f|_0^*T_X$$ 
contained in the $\OO(1)^{\ind(X)-2}$ summand,
one can find
a morphism
$$ \gamma_v : \Spf \kappa(\bar m_1\times \bar x)[[t]]
\to \sM_2^\free \times_X \bar x $$
over $\bar x$ such that
$$ (p\circ \gamma_v)(0) = \bar m_1 \times \bar x,\quad
d(e_1\gamma_v)|_0(\frac{\partial}{\partial t}) = v \otimes 1 $$
where $p:\sM_2^\free \times_X \bar x \to \sM_1^\free \times \bar x$ 
is the natural projection,
and $e_1:\sM_2^\free \times_X \bar x \to X \times \bar x$ 
is the \emph{left} evaluation
map. That is, $\gamma_v$ is a deformation of a $2$-pointed
rational curve,
fixing the second marked point, with $f \otimes 1$ 
as the central curve, and $v \otimes 1$
as the tangent vector to the corresponding deformation of the first marked point.
Considering $\pr_1 \circ p\circ \gamma_v : \Spf \kappa(\bar m_1\times\bar x)[[t]]
\to \sM_1$, 
we have a formal family of parametrised rational curves
$$ \delta_v : \Spf \kappa(\bar m_1\times\bar x)[[t]] \times_{\sM_1}\sM_2 \to X $$
such that $d\delta_v$ factors through $\delta_v^*\sD$,
and $d\delta_v|_{(0,0)}$ is an isomorphism onto
the base-change of the span of $u$ and $v$ in $\kappa(\bar m_1)\otimes_{\kappa(x)} T_{X,x}$. 
In particular, restricting to $\hat\sM_2 \subset \sM_2$ gives an unramified morphism
from a formal disc of dimension two:
$$ \Spf \kappa(\bar m_1\times\bar x)[[t]] \times_{\sM_1}\hat\sM_2 \to X, $$
tangent to $\sD$ everywhere, and to the span of $u$ and $v$ at the origin.
It follows that
$$ (\kappa(\bar m_1) \otimes_{\kappa(x)}\theta_x)(u\wedge v) = 0. $$
But then, since $v$ was arbitrary,
 $\theta_x$ vanishes on any element in the linear span of the
image of $\PP T_{\sM_1/X,m_1} \to \PP \Lambda^2 T_{X,x}$. Hence,
by hypothesis 4 of the Proposition, $\theta_x=0$,
and finally $\theta=0$ by construction of $\sD$.

We proceed to show vanishing of $\phi$, which we now know to be
$\OO_X$-linear. Consider an unramified morphism
$ h : \bar m_1 \times \hat W \to X $
from the base-change of a formal disc $\hat W$ of dimension $\dim(X) - 1$,
such that the image of $dh$ at the origin is transverse
to $f$. By smoothness of $\sM_1 \to X$ at the generic point,
there is a lift $\tilde h : \bar m_1 \times \hat W \to \sM_1$
of $h$, and a formal family of parametrised rational curves
$$ g : \bar m_1 \times \hat W \times \PP^1 \simeq 
\bar m_1\times \hat W \times_{\sM_1}\sM_2 \to X $$
whith $f$ as the central fibre. Restricting to $\hat \PP^1 \subset \PP^1$,
we obtain an unramified morphism from the base-change of a formal disc
of dimension $\dim(X)$
$$ \hat g : \bar m_1 \times \hat W \times \hat \PP^1 \to X. $$
Identifying $\hat\PP^1 = \Spf k[[t]]$, we have 
$$ (\hat g^*\phi) (d\hat g (1\otimes 1\otimes \frac{\partial}{\partial t})) = 0. $$
But
the smallest subspace of $T_{X,x}$ whose pullback
by $\hat g$ contains 
$d\hat g(1\otimes 1\otimes \frac{\partial}{\partial t})$
is precisely $\sD_x$. Hence $\phi_x=0$, and finally $\phi=0$ by construction
of $\sD$.
\end{proof}

It follows that $\sD$ defines a height one purely
inseparable morphism $\pi : X \to Y$ to a normal variety, 
flat away from codimension two, and factoring
the geometric Frobenius $F_X : X \to X'$.
There is an exact sequence (cf.~\cite{ekedahl})
\begin{equation}
\label{eq:ekedahl}
 0 \to \sD \to T_X \to \pi^* T_Y \xrightarrow{\pi^*\delta}
 \pi^*\sigma^* \sD \to 0 
\end{equation}
where $\sigma:Y \to X$ is a composite of
the natural morphism $Y \to X'$ factoring $F_X$
with the projection $X'\to X$,
and $\delta:T_Y \to \sigma^*\sD$ is an $\OO_Y$-module morphism.
In particular, $\sigma\circ\pi$ is the absolute Frobenius $X \to X$.
This leads to an equality
\begin{equation}
\label{eq:pc1d}
 (1-p)c_1(\sD) - \ind(X) + c_1(\pi^*T_Y) = 0.
\end{equation}
Let $f : \PP^1 \otimes k(\sM) \to X$ be the generic $\sM$-curve.
By definition of $\sD$, $\pi\circ f$ is everywhere ramified,
so that we have a commutative diagram
$$\begin{CD}
\PP^1\otimes k(\sM) @>{f}>> X \\
@V{F_{\PP^1}\otimes 1}VV @VV{\pi}V \\
\PP^1\otimes k(\sM) @>{\bar f}>> Y
\end{CD}$$
where $F_{\PP^1}:\PP^1\to\PP^1$ is the geometric Frobenius.
\begin{lem}\label{lem:c1ty}
$c_1(\pi^*T_Y) \ge p$.
\end{lem}
\begin{proof}
It will be enough
to check that $H^0(\PP^1,f^*\pi^*\omega_Y)=0$.
Using $\omega_X = \pi^!\omega_Y$ (cf.~\cite{hartshorne}) and the fact that
$\PP^1$ is Frobenius-split, we have
\begin{align*}
0 & = H^0(\PP^1,f^*\omega_X) = H^0(\PP^1,f^*\pi^!\omega_Y)=H^0(\PP^1,f^*\sHom_Y(\pi_*\OO_X,\omega_Y))
\\ & = \Hom_{\PP^1}(F_{\PP^1*}\OO_{\PP^1},\bar f^*\omega_Y) \supset
H^0(\PP^1,\bar f^*\omega_Y) = H^0(\PP^1,f^*\pi^*\omega_Y).\qedhere
\end{align*}
\end{proof}
By \eqref{eq:pc1d}, Lemma~\ref{lem:c1ty} and the hypothesis on the index of $X$,
it follows that
\begin{equation}\label{eq:c1d}
c_1(\sD) = \frac{c_1(\pi^*T_Y) - \ind(X)}{p-1} >0. 
\end{equation}
The sequence \eqref{eq:ekedahl} restricts to 
$$
0 \to f^*\sD \to f^* T_X \to F_{\PP^1}^* \bar f^* T_Y 
\xrightarrow{F_{\PP^1}^* \bar f^*\delta} F_{\PP^1}^* f^*\sD \to 0
$$
where exactnes on the left follows from the fact that the generic 
$\sM$-curve, being free, factors through the locus over which $\sD$ is 
a sub-bundle of $T_X$. 
In particular, we have a short exact sequence
\begin{equation}\label{eq:d-ses}
0 \to f^*\sD \to f^* T_X \to F_{\PP^1}^* \ker \bar f^* \delta
\to 0 \end{equation}
Since $f$ is by hypothesis minimal, there is a splitting
\begin{equation}\label{eq:splitting}
 f^*T_X = \overbrace{\OO(2) \oplus \OO(1)^{\ind(X)-2}}^{\sT^+} 
\oplus \overbrace{\OO^{\dim(X) + 1 - \ind(X)}}^{\sT^0} 
\end{equation}
By definition of $\sD$, we have $\sT_+ \subset f^*\sD$.
Identifying $f^*T_X / \sT^+$ with $\sT^0$, let
$$\sD^0 = f^*\sD / \sT^+ \subset f^*T_X/\sT^+ = \sT^0, $$
a sub-bundle of the trivial bundle $\sT^0$ over $\PP^1 \otimes k(\sM)$.
There is a splitting $ \sD^0 = \bigoplus \OO(d_i)$
where all $d_i \le 0$ since $\sD^0$ is a sub-bundle of the trivial
bundle $\sT^0$.
On the other hand, \eqref{eq:d-ses} yields
a short exact sequence
$$
0 \to \sD^0 \to \sT^0 \to F_{\PP^1}^* \ker\bar f^*\delta \to 0
$$
so that
$$ c_1(\sD^0) = -p c_1(\ker \bar f^*\delta) $$
and the inequality \eqref{eq:c1d} gives
$$ 0 < c_1(f^*\sD) = \ind(X) - p c_1(\ker\bar f^*\delta). $$
Since $\ind(X) < p$, it follows that $c_1(\ker\bar f^* \delta) \le 0$
and thus $c_1(\sD^0)=\sum d_i \ge 0$. 
But $d_i\le0$, so that necessarily $d_i=0$ for all $i$, i.e.
$\sD^0$ is a trivial bundle.
We can then assume that
\begin{equation}
\label{eq:d-final}
f^*\sD = \overbrace{\OO(2) \oplus \OO(1)^{\ind(X)-2}}^{\sT^+}
\oplus \overbrace{\OO^{\rk(\sD)+1-\ind(X)}}^{\sD^0}
\end{equation}
compatibly with the decomposition \eqref{eq:splitting}.
This leads to the following property, which will allow us to use induction
on generic chains.
\begin{lem}\label{lem:d-m2}
Let $m_2$ be the generic point of $\sM_2$,
and $e_1,e_2 : m_2 \to X$ the two projections. Then
$$ de_1^{-1}(e_1^*\sD) = de_2^{-1}(e_2^*\sD) $$
as subspaces of $T_{\sM_2,m_2}$.
\end{lem}
\begin{proof}
Immediate by \eqref{eq:d-final}.
\end{proof}
Let $m_2^i$ be the generic
point of $\sM_2^{i,\free}$, and $e_1^i,e_2^i:m_2^i \to X$
the two projections. 
By hypotheses 1--2 of the Proposition, Lemma
\ref{lem:free-stuff} applies, so that
$$ m_2^{i+1} \in m_2^i \times_X m_2. $$ 
Then Lemma~\ref{lem:d-m2} and induction on $i$
give
$$ (de_1^i)^{-1}(e_1^{i*}\sD) = (de_2^i)^{-1}(e_2^{i*}\sD) $$
on $m_2^i$ for all $i\ge0$. The equality extends to $\sM_2^{i,\free}$,
so that
given a subscheme $W \subset \sM_2^{i,\free}$ dominating $X \times X$,
we have that $W \to X\times X$ is separable
only if $\sD = T_X$.
This concludes the proof of Proposition~\ref{pro:linear-nondegeneracy}.
\end{proof}

\section{Cominuscule homogeneous varieties}
\begin{figure}
\vspace{48pt}
\begin{center}\includegraphics{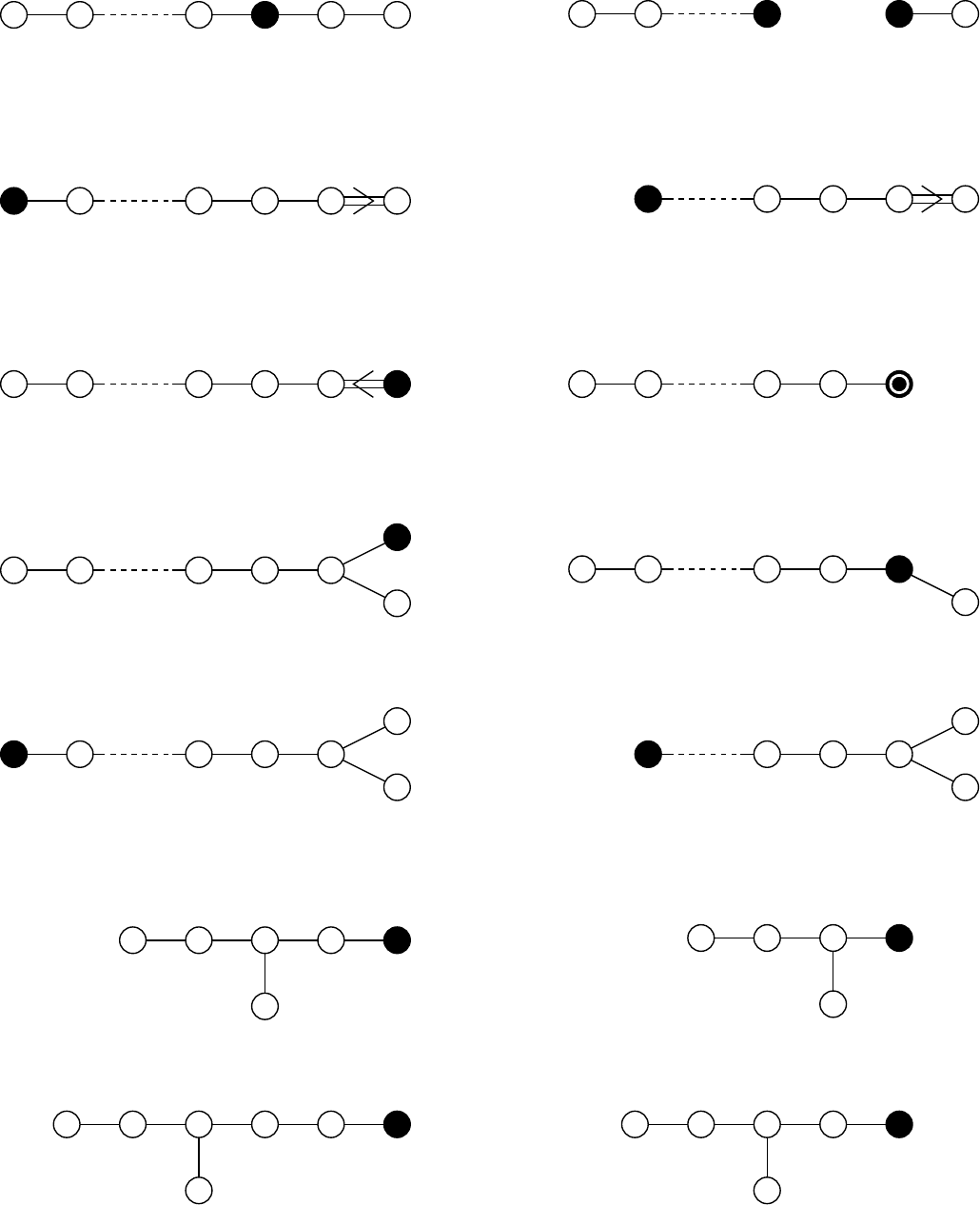}\end{center}
\vspace{48pt}
\caption{Marked Dynkin diagrams corresponding to
cominuscule homogeneous varieties (left) and their varieties
of minimal rational tangents (right). From top to bottom:
Grassmannian, odd-dimensional quadric, symplectic Grassmannian,
orthogonal Grassmiannian, even-dimensional quadric, Cayley plane,
Freudenthal variety.
}
\label{fig:diagrams}
\end{figure}

\subsection{Classification and properties}
We will devote the last section of this chapter
to a review of properties of the class of homogeneous varieties
to be considered in Chapter~\ref{chap:rigidity}. 
It is, with exception of one sub-class, closed under
the operation of taking the variety of minimal degree rational
tangents through a point. This forms the basis of an inductive
argument in the proof of the Rigidity Theorem.
Recall that we work over the algebraically
closed field $k$.
Most of the material here is standard.
\begin{lem}[cf. \cite{rrs}, Lemma 2.2]	\label{lem:cominuscule}
Let $X = G/P$, where $G$ is a connected, simply connected simple algebraic group,
and
$P$ a maximal reduced parabolic subgroup. Then the following are equivalent:
\begin{enumerate}
\item The unipotent radical $\sR_uP$ is abelian.
\item For a suitable choice of a maximal torus and Borel subgroup,
$P=P_\alpha$ is a standard maximal reduced parabolic
associated with a simple root $\alpha$ occuring with coefficient $1$ in the
simple root decomposition of the highest positive root.
\end{enumerate}
\end{lem}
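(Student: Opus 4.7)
The plan is to reduce this to a combinatorial statement about root systems. First I would choose a maximal torus $T$ contained in a Borel $B \subset P$ (possible since every parabolic contains a Borel), producing a root system $R$ with positive system $R^+$ and simple roots $\Delta$. Standard structure theory identifies every reduced parabolic containing $B$ with $P_\Theta$ for some $\Theta \subset \Delta$, with maximality equivalent to $\Theta = \Delta \setminus \{\alpha\}$ for a unique $\alpha \in \Delta$; this already pins down $P$ up to conjugacy as $P_\alpha$. Writing $c_\alpha(\beta)$ for the coefficient of $\alpha$ in the simple-root expansion of $\beta \in R$, the unipotent radical $\sR_u P_\alpha$ is directly spanned by the root subgroups $U_\beta$ for $\beta \in R^+$ with $c_\alpha(\beta) \ge 1$.

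Next I would unfold abelianness of $\sR_u P_\alpha$ via the Chevalley commutator formula: it holds iff for each pair of positive roots $\beta, \gamma$ with $c_\alpha(\beta), c_\alpha(\gamma) \ge 1$, no positive integer combination $i\beta + j\gamma$ (with $i, j > 0$) is a root. Since every such combination has $c_\alpha \ge 2$, this rephrases as: no root of $R$ has coefficient at $\alpha$ exceeding $1$, equivalently $c_\alpha(\theta) = 1$ for the highest root $\theta$ (one has $c_\alpha(\theta) \ge 1$ automatically, since the Dynkin diagram is connected and $G$ simple).

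The implication (2) $\Rightarrow$ (1) is now immediate. For (1) $\Rightarrow$ (2) I would argue by contrapositive: assume $c_\alpha(\theta) = c \ge 2$, and invoke the classical existence of a chain $0 = \rho_0, \rho_1, \dots, \rho_N = \theta$ with each $\rho_j \in R^+ \cup \{0\}$ and each difference $\rho_{j+1} - \rho_j$ a simple root. Along this chain $c_\alpha(\rho_j)$ starts at $0$, ends at $c \ge 2$, increases by $1$ whenever $\alpha$ itself is the simple root added, and is otherwise constant. Hence there is some $j$ with $c_\alpha(\rho_j) = 1$ and $\rho_{j+1} = \rho_j + \alpha$; taking $\beta = \rho_j$ and $\gamma = \alpha$ gives two positive roots lying in $\sR_u P_\alpha$ whose sum $\rho_{j+1}$ is again a root, so $[U_\beta, U_\gamma] \ne 1$ and $\sR_u P_\alpha$ fails to be abelian.

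The main subtlety, and essentially the only non-formal input, is the chain-of-positive-roots argument that extracts the pair $(\beta, \gamma)$ from the assumption $c_\alpha(\theta) \ge 2$; once combined with the Chevalley commutator formula, the rest is bookkeeping in the root system, and nothing in the proof is sensitive to the characteristic of $k$.
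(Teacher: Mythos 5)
Your argument for the direction (2) $\Rightarrow$ (1) is correct and genuinely characteristic-independent: if $c_\alpha(\theta) = 1$ then any $i\beta + j\gamma$ with $i, j > 0$ and $\beta, \gamma$ roots of the radical has $\alpha$-coefficient at least $2$, hence is not a root, and the Chevalley commutator formula forces $[U_\beta, U_\gamma] = 1$ with no reference to structure constants.

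The direction (1) $\Rightarrow$ (2) has a genuine gap, and your closing claim that nothing in the proof is sensitive to $\Char k$ is incorrect. Having produced $\beta, \gamma$ with $\beta + \gamma$ a root, the commutator formula gives $[x_\beta(t), x_\gamma(u)] = x_{\beta+\gamma}(N_{\beta,\gamma}\, t u)\cdot(\text{further factors})$, and the Chevalley structure constant $N_{\beta,\gamma}$ can equal $\pm 2$ (types $\sfB$, $\sfC$, $\sfF_4$) or $\pm 3$ ($\sfG_2$). When $\Char k$ divides it, this commutator may vanish, and in fact the implication fails: take $G$ simply connected of type $\sfC_2$ and $P = P_{\alpha_1}$, the stabilizer of an isotropic line in the standard $4$-dimensional symplectic representation. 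Then $\theta = 2\alpha_1 + \alpha_2$, so $c_{\alpha_1}(\theta) = 2$; yet in characteristic $2$ the three-dimensional unipotent radical \emph{is} abelian --- its commutator pairing is twice the symplectic pairing on the root spaces for $\alpha_1$ and $\alpha_1+\alpha_2$, hence zero modulo $2$, as one verifies directly on $4\times 4$ matrices. So the Lemma as stated requires a hypothesis such as $\Char k \ne 2$ (harmless for this paper, since Chapter~4 imposes much larger lower bounds on $p$), and your contrapositive step must either assume it or observe that the extracted $N_{\beta,\gamma}$ is a unit in $k$. There is also nothing in the paper to compare your argument against: the Lemma is cited from \cite{rrs} without proof.
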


\begin{defn}
A homogeneous variety $X$ satisfying either of the equivalent hypotheses
in Lemma~\ref{lem:cominuscule} is called 
\emph{cominuscule}.
\end{defn}

In the following, we will always assume that a cominuscule
variety is presented as $X=G/P$ as in Lemma~\ref{lem:cominuscule},
and that furthermore a maximal torus $T$ and a Borel subgroup $B$ have
been chosen so that $P=P_\alpha$ is a standard maximal parabolic
as in hypothesis 2 of the Lemma. We let $\Phi$ denote the root system
of $G$, $\Phi^+$ the set of positive roots (so that $\sR_uB$ is generated
by root spaces $U_{-\beta}$ with $\beta\in\Phi^+$), and $\Delta$ the set
of simple roots (so that in particular $\alpha \in \Delta$).
Let $\omega$ be the fundamental weight corresponding to $\alpha$,
and denote by $\Phi^+_\alpha \subset \Phi^+$ the subset
on which $\omega$ is zero.
Let $L_\alpha = P_\alpha / \sR_uP_\alpha$ be the Levi factor of $P_\alpha$,
together with an inclusion $L_\alpha \subset P_\alpha$ splitting the projection,
and let $L_\alpha^\ss$ be its semisimple part. $L_\alpha$ contains $T$
as a maximal torus, and the set of positive roots of $L_\alpha^\ss$ is identified
with $\Phi^+_\alpha \setminus \Phi^+_\alpha$. 
In particular, $\Delta \setminus
\{\alpha\}$ is the set of simple roots of $L^\ss_\alpha$.
We have $L_\alpha = P_\alpha \cap P_\alpha^+$
where $P_\alpha^+$ is the opposite parabolic.
We associate with $X$ the
Dynkin diagram of $\Phi$ with a marked node corresponding to $\alpha$.
A complete classification of cominuscule varieties in terms of their diagrams is then
given in the left column of Figure~\ref{fig:diagrams} (the right
column lists Dynkin diagrams of $L_\alpha^\ss$, where the marking will be explained
in the next subsection).

\begin{lem}
Let $X = G/P_\alpha$ be cominuscule. Then $X$ is a simply-connected
rational Fano variety with $\Pic(X)\simeq\mathbb{Z}$ generated by a very ample
invertible sheaf $\OO_X(1)$. 
\end{lem}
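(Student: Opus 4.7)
My plan is to verify each of the five assertions in turn using the standard structure theory of $X=G/P_\alpha$. For rationality, the Bruhat decomposition stratifies $X$ into Schubert cells indexed by $W/W_{P_\alpha}$, with the big cell being the orbit $\sR_uP_\alpha^+\cdot eP_\alpha$ under the unipotent radical of the opposite parabolic $P_\alpha^+$. The orbit map is an open immersion (its image is the non-vanishing locus of the $\omega$-coordinate on the Plücker-type embedding considered below), and the cominuscule hypothesis tells us that $\sR_uP_\alpha^+$ is abelian, hence isomorphic to $\AA^{\dim X}$ as a $k$-scheme. Thus $X$ contains a dense open subset isomorphic to affine space, establishing rationality.

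For the Picard group I would use the exact sequence
\[ X(G)\to X(P_\alpha)\to \Pic(X) \to \Pic(G) \]
associated with the principal $P_\alpha$-bundle $G \to X$. Since $G$ is semisimple and simply connected, both $X(G)$ and $\Pic(G)$ vanish; and since $P_\alpha$ is a maximal standard parabolic and characters of $P_\alpha$ factor through $L_\alpha$ while vanishing on the derived group of $L_\alpha^{\ss}$, a short verification gives $X(P_\alpha)=\mathbb{Z}\cdot\omega$. Thus $\Pic(X)\simeq \mathbb{Z}$ generated by the line bundle $L_\omega$ associated with $\omega$. To see that $L_\omega$ is very ample, I would consider the irreducible $G$-representation $V_\omega$ of highest weight $\omega$ (existing because $G$ is simply connected); the stabilizer of the highest-weight line in $\PP(V_\omega)$ is precisely $P_\alpha$, since $\omega$ vanishes on the coroots $\beta^\vee$ for $\beta\in\Delta\setminus\{\alpha\}$. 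The orbit map therefore gives a closed embedding $X\hookrightarrow \PP(V_\omega)$ pulling back $\OO(1)$ to $L_\omega$, which we designate $\OO_X(1)$.

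For the Fano property, one computes $K_X^{-1}$ via $T_{X,eP_\alpha}\simeq \fg/\fp_\alpha$, identifying it with the $P_\alpha$-character $\chi=\sum_{\beta \in \Phi^+\setminus\Phi^+_\alpha}\beta$. Each such $\beta$ has $\alpha$-coefficient exactly $1$ in its simple-root expansion---a direct consequence of the cominuscule condition that $\alpha$ occurs with coefficient $1$ in the highest root---so $\chi=|\Phi^+\setminus\Phi^+_\alpha|\cdot\omega$, a strictly positive multiple of $\omega$. Therefore $K_X^{-1}$ is a positive tensor power of $\OO_X(1)$, hence ample, and $X$ is Fano. For simply-connectedness, the principal $P_\alpha$-bundle $G\to X$ has connected fibre, and $\pi_1^{\mathrm{et}}(G)=1$ by simply-connectedness of $G$ in the root-theoretic sense (any connected étale cover of $G$ would be a separable isogeny of root data). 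The étale homotopy exact sequence then yields $\pi_1^{\mathrm{et}}(X)=1$.

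The main obstacle is really one of bookkeeping: confirming that the cominuscule hypothesis plays the dual role of (a) making the big cell a genuine affine space rather than an iterated affine bundle, and (b) ensuring that the character $\chi$ above is a \emph{positive} (rather than vanishing or mixed-sign) multiple of $\omega$. All remaining pieces are standard structure theory and Borel--Weil, uniform in the characteristic of $k$.
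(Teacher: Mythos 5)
Your treatment of rationality, simple connectedness, and the Picard group is essentially the paper's, and is fine (for simple connectedness you use the homotopy exact sequence for the $P_\alpha$-torsor $G\to X$, which is arguably cleaner than the paper's one-line ``rational, hence simply connected'' in positive characteristic; also note that $\sR_uP_\alpha^+\simeq\AA^{\dim X}$ as a $k$-scheme holds for \emph{any} connected unipotent group over an algebraically closed field, so the cominuscule hypothesis does not actually enter into rationality). The remaining two assertions, however, have problems.

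The Fano index computation contains an error. You claim $\chi=\sum_{\beta\in\Phi^+\setminus\Phi^+_\alpha}\beta=|\Phi^+\setminus\Phi^+_\alpha|\cdot\omega$ because each $\beta$ has $\alpha$-coefficient $1$ in the simple-root basis. But the coefficient of $\omega$ in $\chi$ is $\langle\chi,\alpha^\vee\rangle$, not the $\alpha$-coefficient of $\chi$ in the root basis; already for $X=\PP^1$ one has $\Phi^+\setminus\Phi^+_\alpha=\{\alpha\}$ and $\chi=\alpha=2\omega$, whereas your formula would give $\chi=\omega$. The positivity you need can be salvaged, as the paper does, by writing
$$ \langle\chi,\alpha^\vee\rangle = \langle 2\rho,\alpha^\vee\rangle - \sum_{\beta\in\Phi^+_\alpha}\langle\beta,\alpha^\vee\rangle $$
and observing that the first term is positive while each summand in the second term is non-positive ($\beta$ being a non-negative combination of simple roots distinct from $\alpha$).

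The genuine gap is in very ampleness. You argue that since the reduced stabilizer of the highest-weight line in $\PP(V_\omega)$ is $P_\alpha$, the orbit map is a closed embedding. In positive characteristic this is not automatic: the \emph{scheme-theoretic} stabilizer of $[v_\omega]$ in the irreducible module $L(\omega)$ can be a non-reduced subgroup scheme with reduction $P_\alpha$, in which case the orbit map $G/P_\alpha\to\PP(L(\omega))$ is a purely inseparable finite morphism onto its image rather than an immersion. This actually happens within the cominuscule family: for type $\sfB_n$ with $\alpha=\alpha_1$ in characteristic $2$, the Weyl module $V(\omega_1)$ of dimension $2n+1$ has a one-dimensional trivial submodule, so $\dim L(\omega_1)=2n$ and the $(2n-1)$-dimensional odd quadric cannot embed in $\PP(L(\omega_1))=\PP^{2n-1}$; the orbit map there is the inseparable bijection coming from the exceptional isogeny between the odd orthogonal and symplectic groups. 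Replacing $L(\omega)$ by the Weyl module $V(\omega)$ fixes the target dimension, but one still needs to check that the differential of the orbit map is injective (i.e.\ that $e_{-\beta}v_\omega\ne0$ for all $\beta\in\Phi^+\setminus\Phi^+_\alpha$), which is not obvious in positive characteristic. The paper sidesteps all of this by citing a theorem of Ramanan for the very ampleness of $\OO_X(1)$.
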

\begin{proof}
The projection $G \to X$ induces an 
open immersion
$\sR_u P_\alpha^+ \to X$, where $\sR_u P_\alpha^+ \simeq \AA^{\dim X}$
as a variety. It follows that $X$ is rational, hence simply connected. Now, invertible sheaves on $X$ are equivalent to
descent data on $G$, i.e. homomorphisms
$P_\alpha \to \GG_m$. Every such homomorphism factors through
$L_\alpha$ and is trivial on $L_\alpha^\ss$, hence factors through
the rank one torus $L_\alpha / L_\alpha^\ss \simeq \GG_m$. 

It follows
that $\Pic(X)\simeq\mathbb{Z}$, generated by the invertible sheaf $\OO_X(1)$
associated with the representation $P_\alpha \to \GG_m$ whose
restriction to $T$ is $\omega$. 
More precisely,
given an invertible sheaf $\sL_\lambda$ on $X$, with $\lambda$ the associated
weight of $T \subset P_\alpha$, we have $$\sL_\lambda \simeq \OO_X(\langle \lambda,\alpha^\vee\rangle).$$
This in particular shows that
$$ c_1(T_X) = \sum_{\beta \in \Phi^+ \setminus \Phi^+_\alpha} \langle \beta,\alpha^\vee\rangle 
= \langle 2\rho,\alpha\rangle - \sum_{\beta\in \Phi^+_\alpha}
\langle \beta,\alpha^\vee \rangle $$
where $\rho$ is the half sum of all positive roots. Since $\rho$
is dominant, the first term is non-negative. On the other hand,
the second term is negative, being given by the evaluation
on $\alpha^\vee$ on a positive combination of simple roots
in $\Delta \setminus \{\alpha\}$. Hence $c_1(T_X)>0$ and $X$ is Fano.
Very-ampleness of $\OO_X(1)$ follows from~\cite[Thm. 1]{ramanan}. 
\end{proof}

\begin{table}
\begin{center}
\fbox{\begin{tabular}{ll|lllll}
 $G$ & $\alpha$ & $X=G/P_\alpha$ &  embedding & $\dim(X)$ & $\ind(X)$ \\
\hline
$\sfA_n$ & $\alpha_i\sim\alpha_{n-i}$ & $\Gr(i,n+1)$ & Pl\"ucker & $i(n+1-i)$ &  $n+1$\\
$\sfB_n$ & $\alpha_1$ & $Q \subset \PP^{2n}$ & standard & $2n-1$ & $2n-1$\\
$\sfC_n$ & $\alpha_n$ & $\LG(n,2n)$ & Pl\"ucker & 
$n(n+1)/2$ & $n+1$\\
$\sfD_n$ & $\alpha_n\sim\alpha_{n-1}$ & $\OG(n,2n)$ & Pl\"ucker &
$n(n-1)/2$ & $2n-2$\\
$\sfD_n$ & $\alpha_1$ & $Q \subset \PP^{2n-1}$ & standard & $2n-2$ & $2n-2$\\
$\sfE_6$ & $\alpha_6\sim\alpha_1$ & Cayley plane&   & $16$ & $12$\\
$\sfE_7$ & $\alpha_7$ & Freudenthal variety&  & $27$ & $18$ 
\end{tabular}}\end{center}
\caption{Cominuscule varieties: root system of $G$, simple root defining $P_\alpha$,
root system of the semisimple part of the L\'evi factor,
type of $X=G/P_\alpha$, 
embedding by $|\OO_X(1)|$,
dimension and Fano index. 
We use Bourbaki's ordering of simple roots. 
}
\label{tab:cominuscule}
\end{table}

We list the basic information in Table~\ref{tab:cominuscule}.
Here $\LG(n,2n)$, resp. $\OG(n,2n)$, is the Grassmannian of maximal
isotropic subspaces in $k^{2n}$ equipped with a standard symplectic form,
resp. inner product; $Q \subset \PP^n$ denotes a quadric hypersurface;
the Cayley plane parametrises rays through idempotent elements in the Albert
algebra,
while the Freudenthal variety parametrises rays through strictly 
regular elements in the Freudenthal Triple System associated with the Albert
algebra (cf.~\cite{ferrar}). We will refer to $X \subset \PP H^0(X,\OO_X(1))^\vee$
as a \emph{minimally embedded} cominuscule variety.

\subsection{Varieties of line tangents}
We continue the notation of the previous subsection,
with $X=G/P_\alpha$ a cominuscule variety.
%
Let $I(\alpha) \subset \Delta$ be the set of simple roots corresponding
to nodes adjacent to $\alpha$ in the Dynkin diagram of $\Phi$, 
including $\alpha$.
There is a corresponding standard reduced parabolic
$P_{I(\alpha)}$ in $G$, and its intersection
$P_{I(\alpha)} \cap L_\alpha^\ss$ is a standard reduced parabolic in $L_\alpha^\ss$
corresponding to $I(\alpha) \setminus\{\alpha\} \subset \Delta\setminus\{\alpha\}$.
The latter is the set of marked nodes in the right column of Figure
\ref{fig:diagrams}.
It follows that $P_{I(\alpha)}$
is the normaliser of $U_\alpha$ in $P_\alpha$,
and $P_{I(\alpha)} \cap L_\alpha^\ss$ is the stabiliser
in $L_\alpha^\ss$ of the line $(\fg_{\alpha} + \fp)/\fp$ in $\fg/\fp$.

We now consider rational curves on $X$ of degree $1$ with
respect to $\OO_X(1)$. These are simply lines in 
$\PP H^0(X,\OO_X(1))^\vee$ contained in $X$, and we will
refer to them this way. We will use the notation of Section
\ref{sec:ratcurves}.
\begin{pro}\label{pro:cominuscule-lines}
Let $X=G/P_\alpha$ be a cominuscule variety with origin $x \in X(k)$.
Then:
\begin{enumerate}
\item Lines on $X$ form an irreducible component $\sM \subset\underline\Hom_\bir^n(\PP^1,X)$.
\item The natural action of $G$ on $\sM_1$ is transitive, and we have
$\sM_1 \simeq G / P_{I(\alpha)}$.
\item All lines are minimal, and the tangent map $\sM_1 \to \PP T_X$ is a closed
immersion.
\item $ x \times_X \sM_1 \simeq L^\ss_\alpha / (P_{I(\alpha)}\cap L^\ss_\alpha)$,
and the embedding $x\times_X \sM_1 \to \PP T_{X,x}$
is defined by the invertible sheaf associated with $-\alpha|_{T\cap L_\alpha^\ss}$.
\end{enumerate}
\end{pro}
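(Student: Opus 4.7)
The plan is to construct an explicit basepoint line through the origin $x=eP_\alpha$, compute its pointed-line stabilizer in $G$, and deduce the four assertions by $G$-equivariance. Let $H_\alpha\subset G$ be the $\SL_2$-subgroup with Lie algebra $\fg_\alpha\oplus[\fg_\alpha,\fg_{-\alpha}]\oplus\fg_{-\alpha}$. Since $U_{-\alpha}\subset\sR_uB\subset P_\alpha$, the intersection $H_\alpha\cap P_\alpha$ is a Borel of $H_\alpha$, so the orbit $\ell_0=H_\alpha\cdot x\subset X$ is a $\PP^1$ of $\OO_X(1)$-degree $\langle\omega,\alpha^\vee\rangle=1$; hence $\ell_0$ is a line, with tangent direction $[\fg_\alpha]\in\PP(\fg/\fp_\alpha)=\PP T_{X,x}$ at $x$. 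Fix a parametrisation $\tilde\ell_0:\PP^1\to X$ with $\tilde\ell_0(0)=x$. The $G$-stabilizer of the class $[\tilde\ell_0]\in\sM_1$ lies in $P_\alpha$ and projects to $L_\alpha$ as the stabilizer of the line $[\fg_\alpha]\subset\fg/\fp_\alpha$, which (by the paragraph following Lemma~\ref{lem:cominuscule}) is $P_{I(\alpha)}\cap L_\alpha$. As $\sR_uP_\alpha$ acts trivially on $\fg/\fp_\alpha$ and already lies in $P_{I(\alpha)}$, the full stabilizer is $P_{I(\alpha)}$, producing a $G$-equivariant monomorphism $G/P_{I(\alpha)}\hookrightarrow\sM_1$.

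Next let $\sM\subset\underline\Hom_\bir^n(\PP^1,X)$ be the $G\times\underline\Aut(\PP^1)$-orbit of $\tilde\ell_0$; by connectedness $\sM$ is irreducible, and $\sM/\underline\Aut(\PP^1,0)\simeq G/P_{I(\alpha)}$, which is projective. To finish Parts~1 and~2 I must verify that $\sM$ exhausts all lines on $X$; by $G$-equivariance of $\sM_1\to X$ this reduces to showing that every tangent direction of a line through $x$ lies in the closed $L_\alpha^\ss$-orbit of $[\fg_\alpha]$ in $\PP(\fg/\fp_\alpha)$ (note that $\fg_\alpha$ is indeed the highest-weight line, since $[\fg_{-\beta},\fg_\alpha]\subset\fp_\alpha$ for every $\beta\in\Phi^+_\alpha$). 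This is the main obstacle of the proof and amounts to the VMRT statement for cominuscule varieties, which I expect to verify either case by case using Figure~\ref{fig:diagrams}, or more uniformly by combining $G$-invariance and projectivity of the Fano scheme of lines with a dimension count against the $L_\alpha^\ss$-orbit structure on $\PP(\fg/\fp_\alpha)$.

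For Part~3, I split $\tilde\ell_0^*T_X\simeq H_\alpha\times^{H_\alpha\cap P_\alpha}(\fg/\fp_\alpha)$ into character line-bundles $\OO_{\PP^1}(\langle\gamma,\alpha^\vee\rangle)$ indexed by $\gamma\in\Phi^+\setminus\Phi^+_\alpha$. Cominuscule-ness forces $\langle\gamma,\alpha^\vee\rangle\in\{0,1,2\}$ (as $s_\alpha(\gamma)=\gamma-\langle\gamma,\alpha^\vee\rangle\alpha\in\Phi$ must have $\alpha$-coefficient in $\{-1,0,1\}$), with value $2$ attained only at $\gamma=\alpha$, yielding
\[
\tilde\ell_0^*T_X\simeq\OO(2)\oplus\OO(1)^{\ind(X)-2}\oplus\OO^{\dim X+1-\ind(X)},
\]
which is minimal. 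The tangent map $\sM_1\to\PP T_X$ is $G$-equivariant, and over $x$ reduces to the closed immersion $L_\alpha^\ss/(P_{I(\alpha)}\cap L_\alpha^\ss)\hookrightarrow\PP(\fg/\fp_\alpha)$ as the highest-weight orbit; equivariance promotes it to a global closed immersion. Part~4 follows by identifying $x\times_X\sM_1=P_\alpha/P_{I(\alpha)}=L_\alpha^\ss/(P_{I(\alpha)}\cap L_\alpha^\ss)$ (the unipotent radical of $P_\alpha$ is absorbed, and $Z(L_\alpha)$ acts trivially in projective space), together with the observation that the fibre of $\OO_{\PP(\fg/\fp_\alpha)}(1)$ at $[\fg_\alpha]$ is dual to the weight-$\alpha$ line $\fg_\alpha$, so the embedding is defined by the character $-\alpha|_{T\cap L_\alpha^\ss}$.
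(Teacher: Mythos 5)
Your architecture matches the paper's: the basepoint line is $H_\alpha \cdot x$, its pointed stabiliser is computed to be $P_{I(\alpha)}$, and parts~3--4 follow by $G$-equivariance and the weight computation. The explicit splitting of $\tilde\ell_0^*T_X$ into $\OO(2)\oplus\OO(1)^{\ind(X)-2}\oplus\OO^{\dim X+1-\ind(X)}$ via the root-string argument for $\langle\gamma,\alpha^\vee\rangle$ is cleaner than what the paper spells out; the paper merely asserts minimality after invoking homogeneity.

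However, there is a genuine gap at exactly the point you flagged: you never prove that the $G$-orbit of $\ell_0$ exhausts all lines on $X$, which is the substantive content of Parts~1--2. Your sketch (``case by case using Figure~\ref{fig:diagrams}, or \ldots a dimension count against the $L_\alpha^\ss$-orbit structure'') is not an argument; the closed $L_\alpha^\ss$-invariant subvariety of $\PP(\fg/\fp_\alpha)$ cut out by tangent directions of lines through $x$ need not \emph{a priori} coincide with the minimal orbit, and no dimension count is given that would rule out larger invariant subvarieties. A priori there could also be components of the Fano scheme of lines disjoint from the homogeneous one, or non-reduced structure on it. The paper resolves all of this by citing the Main Theorem of Cohen--Cooperstein (\cite{cohen}): every line in $X$ is a $G$-translate of $c_\alpha(\PP^1)$, and in fact every parametrised line is $g\circ c_\alpha$ for some $g\in G(k)$. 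That theorem is the external input doing the heavy lifting for irreducibility of $\sM$, homogeneity of $\sM_1$, and smoothness/reducedness; without it, your $\sM$ is only shown to be \emph{a} (closed, irreducible) family of lines, not \emph{the} component, and $\sM_1\simeq G/P_{I(\alpha)}$ is only established for your constructed family. You should either cite \cite{cohen} at this step or supply an actual proof of the exhaustion claim.
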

\begin{proof}
This is essentially a corollary of the Main Theorem in~\cite{cohen}.
More explicitly, let $\SL_2 \hookrightarrow G$
be the subgroup corresponding to $\alpha$, so that
the maximal torus of $\SL_2$ maps to $T$, and
the positive root subgroup of $\SL_2$ maps to the root subgroup $U_\alpha \subset G$
associated with $\alpha$. Then $\SL_2 \cap P_\alpha$ is a Borel subgroup
of $\SL_2$, and the inclusion $\SL_2 \hookrightarrow G$ descends to 
a rational curve $c_\alpha : \PP^1 \to X$ of degree $1$ with respect to
$\OO_X(1)$.
The Main Theorem in~\cite{cohen} states that every line in $X$
is a $G$-translate of $c_\alpha(\PP^1)$. In fact, by construction, every
parametrised line $c:\PP^1 \to X$ is of the form $g \circ c_\alpha$ for
some $g\in G(k)$.

It follows that lines on $X$ are free (in fact minimal), and form a single 
irreducible component $\sM$, which is reduced (in fact nonsingular),
and thus $G$-homogeneous. Likewise for $\sM_1$, which is
additionally proper, hence a quotient of $G$ by a parabolic
subgroup. Since $c_\alpha(\PP^1) = \overline{U_\alpha P_\alpha}/P_\alpha$,
we have that the stabiliser of $[c_\alpha] \in \sM_1$
in $P_\alpha$ is $P_{I(\alpha)}$.

Obviously now $\sM_1 \to \PP T_X$
is a closed immersion,
so that $x\times_X\sM_1$ embeds into $\PP T_{X,x} \simeq \PP(\fg/\fp)$
as $L_\alpha^\ss / L_\alpha^\ss \cap P_{I(\alpha)}$. 
The tangent direction to
$c_\alpha(\PP^1)$ at $x$ is the image of the root subspace $\fg_\alpha$ in $T_{X,x}\simeq \fg/\fp$.
It follows that $L_\alpha^\ss \cap P_{I(\alpha)}$
acts on the fibre of $\OO_{\PP T_{X,x}}(-1)$ at $[c_\alpha] \in \PP T_{X,x}$
via $\alpha|_{T \cap L_{\alpha}^\ss}$. 
Hence $\OO(1)$ restricts on $x\times_X\sM_1$ 
to the invertible
sheaf associated with $-\alpha|_{T\cap L_{\alpha}^\ss}$.
\end{proof}

Note that $x\times_X\sM_1 \to \PP T_{X,x}$ is a closed immersion
onto the variety of line tangents at $x$. Associating with
$L_\alpha^\ss / (P_{I(\alpha)} \cap L_{\alpha}^\ss)$ the
Dynkin diagram of the root system $L_\alpha^\ss$,
and marking nodes corresponding to $I(\alpha) \setminus \{\alpha\}$,
gives the right column in Figure~\ref{fig:diagrams}.
The weight $-\alpha|_{T\cap L_\alpha^\ss}$ defining
the embedding is a combination of fundamental weights of $L_\alpha^\ss$,
and the coefficients determine the multiplicity of marking on
corresponding nodes (this is simply $1$ for all marked nodes, except
for $\LG(n,2n)$ where $-\alpha|_{T\cap L_\alpha^\ss} =
2\omega_{n-1}$). We list this information in Table~\ref{tab:vmrt}.
\begin{table}
\begin{center}
\fbox{\begin{tabular}{ll|lllll}
 $G$ & $\alpha$ & $L_\alpha^\ss$ & $x\times_X\sM_1$ & embedding &
$r$ \\
\hline
$\sfA_n$ & $\alpha_i$ & $\sfA_{i-1}\times\sfA_{n-i}$ & 
$\PP^{i-1}\times \PP^{n-i}$ & $|\OO(1,1)|$ & $\min(i,n+1-i)$\\
$\sfB_n$ & $\alpha_1$ & $\sfB_{n-1}$ &
$Q \subset \PP^{2n-2}$ & $|\OO(1)|$ & $2$ \\
$\sfC_n$ & $\alpha_n$ & $\sfA_{n-1}$ &
$\PP^{n-1}$ & $|\OO(2)|$ & $n$\\ 
$\sfD_n$ & $\alpha_n$ & $\sfA_{n-1}$ &
$\Gr(2,n)$ & $|\OO(1)|$ & $\lfloor \frac{n}{2}\rfloor $\\
$\sfD_n$ & $\alpha_1$ & $\sfD_{n-1}$ & 
$Q \subset \PP^{2n-3}$ & $|\OO(1)|$ & $2$ \\
$\sfE_6$ & $\alpha_6$ & $\sfD_5$ &
$\OG(5,10)$ & $|\OO(1)|$ & $2$\\
$\sfE_7$ & $\alpha_7$ & $\sfE_6$ &
Cayley plane & $|\OO(1)|$ & $3$
\end{tabular}}\end{center}
\caption{Varieties of line tangents and length of
connecting chains.}
\label{tab:vmrt}
\end{table}

\begin{cor}\label{cor:cominuscule-lines}The variety of line tangents at the origin of a cominuscule
homogeneous variety is one of the following:
\begin{enumerate}
\item a Segre variety, 
\item a Veronese variety of degree $2$, or
\item a minimally embedded cominuscule variety.
\end{enumerate}\end{cor}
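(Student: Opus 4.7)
The plan is to deduce the corollary as a case-by-case reading of Proposition \ref{pro:cominuscule-lines}(4), which identifies the variety of line tangents at $x$ with $L_\alpha^\ss/(L_\alpha^\ss \cap P_{I(\alpha)})$ embedded in $\PP T_{X,x}$ via the invertible sheaf associated with the character $-\alpha|_{T \cap L_\alpha^\ss}$. For each row of Table \ref{tab:cominuscule} one reads off the semisimple Levi type (obtained by erasing the marked node from the Dynkin diagram of $G$) together with the subset $I(\alpha) \setminus \{\alpha\}$ of its simple roots corresponding to nodes adjacent to $\alpha$ in the diagram of $G$; these are precisely the marked nodes in the right-hand column of Figure \ref{fig:diagrams}.

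In the Grassmannian case $\Gr(i,n+1)$ (type $\sfA_n$, $\alpha = \alpha_i$), the Levi type splits as $\sfA_{i-1}\times\sfA_{n-i}$ and $\alpha_i$ is connected to a single simple root of each factor by an ordinary edge; the quotient is the Segre variety $\PP^{i-1}\times\PP^{n-i}$, and the Cartan pairings give $-\alpha_i|_{T\cap L_\alpha^\ss}$ as the sum of the two neighbouring fundamental weights, so the embedding is by $|\OO(1,1)|$. For the symplectic Grassmannian $\LG(n,2n)$ (type $\sfC_n$, $\alpha = \alpha_n$), the Levi is of type $\sfA_{n-1}$ with $\alpha_{n-1}$ the unique neighbour of $\alpha_n$; the double edge of the $\sfC_n$ diagram produces $-\alpha_n|_{T\cap L_\alpha^\ss} = 2\omega_{n-1}$, so the variety of line tangents is $\PP^{n-1}$ with its degree-two Veronese embedding.

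In every remaining row of Table \ref{tab:cominuscule}, $\alpha$ has a unique neighbour $\alpha'$ in the Dynkin diagram of $G$, joined by an ordinary edge, so $L_\alpha^\ss \cap P_{I(\alpha)}$ is the maximal parabolic of $L_\alpha^\ss$ corresponding to the single simple root $\alpha'$. A direct inspection of the diagrams shows that the marking of $\alpha'$ in the diagram of $L_\alpha^\ss$ is itself cominuscule: a quadric of one lower dimension for the odd and even quadrics, the Grassmannian $\Gr(2,n)$ for $\OG(n,2n)$, the spinor variety $\OG(5,10)$ for the Cayley plane, and the Cayley plane for the Freudenthal variety. The Cartan pairing then gives $-\alpha|_{T\cap L_\alpha^\ss} = \omega_{\alpha'}$, the fundamental weight defining the minimal embedding of the corresponding cominuscule variety, so the embedding is by $|\OO(1)|$. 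The argument is purely combinatorial; the only point requiring genuine attention is the factor of two that appears in the $\sfC_n$ case because of the short simple root, which is precisely what produces the Veronese rather than another minimally embedded cominuscule variety.
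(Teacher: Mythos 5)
Your proposal is correct and follows essentially the same route as the paper: the corollary is read off case by case from Proposition~\ref{pro:cominuscule-lines}(4) together with the Dynkin-diagram bookkeeping recorded in Figure~\ref{fig:diagrams} and Table~\ref{tab:vmrt}, with the double bond in type $\sfC_n$ accounting for the solitary Veronese case. The paper does not give a separate proof beyond this tabulation, so your argument coincides with the intended one.
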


\subsection{Chains of lines}

As in the proof of Proposition~\ref{pro:cominuscule-lines}, we consider
the subgroup $\SL_2 \subset G$ with root subgroups $U_{\pm\alpha}$, and
the line $c_\alpha :\PP^1 \to X$. Note that in particular the open immersion
$\sR_u P_\alpha^+ \simeq \AA^{\dim X} \to X$ restricts to $U_{\alpha} \simeq \AA^1 \to c_\alpha(\PP^1)$. The pullback of the $P_\alpha$-principal bundle $G \to X$ by $c_\alpha$ gives a subvariety
$$ c_\alpha^* G = \SL_2 \cdot P_\alpha = \overline{U_{\alpha}P_\alpha} $$
in $G$, a $P_\alpha$-principal bundle over $\PP^1$. We will inductively
construct morphisms $q^i$ fitting into a commutative diagram
$$ \begin{CD}
 ( \overline{U_{\alpha} P_\alpha})^i @>{q^i}>> [c_\alpha]\times_{\sM_1}\sM_2^i \\
@VVV @VVV \\
G @>>> X
\end{CD} $$
where the left vertical arrow is an $i$-fold product in $G$,
the right vertical arrow is the right structure morphism, and
$[c_\alpha] \in \sM_1(k)$ is the image of $c_\alpha \in \sM(k)$. 
By abuse of notation,
define $[c_\alpha]\times_{\sM_1}\sM_2^0$ to be
$\Spec k$, with both structure maps to $X$ given by $x$.
Then $q^0 = \id_{\Spec k}$.
Suppose by induction that $q^i$ has been defined
and makes the above diagram commute.
We then let $q^{i+1}$ be the composite
\begin{equation*} 
\overline{U_{\alpha}P_\alpha} \times (\overline{U_{-\alpha}P_\alpha})^i 
\xrightarrow{\id \times q^i}
\overline{U_{\alpha}P_\alpha} \times
[c_\alpha]\times_{\sM^1}\sM_2^i 
\xrightarrow{\langle \varphi \circ \pr_1, \psi \rangle}
[c_\alpha]\times_{\sM^1}\sM_2 \times_X \sM_2^i
\end{equation*}
where $\psi : G\times\sM_2^i \to \sM_2^i$ is the action morphism,
while $\phi : \overline{U_{\alpha}P_\alpha} \to \sM_2$ 
sends $g$ to $[g  c_\alpha, (g c_\alpha)^{-1}(x)] \in \sM_2$. 
It is clear by construction that $q^{i+1}$ makes the corresponding
diagram commute.
\begin{lem}\label{lem:qi}
$q^i$ is a composite of $P_\alpha$-principal bundles.
\end{lem}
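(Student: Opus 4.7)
The plan is to prove this by induction on $i$. The case $i=0$ is immediate: $q^0=\id_{\Spec k}$ is a (length-zero) composite of $P_\alpha$-principal bundles. For the inductive step, the excerpt already presents $q^{i+1}$ as the two-step composite
\[
(\overline{U_\alpha P_\alpha})^{i+1} \xrightarrow{\id\times q^i} \overline{U_\alpha P_\alpha}\times([c_\alpha]\times_{\sM_1}\sM_2^i) \xrightarrow{\langle\varphi\circ\pr_1,\psi\rangle} [c_\alpha]\times_{\sM_1}\sM_2^{i+1}.
\]
By the inductive hypothesis, $\id\times q^i$ is a composite of $P_\alpha$-principal bundles (the identity on $\overline{U_\alpha P_\alpha}$ tensored with a composite of $P_\alpha$-principal bundles remains one), so it suffices to show that $\langle\varphi\circ\pr_1,\psi\rangle$ is a single $P_\alpha$-principal bundle.

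To do this, I would exhibit $\langle\varphi\circ\pr_1,\psi\rangle$ as obtained by pullback from the basic $P_\alpha$-principal bundle $q^1:\overline{U_\alpha P_\alpha}\to [c_\alpha]\times_{\sM_1}\sM_2\simeq c_\alpha(\PP^1)$ along the natural projection $[c_\alpha]\times_{\sM_1}\sM_2^{i+1}\to [c_\alpha]\times_{\sM_1}\sM_2$ that remembers only the first link of an $(i{+}1)$-chain. Concretely, I would verify that the square
\[
\begin{CD}
\overline{U_\alpha P_\alpha}\times([c_\alpha]\times_{\sM_1}\sM_2^i) @>{\langle\varphi\circ\pr_1,\psi\rangle}>> [c_\alpha]\times_{\sM_1}\sM_2^{i+1} \\
@V{\pr_1}VV @VVV \\
\overline{U_\alpha P_\alpha} @>{q^1}>> [c_\alpha]\times_{\sM_1}\sM_2
\end{CD}
\]
is Cartesian. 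Granting this, the top arrow inherits the $P_\alpha$-principal bundle structure of the bottom, and the induction closes. The $P_\alpha$-action realising it is right multiplication on $\overline{U_\alpha P_\alpha}$ combined with the inverse action on $[c_\alpha]\times_{\sM_1}\sM_2^i$ via $\psi$; under $(g,c)\mapsto(gp,p^{-1}\cdot c)$ the first link of the image chain is unchanged ($gpx=gx$ since $p\in P_\alpha$ fixes $x$), and the tail $\psi(g,c)=\psi(gp,p^{-1}c)$ also coincides.

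The main obstacle will be verifying the Cartesian property, which requires carefully reconciling the formula $\varphi(g)=[gc_\alpha,(gc_\alpha)^{-1}(x)]$ with the constraint that $\varphi(g)$ represent an element of $[c_\alpha]\times_{\sM_1}\sM_2$: one must identify the parametrised curve $gc_\alpha$, up to the equivalence relation defining $\sM_2$, with a canonical reparametrisation based at $x$ whose underlying $\sM_1$-class is $[c_\alpha]$, and then check that the resulting first link is exactly $(c_\alpha,gx)$. Once that identification is in hand, the $G$-equivariance of $\psi$ yields that the pair $(g,c)$ can be reconstructed from the full chain together with the lift $g\in\overline{U_\alpha P_\alpha}$ above its first link endpoint: $c$ is recovered as $g^{-1}$ applied to the tail. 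This bijection on functor-of-points, combined with the trivialisations provided by $q^1$ in the \'etale topology, will give the Cartesian property and complete the proof.
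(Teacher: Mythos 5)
Your proof takes essentially the same route as the paper: the paper also reduces the inductive step to showing that $\langle\varphi\circ\pr_1,\psi\rangle$ is a $P_\alpha$-principal bundle, and does so by exhibiting the very Cartesian square you describe (the paper merely composes the first-link projection with the isomorphism $[c_\alpha]\times_{\sM_1}\sM_2\simeq\PP^1$ given by $c_\alpha^{-1}\circ e_2$, so that the bundle being pulled back is written as $\overline{U_\alpha P_\alpha}\to\PP^1$ rather than as $q^1$, but the two diagrams are the same up to transposing and applying that identification). Your sketch of the inverse map $(g,\text{chain})\mapsto(g,g^{-1}\cdot\text{tail})$ is exactly the content the paper leaves implicit in asserting the square is Cartesian.
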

\begin{proof}
There is a Cartesian diagram
$$\begin{CD}
\overline{U_{\alpha} P_\alpha} \times [c_\alpha]
\times_{\sM^1}\sM_2^i @>{\pr_1}>> U_{-\alpha}P_\alpha \\
@V{\langle \varphi\circ\pr_1,\psi\rangle}VV
@VVV \\
[c_\alpha]\times_{\sM_1}\sM_2\times_X\sM_2^i @>{c_\alpha^{-1}\circ e_2\circ \pr_1}>> \PP^1.
\end{CD} $$
where in the bottom horizontal
arrow $\pr_1$  projects to $[c_\alpha]\times_{\sM_1}\sM_2$,
and $e_2 : \sM_2 \to X$ is the right
structure morphism.
Since the right vertical arrow is a $P_\alpha$-principal bundle,
so is $\langle \varphi\circ \pr_1,\psi\rangle$. 
Assuming by induction that $q^i$ is a composite
of $P_\alpha$-principal bundles, so is $q^{i+1} = (\id\times q^i) \circ \langle
\varphi\circ\pr_1,\psi\rangle$.
\end{proof}

\begin{pro}\label{pro:cominuscule-chains}
Given a cominuscule variety $X=G/P$
with $\sM \subset \underline\Hom_{\bir}^n(\PP^1,X)$ the component
of lines,
let $r+1$ be the number of $P$-$P$ double cosets in $G$.
Then $\sM_2^i \to X\times X$ is separably surjective for $i \ge r$.
\end{pro}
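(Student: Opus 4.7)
The plan combines an orbit count (leveraging $G$-equivariance) for surjectivity with a tangent-space computation (leveraging the smoothness of $q^r$ given by Lemma~\ref{lem:qi}) for separability; the main obstacle is the differential calculation in the separability step, which reduces to a standard but nontrivial dimension count for generic pairs of cominuscule parabolic subalgebras.

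\emph{Surjectivity.} Since $\sM_2^i\to X\times X$ is equivariant for the diagonal $G$-action, and the $G$-orbits on $X\times X$ correspond bijectively to $P_\alpha$-orbits on $X$ (fix the first factor to $x$ and use $\Stab_G(x)=P_\alpha$), surjectivity is equivalent to the image $Z_i\subseteq X$ of right-evaluation on $\{x\}\times_X\sM_2^i$ equalling $X$; there are $r+1$ such orbits, in bijection with $P_\alpha\backslash G/P_\alpha$. By Prop~\ref{pro:cominuscule-lines} lines are of minimal degree, so $\sM$ is unsplit by Lemma~\ref{lem:unsplit}; hence $\sM_2^i\to X$ is proper and $Z_i$ is closed in $X$. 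Since $P_\alpha$ fixes $x$ and its action commutes with right-evaluation, $Z_i$ is $P_\alpha$-invariant, hence a union of $P_\alpha$-orbit closures. The sequence $\{x\}=Z_0\subseteq Z_1\subseteq\cdots$ strictly increases until reaching $X$: if $Z_i=Z_{i+1}\ne X$, then every line through any point of $Z_i$ would lie in $Z_i$, contradicting chain-connectivity of $X$ by lines ($G$-homogeneity combined with Prop~\ref{pro:cominuscule-lines}). With only $r+1$ orbit closures, $Z_r=X$.

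\emph{Separability.} By Prop~\ref{pro:cominuscule-lines} every line is free (being minimal) and the VMRT $L_\alpha^\ss/(P_{I(\alpha)}\cap L_\alpha^\ss)$ is connected, so Lemma~\ref{lem:free-stuff} applies and $\sM_2^r$ is smooth and irreducible. Lemma~\ref{lem:qi} gives a smooth surjection $q^r:(\overline{U_\alpha P_\alpha})^r\to[c_\alpha]\times_{\sM_1}\sM_2^r$, which combined with the transitive $G$-action on $\sM_1\simeq G/P_{I(\alpha)}$ yields a smooth surjection $G\times(\overline{U_\alpha P_\alpha})^r\to\sM_2^r$. The composite with $\sM_2^r\to X\times X$ is $(g,g_1,\dots,g_r)\mapsto(gx,gg_1\cdots g_r x)$; at $(e,g_1,\dots,g_r)$ its differential sends $(\xi,\eta_1,\dots,\eta_r)\in\fg\oplus(\fp_\alpha+\fg_{-\alpha})^r$ to $(\xi+\fp_\alpha,\,\xi+\sum_i\mathrm{Ad}(g_1\cdots g_i)\eta_i+\fp_y)$ in $T_xX\oplus T_yX$, where $y=g_1\cdots g_r x$. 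Surjectivity of this differential reduces to $\fp_\alpha+\fp_y=\fg$, which holds whenever $(x,y)$ lies in the open $G$-orbit of $X\times X$ (via the dimension count $\dim(P_\alpha\cap P_y)=\dim G-2\dim X$). By the first step, a generic chain has endpoint in this orbit, so the composite is smooth there; $G$-equivariance extends smoothness to the preimage of the open $G$-orbit in $X\times X$, whence $\sM_2^r\to X\times X$ is separably surjective.
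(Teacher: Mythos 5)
Your proof is correct, and the surjectivity step follows essentially the paper's reasoning (the strictly increasing sequence of $P_\alpha$-stable closed subsets of $X$, counted against the $r+1$ double cosets, with properness supplied by Lemma~\ref{lem:unsplit}). The separability step, however, takes a genuinely different route. The paper reduces to separability of the multiplication map $\mu^i\colon(\overline{U_\alpha P_\alpha})^i\to G$ via the commutative square it builds on top of Lemma~\ref{lem:qi}, and then disposes of separability with the terse observation that $N_{P_\alpha}(U_\alpha)=P_{I(\alpha)}$ is a \emph{reduced} parabolic; the reader is left to unpack how a reduced stabiliser for the $P_\alpha$-gauge action gives a reduced generic fibre of $\mu^i$. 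You instead compute the differential of $G\times(\overline{U_\alpha P_\alpha})^r\to X\times X$ directly at a generic chain and reduce everything to the single linear-algebra fact $\fp_\alpha+\fp_y=\fg$ for $(x,y)$ in the open diagonal $G$-orbit, i.e.\ to the classical statement that two parabolic subalgebras in general position span $\fg$. This has the advantage of being self-contained and making the characteristic-independence transparent; the paper's appeal to the normaliser is shorter but opaque. Two small points worth cleaning up in your write-up: the tangent space of $\overline{U_\alpha P_\alpha}$ at the identity is $\fp_\alpha+\fg_\alpha$, not $\fp_\alpha+\fg_{-\alpha}$ (the latter equals $\fp_\alpha$ under the paper's conventions $\sR_uB=\prod U_{-\beta}$; note the paper itself has the same $\pm\alpha$ slip in the definition of $q^{i+1}$), though this does not affect the argument since the $\xi$-contribution alone suffices; and the dimension count $\dim(P_\alpha\cap P_y)=\dim G-2\dim X$ gives $\fp_\alpha+\fp_y=\fg$ only once you also know $P_\alpha\cap P_y$ is smooth — true because generic parabolic pairs are opposite and their intersection is a Levi, but worth stating, or you can bypass it entirely by conjugating to $\fp_y=\mathrm{Ad}(\dot w)\fp_\alpha$ and checking the root-space decomposition directly. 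Finally, you prove the statement for $i=r$; the extension to all $i\ge r$ is immediate since $\sM_2^{i+1}\to\sM_2^i$ is a smooth surjection (all lines are free), but it should be said.
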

\begin{proof}
As before, we can assume $P=P_\alpha$
for a suitable choice of a maximal torus
and a Borel subgroup. Consider first the action morphism
$$ \theta^i : G \times [c_\alpha]\times_{\sM_1}\sM_2^i \to \sM_2^i. $$
There is a Cartesian diagram
$$\begin{CD}
G \times [c_\alpha]\times_{\sM_1}\sM_2^i @>>> G \\
@V{\theta^i}VV @V{[c_\alpha]}VV \\
\sM_2^i @>>> \sM_1
\end{CD}$$
where the bottom horizontal arrow is a composite
of projection to the first segment $\sM_2^i \to \sM_2$
and the natural map $\sM_2 \to \sM_1$,
while the right vertical arrow is a $P_{I(\alpha)}$-principal bundle.
Hence $\theta^i$ is a $P_{I(\alpha)}$-principal bundle.

We now have a commutative diagram
$$\begin{CD}
G \times (\overline{U_{\alpha}P_\alpha})^i @>{\theta^i\circ(\id\times q^i)}>> \sM_2^i \\
@V{\id \times \mu^i}VV @VVV \\
G \times G @>>> X \times X
\end{CD}$$
where
the left vertical arrow is given by multiplication 
$\mu^i:(\overline{U_{\alpha}P_\alpha})^i \to G$, 
while the right vertical arrow is the evaluation morphism.
It will be enough to check that
$\mu^i$ is separably surjective.
By maximality of $P_\alpha$ it follows
that $\mu^i$ is surjective for $i$ sufficiently large.
On the other hand, its image is a union of $P_\alpha$-$P_\alpha$
double cosets, and its dimension stabilises
once the image of $\mu^{i+1}$ coincides with that of $\mu^i$.
Since $U_{\alpha}P_\alpha$ is strictly larger than
the first double coset $P_\alpha$, it follows that
$\mu^i$ is surjective for $i\ge r$.
For separability of $\mu^i$ it is enough to notice
that the normaliser of $U_{\alpha}$ in $P_\alpha$ is the
reduced parabolic $P_{I(\alpha)}$
stabilising $[c_\alpha]$.
\end{proof}

The numbers $r$ appearing in the Proposition are listed in
Table \ref{tab:vmrt}, following~\cite{rrs}

\subsection{Further properties}
\label{ss:numerical}
Let $H = c_1(\OO_X(1))$ be the hyperplane class for the minimal embedding.
We will use the notation
$$ \delta_X = [X]\cdot H^{\dim X}, $$
for the degree of the minimal embedding.
Consider the total evaluation morphism
$$ \nu^i : \sM_2^i \to X^{i+1} $$
for $i\ge r$ as in Proposition \ref{pro:cominuscule-chains}.
Letting $p^i_j : X^{i+1}\to X$, $0\le j\le i$ be the natural projections,
we define the following intersection number:
$$ \delta_X(i) = \nu^i_*[\sM_2^i]\cdot p^{i*}_0[x] \cdot 
\left(\sum_{j=1}^{i-1}p^{i*}_jH\right)^{d_i} \cdot p^{i*}_i [x] $$
where $d_i$ is the dimension of the generic fibre of $\sM_2^i \to X\times X$.
We will express $\delta_X(i)$ in terms of Schubert calculus on $X$.

Denote by $V \subset \PP T_{X,x}$ the variety of line tangents at $x$,
and by $C_x \subset X$ the subvariety swept by lines through $x$, the closure
in $X$ of the image of $P_\alpha U_\alpha P_\alpha$ under the projection
$\pi:G\to X$. Letting $\tilde C_x \to C_x$ be the blowing-up at $x$,
we have a natural commutative diagram
$$\begin{CD}
x\times_X \sM_1 @<<< x\times_X\sM_2 @>>> X \\
@V{\simeq}VV @V{\simeq}VV @AAA \\
V @<<< \tilde C_x @>>> C_x
\end{CD}$$
where the left horizontal arrows are Zariski-locally trivial $\PP^1$-bundles.

The morphism 
$G \times C_x \to X\times X$ sending $(g,c)$ to $(gx,gc)$ factors
through a closed immersion $G\times^{P_\alpha}C_x \to X\times X$,
and we let $C \subset X\times X$ be its image. It follows that
$C = G \cdot (x\times C_x)$, with the diagonal action of $G$ on $X\times X$.
Define
$C^i = (X \backslash C / X)^i,$
naturally a closed subvariety
of $X^{i+1}$. There is a commutative diagram
$$\begin{CD}
C^{i+1} @>>> X^{i+1} \\
@VVV @VVV \\
C^{i} @>>> X^{i}
\end{CD}$$
where the horizontal arrows are the natural closed immersions,
the right vertical arrow is the projection onto the first $i$ factors,
and the left vertical arrow is a Zariski-locally trivial $C_x$-fibration.
It follows that $C^i$ is an iterated $C_x$-fibration.
\begin{lem}\label{lem:m-c-bir}
The total evaluation map $\nu^i:\sM_2^i \to X^{i+1}$
factors through a birational morphism onto $C^i$.
\end{lem}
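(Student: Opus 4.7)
The plan is to prove the lemma in two steps: first, factor $\nu^i$ through a surjection onto $C^i$; second, show this surjection is birational. Throughout, I would exploit the natural $G$-actions on $\sM_2^i$ and $C^i$ and the $G$-equivariance of $\nu^i$ (with respect to the diagonal action on $X^{i+1}$), together with induction on $i$ via the recursive descriptions $\sM_2^i = \sM_2 \times_X \sM_2^{i-1}$ and $C^i = C \times_X C^{i-1}$.

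For the factorization and surjectivity, I would start with $i = 1$. Restricting $\nu^1 : \sM_2 \to X \times X$ over $x$ on the leftmost factor, the displayed commutative diagram identifies $x \times_X \sM_2$ with $\tilde C_x$, whose image in $X$ on the right is $C_x = x \times_X C$. By $G$-homogeneity this extends to a factorization through $C = G \cdot (x \times C_x)$, and the resulting map $\sM_2 \to C$ is surjective because $\tilde C_x \to C_x$ is. The inductive step is immediate from the fiber product descriptions of $\sM_2^i$ and $C^i$.

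For birationality, the decisive input is that $\tilde C_x \to C_x$ is the blowing-up of a point, hence birational. Combined with $x \times_X \sM_2 \cong \tilde C_x$ and $x \times_X C \cong C_x$, this shows $\sM_2 \to C$ is birational over $x$, and by $G$-homogeneity birational globally; moreover we can choose a $G$-invariant dense open $U \subset C$ over which $\sM_2 \to C$ restricts to an isomorphism. Now $C^i$ is irreducible, since by the preceding discussion it is an iterated $C_x$-fibration over $X$, so it makes sense to consider a generic point $(p_0, \dots, p_i) \in C^i$. Each consecutive pair $(p_{j-1}, p_j)$ is then a generic point of $C$, in particular lies in $U$, so the line joining $p_{j-1}$ and $p_j$ is uniquely recovered. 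The entire chain of lines is therefore uniquely determined by the tuple, and the fiber of $\nu^i$ over such a generic point is a single reduced point, proving birationality.

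The main potential subtlety is verifying that each consecutive pair of a generic tuple in $C^i$ genuinely lies in the open $U$. This follows from the iterated $C_x$-fibration structure, which yields dominance of each projection $C^i \to C$ onto a consecutive pair; combined with $G$-invariance of $U$, a sufficiently generic choice of tuple avoids the complement of $U$ in each projection.
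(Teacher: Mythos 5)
Your proof is correct, and it is built around the same key geometric fact as the paper's argument --- two distinct points on a cominuscule variety lie on at most one line, so a chain of lines is recovered from a tuple of marked points with distinct consecutive entries --- but the two arguments package this observation quite differently. The paper's proof is a one-liner: over the complement of the diagonals in $X^{i+1}$, the proper map $\nu^i$ is a monomorphism (since each consecutive pair determines the connecting line), hence a closed immersion; combined with surjectivity onto the irreducible $C^i$, this forces $\nu^i$ to be an isomorphism over that dense open, giving birationality in a single stroke. Your route instead establishes the case $i=1$ by identifying $\sM_2 \to C$ fibrewise with the blow-up $\tilde C_x \to C_x$ and invoking $G$-homogeneity, concretely producing the $G$-invariant open $U = C \setminus \Delta_X$ over which $\sM_2 \to C$ is an isomorphism, and then bootstraps to general $i$ by noting that $\sM_2^i \to C^i$ is a fibre product of copies of $\sM_2 \to C$ over $X$, hence an isomorphism over the dense open $U \times_X \cdots \times_X U$. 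The paper's approach is slicker and sidesteps the need to track $G$-invariance or verify that the base case birationality propagates; your approach is more constructive and makes the exceptional locus explicit (it is precisely the union of the consecutive diagonals), which is arguably clearer even if longer. One small tightening you might make: rather than arguing about the fibre over the generic point being ``a single reduced point'' (which implicitly requires knowing the residue field extension is trivial), it is cleaner to observe directly that $\nu^i$ restricted over the dense open $U \times_X \cdots \times_X U$ is a fibre product of isomorphisms, hence itself an isomorphism.
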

\begin{proof}
That $\nu^i$ factors through a proper surjective morphism onto
$C^i$ follows by induction on $i$.
Since $\sM$-curves are projective lines, the restriction
of $\nu^i$ over the complement of the diagonals in $X^{i+1}$ is a proper monomorphism,
i.e. a closed immersion. 
\end{proof}
It follows that
$$ \delta_X(i) = [C^i] \cdot p^{i*}_0[x] \cdot 
\left(\sum_{j=1}^{i-1}p^{i*}_jH\right)^{d_i} \cdot p^{i*}_i [x]. $$
Assuming $i\ge r$ as in Proposition \ref{pro:cominuscule-chains}, we have
\begin{equation*}\label{eq:dimension-ci}
d_i = \dim C^i - 2\dim X = i \dim C_x - \dim X = i\dim V+i-\dim X.
\end{equation*}
Letting $\{[X_{\sigma}]\}_{\sigma \in S}$ be an additive basis of the Chow ring
$A^*(X)$
consisting of closed Schubert varieties, and identifying $A^*(X^{i+1})$ with
$A^*(X)^{\otimes (i+1)}$, we can write
$$[C] = \sum a^{\sigma\tau} [X_{\sigma}]\otimes [X_{\tau}]$$
where $(a^{\sigma\tau})$ is a symmetric integer matrix. Then
$$
[C^i] = \sum a^{\rho_0\dots\rho_i} [X_{\rho_0}]\otimes\cdots\otimes [X_{\rho_i}] $$
with
\begin{equation*}\label{eq:arho}
a^{\rho_0\dots\rho_i} = 
\sum a^{\sigma_0\tau_0} \cdots a^{\sigma_i\tau_i} 
\delta^{\rho_0}_{\sigma_0} 
\mu_{\tau_0\sigma_1}^{\rho_1} \cdots \mu_{\tau_{i-1}\sigma_i}^{\rho_{i-1}}
\delta^{\rho_i}_{\tau_i}
\end{equation*}
where $(\mu_{\sigma\tau}^\rho)$ are the Littlewood-Richardson coefficients:
$[X_\sigma]\cdot[X_\tau] = \sum \mu_{\sigma\tau}^\rho [X_\rho]$.

Letting $\xi \in S$ be the unique element with $[X_\xi]=[X]$, we have
$$ p_0^{i*}[x] \cdot [C^i] \cdot p_i^{i*}[x] =
\sum a^{\xi\rho_1\dots\rho_{i-1}\xi} [x] \otimes [X_{\rho_1}]\otimes\cdots\otimes
[X_{\rho_{i-1}}]\otimes [x] $$
so that
\begin{eqnarray*}
 \delta_X(i) &=& \sum_{\substack{\rho_1,\dots,\rho_{i-1}\in S
\\ b_1+\dots+b_{i-1}=d_i}}
N(d_i;b_1,\dots,b_{i-1})
a^{\xi\rho_1\dots\rho_{i-1}\xi}
\prod_{j=1}^{i-1} [X_{\rho_j}]\cdot H^{b_j}
\\
&=&
\sum_{\rho_1,\dots,\rho_{i-1}}
N(i\dim V+i-\dim X;\dim X_{\rho_1},\dots,\dim X_{\rho_{i-1}})
a^{\xi\rho_1\dots\rho_{i-1}\xi}
\prod_{j=1}^{i-1} \deg X_{\rho_j}
\end{eqnarray*}
where $N(d;\underline b)$ are the multinomial coefficients:
$(\sum x_j)^d = \sum_{\underline b} N(d;\underline b) \prod x_j^{b_j}$.
It follows that $\delta_X(i)$ can be computed in terms of the matrix
$(a_{\sigma\tau})$ and the degrees and dimensions of Schubert varieties $X_\sigma$.

In order to apply Proposition \ref{pro:linear-nondegeneracy} in Chapter
\ref{chap:rigidity}, we will also need the following property. The proof
by Hwang and Mok~\cite[Proposition 14]{hwang-mok-rigidity} goes without change.
\begin{lem}\label{lem:cominuscule-wedge2}
Assume $\Char k \neq 2$. Then $(X,\sM)$ satisfy hypothesis (4) of Proposition \ref{pro:linear-nondegeneracy}.
\end{lem}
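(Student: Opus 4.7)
The plan is to verify hypothesis~(4) by the case-by-case analysis of Hwang and Mok~\cite[Prop.~14]{hwang-mok-rigidity}, built on the classification of VMRT in Corollary~\ref{cor:cominuscule-lines}. Since the spanning condition is open on $\sM_1$, and $\sM_1$ is $G$-homogeneous with $L_\alpha^\ss$ acting transitively on the VMRT fibre $V=x\times_X\sM_1$, I would reduce to verifying it at the standard $k$-point $[c_\alpha]\in V$ described in Proposition~\ref{pro:cominuscule-lines}. There $T_{X,x}$ is identified with $\fg/\fp_\alpha$, the tangent direction is $u=[\fg_{-\alpha}]$, and by $L_\alpha^\ss$-equivariance the embedded tangent cone $\hat T_{V,[u]}$ is the $L_\alpha^\ss$-submodule of $\fg/\fp_\alpha$ generated by $\fg_{-\alpha}$. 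The image of the natural morphism then identifies with an explicit $L_\alpha^\ss$-subrepresentation of $\Lambda^2(\fg/\fp_\alpha)$.

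By Corollary~\ref{cor:cominuscule-lines}, $V$ is either (a) a Segre variety, (b) a Veronese variety of degree $2$, or (c) a minimally embedded cominuscule variety. In all three cases $V$ linearly spans $\PP T_{X,x}$, so $\sD_x=T_{X,x}$, and it suffices to show that the image of the natural morphism linearly spans $\Lambda^2 T_{X,x}$. Cases~(a) and~(c) are essentially formal: the VMRT is $L_\alpha^\ss$-homogeneously and separably embedded in any characteristic, and the root-theoretic spanning computation goes through verbatim as in Hwang--Mok, using $L_\alpha^\ss$-translates of $u\wedge\hat T_{V,[u]}$ to sweep out all of $\Lambda^2(\fg/\fp_\alpha)$.

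The main obstacle is case~(b), arising from $\LG(n,2n)$: the VMRT is $\PP P^*\simeq\PP^{n-1}$ embedded by the squaring map $[v]\mapsto[v^2]$ into $\PP\Sym^2 P^*=\PP T_{X,x}$, with differential $\delta v\mapsto 2v\,\delta v$. In characteristic $2$ this differential vanishes identically, so the tangent map fails to be separable and the argument collapses. The hypothesis $\Char k\ne 2$ avoids this, allowing the polarisation identity $uv=\bigl((u+v)^2-u^2-v^2\bigr)/2$ and yielding an explicit spanning calculation in $\Lambda^2\Sym^2 P^*$ that completes the verification.
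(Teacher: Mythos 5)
The paper's own proof of this lemma is a one-sentence citation: Hwang and Mok's argument for~\cite[Prop.~14]{hwang-mok-rigidity} is asserted to go through without change once $\Char k\ne 2$. Reconstructing that case-by-case computation is a reasonable thing to do, and your reduction to the base $k$-point $[c_\alpha]$ via $L^\ss_\alpha$-homogeneity is correct; but your diagnosis of \emph{where} characteristic $2$ intervenes is wrong, and this is a genuine gap. Your central claim -- that case~(a), the Segre case, goes through ``verbatim'' in every characteristic and only case~(b) needs $\Char k\ne 2$ -- is false. Take $W=A\otimes B$ with Segre-embedded $V=\PP A\times\PP B$; at $[u]$ with $u=x\otimes y$ one has $\hat T_{V,[u]}=x\otimes B+A\otimes y$, so the spanning set consists of wedges of the two shapes $(x\otimes y)\wedge(x\otimes y')$ and $(x\otimes y)\wedge(x'\otimes y)$. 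Expanding in a basis, one checks directly that the first shape always produces \emph{opposite} coefficients on the pair of basis vectors $(e_if_j)\wedge(e_kf_l)$ and $(e_if_l)\wedge(e_kf_j)$, while the second always produces \emph{equal} coefficients. These two linear constraints are transverse when $2$ is invertible, so combining the two shapes fills out $\Lambda^2W$; but in characteristic $2$ the constraints coincide, and the whole spanning set lands in a proper $L^\ss_\alpha$-submodule. The polarisation obstruction is therefore present uniformly across the classification, not isolated to $\LG(n,2n)$, which is why the lemma carries the blanket hypothesis $\Char k\ne 2$ rather than being confined to type $C_n$.

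The mechanism you invoke for case~(b) is also not quite right. The tangent space $T_{X,x}$ of $\LG(n,2n)$ is the space of symmetric \emph{bilinear} forms on the Lagrangian $L$, that is the divided-power square $\Gamma^2 L^*\subset L^*\otimes L^*$, not the symmetric quotient $\Sym^2 L^*$. The VMRT map $\PP L^*\to\PP\Gamma^2 L^*$, $[\phi]\mapsto[\phi\otimes\phi]$, has projective differential sending $\delta$ to $\phi_0\otimes\tilde\delta+\tilde\delta\otimes\phi_0$, which is \emph{nonzero} in $\Gamma^2 L^*$ for $\tilde\delta\notin k\phi_0$ in every characteristic; the embedding is a closed immersion and the tangent map is unramified regardless of $p$. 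The ``$\delta v\mapsto 2v\,\delta v$'' picture is what you would see after composing with the quotient $\Gamma^2 L^*\to\Sym^2 L^*$, which is the wrong target. The characteristic-$2$ failure for $\LG(n,2n)$ is of the same nature as in the Segre case -- a degeneracy in the wedge-spanning computation -- not a loss of separability of the VMRT embedding.
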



\chapter{The extension theorem}\label{chap:extension}

\section{Morphisms of varieties with curves}

\subsection{Jets of \'etale morphisms}
Given a nonsingular variety $X$, let $\Delta_X : X \to X^\sharp$
denote the diagonal section. Since $X^\sharp$ is a bundle of formal discs,
the sheaf
$\underline\Aut_X(X^\sharp ; \Delta_X)$
is an affine group scheme over $X$, a Zariski-locally trivial twisted form
of the group of origin-preserving automorphisms of a formal disc
of dimension $\dim(X)$.
Given another 
nonsingular variety $Y$ with $\dim(Y)=\dim(X)$, let
$$ X \xleftarrow{\pr_X} X \times Y \xrightarrow{\pr_Y} Y $$
be the two projections, and
consider the sheaves
\begin{eqnarray*} 
\sF_{X,Y} &=& \underline\Hom_{X\times Y}(\pr_X^*X^\sharp,\pr_Y^*Y^\sharp ;
\pr_X^*\Delta_X,\pr_Y^*\Delta_Y) \\
\sE_{X,Y} &=& \underline\Isom_{X\times Y}(\pr_X^*X^\sharp,\pr_Y^*Y^\sharp ;
\pr_X^*\Delta_X,\pr_Y^*\Delta_Y) 
\end{eqnarray*}
over $X\times Y$. Since $\pr_X^* X^\sharp$
and $\pr_Y^* Y^\sharp$ are bundles of formal discs
of equal dimension,
it follows that $\sF_{X,Y}$ is a Zariski-locally trivial $\AA^\infty$-bundle
over $X\times Y$, while
$\sE_{X,Y}$
is a Zariski-locally trivial right torsor
for $\underline\Aut_X(X^\sharp;\Delta_X)\times Y$, and
a Zariski-locally trivial left torsor for
$X\times\underline\Aut_Y(Y^\sharp;\Delta_Y)$.
\begin{lem}\label{lem:e-stratification}
$\sF_{X,Y} / X$ is equipped
with a natural stratification, restricting to $\sE_{X,Y}/X$.
\end{lem}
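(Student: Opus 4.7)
The plan is to construct the stratification on $\sF_{X,Y}/X$ directly on functors of points, exploiting the fact that infinitesimally close sections have identical formal neighborhoods. By Lemma~\ref{lem:completion-at-section}, for any $x:T\to X$ we may identify $x^*X^\sharp$ with the completion $\hat{T\times X}_{\Gamma_x}$ of $T\times X$ along the graph of $x$; a $T$-point of $\sF_{X,Y}$ is then a triple $(x,y,f)$ with $f:\hat{T\times X}_{\Gamma_x}\to\hat{T\times Y}_{\Gamma_y}$ a $T$-morphism sending $\Gamma_x$ to $\Gamma_y$, and a $T$-point of $\sE_{X,Y}$ is one for which $f$ is an isomorphism.

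To define the stratification, consider a $T$-point of $X^\sharp\times_X\sF_{X,Y}$, fibered using the right structure map of $X^\sharp$: it consists of $(x_0,x_1)\in X^\sharp(T)$ together with $(x_1,y,f)\in\sF_{X,Y}(T)$. Since $(x_0,x_1)$ factors through an infinitesimal thickening of the diagonal in $X\times X$, the graphs $\Gamma_{x_0}$ and $\Gamma_{x_1}$ are mutually infinitesimal thickenings inside $T\times X$, sharing the same reduced subscheme; they therefore have the same formal neighborhood, $\hat{T\times X}_{\Gamma_{x_0}}=\hat{T\times X}_{\Gamma_{x_1}}$. In particular, $x_0$ defines a section $\sigma:T\to\hat{T\times X}_{\Gamma_{x_1}}$, and $f\circ\sigma$ is a section of $\hat{T\times Y}_{\Gamma_y}$ whose underlying morphism to $T\times Y$ is the graph of a unique $y':T\to Y$, with $(y,y')\in Y^\sharp(T)$. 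The same reasoning gives $\hat{T\times Y}_{\Gamma_y}=\hat{T\times Y}_{\Gamma_{y'}}$, so the same $f$ may be reinterpreted as a morphism $f':\hat{T\times X}_{\Gamma_{x_0}}\to\hat{T\times Y}_{\Gamma_{y'}}$ sending $\Gamma_{x_0}$ to $\Gamma_{y'}$. The assembled data $((x_0,y',f'),(x_0,x_1))$ is a $T$-point of $\sF_{X,Y}\times_XX^\sharp$.

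This assignment is functorial in $T$, and its inverse is obtained by swapping the roles of $x_0,x_1$ and of $y,y'$. The cocycle condition on the triple product (the completion of $X^3$ along the small diagonal) will reduce to transitivity of the identifications above: three mutually infinitesimally close sections of $T\times X$ share a common formal neighborhood, so iterated and direct transports agree. Since under these canonical identifications $f'$ is literally the same morphism of formal $T$-schemes as $f$, it is an isomorphism iff $f$ is; hence the stratification restricts from $\sF_{X,Y}$ to $\sE_{X,Y}$. The main conceptual point is the identification of formal neighborhoods of infinitesimally close graphs in $T\times X$; once this is in hand, no genuine computation remains, though some care is required in the non-reduced setting to ensure that the relevant ideals differ only by nilpotents.
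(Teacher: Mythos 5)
Your proposal is correct and rests on the same observation as the paper's proof: that infinitesimally close sections of $T\times X$ share a formal neighborhood, so a jet at $x_1$ can be read off at $x_0$ by moving the origin. The paper packages this as a single universal construction via $\epsilon\circ(\id\times p_{13}) : \sF_{X,Y}\times_X X^\sharp\times_X X^\sharp \to Y^\sharp$ and the universal property of $\sF_{X,Y}$, which sidesteps the pointwise bookkeeping (and the appeal to Lemma~\ref{lem:completion-at-section}, whose stated hypothesis is that the base is reduced); but this is the same argument unwound on $T$-points rather than a genuinely different route.
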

\begin{proof}
Let $\epsilon : \sF_{X,Y}\times_X X^\sharp \to Y^\sharp$ be the universal
map. 
Consider the commutative diagram
$$\begin{CD}
\sF_{X,Y}\times_XX^\sharp\times_XX^\sharp @>{\id\times p_{13}}>>
\sF_{X,Y}\times_XX^\sharp @>{\epsilon}>> Y^\sharp \\
@A{\id\times\Delta_X}AA @. @AA{\Delta_Y}A \\
\sF_{X,Y} \times_XX^\sharp @>{\epsilon}>> Y^\sharp @>{p_2}>> Y
\end{CD}$$
where $p_2 :Y^\sharp \to Y$ is the right projection,
and $p_{13} : X^\sharp\times_XX^\sharp \to X^\sharp$ is the leftmost-rightmost projection.
By the universal property of $\sF_{X,Y}$, the top composite $\epsilon \circ (\id\times p_{13})$ defines a morphism
fitting into a diagram
$$\begin{CD}
 \sF_{X,Y}\times_X X^\sharp @>>> \sF_{X,Y} \\
 @VVV @VVV \\
X^\sharp @>>> X
\end{CD}
$$
where the bottom horizontal arrow is the \emph{right} structure map.
This gives the desired morphism $\sF_{X,Y}\times_XX^\sharp \to X^\sharp\times_X\sF_{X,Y}$
over $X^\sharp$. Since $\sE_{X,Y}$ is an open subscheme of $\sF_{X,Y}$, it
inherits a stratification.
\end{proof}
The sheaf $\sE_{X,Y}$ parametrises $\infty$-jets of formally-\'etale 
maps $X \to Y$. Such maps induce isomorphisms of spaces of arcs:
the universal map
$$ \Phi_{X,Y} : \sE_{X,Y} \times_X X^\sharp \to \sE_{X,Y}\times_Y Y^\sharp $$
lifts to produce a commutative diagram
$$\begin{CD}
 \sE_{X,Y} \times_X X^\sharp\times_X\Arc_X @>{\tilde\Phi_{X,Y}}>> \sE_{X,Y}\times_Y Y^\sharp\times_Y\Arc_Y  \\
@VVV @VVV \\
 \sE_{X,Y} \times_X X^\sharp @>{\Phi_{X,Y}}>> \sE_{X,Y}\times_Y Y^\sharp 
\end{CD}$$
where both horizontal arrows are isomorphisms.

We now want to restrict to maps preserving
families of arcs induced by families of rational curves.
Let us for convenience introduce the following notion.
\begin{defn}
A \emph{good family} on a nonsingular projective variety $X$
is a irreducible component $\sM \subset \underline\Hom_\bir^n(\PP^1,X)$
such that the generic $\sM$-curve is minimal and unramified at $0$,
and the generic fibre of $\sM_1^\free \to X$ is geometrically irreducible and positive-dimensional.
\end{defn}
Given a nonsingular variety with
good family $(X,\sM)$, we will denote by $\hat\sM_1 \subset \Arc_X$
the closure of the image of the generic point of $\sM_1$ under the natural
monomorphism $\sM_1^\arc \to \Arc_X$ (in particular, $\sM_1$ and $\hat\sM_1$
are birational). Given another such pair $(Y,\sN)$,
with $\dim(Y)=\dim(X)$, consider the subsheaf
$\sC_{X,Y} \subset \sE_{X,Y}$ defined by
$$ \sC_{X,Y}(T) = \{ \varphi \in \sE_{X,Y}(T)\ |\ 
\varphi^*\tilde\Phi_{X,Y} : T \times_X X^\sharp \times_X \hat\sM_1
\xrightarrow{\simeq} T\times_Y Y^\sharp\times_Y \hat\sN_1
\}. $$
It is a closed subscheme of $\sE_{X,Y}$,
parametrising
$\infty$-jets of formally \'etale maps $X\to Y$
inducing isomorphisms of $\hat\sM_1$ onto $\hat\sN_1$.
We will also use the `generic locus'
$$ \sC_{X,Y}^\circ = x\times_X \sC_{X,Y}\times_Y y $$
where $x$, $y$ are the generic points of $X$, $Y$.
\begin{lem}\label{lem:c-stratification}
The stratification on $\sE_{X,Y} / X$ restricts to $\sC_{X,Y} / X$ and $\sC^\circ_{X,Y}/X$.
\end{lem}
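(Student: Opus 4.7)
The plan is to show that the stratification isomorphism $s:\sE_{X,Y}\times_XX^\sharp\to X^\sharp\times_X\sE_{X,Y}$ constructed in Lemma~\ref{lem:e-stratification} is compatible with the lifted map $\tilde\Phi_{X,Y}$ and with canonical stratifications carried by $\Arc_X$ and $\Arc_Y$ as sheaves on $X$ and $Y$. Given this compatibility, the conditions cutting out $\sC_{X,Y}$ and $\sC^\circ_{X,Y}$—namely, that $\tilde\Phi_{X,Y}$ restricts to an isomorphism $\hat\sM_1 \simeq \hat\sN_1$—are invariant under $s$, which is exactly what the lemma asserts.

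First I would trace through what $s$ does on $T$-points. A pair $(\varphi,\gamma) \in (\sE_{X,Y}\times_X X^\sharp)(T)$ consists of a jet $\varphi$ of formal isomorphism based at $(x_0,y_0)\in(X\times Y)(T)$ and an infinitesimal identification $\gamma:\hat X_{x_0}\simeq\hat X_{x_1}$. By the construction using $p_{13}$ in Lemma~\ref{lem:e-stratification}, $s$ outputs $(\gamma,\varphi')$ with $\varphi'=\varphi\circ\gamma^{-1}$ a jet based at $(x_1,y_0)$. Running the same universal construction but with $X^\sharp,Y^\sharp$ replaced by $\Arc_X,\Arc_Y$ at each step yields the identity $\tilde\Phi_{\varphi'}=\tilde\Phi_\varphi\circ(\gamma^{-1})_*$, where $(\gamma^{-1})_*:\hat X_{x_1}\times_X\Arc_X\simeq\hat X_{x_0}\times_X\Arc_X$ is the canonical transport furnished by the $X$-sheaf structure of $\Arc_X$.

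Second, since $\hat\sM_1\hookrightarrow\Arc_X$ and $\hat\sN_1\hookrightarrow\Arc_Y$ are closed immersions of subsheaves defined globally on $X$ and $Y$, they are horizontal with respect to the canonical stratifications. In particular $(\gamma^{-1})_*$ restricts to $\hat X_{x_1}\times_X\hat\sM_1\simeq\hat X_{x_0}\times_X\hat\sM_1$, while the $Y$-base point is unchanged so $\hat\sN_1$ on the right is undisturbed. It follows that $\tilde\Phi_{\varphi'}$ sends the $\hat\sM_1$-locus isomorphically to the $\hat\sN_1$-locus if and only if $\tilde\Phi_{\varphi}$ does, giving $\varphi\in\sC_{X,Y}(T)\iff\varphi'\in\sC_{X,Y}(T)$. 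This is the restriction statement for $\sC_{X,Y}$.

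For $\sC^\circ_{X,Y}=x\times_X\sC_{X,Y}\times_Y y$ the restriction is immediate: $s$ fixes the $Y$-base point, and on the $X$-side the formal neighborhood $x\times_XX^\sharp$ has underlying reduced scheme $\Spec\kappa(x)$, so infinitesimal translations starting at the generic point remain over the generic point and $x\times_XX^\sharp$ coincides with $X^\sharp\times_Xx$ as a formal scheme. The main obstacle, though a formal one, is making precise the canonical $X$-stratification on $\Arc_X$ and verifying horizontality of $\hat\sM_1\hookrightarrow\Arc_X$; both assertions follow from the functorial construction of $\Arc_X$ over $X$ and of $\hat\sM_1$ as the image-closure of the generic point of $\sM_1$, but must be unwound by chasing through the universal properties used in Lemma~\ref{lem:e-stratification} and in the definition of $\tilde\Phi_{X,Y}$.
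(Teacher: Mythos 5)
Your overall strategy matches the paper's: the one-sentence proof in the text is precisely the observation that $\tilde\Phi_{X,Y}$ is horizontal for the canonical stratifications on $X^\sharp\times_X\Arc_X$ and $Y^\sharp\times_Y\Arc_Y$, and you correctly note that $\hat\sM_1\hookrightarrow\Arc_X$, $\hat\sN_1\hookrightarrow\Arc_Y$ are horizontal because they are defined globally over $X$ and $Y$. So the right ingredients are all present.

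However, your explicit description of the stratification is wrong on the $Y$-side, and this error recurs. Tracing the construction of Lemma~\ref{lem:e-stratification}: the morphism $\sF_{X,Y}\times_X X^\sharp\to\sF_{X,Y}$ built from $\epsilon\circ(\id\times p_{13})$ moves the base from $(x_0,y_0)$ to $(x_1,\varphi(x_1))$, not to $(x_1,y_0)$ — the commutative diagram there explicitly says the new $Y$-component is $p_2\circ\epsilon$, i.e.\ $\varphi(x_1)$. Correspondingly, the re-based jet is $\varphi'=\delta\circ\varphi\circ\gamma^{-1}$, where $\delta$ is the $Y^\sharp$-transport from $y_0$ to $\varphi(x_1)$, and the identity you assert needs to read $\tilde\Phi_{\varphi'}=\delta_*\circ\tilde\Phi_\varphi\circ(\gamma^{-1})_*$. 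Your claim that ``the $Y$-base point is unchanged so $\hat\sN_1$ on the right is undisturbed'' is therefore unjustified as stated; the reason $\hat\sN_1$ is preserved is exactly the same as for $\hat\sM_1$, namely horizontality under the canonical stratification on $Y^\sharp\times_Y\Arc_Y$, a fact you yourself recorded two sentences earlier but then did not use symmetrically. (This asymmetry would also be inconsistent with the proof of Lemma~\ref{lem:c-generic}, which needs to invoke a \emph{separate} $Y$-stratification obtained by swapping the roles of $X$ and $Y$.) The same incorrect claim appears in your treatment of $\sC^\circ_{X,Y}$: $s$ does not fix the $Y$-base point, but it moves it infinitesimally, so the restriction over the generic point of $Y$ is preserved by the same reasoning you apply on the $X$-side. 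With these corrections — treat both sides symmetrically via horizontality — the argument goes through and recovers the paper's proof.
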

\begin{proof}
It is enough to notice that
$\tilde\Phi_{X,Y}$ is horizontal with respect to the pullbacks of the canonical stratifications
on $X^\sharp\times_X\Arc_X$ and $Y^\sharp\times_Y\Arc_Y$.
\end{proof}
\begin{lem}\label{lem:c-generic}
$\sC_{X,Y}^\circ$ is dense in $\sC_{X,Y}$.
\end{lem}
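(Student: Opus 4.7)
The lemma is equivalent to showing that every irreducible component of $\sC_{X,Y}$ dominates $X\times Y$ via the structure morphism: then the generic point of each component lies over the generic point of $X\times Y$, that is, in $\sC_{X,Y}^\circ$.

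The plan is to use the stratification from Lemma \ref{lem:c-stratification} together with a symmetric stratification on $\sC_{X,Y}/Y$. Since $\sE_{X,Y}$ parametrises \emph{isomorphisms}, the involution $\varphi\mapsto\varphi^{-1}$ identifies $\sE_{X,Y}$ with $\sE_{Y,X}$ and restricts to an isomorphism $\sC_{X,Y}\simeq \sC_{Y,X}$ (the arc-matching condition being symmetric under exchange of $\hat\sM_1$ and $\hat\sN_1$). Applying Lemma \ref{lem:e-stratification} and Lemma \ref{lem:c-stratification} to the pair $(Y,X)$ and transporting via this involution produces a stratification on $\sC_{X,Y}/Y$. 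The two stratifications correspond to infinitesimally reparametrising the source and target of $\varphi$ respectively, operations which commute; hence, via the canonical identification $(X\times Y)^\sharp\simeq X^\sharp\times Y^\sharp$, they combine into a single stratification on $\sC_{X,Y}/(X\times Y)$.

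A stratification on $Z/S$ implies the infinitesimal lifting criterion for $Z\to S$: given a square-zero thickening $T_0\hookrightarrow T$ and compatible morphisms $T_0\to Z$, $T\to S$, the defining isomorphism $S^\sharp\times_S Z\simeq Z\times_S S^\sharp$ produces the required lift $T\to Z$. Applied to the structure morphism $\pi:\sC_{X,Y}\to X\times Y$, this yields formal smoothness at every geometric point; equivalently, the formal neighbourhood of every point of $\sC_{X,Y}$ surjects onto the formal neighbourhood of its image in $X\times Y$. Hence every irreducible component of $\sC_{X,Y}$ has image containing a full-dimensional Zariski-closed subset of the irreducible $X\times Y$, so the component dominates $X\times Y$, yielding density.

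The main technical obstacle is verifying that the $X$- and $Y$-stratifications commute precisely enough to yield a single stratification over $X\times Y$; this reduces to the independence of source- and target-reparametrisation for formal morphisms and the cocycle condition, essentially a careful bookkeeping check.
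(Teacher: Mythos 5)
Your approach is in the same spirit as the paper's: both exploit the stratification on $\sC_{X,Y}/X$ (Lemma~\ref{lem:c-stratification}) together with the one on $\sC_{X,Y}/Y$ coming from the evident source/target symmetry. The difference lies in how the two stratifications are used. You propose merging them into a single stratification on $\sC_{X,Y}/(X\times Y)$ via $(X\times Y)^\sharp\simeq X^\sharp\times Y^\sharp$, then reading off an infinitesimal lifting property for $\sC_{X,Y}\to X\times Y$ and deducing dominance of every component. The paper instead applies the two stratifications sequentially at the level of local rings: for $c_0\in\sC_{X,Y}$ over $x_0\in X$, the $X$-stratification forces the kernel of $\OO_{X,x_0}\to\pr_{X*}\OO_{\sC_{X,Y},c_0}$ to lie in $\bigcap_r\fm_{x_0}^r=0$, so the fibre over the generic point $x$ of $X$ is dense in $\Spec\OO_{\sC_{X,Y},c_0}$; choosing $c$ in that dense locus and repeating over $Y$ then puts a point of $\sC_{X,Y}^\circ$ in every neighbourhood of $c_0$. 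Your version is morally fine but has two soft spots. First, the merged stratification over $X\times Y$ is not an automatic consequence of the two separate ones: one must verify that the two stratifying isomorphisms intertwine and jointly satisfy the cocycle condition over $(X\times Y)^\sharp$. You flag this as ``careful bookkeeping,'' but it is exactly the extra step that the sequential argument makes unnecessary, so relying on it is a genuine (if repairable) gap relative to the paper's route. Second, the conclusion ``image contains a full-dimensional Zariski-closed subset'' is the wrong language here: $\sC_{X,Y}$ is a closed subscheme of the $\AA^\infty$-bundle $\sE_{X,Y}$, hence far from Noetherian or finite type, and dimension counts are not available. The cleaner statement (which the stratification directly yields, and which the paper records) is injectivity of the local ring homomorphism; that alone, interpreted as the generic point of $X\times Y$ specialising to the image of every $c_0$, is what you need. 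With those two adjustments your argument closes, but the paper's sequential phrasing avoids both issues.
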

\begin{proof}
Let $c_0 \in \sC_{X,Y}$ be a point with $\pr_X(c_0)=x_0$.
Let $I \subset \OO_{X,x_0}$ be the kernel of $\OO_{X,x_0} \to \pr_{X*}\OO_{\sC_{X,Y},c_0}$. 
By Lemma \ref{lem:c-stratification}, $I \subset \cap_r \fm_{x_0}^r = 0$,
so that $x \times_X \Spec \OO_{\sC_{X,Y},c_0}$ is dense in $\Spec \OO_{\sC_{X,Y},c_0}$.

Now let $c \in x \times_X \Spec \OO_{\sC_{X,Y},c_0}$ be a point with
$\pr_Y(c)=y_0$.
Let $J \subset \OO_{Y,y_0}$ be the kernel of $\OO_{Y,y_0} \to \pr_{Y*} \OO_{\sC_{X,Y},c}$. 
Since the definition of $\sC_{X,Y}$ is symmetric in $X$, $Y$, Lemma
\ref{lem:c-stratification} gives a stratification on $\sC_{X,Y}/Y$.
Hence $J \subset \cap_r \fm_{y_0}^r = 0$,
so that $\Spec\OO_{\sC_{X,Y},c}\times_Y y$ is dense in
$\Spec\OO_{\sC_{X,Y},c}$.
\end{proof}
\begin{lem}\label{lem:c-m1-n1}
There is a natural commutative diagram
$$\begin{CD}
\sC_{X,Y}^\circ \times_X X^\sharp \times_X m_1 @>>> \sC_{X,Y}^\circ\times_Y Y^\sharp\times_Y n_1 \\
@VVV @VVV \\
\sC_{X,Y}^\circ \times_X X^\sharp \times_X \hat\sM_1 @>{\tilde\Phi_{X,Y}}>> \sC_{X,Y}^\circ\times_Y Y^\sharp\times_Y\hat\sN_1
\end{CD}$$
where $m_1$, $n_1$ are the generic points of $\sM_1$, $\sN_1$.
\end{lem}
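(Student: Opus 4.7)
The plan is to construct the top horizontal arrow of the diagram as a restriction of $\tilde\Phi_{X,Y}$. Over $\sC_{X,Y}^\circ$ the isomorphism $\tilde\Phi_{X,Y}$ restricts to an isomorphism
\[
\sC_{X,Y}^\circ \times_X X^\sharp \times_X \hat\sM_1 \xrightarrow{\sim} \sC_{X,Y}^\circ \times_Y Y^\sharp \times_Y \hat\sN_1
\]
by the very definition of $\sC_{X,Y}$; the vertical arrows of the diagram are the base changes of the inclusions of generic points $m_1 \hookrightarrow \hat\sM_1$ and $n_1 \hookrightarrow \hat\sN_1$. It then suffices to show that $\tilde\Phi_{X,Y}$ sends $\sC_{X,Y}^\circ \times_X X^\sharp \times_X m_1$ into $\sC_{X,Y}^\circ \times_Y Y^\sharp \times_Y n_1$. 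First note that since $\sM$ and $\sN$ are irreducible components of $\underline\Hom_\bir^n(\PP^1,-)$ and $\underline\Aut(\PP^1,0)$ is connected, the quotients $\sM_1$ and $\sN_1$ are irreducible, hence so are their birational images $\hat\sM_1$ and $\hat\sN_1$, with generic points $m_1$ and $n_1$.

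I would argue functorially. A $T$-point of $\sC_{X,Y}^\circ \times_X X^\sharp \times_X m_1$ is a tuple $(\varphi,(a,b),\gamma)$ with $\varphi\colon T\to\sC_{X,Y}^\circ$, so $a=\pr_X\varphi$ factors through $x$; $(a,b)\colon T\to X^\sharp$ an infinitesimally close pair; and $\gamma\colon T\to m_1$. Since the basepoint map $\hat\sM_1\to X$ sends $m_1$ to $x$, the condition that $\gamma$ factor through $m_1$ forces $b$ to factor through $x$ as well. Under $\tilde\Phi_{X,Y}$ this maps to $(\varphi,(a',b'),\gamma')$ with $\gamma'=\varphi_*\gamma$ the pushforward arc, and I need to show $\gamma'$ factors through $n_1$.

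By the good-family hypothesis, the integral schemes $x\times_X\hat\sM_1$ and $y\times_Y\hat\sN_1$ have generic points $m_1$ and $n_1$ respectively. Pulling the iso $T\times_X X^\sharp \times_X \hat\sM_1 \to T\times_Y Y^\sharp \times_Y \hat\sN_1$ (provided by the definition of $\sC_{X,Y}$) back along the diagonal sections $X\to X^\sharp$ and $Y\to Y^\sharp$ yields a $T$-family of isomorphisms $T\times_x (x\times_X\hat\sM_1) \to T\times_y (y\times_Y\hat\sN_1)$ carrying $\gamma$ to $\gamma'$; fibrewise each of these is an iso of integral schemes and hence sends $m_1$ to $n_1$, forcing $\gamma'$ to factor through $n_1$. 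The main technical point is the careful juggling of fibre products needed to identify the restriction of $\tilde\Phi_{X,Y}$ to the origin sections with the fibrewise iso $x\times_X\hat\sM_1 \to y\times_Y\hat\sN_1$, and to promote the resulting pointwise matching of generic points to the universal statement over $\sC_{X,Y}^\circ$.
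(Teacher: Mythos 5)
Your overall strategy matches the paper's: the bottom arrow is the isomorphism furnished by the definition of $\sC_{X,Y}$, and the task is to show that the restriction over $m_1$ lands over $n_1$, which is then a statement about generic points that can be checked after base change to residue fields of points of $\sC^\circ_{X,Y}$.

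However, there is an error in the justification of the key step. You assert that "fibrewise each of these is an iso of integral schemes." That is false: for a point $t$ with residue field $\kappa(t)$ lying over $x$, the scheme $\kappa(t)\otimes_{\kappa(x)}(x\times_X\hat\sM_1)$ is in general neither irreducible nor integral --- base change of an integral scheme along a field extension can acquire several components (and, a priori, nilpotents). So the claim "hence sends $m_1$ to $n_1$" is drawing on a property the base-changed schemes do not possess. The paper is careful here: it invokes \emph{separability} of $\kappa(m_1)/\kappa(x)$ and $\kappa(n_1)/\kappa(y)$ (which holds because $m_1$, $n_1$ are free, hence $\sM_1\to X$ and $\sN_1\to Y$ are smooth at these points), ensuring the base change is reduced, and then argues via the fact that $\sC^\circ_{X,Y}\times_Y n_1$ is a localisation of $\sC^\circ_{X,Y}\times_Y \hat\sN_1$. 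Your argument can be repaired with less: an isomorphism of schemes is in particular a homeomorphism and so maps the set of generic points bijectively to the set of generic points; by flatness of the residue-field base change, the set of generic points of $\kappa(t)\otimes_{\kappa(x)}(x\times_X\hat\sM_1)$ is precisely the fibre over $m_1$, and likewise on the $Y$-side. This topological statement, together with the observation that factoring through the generic point $n_1$ is equivalent to the set-theoretic image lying over $n_1$, finishes the argument. But you should not appeal to integrality; either invoke separability as the paper does, or replace the integrality claim with the topological argument about generic points.
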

\begin{proof}
Recall that $m_1 \to \hat\sM_1$ and $n_1 \to \hat\sN_1$ are
inclusions of generic points, while the bottom horizontal arrow is an isomorphism.
The field extensions $\kappa(m_1)/\kappa(x)$ and $\kappa(n_1)/\kappa(y)$ are separable
by freeness of $m_1$, $n_1$.
It follows that 
for every $c \in C^\circ_{X,Y}$ the induced isomorphism 
$$\kappa(c)\otimes_{\kappa(x)}(x\times_X \hat\sM_1)  
\xrightarrow{\tilde\Phi_{X,Y}} \kappa(c)\otimes_{\kappa(y)}(y\times_Y \hat\sN_1)$$
sends $c\times_xm_1$ to $c\times_yn_1$.
Since $\sC_{X,Y}^\circ\times_Y n_1$ is a localisation of
$\sC_{X,Y}^\circ\times_Y\hat\sN_1$, it follows that
$$ \sC^\circ_{X,Y}\times_X X^\sharp\times_X\hat\sM_1 \xrightarrow{\tilde\Phi_{X,Y}}
\sC^\circ_{X,Y}\times_Y Y^\sharp\times_Y\sN_1 $$
sends $\sC^\circ_{X,Y}\times_XX^\sharp\times_X m_1$
into $\sC^\circ_{X,Y}\times_YY^\sharp\times_Y n_1$.
\end{proof}

\subsection{Parallel transport along curves}
Note that given a morphism $f:T \to X$, the stratification on $\sC_{X,Y}/X$
induces one on the pullback $f^*\sC_{X,Y} / T$:
$$\begin{CD}
T^\sharp \times_T f^*\sC_{X,Y}@= 
T^\sharp \times_X \sC_{X,Y} @=
T^\sharp \times_{X^\sharp} (X^\sharp\times_X\sC_{X,Y}) \\
@. @. @V{\simeq}VV \\
f^*\sC_{X,Y}\times_TT^\sharp
@= \sC_{X,Y}\times_XT^\sharp 
@= (\sC_{X,Y}\times_XX^\sharp)\times_{X^\sharp}T^\sharp.
\end{CD}$$
The key point of this section is then the following `parallel transport' result,
an analogue of Hwang and Mok's analytic continuation along rational 
curves~\cite{hwang-mok-extension}. 
\begin{pro}\label{pro:extension-m1}
Let $(X,\sM)$ and $(Y,\sN)$ be a pair of nonsingular varieties
with good families, such that $\dim(X)=\dim(Y)$. 
Let $m_2$ be the generic
point of $\sM_2$ together with evaluation maps $m_2 \rightrightarrows X$.
Then there is a natural isomorphism
$$  {\sC^\circ_{X,Y}}\times_Xm_2 \to m_2\times_X\sC^\circ_{X,Y} $$
horizontal over $m_2$ with respect to the induced stratifications.
\end{pro}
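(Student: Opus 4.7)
The plan is to construct the isomorphism as a formal, algebro-geometric analogue of Hwang and Mok's analytic continuation along minimal rational curves. A $T$-point of $\sC^\circ_{X,Y}\times_X m_2$ is a pair $(c,\gamma)$: here $c$ corresponds to a formal isomorphism $\varphi_c:\hat X_p\to\hat Y_{y_0}$, with $p=\pr_X(c)$ and $y_0=\pr_Y(c)$, preserving the families $\hat\sM_1$, $\hat\sN_1$ of arcs, while $\gamma=(C,p,q)$ is a generic $2$-pointed $\sM$-curve anchored at $p$. The task is to produce, naturally in $(c,\gamma)$, a formal isomorphism $c'$ based at $q$, yielding a $T$-point $(\gamma,c')$ of $m_2\times_X\sC^\circ_{X,Y}$.

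The first step is to transport the pointed curve across $\varphi_c$. Lemma~\ref{lem:c-m1-n1} identifies the formal arc of $C$ at $p$ with the formal arc at $y_0$ of a generic pointed $\sN$-curve $(D,y_0)$, and $(D,y_0)$ is determined uniquely because $\sN_1\to\hat\sN_1$ is birational ($\sN$ being a good family). I then want to promote this to an identification of $2$-pointed curves $(C,p,q)\leftrightarrow(D,y_0,q')$, extracting a canonical second marked point $q'\in D$ corresponding to $q$. Since $\sM_2\to\sM_1$ and $\sN_2\to\sN_1$ are universal $\PP^1$-bundles over the respective pointed-curve spaces, and Lemma~\ref{lem:c-m1-n1} identifies these bundles over $\sC^\circ_{X,Y}$, the expected output of this step is a canonical isomorphism
$$\sC^\circ_{X,Y}\times_X m_2\;\cong\;\sC^\circ_{X,Y}\times_Y n_2.$$

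The principal technical step is the construction of the formal isomorphism $c'$ at $q$. My approach is to use the stratification of $\sC^\circ_{X,Y}/X$ from Lemma~\ref{lem:c-stratification}, together with its symmetric counterpart over $Y$, to propagate $c$ along the formal tangent direction to $C$ at $p$; arc-preservation ensures that this propagation is intertwined, via $\varphi_c$, with the corresponding propagation along $D$ on the $Y$-side. The main obstacle is to integrate this purely infinitesimal propagation all the way out to the generic (hence \emph{far}) second marked point $q$ of $C$. This is the algebro-geometric analogue of Hwang--Mok's analytic continuation, and my plan is to leverage the algebraicity and rigidity of $C\cong\PP^1\cong D$ together with the two-sided stratification of $\sC^\circ_{X,Y}$ to realise the extension as a morphism of formal schemes.

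Finally, horizontality of the resulting isomorphism over $m_2$ with respect to the pullback stratifications from $\sC^\circ_{X,Y}/X$ on both sides should follow from the naturality of the construction in $c$, combined with the compatibility of $\tilde\Phi_{X,Y}$ with the canonical stratifications on $X^\sharp$ and $Y^\sharp$ invoked in Lemma~\ref{lem:c-m1-n1}: any infinitesimal displacement of $c$ along $X^\sharp$ is intertwined, via $\tilde\Phi_{X,Y}$, with the corresponding displacement on the $Y$-side, and the construction of $c'$ commutes with both. The fact that $c'$ itself lies in $\sC^\circ_{X,Y}$, i.e.\ preserves the respective families of arcs, will be automatic from the symmetric nature of the construction.
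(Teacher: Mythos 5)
Your proposal has the right overall shape — transport the pointed curve to $Y$ via the arc-preserving isomorphism, build a formal isomorphism at the second marked point, check horizontality — but it contains one incorrect claim and leaves the crux of the argument as a hope rather than a proof. The claim that Lemma~\ref{lem:c-m1-n1} ``identifies the $\PP^1$-bundles $\sM_2\to\sM_1$ and $\sN_2\to\sN_1$ over $\sC^\circ_{X,Y}$'' is not correct: that Lemma only produces an isomorphism of the pullbacks of the \emph{generic points} $m_1$, $n_1$ (i.e.\ of the formal arcs at the base points), not of the whole universal $\PP^1$-bundles. Extracting the second marked point $q'$ — equivalently, extending the identification from a formal neighbourhood of the zero-section to the full $\PP^1$-bundle — is precisely the content of the Proposition, not a preliminary to it.

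That is where the genuine gap lies. You write that ``the main obstacle is to integrate this purely infinitesimal propagation all the way out to the generic second marked point'' and propose to ``leverage the algebraicity and rigidity of $\PP^1$ together with the two-sided stratification,'' but no mechanism is given, and the hypothesis that does the work is never invoked. The paper's proof takes $M=\Spec\OO_{\sM_2,\tilde m_1}$ (a DVR with generic point $m_2$), pulls the canonical map $\hat\psi$ on $\hat M$ through $\tilde\Phi_{X,Y}$, and considers the zero locus of an induced map $g:r_1^*T_{M/X}\to r_2^*N_{\sN_2/Y}$ into the universal normal sheaf of the $\sN$-curve. \emph{Minimality} of the generic $\sN$-curve forces the ideal sheaf of that zero locus to split fibrewise into $\OO$'s and $\OO(-1)$'s, giving $R^1h_*\sI=0$, so every geometric fibre of the relevant $\PP^1$-bundle restricted to the zero locus is empty, a single reduced point, or an entire $\PP^1$; positive-dimensionality of the generic fibre of $\sM_1^\free\to X$ then pins down single points over $\tilde m_1$, hence a graph everywhere. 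Nothing in your proposal touches on minimality, which is one of the defining conditions of a good family and is exactly what makes the ``integration'' possible. Likewise, that the extended map lands in $\sE_{X,Y}$, then in $\sC_{X,Y}$, then in the generic locus $\sC^\circ_{X,Y}$, and that it is invertible, are asserted as ``automatic,'' whereas the paper has to argue each step via horizontality over $\hat M$ together with density of $\hat M$ in $M$ and the point-swapping involution.
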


\begin{proof}
Let $\tilde m_1 \in \sM_2$
be the image of $m_1$ under the zero-section $\sM_1\to\sM_2$. 
Set
$$ M = \Spec \OO_{\sM_2,\tilde m_1},\quad \hat M =\Spf\hat\OO_{\sM_2,\tilde m_1} $$
so that $M$ is the spectrum of a discrete valuation
ring with closed point $\tilde m_1$ and generic
point $m_2$, and $\hat M$ is its completion.
We will first construct a morphism
$$
\psi  : \sC^\circ_{X,Y}\times_X X^\sharp \times_X M \to \sN_2.
$$
extending the canonical top horizontal arrow in the diagram
$$\begin{CD}
 \sC^\circ_{X,Y}\times_X X^\sharp \times_X \hat M @>{\hat\psi}>> \hat\sN_2 \\
@VVV @VVV \\
 \sC^\circ_{X,Y}\times_X X^\sharp \times_X \Arc_X \times_X X^\sharp @>>> \Arc_Y
\times_Y Y^\sharp
\end{CD}$$
where the bottom hotizontal arrow is induced by $\tilde \Phi$ and $\Phi$,
while the vertical arrows are induced by the natural
maps to the spaces of arcs and by evaluation at the second marked point.

With $(\sM_2/X)^2$ denoting the fibre product of $\sM_2$ with itself
with respect to the \emph{right} structure maps into $X$,
consider the natural diagram
$$ \sM_2 \overset{q_1}{\underset{\Delta}{\leftrightarrows}} (\sM_2/X)^\sharp \xrightarrow{q_2} X^\sharp\times_X \sM_2 $$
of morphisms over $X^\sharp$.
Its pullback by the rightmost structure
map  $\sC^\circ_{X,Y}\times_XX^\sharp\to X$
gives 
$$ \sC^\circ_{X,Y}\times_XX^\sharp\times_X M
\overset{\tilde q_1}{\underset{\tilde\Delta}{\leftrightarrows}}
\sC^\circ_{X,Y}\times_XX^\sharp\times_X (M/X)^\sharp
\xrightarrow{\tilde q_2} 
\sC^\circ_{X,Y} \times_X X^\sharp \times_X X^\sharp \times_X M. $$
Let $\pi:\sM_2\to\sN_1$, $\varpi:\sN_2\to\sN_1$ and
$p_{13} : X^\sharp\times_XX^\sharp\to X^\sharp$ denote the natural
projections. By Lemma \ref{lem:c-m1-n1}, we have the composite
\begin{eqnarray*}
\nu : \sC^\circ_{X,Y}\times_X X^\sharp\times_X (M/X)^\sharp
&\xrightarrow{\tilde q_2}&
\sC^\circ_{X,Y} \times_X X^\sharp\times_X X^\sharp\times_X M \\
&\xrightarrow{p_{13*}}& 
\sC^\circ_{X,Y}\times_XX^\sharp\times_X M \\
&\xrightarrow{\pi_*} &
\sC^\circ_{X,Y}\times_XX^\sharp\times_X m_1\\
&\xrightarrow{\tilde\Phi_{X,Y}}&
\sC^\circ_{X,Y}\times_YY^\sharp\times_Y n_1\\
&\xrightarrow{\pr_{n_1}}&
n_1.
\end{eqnarray*}
Consider now the pullback diagram
$$\begin{CD}
\tilde\Delta^*\nu^*\sN_2 @>>> \nu^*\sN_2 @>>> \sN_2 \\
@V{h}VV @VVV @V{\varpi}VV \\
\\
\sC^\circ_{X,Y}\times_XX^\sharp\times_X M
@>{\tilde\Delta}>>
\sC^\circ_{X,Y}\times_XX^\sharp\times_X (M/X)^\sharp
@>{\nu}>> \sN_1
\end{CD}$$
where the vertical arrows are $\PP^1$-bundles.
Viewing $\sN_2 \to \sN_1$ as the universal $\sN$-curve
in $Y$, let $N_{\sN_2/Y}$ be the universal normal sheaf on $\sN_2$.
Identifying $T_{M/X}$ with the pullback by $\Delta$ of the relative
tangent sheaf of $q_1$, 
the pullback  $\tilde\Delta^* d\nu$ defines by adjunction a map
$$
g : r_1^* T_{M/X} \to r_2^* N_{\sN_2/Y}
$$
of locally free sheaves on $\tilde\Delta^*\nu^*\sN_2$,
where.
$$ M \xleftarrow{r_1} \tilde\Delta^*\nu^*\sN_2 \xrightarrow{r_2} \sN_2, $$
are the natural projections.
\begin{lem}
The zero-locus of $g$
is the graph of a morphism
$$\psi: {\sC^\circ_{X,Y}}\times_XX^\sharp\times_XM \to \sN_2$$
lifting $\nu \circ \tilde\Delta$ and extending $\hat\psi$.
\end{lem}
\begin{proof}
Let ${\sC^\circ_{X,Y}}$ be the zero-locus of $g$.
Since $h$ is proper, so it $h|_{\sC^\circ_{X,Y}}$. 
Since the generic $\sN$-curve is minimal,
the ideal sheaf $\sI_{\sC^\circ_{X,Y}}$ of ${\sC^\circ_{X,Y}}$ in $\tilde\Delta^*\nu^*\sN_2$
splits along the fibres of the $\PP^1$-bundle $h$
into invertible sheaves with degrees in $\{-1,0\}$, so that
$R^1h_*\sI_{\sC^\circ_{X,Y}}=0$ and the natural map
$$ \OO_{\sC^\circ_{X,Y}\times_XX^\sharp\times_XM} \to h_*\OO_{\sC^\circ_{X,Y}} $$
is surjective. It follows that every geometric fibre of $h|_{\sC^\circ_{X,Y}}$
is either empty, a single reduced point, or a whole $\PP^1$.

Consider the pullback
$$\begin{CD}
\iota^*\tilde\Delta^*\nu^*\hat\sN_2 @>{\tilde\iota}>> \tilde\Delta^*\nu^*\hat\sN_2 
@>>> \hat\sN_2\\
@VVV @VVV @V{\varpi}VV\\
\sC^\circ_{X,Y}\times_X X^\sharp\times_X \hat M
@>{\iota}>> \sC^\circ_{X,Y}\times_X X^\sharp\times_X M
@>{\nu\circ \tilde\Delta}>> \sN_1
\end{CD}$$
where $\iota$ is the natural monomorphism.
By construction, the graph of 
$$\hat\psi : \sC^\circ_{X,Y}\times_XX^\sharp\times_X\hat M \to \hat\sN_2$$
factors through ${\sC^\circ_{X,Y}}$.
Since the geometric generic fibre of $\sM_1^\free \to X$ is positive-dimensional,
the geometric fibres of $h|_{\sC^\circ_{X,Y}}$ over $\sC^\circ_{X,Y}\times_XX^\sharp\times_X \tilde m_1$
are single reduced points, so that
the restriction ${\sC^\circ_{X,Y}} \cap \iota^*\tilde\Delta^*\nu^*\sN_2$ actually coincides with 
the graph of $\hat\psi$.
In particular, $\iota^*h|_{\sC^\circ_{X,Y}}$ is an isomorphism. 
Since $\iota$ is an epimorphism of formal schemes,
and $\varpi$ is a $\PP^1$-bundle,
it follows that $\tilde\iota$ is an epimorphism of formal schemes.
Thus $h|_{\sC^\circ_{X,Y}}$ is a closed immersion, adic and admitting a section over $\iota$,
hence an isomorphism.
\end{proof}

We have thus constructed the map $\psi$,
which will allow us to produce
a morphism
$ \sC^\circ_{X,Y}\times_X M \to M\times_X \sC^\circ_{X,Y} $
whose restriction over $m_2$ gives the isomorphism announced in the Proposition.
By freeness of the generic
$\sM$-curve, we can choose an isomorphism
$$ \rho : \sC^\circ_{X,Y}\times_X M\times_XX^\sharp \to \sC^\circ_{X,Y}\times_X
X^\sharp\times_X M $$
over $\sC^\circ_{X,Y}\times X$ (leftmost-rightmost structure map).
Let $\phi$ be the composite
$$ 
\phi : \sC^\circ_{X,Y}\times_X M\times_XX^\sharp \xrightarrow{\rho}
 \sC^\circ_{X,Y}\times_X X^\sharp\times_X M 
\xrightarrow{\psi}
\sN_2 \to Y
$$
where the rightmost arrow is the right structure map.
Consider now the pair 
$$\sC^\circ_{X,Y}\times_XM\times_XX^\sharp \overset{s^*\phi}{\underset{\phi}{\rightrightarrows}} Y $$
where
$s = \Delta_X \circ p_1:X^\sharp \to X^\sharp$
is the `retraction onto origin'.
These induce a morphism
$$ \theta = \langle \id , s^*\phi , \phi  \rangle : 
\sC^\circ_{X,Y}\times_XM\times_XX^\sharp
\to \sC^\circ_{X,Y}\times_X M \times_Y Y^\sharp $$
together with the map
$$ [\theta]:\sC^\circ_{X,Y} \times_X M \to M\times_X \sF_{X,Y} $$
defined by the universal property of $\sF_{X,Y}$.

Now, since $\psi$ is an extension of $\hat\psi$,
there is a commutative diagram
$$ 
\begin{CD}
\sC^\circ_{X,Y}\times_X \hat M \times_X X^\sharp 
@>{\theta}>> \sC^\circ_{X,Y}\times_X \hat M \times Y^\sharp\\
@V{\id\times (p_{13} \circ e)}VV @VV{\id\times\pr_{Y^\sharp}}V \\
\sC^\circ_{X,Y}\times_X X^\sharp @>{\Phi_{X,Y}}>> \sC^\circ_{X,Y}\times_Y Y^\sharp
\end{CD}$$
where $e:\hat M \to \hat X$ is the restriction
of the structure morphism $\sM_2 \to X\times X$.
Hence the restriction of $[\theta]$
to $\sC^\circ_{X,Y}\times_X\hat M$
is a pullback of the stratifying isomorphism
$$ \sC^\circ_{X,Y}\times_X X^\sharp \to X^\sharp\times_X \sC^\circ_{X,Y},$$
and in particular it is horizontal and factors through $\hat M\times_X \sC^\circ_{X,Y}$.
Since  $\sC^\circ_{X,Y}\times_X \hat M$ does not factor through
any proper subscheme of $\sC^\circ_{X,Y}\times_X M$,
it follows that $[\theta]$ factors through the open subscheme
$M\times_X \sE_{X,Y}\subset M\times_X \sF_{X,Y}$,
through the closed subscheme $M\times_X \sC_{X,Y} \subset M\times_X \sE_{X,Y}$,
and finally through the `generic locus' $M\times_X\sC^\circ_{X,Y} \subset
M\times_X \sC_{X,Y}$.
Letting $$\tau : \sC^\circ_{X,Y}\times_X M \to M \times_X  {\sC^\circ_{X,Y}}$$ be the
map induced by the point-swapping involution on $M\subset\sM_2$, we have a pair of
morphisms
$$
\sC^\circ_{X,Y}\times_X M \overset{[\theta]}{\underset{\tau[\theta]\tau}{\rightleftarrows}}
M\times_X\sC^\circ_{X,Y}
$$
such that $\tau[\theta]$ and $[\theta]\tau$ restrict to identity
over $\sC^\circ_{X,Y}\times_X \hat M$ and $\hat M\times_X \sC^\circ_{X,Y}$.
Hence the above morphisms are mutual inverses, and
their restriction over
$m_2$ gives the isomorphism announced in the Proposition, thus concluding its proof.
\end{proof}

\subsection{Induction and descent}
We can now use Proposition \ref{pro:extension-m1} inductively to
trivialise $\sC^\circ_{X,Y}$ along generic chains of $\sM$-curves.
Under suitable conditions, the trivialisation descends generically
to the base.
\begin{pro}\label{pro:c-descent}
Let $(X,\sM)$ and $(Y,\sN)$ be a pair of nonsingular varieties with
good families such that $\dim(X)=\dim(Y)$.
Suppose that $X$ is simply-connected,
of Picard number $1$.
Let $\xi$ be the generic point of $X\times X$. 
Then there is a natural isomorphism
$$\sC^\circ_{X,Y}\times_X \xi\to \xi\times_X{\sC^\circ_{X,Y}} $$
horizontal over $\xi$.
\end{pro}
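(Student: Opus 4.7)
The approach is to iterate Proposition \ref{pro:extension-m1} along chains of $\sM$-curves, obtain a horizontal trivialisation over the generic point of the space of such chains, and then descend to $\xi$ by exploiting simply-connectedness of $X$.

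First I would proceed by induction on chain length. Let $m_2^i$ denote the generic point of $\sM_2^{i,\free}$, with its two evaluation maps $e_1^i,e_2^i:m_2^i\to X$. By Lemma \ref{lem:free-stuff}, $m_2^{i+1}$ lies above $m_2^i\times_Xm_2$, so the rightmost link of a generic $(i+1)$-chain realises the generic $\sM$-curve. Composing the horizontal iso over $m_2^i$ furnished by the induction hypothesis with the one of Proposition \ref{pro:extension-m1} applied to the last segment yields, inductively, a horizontal isomorphism
$$\alpha_i:\sC^\circ_{X,Y}\times_X m_2^i\to m_2^i\times_X\sC^\circ_{X,Y}$$
for every $i\ge0$. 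Minimality of the generic $\sM$-curve implies freeness, so Lemma \ref{lem:free-chains} applies (using $\Pic(X)\simeq\mathbb{Z}$) and yields some $i$ with $\sM_2^{i,\free}\to X\times X$ dominant. Fixing such $i$, the structure map factors through a field extension $\kappa(\xi)\subset\kappa(m_2^i)$, and $\alpha_i$ becomes an isomorphism between the two pullbacks of $\sC^\circ_{X,Y}$ to $m_2^i$ along the two projections to $X$.

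The main obstacle is descending $\alpha_i$ to a horizontal isomorphism over $\xi$ itself. By faithfully flat descent it suffices to show that the two pullbacks of $\alpha_i$ along $m_2^i\times_\xi m_2^i\rightrightarrows m_2^i$ coincide, that is, parallel transport is path-independent for chains sharing the same pair of endpoints. Following Step 3 of the argument of Hwang--Mok sketched in the introduction, I would package the parallel-transport data into a tower of covers of $X$. For each truncation of $\sC^\circ_{X,Y}$ at jet-order $N$ (which is of finite type over $X\times Y$, since by Lemma \ref{lem:aut-formal-disc} the pro-unipotent radical of $\underline\Aut(\hat X,x)$ is a countable limit of extensions by $\GG_a$), parallel transport along chains starting from a fixed geometric base point of $X$ produces a quasi-finite, generically \'etale cover $\tilde X_N\to X$ together with a canonical lift of the stratification. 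A careful inspection of the construction, paralleling Hwang--Mok, shows that $\tilde X_N\to X$ is trivialised along a general $\sM$-curve through the base point. Simply-connectedness of $X$ then rules out ramification and forces $\tilde X_N\to X$ to admit a section, yielding path-independence of $\alpha_i$ modulo order $N$. Taking the inverse limit on $N$ delivers the required descent of $\alpha_i$ to a horizontal isomorphism over $\xi$.
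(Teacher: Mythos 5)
Your outline matches the paper through the first two steps — induction via Proposition \ref{pro:extension-m1} to obtain a horizontal isomorphism $\theta^i$ over $m_2^i$, and Lemma \ref{lem:free-chains} to choose $i$ with $m_2^i$ above $\xi$ — but the descent step has a genuine gap, and the paper's own treatment contains a crucial reduction you omit.

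Horizontality of $\theta^i$ gives agreement of its two pullbacks only on the \emph{completion} $(m_2^i/\xi)^\sharp$ of $m_2^i\times_\xi m_2^i$ along the diagonal, not on $m_2^i\times_\xi m_2^i$ itself. These are different, and the difference is precisely the separable algebraic part of $\kappa(m_2^i)/\kappa(\xi)$: this is the content of Lemma \ref{lem:separable-closure} in the paper, which says the equaliser of $L\rightrightarrows\widehat{L\otimes_KL}$ is the separable algebraic closure $K^{s,L}$, not $K$. So $\theta^i$ descends via horizontality alone only to a \emph{finite separable} extension $\tilde\xi$ of $\xi$, and the remaining obstruction is a finite Galois cover $\Gamma\to X\times X$. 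Killing $\Gamma$ is where the geometric input enters: trivialisation along generic $\sM$-curves (Lemma \ref{lem:c-curves}), Zariski--Nagata purity to concentrate ramification along a divisor $D$, Picard number $1$ to force a free $\sM$-curve to meet $D$, and transversality from Lemma \ref{lem:free-intersection} to get the contradiction; simply-connectedness then trivialises the unramified cover. Your proposal appeals to simply-connectedness but never invokes purity or Picard number $1$ in the ramification argument, and those are indispensable — simply-connectedness alone cannot exclude a ramified finite cover. Moreover the covers $\tilde X_N\to X$ you propose are covers of $X$, not of $X\times X$, which is a type mismatch for the descent problem at hand (the descent datum lives over $\xi$, the generic point of $X\times X$); they look conflated with the separate graph-cover $\tilde X\to X$ that occurs later, in the proof of Theorem \ref{thm:extension}. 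Finally the inverse-limit-over-$N$ step is unnecessary once Lemma \ref{lem:separable-closure} has reduced the problem to one finite cover, and introducing it without a compatibility-of-sections argument creates another hole.
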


We will need the following bit of commutative algebra.
\begin{lem}\label{lem:separable-closure}
Let $L/K$ be a finitely generated field extension, and $K^{s,L}$ the separable
algebraic closure of $K$ in $L$. Then the following is an equaliser diagram:
$$ K^{s,L} \to L \rightrightarrows \widehat{L\otimes_KL} $$
where we complete at the diagonal ideal.
\end{lem}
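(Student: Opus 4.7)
The plan is to prove that $K^{s,L}$ coincides with the equaliser of $L\rightrightarrows\widehat{L\otimes_K L}$. For the inclusion $K^{s,L}\subseteq$ equaliser I will use a Hensel-type argument: if $\alpha\in K^{s,L}$ has separable minimal polynomial $\mu\in K[x]$, the factorisation $\mu(x)-\mu(y)=(x-y)q(x,y)$ satisfies $q(\alpha,\alpha)=\mu'(\alpha)\neq 0$, so that in $L\otimes_K L$ the identity $(\alpha\otimes 1-1\otimes\alpha)\cdot q(\alpha\otimes 1,1\otimes\alpha)=\mu(\alpha\otimes 1)-\mu(1\otimes\alpha)=0$ holds, and since $q(\alpha\otimes 1,1\otimes\alpha)$ reduces modulo the diagonal ideal $I$ to $\mu'(\alpha)\in L^\times$ it becomes a unit in the $I$-adic completion, forcing $\alpha\otimes 1=1\otimes\alpha$ there.

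For the reverse inclusion I will first exploit finite separability of $K^{s,L}/K$ to decompose $K^{s,L}\otimes_K K^{s,L}$ as a finite product of fields with one diagonal factor equal to $K^{s,L}$; the remaining idempotents lie in $I$ and, being idempotent, vanish upon $I$-adic completion, giving a natural identification $\widehat{L\otimes_K L}\simeq\widehat{L\otimes_{K^{s,L}}L}$ under which both structure maps are $K^{s,L}$-linear. This reduces the problem to the case $K=K^{s,L}$, where I must show the equaliser equals $K$. Since $L/K$ is finitely generated, $L\otimes_K L$ is Noetherian, its localisation at the diagonal ideal $I$ is Noetherian local, and by Krull's intersection theorem embeds into its $I$-adic completion $\widehat{L\otimes_K L}$; it therefore suffices, for each $a\in L\setminus K$, to show that the annihilator of $a\otimes 1-1\otimes a$ in $L\otimes_K L$ lies in $I$.

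I will split into two cases. If $a$ is transcendental over $K$, then $K(a)\otimes_K K(a)$ is a localisation of $K[x,y]$ hence a domain, $a\otimes 1-1\otimes a$ is a nonzero-divisor in it, and $L\otimes_K L$ is free (hence flat) as a $K(a)\otimes_K K(a)$-module by choosing $K(a)$-bases of $L$ on each side, so the annihilator stays zero. If $a$ is algebraic over $K$, then the standing hypothesis $K=K^{s,L}$ forces $a$ to be purely inseparable; iterating the identity $(a\otimes 1-1\otimes a)^{p^{m-1}}=a^{p^{m-1}}\otimes 1-1\otimes a^{p^{m-1}}$ reduces to the case $a^p\in K$, in which $K(a)\otimes_K K(a)\simeq K(a)[y]/(y-a)^p$ is Artinian local with $\mathrm{ann}(y-a)=((y-a)^{p-1})$ contained in the maximal ideal; flatness then transfers this annihilator into $I^{p-1}\subseteq I$ in $L\otimes_K L$.

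The main technical obstacle I expect is the purely inseparable case, where $a\otimes 1-1\otimes a$ is nilpotent rather than a nonzero-divisor and so the flatness-plus-nonzero-divisor argument of the transcendental case fails; the explicit Artinian annihilator computation, combined with a careful $p$-power reduction to the case of a single $p$-th power, is what resolves it.
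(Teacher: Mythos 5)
Your proof is correct but takes a genuinely different route from the paper's. The paper first records three special cases (purely transcendental, purely inseparable, separable algebraic), then handles the general case by choosing a separating transcendence basis $K\subset L_0\subset L$ and using a Galois-conjugacy argument: any $x$ in the equaliser has all conjugates over $L_0$ in the equaliser, so the coefficients of its minimal polynomial land in the equaliser intersected with $L_0$, which by the transcendental case is $K$. Your argument instead reduces to the case $K=K^{s,L}$ by splitting off the non-diagonal idempotents of $K^{s,L}\otimes_K K^{s,L}$ (which die in the $I$-adic completion), and then works element-by-element inside $L$: for each $a\in L\setminus K$ you show $\mathrm{ann}_{L\otimes_KL}(a\otimes1-1\otimes a)\subseteq I$, treating the transcendental case by a flatness-over-a-domain argument and the purely inseparable case by an explicit Artinian annihilator computation combined with a $p$-power reduction and primality of $I$. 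Your Hensel-type unit argument for the forward inclusion is likewise more hands-on than the paper's appeal to the separable-algebraic special case. What your approach buys: it never leaves $L$ (no Galois closure is needed, which sidesteps a small subtlety in the paper's conjugacy step), it isolates the genuinely delicate case (nilpotence of $a\otimes1-1\otimes a$ for $a$ purely inseparable) and resolves it with quantitative control ($\mathrm{ann}\subseteq I^{p-1}$), and it makes the role of Krull's intersection theorem explicit. What the paper's route buys: brevity, and factoring the extension once and for all rather than checking elements individually.
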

\begin{proof}
Let $K'\subset L$ be the equaliser of $L\rightrightarrows\widehat{L\otimes_KL}$.
We first observe that the claim is true in the following cases:
\begin{enumerate}
\item $L/K$ purely transcendental: then 
$L\otimes_KL \to \widehat{L\otimes_KL}$ is injective,
so that $K'=K$.
\item $L/K$ purely inseparable: then $L\otimes_KL$ is Artinian, hence already complete, 
and we argue as
above.
\item $L/K$ separable algebraic: then $L\otimes_KL$ is a product of finitely many
copies of $L$, so that the diagonal ideal is idempotent and $K'=L$.
\end{enumerate}
In the general case, by (2) we can assume that $L/K$ is separably generated, so that
there is an intermediate extension $K \subset L_0 \subset L$ such that $L/L_0$
is separable algebraic and $L_0/K$ purely transcendental. Suppose $x \in K'$.
Since $K'$ is invariant under the Galois group of $L/L_0$, the conjugates of $x$
are also contained in $K'$. It follows that the coefficients
of the minimal polynomial of $x$ in $L_0$ are contained in $K' \cap L_0$, and thus
in $K$ by (1). Hence $x$ is separable algebraic over $K$, and thus $K' \subset K^{s,L}$.
The converse follows by (3).
\end{proof}

\begin{proof}[Proof of Proposition \ref{pro:c-descent}.]
Let $m_2^i$ the generic point of $\sM_2^{i,\free}$. By Lemma \ref{lem:free-stuff},
$m_2^{i+1} \in m_2^i\times_X m_2$, so that Proposition \ref{pro:extension-m1} 
and induction on $i$
gives a horizontal isomorphism
$$ \theta^i : \sC^\circ_{X,Y}\times_X m_2^i \to m_2^i\times_X\sC^\circ_{X,Y} $$
over $m_2^i$.
By Lemma \ref{lem:free-chains} we can choose $i$ such that $m_2^i$ maps to $\xi\in X\times X$.
By horizontality of $\theta^i$, the pullbacks
$$ \sC^\circ_{X,Y}\times_X (m_2^i / \xi)^\sharp \rightrightarrows 
\sC^\circ_{X,Y} $$
of $\pr_{\sC^\circ_{X,Y}}\circ \theta^i$ by $(m_2^i / \xi)^\sharp \rightrightarrows m_2^i$
coincide. 
Hence, by Lemma \ref{lem:separable-closure}, $\theta^i$ descends
to a morphism
$$ 
\bar\theta : \sC^\circ_{X,Y}\times_X \tilde\xi
 \to \tilde\xi\times_X\sC^\circ_{X,Y} 
$$ 
where $\tilde\xi$
is the spectrum of
the separable algebraic closure
of $\kappa(\xi)$ in $\kappa(m_2^i)$.
Being an algebraic subextension
of a finitely generated extension, $\kappa(\tilde\xi)/\kappa(\xi)$
is finite.
Horizontality and invertibility of $\bar\theta$ 
follows from that of $\theta^i$ by descent.

To show that $\bar\theta$ is in fact
defined over $\kappa(\xi)$,
we first 
consider a geometric generic point $\bar\zeta$ of $X\times\sM_0$
and the corresponding rational curve
$$f : \PP^1_{\bar\zeta} \to \bar\zeta\times X \to X\times X.$$
\begin{lem}\label{lem:c-curves}
Let $W\subset f^*\tilde\xi$ be a connected component.
Then
$f^*\bar\theta:\sC^\circ_{X,Y} \times_X W \to W\times_X \sC^\circ_{X,Y}$
descends along
$W \to f^*\xi$. 
\end{lem}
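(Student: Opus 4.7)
The plan is to reduce the descent claim to showing that $W \to f^*\xi$ is an isomorphism of schemes, and then to deduce this using the simple-connectedness of $\PP^1$.

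First, I would observe that $\tilde\xi\to\xi$ arises as the generic fibre of a finite \'etale cover $\tilde U\to U$ over some maximal open $U\subset X\times X$ containing $\xi$; by Zariski--Nagata purity on the smooth variety $X\times X$ the complement of $U$ can be taken to be a divisor $B$, the essential branch locus. Pulling back via $f$ yields a finite \'etale cover of $f^{-1}(U)\subset \PP^1_{\bar\zeta}$, and the normalization $\tilde C \to \PP^1_{\bar\zeta}$ of $\PP^1_{\bar\zeta}$ in the function field $\kappa(W)$ is a connected smooth curve extending $W\to f^*\xi$.

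Granting that $\tilde C\to\PP^1_{\bar\zeta}$ is everywhere \'etale, the vanishing of $\pi_1^{\mathrm{et}}(\PP^1_{\bar\zeta})$ (Grothendieck, valid in any characteristic over an algebraically closed base) forces $\tilde C\cong\PP^1_{\bar\zeta}$ and hence $W\cong f^*\xi$; the descent of $f^*\bar\theta|_W$ along this identification is then immediate. This reduction is essentially the algebraic incarnation of Hwang and Mok's monodromy argument: in the complex setting, parallel transport along the simply-connected $\PP^1$ is single-valued, and here the role of monodromy is played by the field extension $\kappa(W)/\kappa(f^*\xi)$.

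The main obstacle is therefore verifying that $\tilde C\to\PP^1_{\bar\zeta}$ is unramified at the points of $f^{-1}(B)$. I would use that $\pr_2\circ f:\PP^1_{\bar\zeta}\to X$ is a free minimal $\sM$-curve (since $\bar\zeta$ is a geometric generic point of $X\times\sM_0$) and apply Lemma~\ref{lem:free-intersection} to ensure that $f^{-1}(B)$ is a reduced finite subset of $\PP^1_{\bar\zeta}$ and that $f$ misses codimension-two loci in $X\times X$; the simple-connectedness of $X$ applied via a projection $X\times X\to X$ would then be invoked to rule out ramification coming from pullback of covers of $X$. Controlling wild ramification in positive characteristic is the most delicate point, and I would likely exploit the specific structure of $\bar\theta$ as a descent of the horizontal $\theta^i$ along $m_2^i$, tracing ramification at a point $p \in f^{-1}(B)$ back to a failure of horizontality of $\theta^i$ in a formal neighborhood of the image of $p$ in $\sM_2^{i,\free}$, which is excluded by Proposition~\ref{pro:extension-m1}.
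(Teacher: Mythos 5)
Your proposal targets a strictly stronger statement than the Lemma actually asserts, and in doing so it collapses the Lemma together with the paragraph of Proposition~\ref{pro:c-descent} that \emph{follows} it. The Lemma does not claim that $W\to f^*\xi$ is an isomorphism; it claims only that the morphism $f^*\bar\theta$ satisfies descent data along $W\to f^*\xi$, i.e.\ its two pullbacks to $W\times_{f^*\xi}W$ agree. Whether the covering pulled back along a generic line is trivial is precisely the question the paper addresses \emph{after} the Lemma, using the Lemma as input: the Lemma yields Galois-invariance over a lift of $\kappa(\eta)$, which is then fed into a purity argument (via Lemma~\ref{lem:free-intersection}, Picard number $1$, and simple-connectedness of $X$) to trivialise the Galois cover $\Gamma\to X\times X$. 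Reorganising the argument to prove triviality of $W\to f^*\xi$ first, and then trying to establish that triviality by proving $\tilde C\to\PP^1_{\bar\zeta}$ is \'etale, would require rebuilding exactly the purity-plus-transversality argument that comes next --- so the proposal is circular with respect to the structure of Proposition~\ref{pro:c-descent}.

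Independently of that structural issue, the \'etaleness step is not actually carried out. The last paragraph of your proposal acknowledges the obstacle and sketches a wish rather than a proof: ``tracing ramification at a point $p\in f^{-1}(B)$ back to a failure of horizontality of $\theta^i$ in a formal neighborhood of the image of $p$'' invokes Proposition~\ref{pro:extension-m1}, but that Proposition concerns horizontality at the generic point $m_2$ and does not control behaviour at the special points of $\PP^1_{\bar\zeta}$ lying over $B$. What the paper does instead is more elementary and does not touch $\pi_1^{\mathrm{et}}$ at all: writing $\bar\vartheta=\pr_2\circ\bar\theta$, the horizontality of $\theta$ means the top composite in the displayed diagram factors through the graph of $\theta$; restricting along the generic $\sM$-curve in $X\times X$ (where $M_{\bar\zeta}=\bar\zeta\times_{\sM_0}M$ sits inside $\PP^1_{\bar\zeta}\times_{\bar\zeta}\PP^1_{\bar\zeta}$), one sees that $\sC^\circ_{X,Y}\times_X(W/\bar\zeta)^\sharp\to\sC^\circ_{X,Y}\times\sC^\circ_{X,Y}$, and hence $\sC^\circ_{X,Y}\times_X(W/\bar\zeta)^2\to\sC^\circ_{X,Y}\times\sC^\circ_{X,Y}$, factors through the graph of $\theta$; restricting further to $(W/\PP^1_{\bar\zeta})^2$ this forces factoring through the diagonal, which is exactly the equaliser condition for descent of $f^*\bar\vartheta$, hence of $f^*\bar\theta$, along $W\to f^*\xi$. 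This is a direct verification using the explicit construction of $\theta$ and the stratification, not a monodromy computation.
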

\begin{proof}
Let $\bar\vartheta = \pr_2\circ \bar\theta:\sC^\circ_{X,Y}\times_X\tilde\xi\to\sC^\circ_{X,Y}$.
Recall that in the Proof of Proposition \ref{pro:extension-m1} we have actually constructed
an isomorphism
$ \theta : \sC^\circ_{X,Y} \times_X M \to M\times_X \sC^\circ_{X,Y}$
horizontal over $M = \Spec \OO_{\sM_2,\tilde m_1}$,
where $\tilde m_1$ is the image of
$m_1$ under the zero-section $\sM_1 \to \sM_2$.
Consider the diagram
with Cartesian squares
$$\begin{CD}
  \sC^\circ_{X,Y}\times_X (X \backslash \tilde\xi)^\sharp \times_{X\times X} \hat M @>>> 
  \sC^\circ_{X,Y}\times_X (X \backslash \tilde\xi)^2 \times_{X\times X} M @>{\Theta}>> 
\sC^\circ_{X,Y}\times_XM\times_X\sC^\circ_{X,Y}\\ 
@VVV @VVV @VVV\\
\sC^\circ_{X,Y}\times_X (X\backslash \tilde\xi)^\sharp @>>>
\sC^\circ_{X,Y}\times_X (X\backslash \tilde\xi)^2
@>{\bar\vartheta\times\bar\vartheta}>> \sC^\circ_{X,Y}\times\sC^\circ_{X,Y}
\end{CD}$$
The left square is induced by the natural inclusion $(X\backslash\tilde\xi)^\sharp \to
(X\backslash\tilde\xi)^2$. By horizontality, the top horizontal composite
factors through the graph of $\theta$. 

Identify $\PP^1_{\bar\zeta}$ and $\PP^1_{\bar\zeta}\times_{\bar\zeta}\PP^1_{\bar\zeta}$
with, respectively, $\bar\zeta \times_{\sM_0}\sM_1$ and
$\bar\zeta\times_{\sM_0}\sM_2$.
Consider $M_{\bar\zeta} = \bar\zeta\times_{\sM_0}M$
as a subscheme of $\PP^1_{\bar\zeta}\times_{\bar\zeta}\PP^1_{\bar\zeta}$.
The curve $f$ is identified with the natural morphism 
$\bar\zeta\times_{\sM_0}\sM_1 \to X \times X$ induced
by $\bar\zeta\to X$ and $\sM_1 \to X$.
Let $W \subset f^*\tilde\xi$ be an irreducible component.
Pulling back the top row of the above diagram, we have that the composite
$$ \sC^\circ_{X,Y} \times_X (W/\bar\zeta)^\sharp \to 
\sC^\circ_{X,Y} \times_{X} ( W / \bar\zeta  )^2 
\to \sC^\circ_{X,Y} \times \sC^\circ_{X,Y} $$
factors through the pullback of the diagram of $\theta$ by
$(W/\bar\zeta)^\sharp
\to M_{\bar\zeta}$. Hence so does the right arrow itself, and in particular
the restriction
$$ \sC^\circ_{X,Y} \times_X (W/\PP^1_{\bar\zeta})^2 \to \sC^\circ_{X,Y}\times\sC^\circ_{X,Y} $$
factors through the diagonal. Hence $f^*\bar\vartheta:\sC^\circ_{X,Y}\times W \to \sC^\circ_{X,Y}$
descends along $W \to f^*\xi$, and so does $f^*\bar\theta$.
\end{proof}

Continuing the proof of the Proposition,
fix a separable closure $\kappa(\bar\xi^s)$ of $\kappa(\xi)$.
The Galois group $\Gal(\bar\xi^s/\xi)$ acts on the set
$E$ of isomorphisms $\sC^\circ_{X,Y}\times_X \bar\xi^s\to\bar\xi^s\times_X\sC^\circ_{X,Y}$ horizontal over $\bar\xi^s$.
By the first part of the proof, there is an element
$\bar\theta \in E$ whose stabiliser
in $\Gal(\bar\xi^s/\xi)$ is of finite index.
Letting $\eta \simeq \bar\zeta\otimes k(t)$
be the generic point of $\PP^1_{\bar\zeta}$,
we have an extension $\kappa(\eta)/\kappa(\xi)$.
We can lift it to $\kappa(\bar\eta^s)/\kappa(\bar\xi^s)$
where $\bar\eta^s$ is a separable closure of $\eta$.
It then follows by Lemma \ref{lem:c-curves} that
the stabiliser of $\bar\theta$ in $\Gal(\bar\xi^s/\xi)$
contains the image of $\Gal(\bar\eta^s/\eta)$.

Let $\Gamma \to X\times X$ be a normal Galois cover corresponding to
the stabiliser of $\bar\theta$,
so that $\bar\theta$ is defined over the generic point of $\Gamma$, and
$f^*\Gamma$ is trivial by the previous paragraph.
We want to show that $\Gamma$ itself is trivial.
Since $X$ is simply-connected, it will be enough to show
that $\Gamma \to X\times X$ is \'etale.
Assuming the opposite, we have by the classical purity theorem
that it is ramified over a divisor $D \subset X\times X$.
Since the problem is symmetric under the transposition on $X\times X$,
we can assume that $D$ is not a pullback of a divisor
from the first factor.
Since
$X$ has Picard number $1$, the pullback $f^*D$ is positive,
and
there is a lift $\tilde f : \PP^1_{\bar\zeta} \to \Gamma$
of $f$ intersecting the ramification divisor. 
It follows that $f$ is tangent to $D$ at the intersection points.
But by Lemma \ref{lem:free-intersection} a generic $\sM$-curve
intersects $D$ transversely, a contradiction.
Hence $\Gamma$ is trivial, $\bar\theta$ is invariant under $\Gal(\bar\xi^s/\xi)$,
and thus finally defined over $\xi$.
\end{proof}

\section{Extension}
\begin{thm}\label{thm:extension}
Let $(X,\sM)$ and $(Y,\sN)$ be a pair of simply-connected, 
nonsingular projective Fano varieties of Picard number $1$ and
equal dimensions,
together with good families of rational curves. 
Let $K$ be an algebraically closed field, 
and $\bar c : \Spec K \to \sC^\circ_{X,Y}$ a geometric point. 
Then there is an isomorphism
$$ \phi : X \otimes K \to Y \otimes K $$
extending the canonical isomorphism
$\bar c^*\Phi : \bar c^*X^\sharp\to\bar c^*Y^\sharp$.
\end{thm}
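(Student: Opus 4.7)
My plan is to derive the theorem from Proposition \ref{pro:c-descent}. I will first invoke that proposition to produce a horizontal isomorphism $\bar\theta : \sC^\circ_{X,Y}\times_X \xi \to \xi\times_X \sC^\circ_{X,Y}$ over the generic point $\xi$ of $X\times X$. Composing with projection onto $\sC^\circ_{X,Y}$ in the second factor and then with $\pr_Y$ yields a morphism $\sC^\circ_{X,Y}\times_X \xi \to Y$, whose pullback along $\bar c : \Spec K \to \sC^\circ_{X,Y}$ is a morphism $\Spec K\times_X \xi \to Y$. Since $\Spec K\times_X \xi$ dominates the generic point of $X\otimes_k K$ (via the right-factor projection of $\xi$), this determines a rational map $\phi_0 : X\otimes K \dashrightarrow Y\otimes K$. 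Horizontality of $\bar\theta$ along the formal neighborhood of the diagonal of $\xi$ then identifies the infinite jet of $\phi_0$ at $\pr_X(\bar c)$ with $\bar c^*\Phi$.

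Next, to establish birationality, I will exploit the symmetry of the construction. The inverse of $\bar\theta$ (which exists because $\bar\theta$ is an isomorphism of $\xi$-schemes by Proposition \ref{pro:c-descent}), combined with $\bar c$ viewed in $\sC^\circ_{Y,X}$ via the natural involution $\sC_{X,Y}\simeq\sC_{Y,X}$ obtained by inverting formal isomorphisms, yields by the same procedure a rational map $\psi_0 : Y\otimes K \dashrightarrow X\otimes K$. The mutual-inverse relation between $\bar\theta$ and its inverse, combined with horizontality, then forces $\phi_0$ and $\psi_0$ to be rational inverses of one another.

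Finally, I will extend $\phi_0$ to a global isomorphism. Inspection of the construction of $\bar\theta$ through Propositions \ref{pro:extension-m1} and \ref{pro:c-descent} shows that $\phi_0$ is in fact already defined along the image of a generic (hence free) $\sM$-curve through $\pr_X(\bar c)$: Proposition \ref{pro:extension-m1} supplies an extension of the relevant morphism over the localisation $M = \Spec\OO_{\sM_2,\tilde m_1}$, which contains such a curve together with a Zariski neighborhood in the $\sM_2$-direction. By Lemma \ref{lem:free-intersection} a free $\sM$-curve meets no closed subscheme of codimension at least two; and since $X$ has Picard number one, the indeterminacy locus of $\phi_0$ cannot contain a divisor without forcing a nontrivial intersection with this free curve. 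Hence the indeterminacy locus has codimension at least two in $X\otimes K$, and applying the same argument to $\psi_0$ shows that $\phi_0$ restricts to an isomorphism between open subsets of $X\otimes K$ and $Y\otimes K$ whose complements have codimension at least two. A standard argument with plurianticanonical embeddings of the Fano varieties $X$ and $Y$, exploiting that regular sections on a smooth variety extend uniquely across codimension-two loci, will then extend $\phi_0$ to the sought regular isomorphism $\phi : X\otimes K \to Y\otimes K$, which extends $\bar c^*\Phi$ by construction.

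I expect the main obstacle to lie in the first step: making the extraction of $\phi_0$ from $\bar\theta$ fully rigorous, in particular carefully identifying $\Spec K\times_X \xi$ with a dominating point of the generic point of $X\otimes_k K$ and verifying that the infinite jet of $\phi_0$ at $\pr_X(\bar c)$ is precisely $\bar c^*\Phi$. Once this is in place, the birationality and extension arguments follow a well-established pattern for smooth Fano varieties of Picard number one.
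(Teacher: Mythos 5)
Your opening and closing moves line up with the paper's: use Proposition \ref{pro:c-descent} together with $\bar c$ to get a horizontal section over the generic point of $X\otimes K$, project to $Y$ to obtain a rational map $\phi_0$, then extend via plurianticanonical embeddings once you know $\phi_0$ restricts to an isomorphism between open subsets whose complements have codimension $\ge 2$. The gap is in the middle step. You propose to get there from ``$\phi_0$ and $\psi_0$ are rational inverses, both defined away from codimension two,'' but this is not enough. Nothing in that statement prevents $\phi_0$ from contracting a prime divisor $E\subset U$ onto a subvariety of codimension $\ge 2$ in $Y\otimes K$; and if $\phi_0(E)$ happens to lie inside the codimension-$\ge 2$ locus where $\psi_0$ is undefined, your mutual-inverse relation never ``sees'' $E$, so you cannot discard it without destroying the codimension-two property on the $X$ side. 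Ruling out such contractions requires an actual argument (e.g.\ Picard number $1$ of $X$ applied to curves in $E$ avoiding the indeterminacy locus), and even then you still need something --- Zariski's main theorem, or \'etaleness --- to promote quasi-finiteness to an open immersion. The paper does all of this at once: it shows $\phi_0$ is \emph{\'etale} on $X_K\setminus W$ via purity (a branch divisor $D\subset Y_K$ would meet the image of a generic free $\sM$-curve transversely by Picard number $1$ of $Y$ and Lemma~\ref{lem:free-intersection}, forcing the source curve to miss the ramification locus, contradiction), then uses simply-connectedness of $Y_K$ to conclude $\phi_0$ is an open immersion. Notice that your outline never invokes simply-connectedness of $Y$ at all, which should itself be a warning sign.

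Two secondary points. First, the assertion that inverting $\bar\theta$ through the involution $\sC_{X,Y}\simeq\sC_{Y,X}$ yields the \emph{rational inverse} of $\phi_0$ is not self-evident: projecting to $Y$ (resp.\ $X$) discards the jet data, and the compatibility you need between $\bar\theta^{-1}$, the involution, and these projections deserves an argument. Second, ``extends $\bar c^*\Phi$ by construction'' is doing a lot of silent work; the paper spends its final paragraph constructing a horizontal lift $\tilde\phi$ of $\phi$ over $\sM_2^i$, comparing it with $\bar c^*\tilde\theta^i$ on the generic point of $\sM_2^{i,\mathrm{free}}$, and deducing the agreement of $\phi$ with $\bar c^*\Phi$ via an epimorphism of formal schemes. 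Your horizontality observation is the right germ of this, but it is not yet a proof.
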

\begin{proof}
By Lemma \ref{lem:free-chains}, $\sM_2^{i,\free} \to X\times X$
is dominant for some $i>0$.
Let $X_K = X \otimes K$ and $Y_K = Y \otimes K$ with generic points
$x_K$, $y_K$.
Then, by Proposition \ref{pro:c-descent}, there is a horizontal section
$$ \sigma : \eta_{X_K} \to \sC^\circ_{X,Y} \otimes K. $$
Its composite with projection to $Y_K$ extends to a morphism
$$ \phi_0 : X_K \setminus W \to Y_K,\quad\quad \codim_{X_K}W \ge 2 $$
whose restriction to $x_K$ is formally \'etale and induces
an isomorphism $x_K^* X^\sharp \times_X \hat\sM_1\simeq 
\phi_0|_{x_K}^* Y^\sharp\times_Y \hat\sN_1$.
It follows that
$\phi_0$ is dominant and generically \'etale. We will now show that
it is actally \'etale on entire $X_K\setminus W$.

Indeed, let $\bar\zeta$ be a geometric generic point
of $\sM_1 \otimes K$. Then the generic $\sM$-curve 
$ f : \PP^1_\zeta \to X_K $
factors through $X_K \setminus W$ (by Lemma \ref{lem:free-intersection}),
and $\phi_0 \circ f : \PP^1_\zeta \to Y_K$ is a generic
$\sN$-curve (since its restriction to $\hat\PP^1_\zeta$
maps, as an unramified morphism from a formal disc,
to the generic point of $\hat\sN_1$). Now, if $\phi_0$ is not \'etale,
then, by the classical purity theorem, it is ramified over a divisor
$D \subset Y_K$. Since $Y$ has Picard number $1$, $\phi_0\circ f$
intersects $D$. But since $\phi_0\circ f$ is free, the intersection
is transverse (again by Lemma \ref{lem:free-intersection}). 
It then follows that $f$ does not intersect the ramification divisor
in $X_K\setminus W$ -- a contradiction.

Hence $\phi_0$ is \'etale. Furthermore, since
$\phi_0 \circ f$ is free, it follows that the complement
of the image of $\phi_0$ has codimension at least $2$ in $Y_K$ (Lemma
\ref{lem:free-intersection}.
It then follows by simply-connectedness of $Y_K$ that $\phi_0$
is an isomorphism onto its image. 
Now, since $X$ and $Y$ are Fano, we can find
an integer $d>0$ such that $-dK_X$ and $-dK_Y$ are both very ample.
Being an isomorphism of open subsets whose complements have codimension
at least $2$, $\phi_0$ induces an isomorphism of Picard
groups and
of spaces of global sections for any invertible sheaf.
Using the differential $d\phi_0$ to identify
$\phi_0^*K_Y$ with $K_X$, 
we have a diagram
$$\begin{diagram}
\node{X_L}\arrow{s} \arrow{r,t,..}{\phi_0} \node{Y_L}\arrow{s} \\
\node{\PP H^0(X_K,\OO(-dK_X))^\vee} \arrow{=}
\node{\PP H^0(Y_K,\OO(-dK_Y))^\vee} 
\end{diagram}$$
where the vertical arrows are the projective embeddings
indced by $-dK_X$ and $-dK_Y$. Hence
$\phi_0$ extends to an isomorphism $\phi:X_K\to Y_K$.

It remains to check that $\phi$ is an extension
of $\bar c^*\Phi$. Consider the lift of $\phi$ to a morphism
$\tilde\phi$, horizontal over $\sM_2^i$ and fitting into a 
commutative diagram
$$\begin{CD}
 \bar c \times_X \sM_2^i @>{\tilde\phi}>>\bar c\times_X \sM_2^i \times_X \sC_{X,Y} \\
@VVV @VVV \\
\bar c \times X @>{\phi}>> \bar c \times Y
\end{CD}$$
where the left vertical arrow is induced by the right structure map
$\sM_2^i\to X$. 
Recall that in the proof of Proposition \ref{pro:extension-m1} we have
constructed an isomorphism
$$ \sC^\circ_{X,Y}\times_X M \to M \times_X \sC^\circ_{X,Y} $$
where $M = \Spec \OO_{\sM_2,\tilde m_1}$ and $\tilde m_1$
is the image of $m_1$ under the zero-section $\sM_1 \to \sM_2$.
Note that its restriction over $\tilde m_1$ is the 
identity on $\sC^\circ_{X,Y} \times_X m_1$.
By induction, we have an isomorphism
$$ \tilde\theta^i: \sC^\circ_{X,Y} \times_X (X\backslash M/X)^i \to (X\backslash M/X)^i
\times_X \sC^\circ_{X,Y} $$
extending $\theta^i$ of the proof of Proposition \ref{pro:c-descent}.
It follows that we have a commutative diagram
$$\begin{CD}
\bar c \times_X m_2^i @>>> \bar c \times_X \sM_2^i \\
@VVV @V{\tilde\phi}VV \\
\bar c \times_X (X\backslash M/X)^i @>{\bar c^*\tilde\theta^i}>> \bar c\times_X\sM_2^i \times_X \sC_{X,Y}
\end{CD}$$
i.e. $\tilde\phi$ and $\bar c^*\tilde\theta^i$ agree
on $\bar c \times_X m_2^i$. Then, by irreducibility and reducedness
of $\bar c\times_X \sM_2^{i,\free}$ (cf. Lemma \ref{lem:free-stuff}),
they agree on $(X\backslash M/X)^i \subset \sM_2^i$. In particular,
the composite
$$ \bar c \times_X m_1 \xrightarrow{\bar c^*\langle \tilde m_1,\dots,\tilde m_1\rangle} \bar c \times_X \sM_2^i \xrightarrow{\tilde \phi}
\bar c \times_X \sM_2^i \times_X \sC_{X,Y} \to \bar c\times_X \sC_{X,Y} $$
factors through
the diagonal embedding $\bar c \to \bar c\times_X \sC_{X,Y}$.
Since $\tilde\phi$ is a horizontal lift of $\phi$, it follows that
we have a commutative diagram
$$\begin{CD}
(\bar c\times_X X^\sharp) \times_{X\times X}\sM_2^i @>{\tilde\phi}>> 
\bar c \times_X
\sM_2^i \times_X \sC_{X,Y} \\
@VVV @VVV \\
\bar c\times_X X^\sharp @>{\bar c^*\Phi}>> \bar c \times Y 
\end{CD}$$
Since the left vertical arrow is an epimorphism of formal schemes,
it follows that the composite 
$$ \bar c \times_X X^\sharp \to \bar c\times X \xrightarrow{\phi} \bar c \times Y $$
coincides with $\bar c^*\Phi$.
\end{proof}

\begin{cor}\label{cor:extension}
Let $(X,\sM)$ and $(Y,\sN)$ be a pair of simply-connected, 
nonsingular projective Fano varieties of Picard number $1$ and
equal dimensions,
together with good families of rational curves. 
Let $K$ be an algebraically closed field, and
$\bar x_0:\Spec K \to X$, $\bar y_0:\Spec K \to Y$
a pair of geometric points such that there is an isomorphism
$\bar x_0^*X^\sharp \simeq \bar y_0^*Y^\sharp$
identifying $\bar x_0^*X^\sharp\times_X\hat\sM_1$ with
$\bar y_0^*Y^\sharp\times_Y\hat\sN_1$. Then
there is an isomorphism $X \simeq Y$ identifying
$\sM$ with $\sN$.
\end{cor}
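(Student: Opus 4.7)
The plan is to reduce to Theorem~\ref{thm:extension} by producing a geometric point of $\sC^\circ_{X,Y}$ from the hypothesis, then descend the resulting isomorphism from a large algebraically closed field back down to $k$. The hypothesis gives precisely a $K$-point $\bar c_0 : \Spec K \to \sC_{X,Y}$ lying over $(\bar x_0,\bar y_0)$, so $\sC_{X,Y}$ is non-empty. By Lemma~\ref{lem:c-generic}, the subscheme $\sC^\circ_{X,Y}$ is dense in $\sC_{X,Y}$ and hence also non-empty; I may therefore choose a geometric point $\bar c : \Spec L \to \sC^\circ_{X,Y}$ with $L$ algebraically closed (for instance, an algebraic closure of the residue field at a generic point of some irreducible component of $\sC^\circ_{X,Y}$).

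Applying Theorem~\ref{thm:extension} to $\bar c$ yields an isomorphism $\phi_L : X \otimes L \to Y \otimes L$ extending $\bar c^*\Phi$. I would then verify that $\phi_L$ automatically identifies $\sM \otimes L$ with $\sN \otimes L$: as recorded in the proof of that theorem, the restriction of $\phi_L$ to the generic point $x_L$ of $X \otimes L$ is formally \'etale and identifies $x_L^* X^\sharp \times_X \hat\sM_1$ with $\phi_L(x_L)^* Y^\sharp \times_Y \hat\sN_1$, so the generic $\sM$-curve is carried to a generic $\sN$-curve. Since $\sM$ and $\sN$ are irreducible components of the respective Hom schemes, this forces $\phi_{L*}(\sM \otimes L) = \sN \otimes L$.

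It remains to descend from $L$ to $k$. Since $X$ and $Y$ are projective, $\underline\Isom_k(X,Y)$ is representable by a $k$-scheme locally of finite type; the condition that an isomorphism identify $\sM$ with $\sN$ is open and closed, since in a flat family of isomorphisms the image of an irreducible component of $\underline\Hom_\bir^n(\PP^1,X)$ is a flat deformation of an irreducible component of $\underline\Hom_\bir^n(\PP^1,Y)$, and therefore lies in a locally constant component. The subfunctor $U\subset\underline\Isom_k(X,Y)$ of isomorphisms taking $\sM$ to $\sN$ is thus a union of connected components, and by the previous paragraph $U$ has an $L$-point. Being locally of finite type and non-empty over the algebraically closed field $k$, it has a closed point with residue field $k$, which is the desired isomorphism. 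The only mildly subtle point in the whole argument is the open-and-closed claim, but this is a standard consequence of flatness and the discreteness of the set of irreducible components of the Hom scheme in a trivial family.
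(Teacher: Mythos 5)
Your argument is correct, and the reduction to Theorem~\ref{thm:extension} via Lemma~\ref{lem:c-generic} and the verification that $\phi_L$ carries $\sM\otimes L$ to $\sN\otimes L$ match the paper closely. Where you diverge is the final descent from $L$ to $k$. The paper spreads $\phi_L$ out over a finite-type $k$-subalgebra $A\subset L$ (possible because $X$, $Y$, $\sM$, $\sN$ and $\phi_L$ are all of finite presentation), and then restricts $\phi_A : X\otimes A \to Y\otimes A$ to a closed point of $\Spec A$, whose residue field is $k$ by the Nullstellensatz. You instead invoke the representability of $\underline\Isom_k(X,Y)$ by a $k$-scheme locally of finite type and observe that the locus $U$ of isomorphisms sending $\sM$ to $\sN$ is open and closed, so that non-emptiness of $U(L)$ gives a $k$-point. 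Both routes are standard; the paper's spreading-out is slightly more elementary (it needs no Hilbert/Isom machinery and also automatically carries along the identification of $\sM$ with $\sN$ through the specialization), while your version is arguably cleaner conceptually once one is willing to use the Isom scheme.

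One remark on your open-and-closed claim: the phrase \emph{``the image of an irreducible component \ldots\ is a flat deformation of an irreducible component''} is a bit loose. The clean justification is that $\underline\Hom^n_\bir(\PP^1,X)$ is normal by construction, so its irreducible components are connected components; since $k$ is algebraically closed, those components are geometrically connected, so the connected components of $\underline\Hom^n_\bir(\PP^1,Y)\times T$ over a connected $T$ are simply $C\times T$. An isomorphism $\phi_T$ then carries the connected component $\sM\times T$ onto a single $C\times T$, and the condition $C=\sN$ is therefore constant on $T$. Stating it this way removes any ambiguity.
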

\begin{proof}
By Lemma \ref{lem:c-generic}, $\sC^\circ_{X,Y}$ is nonempty,
so that there is an algebraically closed extension $L/K$
and a geometric point $\bar c : \Spec L \to \sC^\circ_{X,Y}$,
inducing by Theorem \ref{thm:extension} an isomorphism
$$ \phi_L : X\otimes L \to Y\otimes L $$
identifying $\hat\sM_1 \otimes L$ with $\hat\sN_1\otimes L$
and thus $\sM\otimes L$ with $\sN\otimes L$. Since $X$ and $Y$ are algebraic,
there is a subalgebra $A \subset L$, of finite type over $k$,
and such that $\phi_L$ is the base-change of an isomorphism
$$ \phi_A : X\otimes A \to Y\otimes A $$
identifying $\sM\otimes A$ with $\sN\otimes A$. Restricting
$\phi_A$ over a closed point of $\Spec A$ yields the desired
isomorphism $X\simeq Y$.
\end{proof}
 

\chapter{The rigidity theorem}\label{chap:rigidity}

\section{The setup}

\begin{defn}\label{defn:degeneration}
Let $G/P$ be a cominuscule variety. A \emph{smooth projective degeneration}
of $G/P$ is a smooth, projective morphism $X \to S$ such that $S$ is the spectrum 
of a discrete valuation ring over $k$, with residue field $k$ and fraction field $F$, 
and the geometric generic
fibre of $X$ is isomorphic to $G/P \otimes \bar F$.
\end{defn}

Hwang and Mok~\cite{hwang-mok-rigidity} show that, over $k=\mathbb{C}$, every
smooth projective degeneration of a cominuscule variety $G/P$ is an isotrivial
fibration. Assuming from now on that $k$ is of characteristic $p>0$, we
want to find conditions on $p$ guaranteeing an analogous rigidity result. However,
we will need to introduce an additional parameter.

\begin{defn}\label{defn:rigidity}
Let $G/P$ be a cominuscule variety, and $d$ a positive integer. We will say that
$G/P$ is \emph{$d$-rigid} if every smooth projective degeneration $X \to S$ of $G/P$,
such that there exists a very ample invertible sheaf on $X$ retricting
to $\OO_{G/P}(d)\otimes\bar F$ on the geometric generic fibre, is necessarily
an isotrivial fibration.
\end{defn}

Since isotriviality can be checked after faithfully flat base change,
we can restrict to smooth projective degenerations with trivial generic fibres.
Furthermore, since the group $\underline\Aut(G/P)$
is smooth, isotriviality is equivalent to the central fibre being isomorphic to $G/P$.

It the following we will fix a cominuscule variety $G/P$ and a smooth projective
degeneration $X \to S$ with trivial generic fibre. Denote with $s_1$, resp. $s_0$,
the generic, resp. special, point of $S$. Let $X_1 = s_1 \times_S X$, $X_0 = s_0\times_SX$.
Recall that the Picard group of $G/P$ is generated by an ample invertible sheaf $\OO_{G/P}(1)$.
We let $\OO_X(1)$ be the unique extension of $\OO_{G/P}(1) \otimes F$ to an invertible sheaf
on $X$, and $\OO_{X_0}(1)$ its restriction to the central fibre. Note that the restriction map
$$ \Pic X \to \Pic X_1 \simeq \mathbb{Z}$$
is an isomorphism, so that $\OO_X(1)$ is ample by projectivity of $X\to S$.
We also let $$\sM \subset \underline\Hom^n_{S,\bir}(\PP^1_S,X)$$
be the closed subscheme, flat over $S$,
such that $s_1\times_S\sM \subset s_1\times \underline\Hom^n_\bir(\PP^1,G/P)$ is the
component of lines on $G/P$. The fibre $$s_0\times_S \sM \subset \underline\Hom^n_\bir(\PP^1,X_0)$$
is connected, but it may be in general reducible. Since every component
is an irreducible family
of rational curves of degree $1$ with respect to $\OO_{X_0}(1)$, 
hence 
unsplit
by Lemma \ref{lem:unsplit}, it follows that $s_0\times_S\sM_0$ is proper.
So is then
$\sM_0\to S$.

Let us state some immediate properties of $X_0$.
\begin{lem}
The central fibre $X_0$ is simply-connected, Fano, of Picard number $1$, and chain-connected
by $(s_0\times_S\sM)$-curves.
\end{lem}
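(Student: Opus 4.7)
The plan is to transfer the four properties from the generic fibre $X_1 \simeq G/P \otimes F$ by exploiting flatness of $X\to S$, the identification $\Pic X \simeq \Pic X_1$, and the properness of $\sM_0 \to S$ recorded just above the statement.

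The Fano property is the easiest. The relative anticanonical sheaf $\omega_{X/S}^{-1}$ restricts on $X_1$ to $\OO_{X_1}(\ind(G/P))$, so $\Pic X \simeq \Pic X_1$ forces $\omega_{X/S}^{-1} \simeq \OO_X(\ind(G/P))$ globally, and the restriction $\omega_{X_0}^{-1} \simeq \OO_{X_0}(\ind(G/P))$ is ample because $\OO_X(1)$ is.

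Chain-connectedness is the main step. Properness of $\sM_0 \to S$ propagates to $\sM_2 \to S$ (a $\PP^1 \times \PP^1$-bundle over $\sM_0$) and then to $\sM_2^i \to S$ for each $i\ge 0$, so the total evaluation $\sM_2^i \to X\times_S X$ is proper, with closed image $Z^i \subset X\times_S X$. Proposition \ref{pro:cominuscule-chains} applied to the generic fibre yields $s_1\times_S Z^i = X_1\times X_1$ for $i\ge r$, where $r$ is the invariant from Table \ref{tab:vmrt}. Since $Z^i$ is a closed subscheme of the flat $S$-scheme $X\times_S X$, upper semicontinuity of fibre dimension forces $s_0\times_S Z^i$ to have dimension at least $2\dim X_0$, hence to equal $X_0\times X_0$. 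This exhibits $X_0$ as chain-connected by $(s_0\times_S\sM)$-curves, and then Picard number $1$ is immediate from Proposition \ref{pro:picard-number} applied to the connected unsplit family $s_0\times_S\sM$.

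For simply-connectedness I would invoke Grothendieck's specialisation theorem for the \'etale fundamental group of a proper smooth morphism (SGA I, Exp. X), which provides a surjection $\pi_1(X_1) \twoheadrightarrow \pi_1(X_0)$ of geometric \'etale fundamental groups. The source is trivial: any connected \'etale cover of $G/P\otimes \bar F$ pulls back along the $P$-bundle $G\to G/P$ to an \'etale cover of the simply connected group $G$, which splits into copies of $G$; since $P$ is connected, its action on the sheets is trivial, and the cover descends to a trivial cover of $G/P$. The only mildly delicate ingredient in the whole argument is the upper semicontinuity step in the chain-connectedness paragraph; all other properties transfer formally from the generic fibre.
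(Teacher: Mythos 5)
Your argument is correct and follows the paper's own four-part decomposition, with the same key inputs (Proposition \ref{pro:cominuscule-chains}, Proposition \ref{pro:picard-number}, the restriction isomorphism $\Pic X\simeq\Pic X_1$, and $\omega_{X/S}^{-1}\simeq\OO_X(\ind(G/P))$). The one place you genuinely diverge is the simple-connectedness step: you invoke Grothendieck's specialisation theorem for $\pi_1^{\mathrm{\acute et}}$ as a citation, whereas the paper re-derives exactly the surjectivity you need, by lifting a finite \'etale cover of $X_0$ to a cover of $X$, trivialising its geometric generic fibre using the simple-connectedness of $G/P$ (already recorded in the classification lemma, so your re-proof of $\pi_1(G/P)=1$ is redundant), and specialising after a finite separable base change. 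Both routes are valid; if you cite the theorem, you should note that $\OO_S$ is not assumed henselian, so one passes to the henselisation of $S$, which does not change $X_0$. One small simplification in the chain-connectedness step: rather than a fibre-dimension estimate, it is enough to observe that $Z^i$ is a closed subset of the irreducible scheme $X\times_SX$ (smooth over $S$, hence regular, and connected by cohomology and base change) containing the dense open $X_1\times_{s_1}X_1$, hence $Z^i=X\times_SX$, and restricting over $s_0$ gives surjectivity directly; the dimension argument you give works as well but carries a little extra baggage (properness of $Z^i\to S$ is needed to ensure the closed locus where the fibre dimension is $\ge 2\dim X_0$ meets the special fibre).
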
 
\begin{proof}
Every finite \'etale cover $\tilde X_0 \to X_0$ deforms to a finite \'etale cover $\tilde X \to X$,
restricting to $\tilde X_1 \to X_1$ at the generic fibre.
By simply-connectedness of $G/P$, $\tilde X_1 \otimes \bar F \to X_1 \otimes \bar F$
is trivial. After a finite separable 
base change $T \to S$ it follows that $T\times_S\tilde X \to T\times_SX$
is trivial, and thus so is the restriction $\tilde X_0 \to X_0$ over a closed point of $T$
above $s_0$.
Hence $X_0$ is simply-connected.

The relative anticanonical sheaf
$\omega^{-1}_{X/S}$ is isomorphic to $\OO_X(\ind(G/P))$, hence ample, so that in particular
$X_0$ is Fano. 
Since $\sM_0 \to S$ is proper, $\sM_2^i \to X\times_SX$ is surjective for $i$ as in 
Proposition \ref{pro:cominuscule-chains}.
Hence $X_0$ is chain-connected by
$(s_0\times_S\sM)$-curves, and furthermore of Picard number $1$ by Proposition
\ref{pro:picard-number}. 
\end{proof}

\section{Curves at the generic point}
\subsection{Smoothness}
In order to proceed with the proof of rigidity, we first
need to establish smoothness of the family of $\sM$-curves through
the generic point of $X_0$. Equivalently, we check that every
$\sM$-curve through the generic point of $X_0$ is free, a result
that comes for free in characteristic zero (cf.~\cite{kollar}). Let us begin
with a converse, stating that $\sM$ contains all free rational curves of
degree one:
\begin{lem}\label{lem:m-contains-free}
Let $W \subset \underline\Hom_\bir^n(\PP^1,X_0)$ be an irreducible
component of degree $1$ with respect to $\OO_X(1)$, and such that the generic $W$-curve
is free.
Then $W \subset \sM$.
\end{lem}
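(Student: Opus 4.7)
The core idea is that freeness of a rational curve makes the Hom scheme smooth over the base $S$ at that point, so the curve deforms to the generic fibre, where, having degree $1$ on $G/P$, it must be a line.

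More precisely, let $C:\PP^1_L \to X_0$ be the generic $W$-curve, with $L=\kappa(W)$. Freeness of $C$ means $C^*T_{X_0}$ is a direct sum of invertible sheaves of non-negative degree, so
$$ H^1(\PP^1_L, C^*T_{X_0}) = 0. $$
Since $X \to S$ is smooth, $T_{X/S}|_{X_0} \simeq T_{X_0}$, and the standard deformation-theoretic argument (cf.~\cite{kollar}) shows that $\underline\Hom_S(\PP^1_S,X) \to S$ is smooth at the point $[C]$, where $C$ is viewed as an $L$-point via the structure morphism $\Spec L \to s_0 \hookrightarrow S$. In particular $[C]$ is a regular point of $\underline\Hom_{S,\bir}(\PP^1_S,X)$, so the normalisation morphism $\underline\Hom_{S,\bir}^n(\PP^1_S,X) \to \underline\Hom_{S,\bir}(\PP^1_S,X)$ is an isomorphism in a neighbourhood of $[C]$; choose the unique lift $[\tilde C]$, and let $Z$ be the irreducible component of $\underline\Hom_{S,\bir}^n(\PP^1_S,X)$ containing $[\tilde C]$. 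Smoothness over $S$ propagates to $Z$, so $Z \to S$ is flat and dominant.

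Consequently, the generic fibre $s_1 \times_S Z$ is a non-empty open subscheme of $\underline\Hom_\bir^n(\PP^1_F, X_1)$. Its geometric points correspond to rational curves on $X_1 \otimes \bar F \simeq G/P \otimes \bar F$ of degree $1$ with respect to $\OO_{G/P}(1)$, which by Proposition~\ref{pro:cominuscule-lines} are exactly the lines on $G/P$ and form a single irreducible component. Hence $Z$ is the closure of this family of lines in $\underline\Hom_{S,\bir}^n(\PP^1_S,X)$, i.e.\ $Z=\sM$ by the defining property of $\sM$. Pushing forward $[\tilde C]\in\sM$ through the natural identification $s_0 \times_S \underline\Hom_S(\PP^1_S,X) \simeq \underline\Hom(\PP^1,X_0)$ (and its version after normalisation) shows the generic point of $W$ lies in the image of $s_0 \times_S \sM$; since $W$ is an integral closed subscheme, $W \subset s_0 \times_S \sM$ as required.

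The argument is essentially mechanical once one invokes the freeness-unobstructedness principle; the only mild subtlety I expect is bookkeeping around normalisation and passage between $s_0 \times_S \underline\Hom_{S,\bir}^n$ and $\underline\Hom_\bir^n(\PP^1,X_0)$, but this is harmless because $[C]$ is a regular point where both schemes are locally isomorphic to the same smooth $S$-scheme.
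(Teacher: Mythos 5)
Your proof is correct and takes essentially the same route as the paper: freeness makes $\underline\Hom_{S,\bir}^n(\PP^1_S,X)\to S$ smooth, hence faithfully flat, at the generic point of $W$, so the curve specialises from lines in the generic fibre, forcing that point into $\sM$. The paper argues slightly more directly via the local ring $\Spec\OO_{H,\eta_W}$ rather than passing through the irreducible component $Z$, but the content is identical.
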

\begin{proof}
Let $f: \PP^1 \otimes k(W) \to X_0$ be the generic $W$-curve.
By freeness, $H=\underline\Hom_{S,\bir}^n(\PP^1_S,X) \to S$ is smooth at $\eta_W$,
so that in particular $\Spec \OO_{H,\eta_W} \to S$ is faithfully flat. Since $\deg f^*\OO_X(1)=1$,
it follows that $s_1 \times_S \Spec \OO_{H,\eta_W}$ is a family of lines on 
$X_1 \simeq s_1\times(G/P)$ and thus $\Spec \OO_{H,\eta_W} \subset \sM$. Hence $\eta_W \in \sM$
and $W \subset \sM$.  
\end{proof}

Letting $x_1$, resp. $x_0$, be the generic point of $X_1$, resp. $X_0$, observe
that $\OO_S \subset \OO_{X,x_0}$ is an unramified extension of discrete valuation rings,
inducing separably generated extensions of residue and fraction fields.
\begin{lem}\label{lem:m1-flat}
$\Spec \OO_{X,x_0} \times_X \sM_1 \to \Spec \OO_{X,x_0}$ is flat.
\end{lem}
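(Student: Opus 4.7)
The plan is to reduce to torsion-freeness over a DVR. Since $X\to S$ is smooth and $x_0$ is the generic point of the Cartier divisor $X_0\subset X$, the local ring $R = \OO_{X,x_0}$ is a DVR, and a uniformizer $t\in\OO_{S,s_0}$ remains a uniformizer of $R$. A scheme over a DVR is flat precisely when $t$ acts as a non-zero-divisor on every stalk of its structure sheaf, so it suffices to verify this at each point of $\sM_1$ whose image in $X$ is $x_0$ or $x_1$.

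The key structural input is the integrality of $\sM_1$. By construction $\sM$ is the scheme-theoretic closure in $\underline\Hom^n_{S,\bir}(\PP^1_S,X)$ of its generic fibre $s_1\times_S\sM$, which, having reduced to a degeneration with trivial generic fibre, is the base change to $F$ of the lines component on $G/P$. That component is an integral $k$-variety, hence geometrically integral since $k$ is algebraically closed, so its base change to $F$ remains integral. The scheme-theoretic closure of an integral subscheme of a Noetherian scheme is integral, so $\sM$ is integral; then $\sM_1$, being the quotient of $\sM$ by the faithfully flat $\underline\Aut(\PP^1,0)$-action, is integral as well. Moreover, the generic point $\eta$ of $\sM_1$ maps to the generic point $x_1$ of $X$, because the evaluation $s_1\times_S\sM\to X_1$ is surjective (every point of $G/P$ lies on a line).

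Given any $y\in\sM_1$ mapping to $x_0$ or $x_1$, the stalk $\OO_{\sM_1,y}$ is a domain by integrality, and $t$ is nonzero in the fraction field $\kappa(\sM_1)$ since $\eta$ maps to $x_1\notin V(t)$. Hence $t$ is a non-zero-divisor in $\OO_{\sM_1,y}$, making this stalk flat over $R$; flatness is preserved under base change, so $\Spec R\times_X\sM_1\to\Spec R$ is flat. The argument has no serious obstacle once the integrality of $\sM_1$ is identified -- in particular, potential reducibility or lower dimension of the closed fibre $s_0\times_S\sM$ is irrelevant, since the base $\Spec R$ is one-dimensional and only the action of $t$ along this single normal direction needs to be controlled.
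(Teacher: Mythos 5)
Your proof is correct. Where the paper proceeds by a short chain of flatness arguments, you instead go through integrality; both routes reach the same torsion-freeness conclusion, so I'd call this a genuinely different but parallel argument.

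The paper's proof is shorter: $\sM\to S$ is flat by construction (the paper \emph{defines} $\sM$ to be the closed subscheme flat over $S$ with the prescribed generic fibre), $\sM\to\sM_1$ is a faithfully flat principal bundle, so $\sM_1\to S$ is flat by descent of flatness; then, since the uniformizer of $\OO_S$ is also a uniformizer of the DVR $\OO_{X,x_0}$, torsion-freeness transfers and one concludes. You instead unpack the flatness-over-a-DVR of $\sM$ as the statement that $\sM$ is the scheme-theoretic closure of its generic fibre, derive integrality of $\sM$ (and hence of $\sM_1$ after faithfully flat quotient), and then get $t$-torsion-freeness of every stalk from injectivity into the function field. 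Your route establishes the strictly stronger fact that $\sM_1$ is integral, which costs a little more work (in particular the geometric integrality of the lines component and the dominance of the generic-fibre evaluation, both of which you correctly justify) but makes the eventual non-zero-divisor claim completely transparent. Either argument is fine; the paper's is more economical because it simply invokes the flatness that was built into the definition of $\sM$ rather than re-deriving its consequences.
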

\begin{proof}
Since $\sM \to S$ is flat, and $\sM \to \sM_1$ is a principal bundle,
$\sM_1\to S$ is flat. That is, multiplication by the uniformizer of $\OO_S$ 
is injective in the local rings of $\sM_1$. But 
the uniformizer of $\OO_S$
is also a uniformizer in $\OO_{X,x_0}$. 
\end{proof}

Singular points of $x_0\times_X\sM_1$ can be detected by
inseparability of their residue fields viewed as extensions of $\kappa(x_0)$:
\begin{lem}\label{lem:separable-smooth}
Let $y \in x_0\times_X\sM_1$ be a point such that $\kappa(y) / \kappa(x_0)$ is separably
generated. Then $x_0\times_X\sM_1 \to x_0$ is smooth at $y$.
\end{lem}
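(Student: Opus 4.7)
The plan is to reduce smoothness at $y$ to regularity of the local ring $\OO_{x_0\times_X\sM_1,y}$: since $\kappa(y)/\kappa(x_0)$ is separably generated by hypothesis, the standard criterion for finite-type schemes over a field identifies smoothness at $y$ with regularity of this stalk. I then verify regularity by computing the Zariski tangent space at $y$ and comparing it with the local Krull dimension.

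For the Krull dimension, Lemma~\ref{lem:m1-flat} provides flatness of $\sM_1\to X$ at $y$; combined with the description of the generic fibre of $\sM_1\to X$ as the space of lines through a point of $G/P$, this yields $\dim\OO_{x_0\times_X\sM_1,y}=r$, where $r$ is the dimension of the VMRT of $G/P$ (Proposition~\ref{pro:cominuscule-lines}). For the tangent space, I would lift $y$ to a point $\tilde y\in\sM$ via the principal $\underline\Aut(\PP^1,0)$-bundle $\sM\to\sM_1$ and identify $\tilde y$ with a pointed curve $f:\PP^1_{\kappa(y)}\to X_0\otimes\kappa(y)$ satisfying $f(0)=x_0\otimes\kappa(y)$. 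Deformation theory of pointed morphisms gives $T_{\tilde y}(x_0\times_X\sM)\simeq H^0(\PP^1_{\kappa(y)},f^*T_{X_0}(-0))$; quotienting by the Lie action of $\Aut(\PP^1,0)$---which is injective provided $f$ is unramified at $0$, as holds for a generic line---yields $\dim T_y(x_0\times_X\sM_1)=h^0(f^*T_{X_0}(-0))-2$.

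Writing $f^*T_{X_0}=\bigoplus_i\OO(a_i)$ with $\sum_i a_i=\ind(G/P)$ (by computing the degree of $f^*T_{X_0}$ against $-K_{X_0}$), one obtains $h^0(f^*T_{X_0}(-0))=\sum_i\max(a_i,0)\ge\ind(G/P)$, with equality iff every $a_i\ge 0$---i.e., iff $f$ is free. Since $r+2=\ind(G/P)$ for cominuscule $G/P$ (comparing Tables~\ref{tab:cominuscule} and~\ref{tab:vmrt}), regularity at $y$ is equivalent to the freeness of the curve $f$ parametrized by $y$.

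The main obstacle is deducing freeness of $f$ from the separability of $\kappa(y)/\kappa(x_0)$. My strategy is contrapositive: assume $f$ is non-free, examine the closed non-free locus $Z\subset\sM_1$, and show that any $y\in Z$ lying over $x_0$ necessarily has an inseparable residue field extension over $\kappa(x_0)$. For this I would use the finite tangent map $\sM_1\to\PP T_X$ from Proposition~\ref{pro:kebekus}: its ramification locus should coincide with the non-free locus, so that a point with separably generated residue field extension must lie outside $Z$.
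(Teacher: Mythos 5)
Your reduction correctly pinpoints the crux: smoothness at $y$ is equivalent to freeness of the rational curve $f$ parametrized by $y$, and everything hinges on extracting freeness from separability of $\kappa(y)/\kappa(x_0)$. The paper does this in one line, and your dimension count is a detour around the same fact. Here is the paper's mechanism. Lift $y$ to $\tilde y\in\sM$; the extension $\kappa(\tilde y)/\kappa(x_0)$ is still separably generated since $\sM\to\sM_1$ is a principal bundle for a smooth group. Because $x_0$ is the \emph{generic} point of $X_0$, the stalk $\Omega_{X_0/k}\otimes\kappa(x_0)$ is $\Omega_{\kappa(x_0)/k}$, and separable generation of $\kappa(\tilde y)/\kappa(x_0)$ is exactly the condition that $\Omega_{\kappa(x_0)/k}\otimes_{\kappa(x_0)}\kappa(\tilde y)\to\Omega_{\kappa(\tilde y)/k}$ be injective. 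Feeding this into the cotangent sequence of $\sM\to X_0$ at $\tilde y$ yields surjectivity of the differential $T_{\sM,\tilde y}\to T_{X_0,x_0}\otimes\kappa(\tilde y)$, which factors through the evaluation $H^0(\PP^1_{\overline{\kappa(y)}},f^*T_{X_0})\to f|_0^*T_{X_0}$. Surjectivity of that evaluation forces, via the long exact sequence for $0\to f^*T_{X_0}(-0)\to f^*T_{X_0}\to f|_0^*T_{X_0}\to 0$, the vanishing of $H^1(\PP^1,f^*T_{X_0}(-0))$, i.e.\ freeness; Koll\'ar then gives smoothness. Note that the separability hypothesis is consumed exactly once and lands you directly on surjectivity of the evaluation, with no need to compare Krull and tangent dimensions.

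The step you leave open---``the ramification locus of the tangent map should coincide with the non-free locus, so a separable point lies outside it''---fails on both halves. First, the tangent map $\sM_1\to\PP T_X$ is unramified at a point precisely when the corresponding curve is \emph{minimal} (standard), not merely free: a free curve whose $f^*T_{X_0}$ contains an extra $\OO(2)$ summand still produces a ramification point, so in general the non-free locus is strictly smaller than the ramification locus. Second, and more seriously, a finite morphism can ramify at a point whose residue field extension over the base is trivial, hence separable---the double cover $\AA^1\to\AA^1$, $s\mapsto s^2$, in odd characteristic is ramified at $s=0$ with residue extension $k/k$---so ``$\kappa(y)/\kappa(x_0)$ separable'' gives no information about whether $y$ lies on the ramification locus, and the contrapositive you sketch does not close. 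The K\"ahler-differential argument above is precisely what converts separability into the needed positivity of $f^*T_{X_0}$ without passing through the tangent map at all.
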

\begin{proof}
Let
$f:\PP^1 \otimes\overline{\kappa(y)} \to X_0$ be the corresponding rational curve.
By separability of $\kappa(y)/\kappa(x_0)$, the map $H^0(\PP^1\otimes\overline{\kappa(y)},
f^* T_{X_0}) \to f|_0^* T_{X_0}$ is surjective, so that $f$ is free. Hence
$\sM_1 \to X_0$ is smooth at $y$ (cf.~\cite[Cor. 3.5.4]{kollar}, noting that
$\sM_1$ is the universal $\PP^1$-bundle over $\sM_0$).
\end{proof}
Let now $\bar x_1$, resp. $\bar x_0$, be the geometric generic point of $X_1$, resp. $X_0$.
We will identify a condition on the characteristic $p$ 
ensuring that $\bar x_0 \times_X \sM_1$ is a nonsingular variety. The
idea is to examine the degrees of the components of its singular locus
in a suitable projective embedding.
\begin{lem}\label{lem:m1-singular}
Let
$Z$ be an irreducible component of
the singular locus of $\bar x_0 \times_X \sM_1$. 
Then $\length\OO_{Z,\eta_Z}$ is divisible by $p$.
\end{lem}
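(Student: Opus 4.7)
Set $\ell=\kappa(x_0)$, $\bar\ell=\overline{\kappa(x_0)}$, $Y_\ell=x_0\times_X\sM_1$, and $Y_{\bar\ell}=\bar x_0\times_X\sM_1$. The plan is to reduce the lemma to a purely field-theoretic statement about inseparability, combining Lemma~\ref{lem:separable-smooth} with the standard fact that base change of a not-geometrically-reduced scheme to an algebraic closure introduces multiplicities that are powers of $p$.

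I would first observe that $\length\OO_{Y_{\bar\ell},\eta_Z}$ is finite precisely when $Z$ is a full (topological) irreducible component of $Y_{\bar\ell}$, in which case it coincides with the geometric multiplicity $m_Z$ of $Z$ in $Y_{\bar\ell}$; since $\bar\ell$ is perfect, the reduced scheme $Y_{\bar\ell}^{\red}$ is generically smooth over $\bar\ell$, so a component with singular generic point must in fact be non-reduced at $\eta_Z$. Let $W\subset Y_\ell$ be the underlying reduced component whose base change to $\bar\ell$ contains $Z$ as a component, set $L=\kappa(\eta_W)$, and let $m$, $m'$ denote the multiplicity of $W$ in $Y_\ell$ and of $Z$ in $W\otimes_\ell\bar\ell$ respectively. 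A short filtration argument on the Artinian local ring $\OO_{Y_\ell,\eta_W}\otimes_\ell\bar\ell$, whose residue field at the prime corresponding to $Z$ is a localization of $L\otimes_\ell\bar\ell$, yields the factorization $m_Z=m\cdot m'$.

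The central claim is that $L/\ell$ cannot be separably generated. Indeed, if it were, Lemma~\ref{lem:separable-smooth} would force $Y_\ell\to\Spec\ell$ to be smooth at $\eta_W$, entailing both $m=1$ (smoothness implies reducedness) and, upon base change to the perfect field $\bar\ell$, smoothness and thus regularity of $Y_{\bar\ell}$ at every point above $\eta_W$, contradicting the singularity at $\eta_Z$.

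To conclude, I would invoke the standard algebraic fact that for any finitely generated extension $L/\ell$ in characteristic $p$ which is not separably generated, the localization of $L\otimes_\ell\bar\ell$ at every minimal prime is an Artinian local ring of length a positive power of $p$; one factors $L/\ell$ through its maximal separable-over-$\ell$ subfield $L^s\subset L$, noting that $L^s\otimes_\ell\bar\ell$ is reduced while the nontrivial purely inseparable extension $L/L^s$ contributes a $p$-power degree. This yields $p\mid m'$, and hence $p\mid m_Z=\length\OO_{Y_{\bar\ell},\eta_Z}$. The main technical obstacle is the bookkeeping in the first step: establishing the factorization $m_Z=m\cdot m'$ carefully and reconciling the phrase \emph{irreducible component of the singular locus} with a component of $Y_{\bar\ell}$ on which the length invariant is actually finite; the remaining steps are then direct applications of previously stated results.
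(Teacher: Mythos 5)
Your identification of the field-theoretic heart of the argument — Lemma~\ref{lem:separable-smooth} forces the residue field at the generic point of the singular locus of $x_0\times_X\sM_1$ to be non-separably-generated over $\kappa(x_0)$, whence the base change to $\overline{\kappa(x_0)}$ acquires $p$-power lengths at minimal primes — is indeed the content of the paper's one-line proof. However, your reinterpretation of $\length\OO_{Z,\eta_Z}$ as $\length\OO_{\bar x_0\times_X\sM_1,\eta_Z}$ forces you to assume $Z$ is a full irreducible component of $\bar x_0\times_X\sM_1$, a restriction you yourself flag as an unresolved ``reconciliation'' issue. In Lemma~\ref{lem:m1-smooth}, where the result is used, $Z$ is only a maximal-dimensional component of the \emph{singular locus}; it need not be a component of $\bar x_0\times_X\sM_1$ at all (for instance $\bar x_0\times_X\sM_1$ could be reduced and irreducible with a positive-codimensional singular locus), in which case $\OO_{\bar x_0\times_X\sM_1,\eta_Z}$ has infinite length and your argument says nothing. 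The quantity actually needed is the length at $\eta_Z$ of the base change to $\overline{\kappa(x_0)}$ of the \emph{reduced} singular locus of $x_0\times_X\sM_1$ — that is the scheme being sliced by $\Lambda$ in Lemma~\ref{lem:m1-smooth}. With that reading the factorization $m_Z=m\cdot m'$ and the detour through multiplicities of $\bar x_0\times_X\sM_1$ fall away: one passes directly to the image component $W$ of $Z$ in the singular locus of $x_0\times_X\sM_1$, applies Lemma~\ref{lem:separable-smooth} to see $\kappa(\eta_W)/\kappa(x_0)$ is not separably generated, and invokes the $p$-power fact for $\kappa(\eta_W)\otimes_{\kappa(x_0)}\overline{\kappa(x_0)}$ localized at the minimal prime corresponding to $\eta_Z$.

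A secondary point: the sketch of the closing field-theoretic step via ``the maximal separable-over-$\ell$ subfield $L^s$'' does not work for transcendental extensions. With $\ell=k(s)$ and $L=k(s^{1/p},u)$ ($u$ transcendental over $\ell$), both $\ell(u)$ and $\ell(u+s^{1/p})$ are separable over $\ell$, yet their compositum is $L$, which is not; so no maximal separable intermediate field exists, and $L$ is certainly not purely inseparable over the separable algebraic closure of $\ell$ in $L$. The fact you need — that for a finitely generated, non-separable $L/\ell$ the local rings of $L\otimes_\ell\bar\ell$ at minimal primes have length a positive power of $p$ — is true and standard, but is proved by the usual treatment of geometric multiplicity (e.g.\ EGA~IV, \S4.7) rather than by a maximal separable subfield.
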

\begin{proof}
This is an immediate consequence of Lemma \ref{lem:separable-smooth}.
\end{proof}

The idea of the following crucial lemma is due to Fedor Bogomolov.\footnote{Explained to
me by Jason Starr.}
\begin{lem}\label{lem:singular-length}
Let $Y \subset \PP^N$ be a closed subscheme of degree $e$ and pure dimension $n$. Assume that
the singular locus $Y^\sing$ is zero-dimensional. Then $\length Y^\sing \le e(e-1)^n$.
\end{lem}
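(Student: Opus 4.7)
The bound $e(e-1)^n$ is exactly what Bezout produces for the intersection of $Y$ (of dimension $n$ and degree $e$) with $n$ hypersurfaces of degree $e-1$ in general position. The plan is therefore to exhibit $n$ such hypersurfaces $H_1,\ldots,H_n \subset \PP^N$, each of degree $e-1$, such that $H_1 \cap \cdots \cap H_n \cap Y$ is zero-dimensional and contains $Y^\sing$ scheme-theoretically. Bezout on $Y$ then yields
\[
\length(H_1 \cap \cdots \cap H_n \cap Y) = e(e-1)^n,
\]
from which the desired bound $\length Y^\sing \le e(e-1)^n$ follows immediately.

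The source of these hypersurfaces is the model hypersurface case. For $Y = V(f) \subset \PP^{n+1}$ of degree $e$, the partial derivatives $\partial_0 f,\ldots,\partial_{n+1} f$ have degree $e-1$ and cut out $Y^\sing$ together with $f$. To handle a general $Y \subset \PP^N$, I would pass to a generic linear projection $\pi:\PP^N \dashrightarrow \PP^{n+1}$ from a generic codimension-$(n+2)$ linear subspace $\Lambda$ disjoint from $Y$. For generic $\Lambda$, the restriction $\pi|_Y$ is finite and birational onto a degree-$e$ hypersurface $\bar Y = V(\bar f) \subset \PP^{n+1}$. Pulling back $\partial_i \bar f$ along the linear map $\pi$ produces polynomials $g_0,\ldots,g_{n+1}$ on $\PP^N$ of degree $e-1$, and taking $n$ generic linear combinations of the $g_i$ yields the candidate hypersurfaces $H_i$. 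Set-theoretically $H_1 \cap \cdots \cap H_n \cap Y$ contains $Y^\sing$ because $\pi$ sends singular points of $Y$ to singular points of $\bar Y$, which in turn vanish on all $\partial_i \bar f$.

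The principal obstacle is upgrading this to the scheme-theoretic containment $Y^\sing \subseteq H_1 \cap \cdots \cap H_n \cap Y$ that the length comparison requires. The delicate point is that at a singular point $p \in Y^\sing$ the embedding dimension of $Y$ at $p$ may exceed $n+1$, so $\pi$ is not locally a closed immersion near $p$, and one cannot immediately identify the Jacobian ideals of $Y$ and $\bar Y$ at $p$ and $\pi(p)$. The hard part will be a careful comparison at the level of complete local rings, using that for generic $\Lambda$ the projection $\pi$ interacts transversely with the formal neighborhood of each point of $Y^\sing$, so that the pulled-back Jacobian generators $g_i|_Y$ lie in the Jacobian ideal of $Y$, yielding the desired scheme-theoretic containment.
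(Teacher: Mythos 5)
Your proposal follows the paper's outline --- Bezout against $n$ hypersurfaces of degree $e-1$ coming from pulled-back Jacobian generators of a projected hypersurface --- but relying on a \emph{single} generic projection $\pi : \PP^N \dashrightarrow \PP^{n+1}$ creates a gap you do not flag. The base locus on $Y$ of the linear system spanned by the $g_i = \pi^* \partial_i \bar f$ is exactly $\pi^{-1}(\bar Y^{\sing})$, and for $n \ge 2$, $N > n+1$ this is typically positive-dimensional: a generic projection of an $n$-fold to $\PP^{n+1}$ produces a hypersurface whose singular locus has dimension at least $n-1$, because the secant variety of $Y$ meets a generic center $\Lambda$ of dimension $N-n-2$ in dimension $\ge n-1$, giving a double-point locus of that dimension on $\bar Y$. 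Since every member of your system vanishes on $\pi^{-1}(\bar Y^{\sing})$, the candidate intersection $H_1 \cap \cdots \cap H_n \cap Y$ has dimension $\ge n-1$, not zero, and the Bezout count cannot be brought to bear. No choice of $n$ linear combinations of the $g_i$ can escape a positive-dimensional base locus that lives inside the system they span.

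The paper avoids this precisely by not using a single projection: for each smooth point $y \in Y \setminus Y^{\sing}$ it chooses a projection $\pi_y$ which is an immersion near $y$, so that the partials $\pi_y^* f_{y,i}$ do not all vanish at $y$; pooling these over all smooth $y$ yields a linear system $U$ whose base locus misses every smooth point, hence is supported on the zero-dimensional $Y^{\sing}$, after which $n$ general sections of $U$ give a proper zero-dimensional intersection. As for the scheme-theoretic containment $Y^{\sing} \subset H_1 \cap \cdots \cap H_n \cap Y$ which you identify as the hard step: that difficulty is indeed real, and it is shared --- the paper also asserts without detailed argument that the pulled-back partials ``vanish on $Y^{\sing}$'', and your diagnosis that the subtlety concentrates at singular points of high embedding dimension, where every projection to $\PP^{n+1}$ is ramified, is correct. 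But the dimension gap above is the one your argument misses entirely, and it must be repaired before the scheme-theoretic comparison even becomes relevant.
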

\begin{proof}
We claim that there is an $n$-dimensional subspace $V \subset H^0(Y,\OO(e-1))$
such that the base locus of the linear system $|V|$ zero-dimensional and contains $Y^\sing$.
We will then have $$\length Y^\sing \le Y \cdot ((d-1)H)^n = e(e-1)^n,$$
where $H$ is the hyperplane
class in $\PP^N$.

To prove the claim, we first note that
for every closed point $y \in Y \setminus Y^\sing$ there exists a linear projection
$\pi_y : \PP^N \dashrightarrow \PP^{n+1}$ 
such that $\pi_y(Y)$ is a degree $d$ hypersurface,
and $\pi_y$ is an immersion on some open neighbourhood of $y$.
Let $f_y \in H^0(\PP^{n+1},\OO(e))$ be the homogeneous polynomial cutting out $\pi_y(Y)$,
and $f_{y,i}\in H^0(\PP^{n+1},\OO(e-1))$ its partial derivatives.
Let $U \subset H^0(Y,\OO(e-1))$ be the subspace generated by $\pi_y^* f_{y,i}$
for all $0 \le i\le n+1$ and all $y \in Y\setminus Y^\sing$. For each $y \in Y\setminus Y^\sing$
we have that not all $\pi_Y^*f_{y,i}$ vanish at $y$, but all vanish on $Y^\sing$.  
It follows that the base locus of $|U|$
contains $Y^\sing$ and has the same reduced structure. In particular, it is zero-dimensional,
so that we can find an $n$-dimensional subspace $V \subset U$ such that the base-locus of
$|V|$ is zero-dimensional.
\end{proof}

\begin{lem}\label{lem:m1-smooth}
Let $m=\dim (\bar x_1\times_X\sM_1)$.
Suppose there is a degree $e$ projective embedding $\bar x_0 \times_X \sM_1
\hookrightarrow \bar x_0 \times \PP^N$. Assume $p > e(e-1)^m$. Then $\bar x_0 \times_X \sM_1$
is smooth.
\end{lem}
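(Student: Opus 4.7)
My plan is to argue by contradiction: set $Y = \bar x_0\times_X\sM_1$ and suppose its singular locus $Y^\sing$ is nonempty. Two incompatible bounds on the length of the singular locus of a generic linear section of $Y$ will yield a contradiction---an upper bound from Lemma~\ref{lem:singular-length}, and a lower bound via a Jacobian comparison combined with Lemma~\ref{lem:m1-singular}. By flat specialisation from the $m$-dimensional $\bar x_1\times_X\sM_1$ (Lemma~\ref{lem:m1-flat}), $Y$ has pure dimension $m$. Setting $s = \dim Y^\sing\ge 0$, choose $s$ hyperplanes $H_1,\dots,H_s$ in $\bar x_0\times\PP^N$ in sufficiently general position, and let $Y' = Y\cap H_1\cap\dots\cap H_s$, of pure dimension $m-s$ and degree $e$. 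Transversality to the smooth locus of $Y$ forces $(Y')^\sing$ to be zero-dimensional, and Lemma~\ref{lem:singular-length} then gives $\length (Y')^\sing \le e(e-1)^{m-s}\le e(e-1)^m < p$.

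For the lower bound I will establish the scheme-theoretic inclusion $Y^\sing\cap Y' \subseteq (Y')^\sing$. Working locally, write $Y$ as cut out by $f_1,\dots,f_r$ in $\bar x_0\times\PP^N$ and each $H_i$ as $\{h_i=0\}$ with $h_i$ linear, so that the Jacobian of $Y'$ is $\mathrm{Jac}(Y)$ augmented by $s$ additional rows given by the (constant) linear coefficients of the $h_i$. Each $(N-m+s)\times(N-m+s)$ minor of $\mathrm{Jac}(Y')$ expands by cofactors along some subset of these $s$ hyperplane rows into a combination of minors of $\mathrm{Jac}(Y)$ of size $(N-m+k)\times(N-m+k)$ for various $k\ge 0$; since these larger minors lie in $\mathrm{Jac}(Y)=I_{N-m}(\mathrm{Jac}(Y))$, I obtain $\mathrm{Jac}(Y')\subseteq\mathrm{Jac}(Y)\cdot\OO_{Y'}$, whence $Y^\sing\cap Y'\subseteq (Y')^\sing$ scheme-theoretically and $\length(Y^\sing\cap Y')\le \length (Y')^\sing$.

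To conclude, I compute $\length(Y^\sing\cap Y')$ by Bezout. For hyperplanes in sufficiently general position only the top-dimensional components $Z\subseteq Y^\sing$ contribute, each with length $\ell_Z\cdot\deg Z^{\mathrm{red}}$, where $\ell_Z = \length \OO_{Y^\sing,\eta_Z}$. Lemma~\ref{lem:m1-singular} gives $p \mid \ell_Z$ for every such $Z$, so the total length is a positive multiple of $p$. Combining the bounds gives $p\le\length(Y^\sing\cap Y')\le\length (Y')^\sing\le e(e-1)^m < p$, the desired contradiction.

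The main technical point is the Jacobian comparison of the second paragraph, where one must keep track of how ideals of minors of different sizes interact under a linear section; the remainder is a standard transversality computation in the choice of hyperplanes, together with the observation that lower-dimensional components of $Y^\sing$ are simply missed by a generic $H_1\cap\dots\cap H_s$ and therefore do not interfere with the length count.
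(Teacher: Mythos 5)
Your proof is correct and follows essentially the same route as the paper: cut $\bar x_0\times_X\sM_1$ by a generic linear space of codimension $\dim Y^\sing$ to reduce its singular locus to dimension zero, then play the upper bound of Lemma~\ref{lem:singular-length} against the $p$-divisibility of Lemma~\ref{lem:m1-singular} to obtain $p\le \length(Y')^\sing\le e(e-1)^m<p$. The one step you spell out that the paper compresses into its appeal to Bertini's theorem is the Jacobian-minor computation establishing the scheme-theoretic inclusion $Y^\sing\cap Y'\subseteq (Y')^\sing$, which is the half of the argument that makes the length count honest; this is a worthwhile clarification but not a different approach.
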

\begin{proof}
Suppose $\bar x_0 \times_X \sM_1 \subset \bar x_0 \times\PP^N$ is not smooth. 
Let $Z$ be an irreducible component
of maximal dimension of the singular locus of $\bar x_0 \times_X \sM_1$. By Bertini's Theorem,
there is a linear subspace $\Lambda \subset \bar x_0 \times_X \PP^N$ with $\codim\Lambda=\dim Z$
such that
$Y = \Lambda \cap (\bar x_0\times_X\sM_1)$
is a degree $e$ subscheme of pure dimension $m - \dim Z$, whose singular locus
is zero-dimensional and contains $\Lambda \cap Z$, a nonempty zero-dimensional subscheme
of length divisible by $p$. It then follows
by Lemma \ref{lem:singular-length} that 
$$0 < \length(\Lambda\cap Z) \le e(e-1)^{m-\dim Z} \le e(e-1)^m < p, $$
a contradiction.
\end{proof}
We now need an expression for the integer $e$ in Lemma \ref{lem:m1-smooth}.
By Proposition \ref{pro:kebekus}, $x_0\times_X \sM_1^\arc = x_0\times_X\sM_1$,
so that $\Spec \OO_{X,x_0}\times_X \sM_1^\arc = \Spec\OO_{X,x_0}\times\sM_1^\arc$,
and we have morphisms
$$ \Spec\OO_{X,x_0}\times_X \sM_1 \to \Spec\OO_{X,x_0}\times_X \Arc_{X/S} \to
\Spec\OO_{X,x_0}\times_X \PP T_{X/S} $$
whose composite, the \emph{tangent map}, is a finite morphism, and an isomorphism over $X_1$.  
\begin{lem}\label{lem:degree-m1}
Suppose $\OO_X(d)$ is very ample on $X$. 
Let $\tau : x_0\times_X\sM_1 \to \PP{T_{X_0,x_0}}$
be the tangent map over $X_0$. Then
$\tau^* \OO_{\PP T_{X_0,x_0}}\left(\frac{d(d+1)}{2}\right)$ is very ample on
$x_0\times_X \sM_1$.
\end{lem}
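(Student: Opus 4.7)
The plan is to identify $\tau^* \OO_{\PP T_{X_0,x_0}}(d(d+1)/2)$ with $\det \pi_* \ev^* \OO_X(d)$ for the universal pointed line through $x_0$, and then to deduce very ampleness from that of $\OO_X(d)$ on $X_0$. Let $\pi : U = x_0 \times_X \sM_2 \to F = x_0 \times_X \sM_1$ be the $\PP^1$-bundle of pointed $\sM$-curves through $x_0$, with $0$-section $s_0 : F \to U$ mapping to $x_0$ and evaluation $\ev : U \to X_0$. Differentiating $\ev$ along the fibres of $\pi$ and pulling back by $s_0$ yields a fibrewise injection $s_0^* T_{U/F} \hookrightarrow T_{X_0,x_0} \otimes \OO_F$ whose image at $[L]$ is exactly the tangent line $T_L$; by the universal property of the tautological subbundle on $\PP T_{X_0,x_0}$ this identifies $s_0^* \Omega_{U/F} \simeq \tau^* \OO_{\PP T_{X_0,x_0}}(1)$.

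Next, set $E := \pi_* \ev^* \OO_X(d)$; since $\ev^*\OO_X(d)$ restricts to $\OO_{\PP^1}(d)$ on each fibre of $\pi$, $E$ is locally free of rank $d+1$. The filtration of $\ev^*\OO_X(d)$ by order of vanishing along the Cartier divisor $D = s_0(F) \subset U$, combined with the fibrewise vanishings $H^1(\PP^1,\OO(d-k-1)) = 0$ for $0 \le k \le d$, produces a filtration $E = E_0 \supset E_1 \supset \dots \supset E_{d+1} = 0$ with graded pieces
$$E_k/E_{k+1} \simeq s_0^* \ev^* \OO_X(d) \otimes (s_0^* \Omega_{U/F})^{\otimes k} \simeq \tau^* \OO_{\PP T_{X_0,x_0}}(k),$$
using $\ev \circ s_0 \equiv x_0$ to trivialise the first tensor factor and $\sI_D/\sI_D^2 \simeq s_0^*\Omega_{U/F}$. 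Taking determinants yields $\det E \simeq \tau^* \OO_{\PP T_{X_0,x_0}}(d(d+1)/2)$.

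To deduce very ampleness, I embed $F$ in a Grassmannian via $\det E$. Global generation of $\OO_X(d)$ gives a surjection $\OO_U \otimes H^0(X_0,\OO_X(d)) \twoheadrightarrow \ev^*\OO_X(d)$ whose pushforward is a map $\eta : H^0(X_0,\OO_X(d)) \otimes \OO_F \to E$ with fibre at $[L]$ the restriction $H^0(X_0,\OO_X(d)) \to H^0(L, \OO_X(d)|_L)$. Once $\eta$ is known to be surjective, it classifies a morphism $\Phi : F \to \Gr(d+1, H^0(X_0,\OO_X(d)))$ satisfying $\Phi^* \OO_{\Gr}(1) = \det E$; and $\Phi$ is a closed immersion, since the kernel of $H^0(X_0,\OO_X(d)) \twoheadrightarrow H^0(L,\OO_X(d)|_L)$ consists of the degree-$d$ forms vanishing on $L$ in the very ample embedding $X_0 \hookrightarrow \PP H^0(X_0,\OO_X(d))^\vee$, which uniquely recovers $L$.

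The main obstacle I anticipate is the surjectivity of $\eta$, fibrewise the assertion that each $\sM$-line $L$ through $x_0$ spans a $\PP^d$ in $\PP H^0(X_0,\OO_X(d))^\vee$, i.e.\ embeds as a genuine degree-$d$ rational normal curve. This is clear on the generic fibre $X_1 \simeq G/P \otimes \bar F$ (the $d$-uple Veronese carries $\sM$-lines to rational normal curves) and the property is open in the flat family $X \to S$, so it passes to lines through the generic point $x_0$ of the special fibre, which is all that is required since $F$ is the fibre of $\sM_1 \to X_0$ over $x_0$. If this openness argument needs supplementing, one can instead work with the image sub-bundle of $\eta$ on a dense open of $F$ and extend the resulting embedding by using the properness of $F$ over $\sM_0$.
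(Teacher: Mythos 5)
Your determinant computation is correct and runs parallel to the paper's: filtering $E = \pi_*\ev^*\OO_X(d)$ by vanishing order at the marked point to get graded pieces $\tau^*\OO(k)$, $0\le k\le d$, is the global-sections analogue of the paper's observation that the ideal $\sI$ of the zero-section in the universal $d$-jet of arc restricts as $\OO(1)\oplus\dots\oplus\OO(d)$ along a test curve; both give $\det = \tau^*\OO\bigl(\tfrac{d(d+1)}{2}\bigr)$, and both arguments then conclude via a Grassmannian embedding and Pl\"ucker.

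The genuine gap is in which linear system drives the Grassmannian embedding. The paper restricts \emph{local} jets $\fm/\fm^{d+1}$ at the point to the arc; that map is automatically surjective, since an arc unramified at the origin generates its own coordinate ring. You instead restrict \emph{global} sections $H^0(X_0,\OO_X(d))$ to $L$, and surjectivity of $\eta$ is then precisely the claim that every $\sM$-line through $x_0$ is a rational normal curve of degree $d$ in the embedding $X_0\hookrightarrow\PP^N$. Very ampleness of $\OO_X(d)$ only gives generation of $1$-jets, not generation of restrictions to degree-$d$ curves, and a smooth irreducible genus-$0$ curve of degree $d$ can certainly span a $\PP^m$ with $m<d$ (e.g.\ a generic linear projection of a rational normal curve). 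Your openness argument does not close this: the cokernel of $\eta$ over $\Spec\OO_{X,x_0}$ has support avoiding the generic fibre, but by upper-semicontinuity of $h^0(\sI_L(1))$ it could be exactly the whole special fibre $x_0\times_X\sM_1$, which is what must be excluded. The fallback via ``properness of $F$ over $\sM_0$'' also does not help: $F$ is a fibre over a non-closed point and is not proper, and in any case a surjection defined on a dense open need not extend. The correct repair is precisely the paper's move: replace $H^0(X_0,\OO_X(d))$ by the $d$-jet space $\OO_{X_0}(d)\otimes\OO_{X_0,x_0}/\fm_{x_0}^{d+1}$, for which the analogue of $\eta$ is surjective for free; the rest of your argument then goes through unchanged. (A secondary point: showing $\Phi$ is a closed immersion needs more than injectivity on geometric points, but the paper's corresponding step --- that its Grassmannian map factors the locally closed immersion into $\Hilb_D$ --- supplies the missing unramifiedness, and the same device would work for you once the jet version of $\eta$ is used.)
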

\begin{proof}
Consider the projective
embedding $\iota : X_0 \to \PP^N = \PP H^0(X_0,\OO_{X_0}(1))$
defined by the complete linear system $|\OO_{X_0}(1)|$.
Let $\sN \subset \underline\Hom_{\bir}^n(\PP^1,\PP^N)$ be the component
of degree $d$ with respect to $\OO_{\PP^N}(1)$, so that there is a natural commutative
diagram
$$\begin{CD}
x_0 \times_X \sM_1 @>{\tau}>> x_0 \times_X \PP T_{X_0} @>>> x_0 \\
@VVV @VVV @VVV \\
\sN_1^\arc @>{\theta}>> \PP T_{\PP^N} @>>> \PP^N
\end{CD}$$
where the vertical arrows are induced by $\iota$,
and $\theta$ is the tangent morphism. By linearity of $\iota_*:\PP T_{X_0,x_0}
\to \PP T_{\PP^N,\iota(x_0)}$
and $\Aut(\PP^N)$-equivariance of $\sN_1$, it will be enough to
check that, for a point $q \in \PP^N(k)$, the pullback
$\theta^*\OO_{\PP T_{\PP^N,q}}(d(d+1)/2)$ is very ample on $q\times_{\PP^N}
\sN_1^\arc$.

Let $\fm$ be the maximal ideal in $\OO_{\PP^N,q}$, and define
$$ 
D = \Spec \OO_{\PP^N,q} / \fm^{d+1},
\quad
P = \Spec k[t] / (t^{d+1}),
\quad
A = \underline\Imm(P,D;0, q)/ \Aut(P,0) $$
where $0 \in P(k)$. There are natural morphisms
$$ q \times_{\PP^N} \Arc_{\PP^N} \to A \to \PP T_{\PP^N,q} $$
where the tangent map $A \to \PP T_{\PP^N,q}$ is a Zariski-locally
trivial affine space bundle, and can be viewed as a quotient
of $q\times_{\PP^N}\Arc_{\PP^N}$ parametrising $d$-th jets of immersed
arcs through $q \in \PP^N$. Since $\sN$ parametrises rational curves
of degree $d$ on $\PP^N$, the composite
$$ q\times_{\PP^N}\sN^\arc_1 \xrightarrow{\vartheta} q\times_{\PP^N}\Arc_{\PP^N} \to A $$
is a locally closed immersion, factoring $\theta$, so that it will now be
enough to show that $\vartheta^*\OO_{\PP T_{\PP^N,q}}(d(d+1)/2)$
is very ample on $A$.

Let $T \subset A \times D$ be the universal family over $A$, so that
$\OO_T$ is a sheaf of infinitesimal extensions of $\OO_A$. The evaluation
morphism $T \to D$ induces an epimorphism
$$ \OO_A \otimes \OO_{\PP^N,q}/\fm^{d+1} \to \OO_T \to 0 $$
of filtered $\OO_A$-algebras, with filtrations induced by
$\fm$ and the ideal sheaf $\sI \subset \OO_T$ of the zero-section $A\to T$.
Since $\sI$ is a locally free $\OO_A$-module, the map 
$$ \OO_A \otimes \fm/\fm^{d+1} \to \sI \to 0 $$
induces a morphism
$$ f : A \to \Gr(\dim(\fm/\fm^{d+1}) -  d, \dim(\fm/\fm^{d+1})) $$
factoring the natural immersion $A \to \Hilb_D$.
It follows that $\det \sI$ is very ample on $A$.
Since $A$ is an affine space bundle over $\PP T_{\PP^N,q}$,
it follows that $\vartheta^* : \Pic \PP T_{\PP^N,q} \to \Pic A$ is an isomorphism,
so that $\det\sI = \vartheta^* \OO_{\PP T_{\PP^N,q}}(e)$ for some $e>0$, and
one can determine $e$ by computing the intersection of $c_1(\sI)$ with a curve.
Let $c: \PP^1 \to A$ be a rational curve such that $\vartheta\circ c$
is a line in $\PP T_{\PP^N,q}$. Then $$c^*\sI \simeq \OO(1) \oplus \dots \oplus \OO(d)$$
so that $$e = \deg c^*\det\sI = \frac{d(d+1)}{2}.$$
Hence finally
$\vartheta^*\OO_{\PP T_{\PP^N,q}}(\frac{d(d+1)}{2})$ is very ample on $A$.
\end{proof}

We have thus arrived at the following condition for smoothness
of $\bar x_0\times_X\sM_1$.
\begin{pro}\label{pro:m1-smooth}
Let $m=\dim (\bar x_1\times_X\sM_1)$ and
let $\delta$ be the degree of $\bar x_1\times_X\sM_1$ with respect to
the embedding in $\bar x_1 \times_X \PP T_{X/S}$.
Suppose $\OO_X(d)$ is very ample on $X$.
Assume $$p > \left(\frac{d(d+1)}{2}\right)^m \delta \left(\left(
	\frac{d(d+1)}{2}
\right)^m\delta-1\right)^m.$$
Then $\bar x_0 \times_X \sM_1$
is smooth.
\end{pro}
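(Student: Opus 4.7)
The plan is to combine Lemma~\ref{lem:degree-m1} with Lemma~\ref{lem:m1-smooth}, using Lemma~\ref{lem:m1-flat} to transport degree information from the generic to the special fibre. First, by Lemma~\ref{lem:degree-m1} applied to $X_0$, the pullback $L = \tau^*\OO_{\PP T_{X_0,x_0}}\!\left(\tfrac{d(d+1)}{2}\right)$ along the tangent map $\tau : x_0\times_X\sM_1 \to \PP T_{X_0,x_0}$ is very ample. Base-changing to $\bar x_0$ produces a projective embedding
\[ \bar x_0 \times_X \sM_1 \hookrightarrow \bar x_0 \times \PP^N \]
of $\bar x_0\times_X\sM_1$ by a very ample line bundle, realising the setup demanded by Lemma~\ref{lem:m1-smooth}.

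Next, I would compute the degree $e$ of this embedding. The tangent map is defined uniformly on $\Spec \OO_{X,x_0}\times_X\sM_1$ (via the factorisation $\sM_1^\arc \to \Arc_{X/S} \to \PP T_{X/S}$), and by Lemma~\ref{lem:m1-flat} this family is flat over $\Spec \OO_{X,x_0}$ of relative dimension $m$. Consequently the intersection number of the $m$-th power of $c_1(\tau^*\OO_{\PP T_{X/S}}(1))$ with a fibre class is constant over $\Spec \OO_{X,x_0}$. On the geometric generic fibre the tangent map is a closed immersion by Proposition~\ref{pro:cominuscule-lines}(3), so this intersection number equals $\delta$. Therefore the degree of the special fibre with respect to $L$ is
\[ e \;=\; \left(\tfrac{d(d+1)}{2}\right)^{\!m}\!\delta. \]

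Finally, the hypothesis $p > e(e-1)^m$ is precisely the assumption of Lemma~\ref{lem:m1-smooth}, which then yields smoothness of $\bar x_0\times_X\sM_1$.

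The only delicate point (not really an obstacle, more a bookkeeping step) is the degree-preservation argument in the middle: one has to observe that a very ample class on the total space of a flat family over a discrete valuation ring has constant top self-intersection on the fibres, and that the line bundle $\tau^*\OO_{\PP T_{X/S}}(1)$ globalises the fibrewise pullback used in Lemma~\ref{lem:degree-m1}. Everything else is a direct invocation of results already established.
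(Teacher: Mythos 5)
Your proof follows the paper's own argument essentially verbatim: Lemma~\ref{lem:degree-m1} supplies the very ample sheaf, Lemma~\ref{lem:m1-flat} transports the degree $\left(\tfrac{d(d+1)}{2}\right)^m\delta$ from the generic to the special fibre by flatness, and Lemma~\ref{lem:m1-smooth} then gives smoothness. The only difference is that you spell out the degree-constancy step somewhat more explicitly than the paper does, which is harmless.
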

\begin{proof}
By Lemma \ref{lem:degree-m1}, the pullback of $\OO_{\PP T_{X/S}}(d(d+1)/2)$ 
by the tangent map
is very ample on $x_0 \times_X \sM_1$, and thus on $\Spec \OO_{X_0,x_0}\times_X\sM_1$.
Then, by Lemma \ref{lem:m1-flat}, the degree of the corresponding projective embedding
of $\bar x_0\times_X\sM_1$ is
$e=\left(\frac{d(d+1)}{2}\right)^m\delta$. Applying 
Lemma \ref{lem:m1-smooth}, the claim follows.
\end{proof}

\subsection{Segre case}
The preceding subsection gives a condition under which
the space of $\sM$-curves through the generic point of $X_0$
is a smooth degeneration of the space of $\sM$-curves through the
generic point of
$X_1$. As indicated in Corollary \ref{cor:cominuscule-lines},
in case of $G$ being of type $A_n$, the tangent map indentifies
the space of $\sM$-curves through
any point of $X_1$ with a Segre subvariety of
the projectivised tangent space. Smooth degenerations
in such situation are described by the following.
\begin{lem}\label{lem:segre}
Let $f:Y \to S$ be a smooth morphism whose geometric generic
fibre is isomorphic to $(\PP^a \times\PP^b) \otimes \bar F$.
Suppose $f$ factors through a finite morphism
$\varphi:Y \to \PP_S^{ab+a+b}$ whose restriction
to the geometric generic fibre of $f$ is a Segre embedding.
Then $f$ is an isotrivial fibration.
\end{lem}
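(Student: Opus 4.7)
My strategy is to reconstruct the two Segre projections on the total space $Y$ and assemble them into an $S$-isomorphism $Y\to\PP^a_S\times_S\PP^b_S$. On the geometric generic fibre $Y_1\cong\PP^a\times\PP^b$, the Segre factorization reads $\varphi_1^*\OO(1) = L_{a,1}\otimes L_{b,1}$ with $L_{*,1}$ the pullbacks of $\OO(1)$ from the two factors (possibly after a quadratic base change when $a=b$ to distinguish the two classes, harmless for isotriviality). Since $Y$ is regular over $S$, the special fibre $Y_0$ is irreducible ($f$ is smooth proper with geometrically connected generic fibre, hence with connected fibres by Stein factorization), and $Y_0$ is a principal divisor on $Y$, so restriction $\Pic Y\to\Pic Y_1$ is an isomorphism. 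Consequently $L_{a,1}$ and $L_{b,1}$ extend uniquely to line bundles $L_a, L_b$ on $Y$ with $L_a\otimes L_b\cong\varphi^*\OO_{\PP^N_S}(1)$, where $N = ab+a+b$.

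Next I would argue that $f_*L_a$ is locally free of rank $a+1$ on $S$ with formation commuting with base change, so that evaluation of sections produces a morphism $\pi_a: Y\to\PP(f_*L_a)\cong\PP^a_S$ (after trivializing over the local base $S$), and similarly $\pi_b$. Combined, $\pi = (\pi_a,\pi_b):Y\to\PP^a_S\times_S\PP^b_S$ pulls $\OO(1,1)$ back to $L_a\otimes L_b\cong\varphi^*\OO_{\PP^N_S}(1)$, which is ample on $Y$ (as $\varphi$ is finite into projective space). Hence $\pi$ is affine and, being $S$-proper, finite. On $Y_1$ it inverts the Segre embedding, so $\pi$ is finite and birational with smooth (hence normal) target; therefore it is an isomorphism, since $\pi_*\OO_Y$ is a finite torsion-free $\OO_{\PP^a_S\times_S\PP^b_S}$-algebra agreeing with the structure sheaf generically, and thus globally by normality. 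This trivializes $f$, which is \emph{a fortiori} isotrivial.

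The hard part will be the cohomological input. K\"unneth gives $H^i(Y_1,L_{a,1})=0$ for $i>0$; constancy of Euler characteristic and upper semi-continuity yield $h^0(Y_0,L_{a,0})\ge a+1=\chi$; and top-degree base change forces $R^{a+b}f_*L_a = 0$ near $s_1$. What does not come for free is the vanishing of the intermediate $h^i(Y_0,L_{a,0})$ for $0<i<a+b$, since $L_a$ is merely $f$-semi-ample (not $f$-ample) on fibres, precluding direct Serre- or Kodaira-type arguments (and Kawamata--Viehweg is delicate in characteristic $p$). I expect one should circumvent this either by extracting $\pi_a$ Hilbert-schematically via the relative Fano scheme of $\PP^b$-rulings on $Y/S$ (extending the ruling of the generic Segre and showing the Fano scheme is $S$-flat with $\PP^a$ fibres), or by passing through the scheme-theoretic image $\varphi(Y)\subset\PP^N_S$ and invoking rigidity of the Segre orbit in $\Hilb^{P_\Sigma}(\PP^N)$ (using $H^1(\Sigma,T_\Sigma)=0$ via K\"unneth on $\PP^a\times\PP^b$) to realize $\varphi(Y)$ as an $S$-flat family of Segre subvarieties, then recovering $Y\to\varphi(Y)$ as an isomorphism from the finiteness of $\varphi$ and normality of fibres.
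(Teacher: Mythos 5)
Your Picard-group setup is sound: $\Pic Y \to \Pic Y_1$ is indeed an isomorphism under these hypotheses, so $L_a, L_b$ extend uniquely and $L_a\otimes L_b\cong\varphi^*\OO(1)$. But the gap you flag is genuine and not a mere technical nuisance: $L_a$ is only $f$-nef, not $f$-ample, so there is no vanishing theorem in characteristic $p$ to control $h^i(Y_0,L_{a,0})$ for $0<i<a+b$, and without that you cannot establish that $f_*L_a$ is locally free of rank $a+1$ commuting with base change, nor that $L_{a,0}$ is globally generated (which you also need for $\pi_a$ to exist). The naive line-bundle route therefore does not close.

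Your first fallback sketch — extracting $\pi_a$ from the relative Hilbert (Fano) scheme of $\PP^b$-rulings — is in fact the route the paper takes. But the one sentence omits the entire content. The paper takes the component $P\subset\Hilb_{Y/S}$ whose generic fibre parametrizes $\{*\}\times\PP^b$, with universal family $\Lambda_P\to P$, and the crux is to show that $p^*\Lambda_P\cong\PP^b$ for \emph{every} closed point $p\in P_0$. This is precisely where the finiteness of $\varphi$ enters: one takes a local section through $p$, extends the family of linear $\PP^b$'s in $\PP^N_S$ across the closed point using properness of the Grassmannian, and then shows the resulting rational map $\bar\Lambda_P\dashrightarrow Y$ (a one-sided inverse to $\varphi$) is actually regular \emph{because $\varphi$ is finite}, hence a closed immersion, forcing the special fibre of $\Lambda_P$ to be the extended $\PP^b$. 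After that one must still show (again via the finiteness of $\varphi$ and intersection theory on the smooth total space $Y$) that the double fibration $P\times_SQ\leftarrow\Lambda_P\times_Y\Lambda_Q\to Y$ has both legs isomorphisms, from which $Y\cong\PP^a_S\times_S\PP^b_S$ follows. None of this is implied by the phrase ``showing the Fano scheme is $S$-flat with $\PP^a$ fibres.'' As for your second fallback, invoking ``rigidity of the Segre orbit in $\Hilb^{P_\Sigma}(\PP^N)$'' is not available: Segre varieties do degenerate in the Hilbert scheme (already a smooth quadric surface in $\PP^3$ degenerates to a quadric cone), and ruling out such degenerations of $\varphi(Y)_0$ would once more require using the finiteness of $\varphi$ and smoothness of $Y_0$ — i.e., redo the same work. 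In summary, the step that actually proves the lemma — extending linear rulings across the special fibre using $\varphi$ finite — is exactly the part your proposal leaves unaddressed.
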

\begin{proof}
Set $Y_1 = s_1\times_SY$, $Y_0=s_0\times_SY$. After faithfully flat base change,
we can assume that $Y_1 \simeq (\PP^a\times\PP^b)\otimes F$. Let
$$ P \subset \Hilb_{Y/S} $$
be the closed subscheme, flat over $S$, such that $P_1=s_1 \times_S P \simeq \PP^a_F$
is the component parametrising subspaces of the form $\{*\}\times \PP^b \subset Y_1$.
Let $\Lambda_P \subset P \times_S Y$ 
be the universal family, so that $s_1\times_S\Lambda_P \simeq P_1 \times \PP^b$.
We will show that
$\Lambda_P \to P$ is an  isotrivial bundle
of projective spaces. By flatness of the universal family, 
and by smoothness of $\PGL_{b+1}$,
it will be enough to check that for each closed point $p \in P_0 = s_0\times_SP$,
the fibre $p\times_P\Lambda_P$ is isomorphic to $\PP^b$. Given $p\in P_0$,
there is a finite flat base change $T \to S$ to the spectrum of a discrete
valuation ring with closed point $t_0$ and generic point $t_1$, 
together with a section $\sigma : T \to T\times_SP$ such that $\sigma(t_0)=p$.
The composite
$$ \sigma|_{t_1}^* \Lambda_P \to t_1\times_S Y \xrightarrow{\varphi} t_1\times\PP^{ab+a+b}$$
is a family of linear subspaces over $t_1$, thus defining a $t_1$-point
of the appropriate Grassmannian. By properness of the Grassmannian,
the $t_1$-point extends to a $T$-point, and thus defines a family
$$ \bar \Lambda_P \subset \PP^{ab+a+b}_T $$
of linear subspaces, flat over $T$. In fact,
$\bar\Lambda_P \simeq\PP^b_T$.
The
inclusion $t_1\times_T \bar\Lambda_P \subset t_1\times_S Y$ 
extends to a rational map $i : \bar\Lambda_P\dashrightarrow Y$
such that $\varphi\circ i$ is the identity on $\bar\Lambda_P$. Since
$\varphi$ is finite, it follows that $i$ is in fact regular, hence
a closed immersion. Thus $\sigma^*\Lambda_P$ and $\bar\Lambda_P$
are closed subschemes of $T\times_SY$, flat over $T$, and
with identical restrictions over $t_1$ -- hence $\sigma^*\Lambda_P = \bar\Lambda_P$,
and in particular $p^*\Lambda_P \simeq \PP^b$.

We have thus defined the subscheme $P \subset \Hilb_{Y/S}$
such that the universal family $\Lambda_P \to P$ is an isotrivial
$\PP^b$-bundle. Symmetrically, we define a closed
subscheme $Q \subset \Hilb_{Y/S}$, flat over $S$, such that
$Q_1 = s_1\times_SQ \simeq \PP^b_F$ is the component
parametrising subspaces of the form $\PP^a \times \{*\} \subset Y_1$,
and with the universal family $\Lambda_Q \to Q$ being an
isotrivial $\PP^a$-bundle.
We now claim that the arrows in the projection diagram
$$ P\times_SQ \leftarrow \Lambda_P \times_Y \Lambda_Q \to Y $$
are  isomorphisms. Since they do become isomorphisms
after restriction over $s_1$, it will be enough to check
that their restrictions over $s_0$ are bijective on closed points.
Let us refer to the closed fibres of $s_0 \times_S \Lambda_P \to s_0\times_SP$
(resp. $s_0\times_S \Lambda_Q \to s_0\times_SQ$) as $P$-planes
(resp. $Q$-planes) in $Y_0$. We then need to check that all
$P$-planes (resp. $Q$-planes) are disjoint, and that
a $P$-plane intersects a $Q$-plane in a single point.
Using smoothness of $Y$, this follows by intersection theory,
specialising relevant classes from the generic fibre.
Finally, we have $Y \simeq P\times_SQ$, where the
double fibration
$ P \leftarrow P\times_SQ \rightarrow Q $
is a pair of bundles of projective spaces. It thus follows that
$P \simeq \PP^a_S$, $Q \simeq \PP^b_S$, so that $Y \simeq \PP^a_S\times_S\PP^b_S$.
\end{proof}

\subsection{General case}

Recall the description of $\bar x_1 \times_X \sM_1$ given in Proposition
\ref{pro:cominuscule-lines}, Corollary \ref{cor:cominuscule-lines} and Table
\ref{tab:vmrt}. It is isomorphic to $\bar x_1 \times V$, where $V$ is the variety
of line tangents at the origin of $G/P$: either a Segre variety, or degree $2$
Veronese, or a minimally embedded cominuscule variety. Combining
the results of the two preceding subsections, we are ready to state
a condition on the characteristic $p$ ensuring that
the space of $\sM$-curves through the generic point
does not degenerate.

\begin{pro}\label{pro:rigidity-m1}
Let $V$ be the variety of line tangents at the origin of $G/P$.
Suppose $\OO_X(d)$ is very ample on $X$,
and assume one of the following holds.
\begin{enumerate}
\item $G$ is of type $A_n$, $P$ is associated to $\alpha_i$, and 
$$p>\left(\frac{d(d+1)}{2}\right)^{n-1}{n-1\choose i-1}
\left(\left(\frac{d(d+1)}{2}\right)^{n-1}{n-1\choose i-1}-1\right)^{n-1}$$
\item $G$ is of type $C_n$ and 
$$p>\left(d(d+1)\right)^{n-1} 
\left(\left(d(d+1)\right)^{n-1}-1\right)^{n-1}$$
\item $G$ is of one of remaning types, $V$ is $\frac{d(d+1)}{2}$-rigid, and
$$p>\left(\frac{d(d+1)}{2}\right)^m(\deg V)(
\left(\frac{d(d+1)}{2}\right)^{m}(\deg V-1)^m$$ where $m=\dim V$ and
$\deg V$ is the degree of $V$ embedded in the projectivised
tangent space at the origin of $G/P$.
\end{enumerate}
Then $\Spec \OO_{X,x_0}\times_X \sM_1 \to \Spec \OO_{X,x_0}$ is an isotrivial $V$-bundle.
\end{pro}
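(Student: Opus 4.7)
The plan is to combine the smoothness statement Proposition \ref{pro:m1-smooth} with a case analysis reflecting the trichotomy of VMRT's recorded in Corollary \ref{cor:cominuscule-lines}. By Lemma \ref{lem:m1-flat}, $q : \Spec \OO_{X,x_0}\times_X\sM_1 \to \Spec \OO_{X,x_0}$ is flat, with geometric generic fibre $\bar x_1 \times V$ (Proposition \ref{pro:cominuscule-lines}). The bound on $p$ assumed in each case is precisely what Proposition \ref{pro:m1-smooth} requires when $m = \dim V$ and $\delta$ is the degree of the minimal VMRT embedding into $\PP T_{G/P,x}$, namely $\binom{n-1}{i-1}$ for the Segre, $2^{n-1}$ for the Veronese, and $\deg V$ in the remaining cases. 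This yields smoothness of the special fibre $\bar x_0 \times_X\sM_1$, and hence of $q$ as a flat morphism to a DVR with geometrically smooth fibres, reducing the problem to showing isotriviality of a smooth projective degeneration of $V$.

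For case (1) I would invoke Lemma \ref{lem:segre} directly. The key geometric input is that the VMRT of a Grassmannian linearly spans the tangent space: $\dim(G/P) = i(n-i+1)$ matches one plus the target dimension of the Segre embedding of $\PP^{i-1}\times\PP^{n-i}$. Composing the tangent map (finite by Proposition \ref{pro:kebekus}) with a trivialisation of $\PP T_{X/S}$ over the local DVR yields a finite morphism into $\Spec \OO_{X,x_0}\times\PP^{i(n-i+1)-1}$ realising the Segre embedding on the generic fibre, and Lemma \ref{lem:segre} produces the isotrivial $V$-bundle structure. For case (3), Lemma \ref{lem:degree-m1} supplies a very ample invertible sheaf on $\Spec \OO_{X,x_0}\times_X\sM_1$, obtained by pulling back $\OO_{\PP T_{X/S}}(d(d+1)/2)$ via the tangent map, and restricting on the generic fibre to $\OO_V(d(d+1)/2)$ since the minimal VMRT embedding is by $|\OO_V(1)|$; the assumption that $V$ is $\tfrac{d(d+1)}{2}$-rigid then applies directly.

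I expect case (2) to be the main obstacle, since $V = \PP^{n-1}$ is embedded by $|\OO(2)|$ rather than its minimal embedding, so the preceding $d$-rigidity argument does not apply verbatim (one would need $d(d+1)$-rigidity of $\PP^{n-1}$, which is not among the hypotheses). My approach would be to invoke an unconditional rigidity statement for projective space: the smooth special fibre inherits Picard number $1$ by specialisation and Fano index $n$ from $\omega_{q}^{-1}$ restricting to $\OO_{\PP^{n-1}}(n)$ on the generic fibre, so by the Kobayashi--Ochiai characterisation of $\PP^n$ as a Fano variety of maximal index (which extends to positive characteristic) it must be isomorphic to $\PP^{n-1}$. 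Smoothness of $\Aut(\PP^{n-1}) = \PGL_n$ then upgrades the fibrewise identification to a Zariski-locally trivial bundle structure. The delicate point is precisely that this step bypasses the inductive rigidity framework used in cases (1) and (3), requiring a specialised argument tailored to projective space.
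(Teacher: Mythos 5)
Your proposal matches the paper's proof essentially step for step: smoothness of the special fibre via Proposition~\ref{pro:m1-smooth} and Lemma~\ref{lem:m1-flat}, then Lemma~\ref{lem:segre} with the tangent-map-induced finite morphism for case (1), unconditional rigidity of $\PP^{n-1}$ for case (2), and $\frac{d(d+1)}{2}$-rigidity of $V$ together with the very ample sheaf from Lemma~\ref{lem:degree-m1} for case (3). The paper disposes of case (2) with the single phrase ``well-known rigidity of $\PP^{n-1}$''; the Kobayashi--Ochiai argument you supply is precisely what that appeal leaves implicit, so there is no genuine gap and no divergence in method.
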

\begin{proof}
We first use Proposition \ref{pro:m1-smooth} to conclude smoothness of $\bar x_0 \times_X \sM_1$,
and thus -- by Lemma \ref{lem:m1-flat} -- of $\Spec \OO_{X,x_0}\times_X \sM_1 \to \Spec\OO_{X,x_0}$.
The conditions on $p$ correspond precisely to the hypothesis of Proposition
\ref{pro:m1-smooth}, where in cases (1) and (2) we use well-known expressions for degrees
of Segre and Veronese varieties. 

Then, to check isotriviality, after a faithfully flat base-change and
replacing $k$ with $\kappa(\bar x_0)$, we may replace
$\Spec\OO_{X,x_0}$ with $S$. We thus obtain a smooth projective
morphism $Y \to S$ whose generic fibre is isomorphic to 
a base-change of $V$.
Now, depending on the type of $G$, we use:
\begin{enumerate}
\item Lemma \ref{lem:segre} with the finite morphism $Y \to \PP^{\dim(G/P)-1}_S$ induced
by the tangent map;
\item well-known rigidity of $\PP^{n-1}$;
\item $\frac{d(d+1)}{2}$-rigidity of $V$, where $\OO_Y(d(d+1)/2)$ is very ample
by Lemma \ref{lem:degree-m1}.\qedhere
\end{enumerate}
\end{proof}

\section{Arcs at the generic point}
\subsection{Minimality}
We have so far described, under suitable conditions, the abstract
space of $\sM$-curves through the generic point, concluding that
it does not degenerate in the central fibre. In order
to apply the main Theorem of Chapter \ref{chap:extension}, we
need a similar result for the embedding into the space of arcs.
This will be achieved in three steps: we first check that,
after discarding non-dominant components of $s_0\times_X\sM$,
we are left with an irreducible component whose generic member
is minimal; next, we obtain a condition on the characteristic $p$
ensuring that the corresponding variety of rational tangents
at the generic point of $X_0$ is linearly nondegenerate; finally,
we check that the corresponding family of arcs through a formal neighbourhood
of the generic point is isomorphic to that on $G/P$.

\begin{lem}\label{lem:m-star}
Assume $\bar x_0\times_X\sM_1$ is smooth.
Then there is a unique irreducible component $\sM_*$ of $s_0\times_X\sM$
such that $\sM_*$ is a dominating family of rational curves on $X_0$.
Furthermore, 
$\sM_*$ is an irreducible component of $\underline\Hom_\bir^n(\PP^1,X_0)$,
and
the generic $\sM_*$-curve is free.
\end{lem}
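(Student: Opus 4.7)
The plan is to use the smoothness hypothesis on $\bar x_0 \times_X \sM_1$ together with Lemma~\ref{lem:m1-flat} to identify a unique dominating component of $s_0 \times_S \sM_1$, and then to cascade through Lemmas~\ref{lem:separable-smooth} and~\ref{lem:m-contains-free}.

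First I would promote the pointwise hypothesis to a neighborhood statement. By Lemma~\ref{lem:m1-flat} and openness of flatness, $\sM_1 \to X$ is flat at every point above $x_0$, and smoothness of the fiber over $\bar x_0$ implies smoothness of the morphism $\sM_1 \to X$ at each such point. Since $\sM_0 \to S$ is proper and $X \to S$ is separated, cancellation gives properness of $\sM_0 \to X$, and hence of the $\PP^1$-bundle $\sM_1 \to X$. Upper semicontinuity of the non-smooth locus for proper morphisms then yields an open neighborhood $U \ni x_0$ over which $\sM_1 \to X$ is smooth and proper. Cohomology and base change makes the direct image of $\OO$ locally free on $U$, so the Stein factorization shows that the number of geometric connected components of fibers of $\sM_1 \to X$ is locally constant on $U$. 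Since the geometric generic fiber $\bar x_1 \times_X \sM_1$ is $V \otimes \overline{\kappa(x_1)}$ for $V$ the variety of line tangents at the origin of $G/P$, hence irreducible by Corollary~\ref{cor:cominuscule-lines}, this number equals one throughout $U$, so $\bar x_0 \times_X \sM_1$ is connected; combined with the smoothness hypothesis it is smooth irreducible. Each irreducible component $\sM'$ of $s_0 \times_S \sM_1$ dominating $X_0$ contributes a closed piece $\bar x_0 \times_{X_0} \sM'$ of pure dimension $\dim V$ to $\bar x_0 \times_X \sM_1$, while non-dominating components contribute nothing; irreducibility forces exactly one such $\sM'$. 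Lifting via the principal $\underline\Aut(\PP^1,0)$-bundle $\sM \to \sM_1$, whose fibers are connected, yields a unique dominating irreducible component $\sM_* \subset s_0 \times_S \sM$.

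For freeness of the generic $\sM_*$-curve, smoothness of $\bar x_0 \times_X \sM_1$ descends by faithfully flat base change to smoothness of $x_0 \times_X \sM_1$ over $\kappa(x_0)$, so the function field at its unique dominant generic point is separably generated over $\kappa(x_0)$. The argument of Lemma~\ref{lem:separable-smooth} then furnishes freeness of the corresponding parametrized rational curve, which, after reparametrization, is the generic $\sM_*$-curve. Finally, let $W$ be the irreducible component of $\underline\Hom_\bir^n(\PP^1, X_0)$ containing the generic point of $\sM_*$; openness of freeness in $\underline\Hom_\bir^n(\PP^1, X_0)$ makes the generic $W$-curve free, so Lemma~\ref{lem:m-contains-free} gives $W \subset s_0 \times_S \sM$, and then uniqueness of the dominating component forces $W = \sM_*$.

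The main obstacle I anticipate is cleanly establishing connectedness of $\bar x_0 \times_X \sM_1$: this requires combining the local flatness from Lemma~\ref{lem:m1-flat} with properness of $\sM_1 \to X$ and a careful invocation of Stein factorization for the resulting proper smooth morphism. The rest of the argument is essentially bookkeeping with the stated lemmas.
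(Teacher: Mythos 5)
Your proof is correct and follows essentially the same route as the paper: uniqueness of the dominating component from irreducibility of $\bar x_0\times_X\sM_1$, freeness from Lemma~\ref{lem:separable-smooth}, and the component statement from Lemma~\ref{lem:m-contains-free}. The paper compresses the step ``smoothness implies irreducibility'' (leaving connectedness of $\bar x_0\times_X\sM_1$ implicit), and you supply that detail with the Stein-factorization argument; you also present the freeness and component steps in the logically correct order, whereas the paper states them out of sequence.
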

\begin{proof}
Existence and uniqueness of $\sM_*$ follows
from smoothness, and thus irreducibility of $x_0\times_X\sM_1$.
By Lemma \ref{lem:m-contains-free}, $M$ contains
every irreducible component of $\underline\Hom_\bir^n(\PP^1,X_0)$ whose
generic member is free, hence is a component itself.
Since $x_0\times_X\sM_{*,1} = x_0\times_X\sM_1$,
the generic $\sM_*$-curve is free.
\end{proof}

In the remainder of this section we will assume
the hypotheses of Proposition \ref{pro:rigidity-m1},
so that in particular $\bar x_0\times_X\sM_1$ is smooth
and Lemma \ref{lem:m-star} applies. We use our standard notation
$\sM_{*,1}$, $\sM_{*,2}^i$, etc.
Note that by Lemma \ref{lem:free-chains}, $\sM_{*,2}^{i,\free} \to X_0\times X_0$
is dominant for some $i\ge0$, so that $X_0$ is chain-connected by $\sM_*$-curves. 

By smoothness of $\bar x_0\times_{X_0}\sM_{*,1}$, the generic $\sM_*$-curve is free.
In fact, our hypotheses on $p$ imply more.
\begin{lem}\label{lem:m-star-minimal}
The generic $\sM_*$-curve is minimal.
\end{lem}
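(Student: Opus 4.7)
The plan is to prove minimality by showing that the tangent map at the generic $\sM_*$-curve has injective differential. By Lemma \ref{lem:m-star}, the generic $\sM_*$-curve $f$ is free, and by Proposition \ref{pro:kebekus}(1) applied to the dominating family $\sM_*$, it is unramified at $0$. Writing $f^*T_{X_0} \simeq \bigoplus_{i=1}^n \OO(a_i)$ with $a_1 \geq \cdots \geq a_n \geq 0$, $a_1 \geq 2$, and $\sum a_i = \ind(G/P)$, and setting $\ell = \#\{i : a_i \geq 1\}$, minimality reduces to the single numerical condition $\ell = \ind - 1$, which (combined with $a_1 \geq 2$ and $\sum a_i = \ind$) forces $(a_i) = (2, 1^{\ind - 2}, 0^{n - \ind + 1})$.

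First, I would carry out a short calculation at $[f]$, identifying $T_{[f]}(x_0 \times_X \sM_{*,1})$ with the quotient of $H^0(\PP^1, f^*T_{X_0}(-0))$ by the image of the Lie algebra of $\Aut(\PP^1, 0)$, and then reading off the differential of the tangent map $\tau : x_0 \times_X \sM_{*,1} \to \PP T_{X_0, x_0}$ summand by summand. A routine computation yields $\dim \ker d\tau|_{[f]} = \ind - \ell - 1$, so injectivity of $d\tau$ at $[f]$ is equivalent to $\ell = \ind - 1$.

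Second, I would prove that $\tau$ is in fact a closed immersion on $x_0 \times_X \sM_{*,1}$. By Proposition \ref{pro:rigidity-m1}, the morphism $\Spec \OO_{X, x_0} \times_X \sM_1 \to \Spec \OO_{X, x_0}$ is an isotrivial $V$-bundle, where $V$ is the variety of line tangents of $G/P$; by Corollary \ref{cor:cominuscule-lines}, $V$ is a Segre variety, a degree-$2$ Veronese, or a minimally embedded cominuscule variety, and in each case $\Pic(V)$ is discrete. The pullback of $\OO_{\PP T_{X/S}}(1)$ by $\tau$, restricted to fibres of this $V$-bundle, thus has locally constant Picard class on the connected local base $\Spec \OO_{X, x_0}$. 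On the generic fibre $x_1 \times_X \sM_1 \simeq V$, which sits over $X_1 \simeq G/P$, the tangent map is by Proposition \ref{pro:cominuscule-lines}(3,4) the standard embedding associated to the very ample class $\OO_V(-\alpha|_{T \cap L^\ss_\alpha})$. By constancy of the class, the restriction to the special fibre $x_0 \times_X \sM_{*,1}$ is defined by the same very ample class, hence a closed immersion.

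Combining the two steps, $d\tau$ is injective at $[f]$, $\ell = \ind - 1$, and the splitting is minimal. The main obstacle is the second step: one must translate the abstract isotriviality of $\sM_1$ as a $V$-bundle into a precise statement about the line bundle $\tau^*\OO_{\PP T_{X/S}}(1)$ on fibres, handling twists by $\Pic(\Spec \OO_{X, x_0})$ (trivial here, so the very ample class on $V$ determines the embedding up to linear change of coordinates on the ambient projective space), and verifying that the tangent map's embedding commutes with the isotrivial identifications so as to coincide with the standard embedding on the special fibre.
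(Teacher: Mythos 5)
Your Step 1 is correct and matches the underlying mechanism the paper uses: it reduces minimality to injectivity of $d\tau$ at $[f]$, via the computation $\dim\ker d\tau|_{[f]} = \sum_{a_i\ge 2}(a_i-1) - 1 = \ind - \ell - 1$, so that $d\tau$ injective forces the splitting type $(2,1^{\ind-2},0^{\dim X_0 - \ind+1})$.

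Your Step 2, however, has a genuine gap. Knowing that $\tau^*\OO_{\PP T_{X/S}}(1)$ restricts on the special fibre to the very ample class $\sL$ does \emph{not} imply that $\tau$ restricted to that fibre is a closed immersion, nor even that it is generically unramified. The tangent map $\tau : x_0\times_X\sM_{*,1}\to\PP T_{X_0,x_0}$ is given by the linear system arising from the restriction map $T^\vee_{X_0,x_0}\to H^0(V,\sL)$, and nothing in the Picard-constancy argument rules out that this image is a \emph{proper} subspace of $H^0(V,\sL)$. A morphism defined by a proper subsystem of a very ample complete system need not be an embedding (for instance, $\PP^1\to\PP^2$ by a $3$-dimensional basepoint-free subspace of $H^0(\OO(3))$ fails to be injective), and can even be a multiple cover when degrees permit. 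Surjectivity of $T^\vee_{X_0,x_0}\to H^0(V,\sL)$ is exactly linear nondegeneracy of the variety of rational tangents in $\PP T_{X_0,x_0}$, which is the content of Proposition \ref{pro:m1-tangent} -- but that proposition is proved in the paper \emph{after} and \emph{using} the present Lemma (via hypothesis (1) of Proposition \ref{pro:linear-nondegeneracy}). So the route you propose is circular unless you supply an independent argument for linear nondegeneracy.

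The paper sidesteps all of this with a much shorter degree argument: from the factorisation $V\xrightarrow{|\sL|}\PP T_{G/P,o}\dashrightarrow\bar x_0^*\PP T_{X/S}$ and the hypotheses of Proposition \ref{pro:rigidity-m1} one has $\deg_{\sL} V < p$; since the tangent morphism is finite (Kebekus) and its degree divides $\deg_{\sL} V$, it is a finite morphism of degree $<p$, hence separable, hence generically unramified --- and then your Step 1 finishes. This uses only what is available at this stage and no linear nondegeneracy. If you want to salvage your approach, you would need to replace the ``same very ample class, hence closed immersion'' step with exactly this degree bound, or else prove linear nondegeneracy by some means independent of minimality.
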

\begin{proof}
Let $(V,\sL)$ be the variety of line tangents
at the origin $o$ in $G/P$, together with the very ample
invertible sheaf defining the embedding 
$V \subset \PP T_{G/P,o}$ into the projectivised
tangent space. More concretely, for $G$ of type $A_n$, $V$ is a Segre
variety with $\sL = \OO(1,1)$; for $G$ of type $C_n$, $V$ is a Veronese
variety with $\sL = \OO(2)$; for remaining types, $V$ is cominuscule
with $\sL = \OO(1)$.
By Proposition \ref{pro:rigidity-m1}, we 
have a commutative diagram with Cartesian squares
$$\begin{CD}
V\otimes\kappa(\bar x_0) 
@>>> \Spec \OO_{X,x_0}\times_X\sM_1
@<<< V\otimes\kappa(\bar x_1) \\
@V{\mathfrak{d}}VV @VVV @V{|\sL\otimes\kappa(\bar x_1)|}VV \\
\bar x_0\times_X\PP T_{X/S} @>>> \Spec\OO_{X,x_0}\times_X\PP T_{X/S} @<<<
\bar x_1\times_X\PP T_{X/S}
\end{CD}$$
where $\mathfrak{d} \subset |\sL\otimes\kappa(\bar x_0)|$ is a linear
subsystem. In particular, it follows that
the tangent morphism
$ \bar x_0\times_X \sM_{*,1} \to \bar x_0^*\PP T_{X_0} $
factors as
$$\bar x_0\times_X \sM_{*,1} \simeq V \otimes \kappa(\bar x_0)
\xrightarrow{|\sL|} \PP T_{G/P,o} \otimes \kappa(\bar x_0)
\dashrightarrow \bar x_0^* \PP T_{X/S} 
$$
where the rightmost arrow is a linear projection onto a subspace.
Now, the hypotheses of Proposition
\ref{pro:rigidity-m1} 
ensure that the degree of the embedding defined by $|\sL|$
is less than $p$. It follows that the tangent morphism, being finite,
is generically unramified, so that the generic $\sM_*$-curve
$f: \PP^1 \otimes k(\sM_*)\to X_0$
does not admit nontrivial infinitesimal deformations
fixing $f(0)$ and the tangent direction
in $f|_0^* \PP T_{X_0}$.
Hence $f$ is minimal.
\end{proof}

\subsection{Linear nondegeneracy}

We are going to derive linear nondegeneracy of the variety
of $\sM_*$-rational tangents at $x_0$ from Proposition \ref{pro:linear-nondegeneracy}.
That will require $X_0$ to be \emph{separably} connected by chains of free $\sM_*$-curves,
a condition we shall satisfy by constructing a projective embedding
of a suitable blow-down of the generic fibre of $\sM_{*,2}^{i,\free} \to X_0\times X_0$,
and imposing its degree as another bound on the characteristic $p$.
As a consequence, it will follow that the space of $\sM$-curves
through the generic point of $X_0$, the corresponding variety of rational tangents,
and the tangent map between the two, are isomorphic to those on $G/P$.
 
Recall that by Lemma \ref{lem:free-chains}, $\sM_{*,2}^{i,\free} \to X_0\times X_0$
is dominant for some $i\ge0$. 
Letting $\xi_1$, resp. $\xi_0$, be the generic point of $X_1 \times_{s_1}X_1$, resp.
 $X_0\times X_0$, observe that
$\OO_S \subset \OO_{(X/S)^2,\xi_0}$ is an unramified extension of discrete valuation
rings, inducing separably generated extensions of residue and fraction fields.

\begin{lem}\label{lem:m2-flat}
Let $r_0$ be the smallest $i>0$ such that $\sM_{*,2}^{i,\free} \to X_0\times X_0$ is dominant.
Then: 
\begin{enumerate}
\item $r_0 \le \dim (G/P)$;
\item The total evaluation morphism
$\xi_0\times_{X_0^2}\sM_{*,2}^{r_0,\free} \to \xi_0\times_{X_0^2} X_0^{r_0+1}$
is quasi-finite.
\item $\xi_0 \times_{X_0^2} \sM_{*,2}^{r_0,\free}$ 
is a dense open subscheme of an irreducible component of
$\xi_0\times_{(X/S)^2}\sM_2^{r_0}$;
\end{enumerate}
\end{lem}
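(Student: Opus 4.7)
The plan is to prove the three statements in the order (1), (3), (2), since the quasi-finiteness in (2) will be derived via (3) combined with the known behavior on the generic fibre.

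For (1), let $W_i \subseteq X_0 \times X_0$ be the closure of the image of $\sM_{*,2}^{i,\free}$ under the two-point evaluation. By Lemma \ref{lem:free-stuff}, $\sM_{*,2}^{i,\free}$ is integral, so each $W_i$ is irreducible; moreover $W_0$ is the diagonal of dimension $\dim X_0$. I would then argue that as long as $W_i \neq X_0 \times X_0$, the dimension strictly increases: if $W_{i+1} = W_i$, then (after unwinding the definition of the iterated fibre product) $W_i$ is closed under adjoining $\sM_*$-lines at the second endpoint, and chain-connectedness of $X_0$ by $\sM_*$-curves forces $W_i$ to exhaust $X_0 \times X_0$, a contradiction. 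Starting at $\dim X_0$ and strictly increasing until reaching $2\dim X_0$, we need at most $\dim X_0$ steps, whence $r_0 \le \dim(G/P)$.

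For (3), I would combine flatness of $\sM \to S$ with integrality of $\sM_{*,2}^{r_0,\free}$. Flatness propagates through the principal bundle quotient $\sM \to \sM_0$ and the associated $\PP^1$-bundle $\sM_2 \to \sM_0$, giving flatness of $\sM_2 \to S$. Over the free locus, $\sM_2^\free \to X$ is smooth (via smoothness of $\sM_1^\free \to X$), so $\sM_2^{r_0}$ is flat over $S$ on the free locus with relative dimension $\dim X_1 + r_0(\dim V + 1)$. Since $\sM_{*,2}^{r_0,\free}$ is integral by Lemma \ref{lem:free-stuff} and of exactly this dimension, it must be a dense open subscheme of an irreducible component of the special fibre $s_0 \times_S \sM_2^{r_0}$. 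Minimality of $r_0$ guarantees that $\sM_{*,2}^{r_0,\free} \to X_0^2$ is dominant, so localizing at $\xi_0$ yields a nonempty dense open of an irreducible component of $\xi_0 \times_{(X/S)^2} \sM_2^{r_0}$.

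For (2), I would compare with the generic fibre using (3). By Lemma \ref{lem:m-c-bir}, on $X_1 \simeq G/P \otimes F$ the total evaluation $\sM_2^{r_0} \to X_1^{r_0+1}$ is birational onto the iterated $C_x$-fibration $C^{r_0}$, of dimension $\dim X_1 + r_0(\dim V + 1)$. Letting $\tilde C^{r_0} \subseteq (X/S)^{r_0+1}$ denote the schematic closure of $C^{r_0}$, it is flat over $S$ of the same relative dimension. The image of $\sM_{*,2}^{r_0,\free} \to X_0^{r_0+1}$ is contained in the special fibre $\tilde C^{r_0}_0$, and a dimension count (both are $\dim X_0 + r_0(\dim V + 1)$, with the source irreducible) forces the image to be a full irreducible component of $\tilde C^{r_0}_0$. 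Thus the map $\sM_{*,2}^{r_0,\free} \to \tilde C^{r_0}_0$ is dominant between irreducibles of equal dimension, hence generically finite; combined with properness of $\sM_2^{r_0} \to X^{r_0+1}$ inherited from projectivity of $X/S$ this upgrades to finite fibres everywhere, and restricting to $\xi_0$ gives the desired quasi-finiteness.

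The hardest step will be the dimension-matching argument in (2): pinning down that the image of $\sM_{*,2}^{r_0,\free}$ in $X_0^{r_0+1}$ indeed fills out an entire component of the specialization of $C^{r_0}$, rather than dropping to smaller dimension, relies essentially on the flatness input from (3) together with the birationality statement of Lemma \ref{lem:m-c-bir} on the generic fibre.
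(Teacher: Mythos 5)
Your parts (1) and (3) are essentially on the right track. For (1), your argument that the image dimension $d_i$ strictly increases until it reaches $2\dim X_0$ is exactly the paper's argument, phrased a bit more elaborately. For (3), the paper takes a shorter route: since $\sM_*^{\free}$ is open in $\sM_*$, the scheme $\xi_0\times_{X_0^2}\sM_{*,2}^{r_0,\free}$ is open in $\xi_0\times_{X_0^2}\sM_{*,2}^{r_0}$, which by Lemma \ref{lem:free-stuff} is irreducible, and the latter is a union of irreducible components of $\xi_0\times_{(X/S)^2}\sM_2^{r_0}$; no flatness of $\sM_2^{r_0}$ over $S$ or dimension counting is needed. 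Your version of (3) could probably be made to work, but it is substantially more complicated than necessary.

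Part (2) has a genuine gap. You claim that because $\sM_{*,2}^{r_0,\free} \to \tilde C^{r_0}_0$ is generically finite and $\sM_2^{r_0} \to X^{r_0+1}$ is proper, the evaluation has finite fibres everywhere. This inference is false: a proper, generically finite morphism can have positive-dimensional fibres (any blow-down is a counterexample). Moreover the source of the morphism you care about, $\xi_0\times_{X_0^2}\sM_{*,2}^{r_0,\free}$, is an open subscheme of the free locus, not a proper scheme over the target, so one cannot even invoke properness directly. Most tellingly, your argument for (2) does not use the minimality of $r_0$ at all, whereas this is the crux of the paper's proof: a positive-dimensional fibre forces, by Bend-and-Break, the tuple $\vec x$ to lie on a diagonal of $X_0^{r_0+1}$; a chain in that fibre then has a segment with coincident endpoints, which can be deleted to produce a chain of length $r_0-1$ with the same endpoints, contradicting the minimality of $r_0$. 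That argument is not recoverable from generic finiteness plus a dimension count, and the approach as written does not prove quasi-finiteness.
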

\begin{proof}
\begin{enumerate}
\item
Let $n = \dim (G/P) =\dim X_0$. Denote by $d_i$
the dimension of the closed image of $\sM_{*,2}^{i,\free} \to X_0\times X_0$.
We then have that the sequence $d_i$ is nondecreasing, $d_0 = n$, and $d_{i+1}=d_i$ if and only if
$d_i = 2n$. We then have $d_{r_0}=d_n = 2n$, so that $r_0 \le n$.
\item
Suppose not, so that there is a point
$\vec{x} \in \xi_0\times_{X_0^2}X_0^{r_0+1}$
with a positive-dimensional fibre in $\xi_0\times_{X_0^2}\sM_{*,2}^{r_0,\free}$.
By Bend-and-Break, this can only happen if $\vec{x}$ factors through
one of the diagonals in $X_0^{r_0+1}$. It follows that
a chain corresponding
to a point in the fibre contains a segment whose two marked points coincide. 
Removing the segment, we obtain a chain of length
$r_0-1$, corresponding to a point
in $\xi_0\times_{X_0^2}\sM_{*,2}^{r_0-1,\free}$. This contradicts minimality of $r_0$.
\item
Since $\sM_*^\free$ is open $\sM_*$, we have
that $\xi_0\times_{X_0^2}\sM_{*,2}^{r_0,\free}$ is open
in
$\xi_0\times_{X_0^2}\sM_{*,2}^{r_0}$.
Since $\bar x_0 \times_{X_0} \sM_{*,1}$ is smooth and connected,
we have by Lemma \ref{lem:free-stuff} that $\xi_0\times_{X_0^2}\sM_{*,2}^{r_0,\free}$
is irreducible, and thus its closure in $\xi_0\times_{X_0^2}\sM_{*,2}^{r_0}$ is an irreducible
component.
Finally, $\xi_0\times_{X_0^2}\sM_{*,2}^{r_0}$ is a union
of irreducible components
of
$\xi_0\times_{(X/S)^2} \sM_2^{r_0}$. 
\end{enumerate}\end{proof}

\begin{lem}\label{lem:m2-separable}
Let $V$ be the variety of line tangents
at the origin of $G/P$, and assume $p>\delta_{G/P}(r_0)$ (cf. \ref{ss:numerical}). 
Then $\sM_{*,2}^{r_0,\free} \to X_0\times X_0$ is separably dominant.
\end{lem}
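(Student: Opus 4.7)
The plan is to bound the generic degree of the evaluation morphism by the intersection number $\delta_{G/P}(r_0)$ via flat cycle-theoretic specialization from the generic fibre $G/P$, and then to deduce separability from the hypothesis $p>\delta_{G/P}(r_0)$.

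First, by Lemma~\ref{lem:m2-flat}(2) the restriction $\nu_0$ of the total evaluation morphism to $\sM_{*,2}^{r_0,\free}$ is quasi-finite over the generic point of $X_0\times X_0$, hence generically finite onto its scheme-theoretic image $Z_*\subset X_0^{r_0+1}$; denote its degree by $m_*$. The morphism $\nu^{r_0}:\sM_2^{r_0}\to X^{r_0+1}$ is defined over $S$, with $S$-flat source (flatness of $\sM\to S$ is inherited by the iterated fibre product $\sM_2^{r_0}$) and $S$-smooth target. After an unramified base change producing $S$-points of $X$ specialising to distinct generic points of $X_0$, I form the intersection
$$\Delta\;=\;\nu^{r_0}_*[\sM_2^{r_0}]\cdot p_0^{*}[x_S]\cdot\Bigl(\sum_{j=1}^{r_0-1}p_j^{*}H\Bigr)^{d_{r_0}}\cdot p_{r_0}^{*}[x_S]$$
on $X^{r_0+1}/S$, where $H$ is a relative hyperplane class extending $\OO_X(1)$. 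Since all factors are $S$-flat, $\Delta$ yields the same integer on either geometric fibre. On the generic fibre, Lemma~\ref{lem:m-c-bir} gives $\nu^{r_0}_*[\sM_2^{r_0}]|_{s_1}=[C^{r_0}]$, so $\Delta=\delta_{G/P}(r_0)$.

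On the special fibre, $\nu^{r_0}_*[s_0\times_S\sM_2^{r_0}]$ decomposes into nonnegative-coefficient contributions indexed by irreducible components of $s_0\times_S\sM_2^{r_0}$; the component containing $\sM_{*,2}^{r_0}$ contributes $n_*m_*[Z_*]$ with $n_*\geq 1$, whose pairing with the test cycle equals $n_*m_*$ times a strictly positive integer obtained as the degree of the generic fibre of $Z_*\to X_0\times X_0$ in the polarisation $\sum_j H_j$ (the generic fibre embeds in $X_0^{r_0-1}$ via intermediate projections by quasi-finiteness, and $\OO(\sum_jH_j)$ is ample there). Thus $m_*\leq\delta_{G/P}(r_0)<p$, so the inseparable degree of the generically finite morphism $\sM_{*,2}^{r_0,\free}\to Z_*$---a power of $p$ dividing $m_*$---must equal one, making this morphism generically \'etale. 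The residual projection $Z_*\to X_0\times X_0$ is dominant with generic fibre the flat specialisation of the generically smooth $C_x$-tower $C^{r_0}\to X$ to $X_0$ (by Lemma~\ref{lem:free-stuff}(2) applied inductively to $\sM_{*,2}^{i,\free}\to\sM_{*,2}^{i-1,\free}$ and the birational identification of Lemma~\ref{lem:m-c-bir}), hence generically smooth; composing yields separable dominance of $\sM_{*,2}^{r_0,\free}\to X_0\times X_0$ as required.

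The principal difficulty is the cycle-theoretic bookkeeping: identifying the coefficient with which $[Z_*]$ appears in $\nu^{r_0}_*[s_0\times_S\sM_2^{r_0}]$, verifying its pairing with the test cycle is a positive integer via suitable transversality, and ensuring that irreducible components of $s_0\times_S\sM_2^{r_0}$ outside $\sM_{*,2}^{r_0}$ contribute nonnegatively rather than spoiling the bound. One must also keep in mind that the minimal chain length $r_0$ defined on the special fibre may a priori differ from the double-coset length $r$ associated with $G/P$, so the specialisation argument must be run with whatever $r_0$ is supplied by the setup.
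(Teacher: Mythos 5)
Your overall strategy is the same as the paper's --- bound an intersection number by $\delta_{G/P}(r_0)$ via flat cycle-theoretic specialisation and invoke $p>\delta_{G/P}(r_0)$ --- but the quantity you bound is different, and the difference matters. You bound the degree $m_*$ of the generically finite map $\sM_{*,2}^{r_0,\free}\to Z_*$ to its scheme-theoretic image in $X_0^{r_0+1}$. The paper instead bounds the coefficients of the cycle $[\bar\xi_0\times_{\xi_0}W]$, where $W\subset\xi_0\times_{(X/S)^2}\bar\sM_2^{r_0}$ is the relevant component of the flat limit over $\Spec\OO_{(X/S)^2,\xi_0}$: if $\kappa(\eta_W)/\kappa(\xi_0)$ were not separably generated, then $\bar\xi_0\times_{\xi_0}W$ would be geometrically non-reduced with generic multiplicities divisible by $p$, making $(\bar\xi_0\times_{\xi_0}W)\cdot c_1(\sL)^{\dim}$ a positive multiple of $p$ yet at most $\delta_{G/P}(r_0)$. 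That directly forces separability of $\sM_{*,2}^{r_0,\free}\to X_0^2$ in one stroke.

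Your factorisation through $Z_*$ leaves a genuine gap. Knowing $m_*<p$ makes $\sM_{*,2}^{r_0,\free}\to Z_*$ generically \'etale, but the composite to $X_0^2$ is separably dominant only if the second leg $Z_*\to X_0\times X_0$ is also separably dominant. Your argument for that --- that the generic fibre of $Z_*\to X_0^2$ is "the flat specialisation of the generically smooth $C_x$-tower, hence generically smooth" --- does not hold up. Flat specialisation does not preserve separability (this is precisely the pathology the lemma exists to rule out: a family of separable covers can degenerate to an inseparable one, as $x\mapsto x^p+tx$ shows). Moreover Lemma~\ref{lem:free-stuff}(2) controls the chain-building maps $\sM_{*,2}^{i+1,\free}\to\sM_{*,2}^{i,\free}$ and the single left evaluation $\sM_{*,2}^{i,\free}\to X_0$, not the joint map to $X_0\times X_0$, so it does not deliver what you need. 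You would have to run a second, independent argument for the separability of $Z_*\to X_0^2$ --- at which point it is simpler to adopt the paper's formulation, which makes the inseparability hypothesis visible as $p$-divisibility of the very cycle being intersected and dispenses with the factorisation through $Z_*$ entirely.
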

\begin{proof}
Let $\bar\sM_2^{r_0} \subset \Spec \OO_{(X/S)^2,\xi_0} \times_{(X/S)^2}\sM_2^{r_0}$ 
be the flat limit
of $\xi_1\times_{(X/S)^2}\sM_2^{r_0}$ over $\Spec\OO_{(X/S)^2,\xi_0}$.
Note that $\xi_0\times_{(X/S)^2}\bar\sM_2^{r_0}$ contains
$\xi_0\times_{X_0^2}\sM_{*,2}^{r_0,\free}$.
Consider the total evaluation morphism
$$ \nu:\bar\sM_2^{r_0} \to 
\Spec \OO_{{X/S}^2,\xi_0} \times_{(X/S)^2} (X/S)^{r_0+1}. $$
Letting $p_j : (X/S)^{r_0+1} \to X$, $0\le j\le r_0$, be the natural projections,
we have an invertible sheaf
$$\sL = \bigotimes_{j=1}^{r_0-1} \nu^{*}p_j^* \OO_X(1)$$
on $\Spec\OO_{(X/S)^2,\xi_0}\times_{(X/S)^2}\sM_2^{r_0}$.
By the definition in \ref{ss:numerical},
$$
(\bar\xi_1 \times_{(X/S)^2} \sM_2^{r_0}) \cdot c_1(\sL)^{\dim \bar\xi_1^*\sM_2^{r_0}}
 = \delta_{G/P}(r_0)
$$
holds on the geometric generic fibre,
so that
$$
(\bar\xi_0\times_{(X/S)^2} \bar\sM_2^{r_0}) \cdot c_1(\sL)^{\dim\bar\xi_0^*\sM_2^{r_0}} = 
\delta_{G/P}(r_0)
$$
holds on the geomteric special fibre.

Let now $W \subset \xi_0\times_{(X/S)^2}\bar\sM_2^{r_0}$ be the closure
of $\xi_0\times_{X_0^2}\sM_{*,2}^{r_0,\free}$, an irreducible component.
Suppose $\kappa(\eta_W) / \kappa(\xi_0)$ is not separably generated.
Then local rings of generic points of $\bar\xi_0\times_{\xi_0}W$
have length divisible by $p$, so that
$$(\bar\xi_0\times_{\xi_0}W) \cdot c_1(\sL)^{\dim \bar\xi_0^*W} \in p\mathbb{Z}. $$
By Lemma \ref{lem:m2-flat}, the restriction of $\nu$ to $W$ is generically finite, 
and the above intersection
number is positive.
We then have
$$\delta_{G/P}(r_0) \ge (\bar\xi_0\times_{\xi_0}W) \cdot c_1(\sL)^{\dim \bar\xi_0^*W} \ge p.$$
But $p > \delta_{G/P}(r_0)$, a contradiction.
It follows that $\sM_{*,2}^{r_0,\free} \to X_0\times X_0$, dominant by
Lemma \ref{lem:m2-flat}, is separable.
\end{proof}

\begin{pro}\label{pro:m1-tangent}
Let $(V,\sL)$ be the variety of line tangents
at the origin of $G/P$, and the invertible sheaf
defining the projective embedding
into the projectivised tangent space at the origin.
Assume the hypotheses of Proposition \ref{pro:rigidity-m1},
and furthermore $p>\delta_{G/P}(\dim (G/P))$ and $p>\ind(G/P)$. 
Then the tangent morphism
$$ \Spec \OO_{X,x_0} \times_X \sM_1 \to \Spec \OO_{X,x_0}\times_X \PP T_{X/S}, $$
a map between an isotrivial $V$-bundle and a trivial projective space
bundle, is defined by the complete
linear system associated with $\sL$.
\end{pro}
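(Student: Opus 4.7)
The plan is to reduce the statement to linear nondegeneracy of the variety of $\sM_*$-rational tangents at $x_0$ inside $\PP T_{X_0,x_0}$, and then extract this from Proposition \ref{pro:linear-nondegeneracy} applied to $(X_0,\sM_*)$. Indeed, the factorisation recalled in the proof of Lemma \ref{lem:m-star-minimal} displays the tangent morphism on the closed fiber as the composite of $V\otimes\kappa(\bar x_0)\xrightarrow{|\sL|}\PP T_{G/P,o}\otimes\kappa(\bar x_0)$ with a linear projection onto $\PP\sD_{\bar x_0}\subset\bar x_0^{*}\PP T_{X/S}$, while on the generic fiber it is already the full system $|\sL\otimes\kappa(\bar x_1)|$. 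So once $\sD_{x_0}=T_{X_0,x_0}$ is known, the projection becomes a linear isomorphism on the closed fiber, forcing the sub-system $\mathfrak d$ to equal $|\sL|$ and giving the claim over the whole spectrum.

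To invoke Proposition \ref{pro:linear-nondegeneracy} on $(X_0,\sM_*)$, the standing hypotheses hold: $X_0$ is Fano of Picard number $1$ by the setup, $\sM_*$ is an irreducible component of $\underline\Hom_\bir^n(\PP^1,X_0)$ of degree $1$ with respect to $\OO_{X_0}(1)$, and $\ind(X_0)=\ind(G/P)<p$ by assumption. As for the four numbered hypotheses: (1) is Lemma \ref{lem:m-star-minimal}; (2) follows from Proposition \ref{pro:rigidity-m1}, since it realises $\Spec\OO_{X,x_0}\times_X\sM_1$ as an isotrivial $V$-bundle and $V$ is geometrically irreducible; (3) is Lemma \ref{lem:m2-separable} applied with $i=r_0$, using $r_0\le\dim(G/P)$ from Lemma \ref{lem:m2-flat} together with the bound $p>\delta_{G/P}(\dim(G/P))$.

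Hypothesis (4) is the transfer step that requires the most care. For the model $(G/P,\sM_{G/P})$ with its VMRT $V\subset\PP T_{G/P,o}$ given by the full system $|\sL|$, the required spanning in $\Lambda^{2}T_{G/P,o}$ is Lemma \ref{lem:cominuscule-wedge2}, which applies because $p>\ind(G/P)\ge 2$ in particular gives $\Char k\ne 2$. For $(X_0,\sM_*)$ the variety of rational tangents at $x_0$ is the linear projection of $V\subset\PP T_{G/P,o}$ onto $\PP\sD_{\bar x_0}$, and the second-order map $\PP T_{\sM_{*,1}/X_0,m_{*,1}}\to\PP\Lambda^{2}T_{X_0,x_0}$ sits in a commutative square with the surjection $\Lambda^{2}T_{G/P,o}\twoheadrightarrow\Lambda^{2}\sD_{\bar x_0}$ on targets, so the cominuscule spanning pushes forward to spanning of $\Lambda^{2}\sD_{\bar x_0}$. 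With (1)--(4) in hand, Proposition \ref{pro:linear-nondegeneracy} gives $\sD_{x_0}=T_{X_0,x_0}$, and the opening reduction closes the argument. The main delicate point is precisely this compatibility in (4) --- checking that the second-order data attached to $\sM_*$ at $x_0$ really is the linear projection of the corresponding data for lines on $G/P$, so that the cominuscule input of Lemma \ref{lem:cominuscule-wedge2} can be pushed forward to $\sD_{\bar x_0}$ rather than used only on $G/P$ itself.
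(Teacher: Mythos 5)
Your proof is correct and follows essentially the same route as the paper: reduce to linear nondegeneracy of $\sD_{x_0}$ via the factorisation from Lemma \ref{lem:m-star-minimal}, then verify hypotheses (1)--(4) of Proposition \ref{pro:linear-nondegeneracy} via Lemmas \ref{lem:m-star-minimal}, \ref{lem:m2-separable}, \ref{lem:cominuscule-wedge2} together with the smoothness/irreducibility of $\bar x_0\times_X\sM_{*,1}$ from Proposition \ref{pro:rigidity-m1}. Your unpacking of hypothesis (4) --- that the second-order map for $(X_0,\sM_*)$ is the linear projection of the model's second-order map, so the spanning in $\Lambda^2 T_{G/P,o}$ pushes forward to $\Lambda^2\sD_{\bar x_0}$ --- is exactly what the paper means by the terse phrase ``using the above factorisation,'' and you spell it out correctly.
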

\begin{proof}
Recall the factorisation 
$$\bar x_0\times_X \sM_{*,1} \simeq V \otimes \kappa(\bar x_0)
\xrightarrow{|\sL|} \PP T_{G/P,o} \otimes \kappa(\bar x_0)
\dashrightarrow \bar x_0\times_X \PP T_{X/S} 
$$
of the tangent morphism $\bar x_0\times_X\sM_{*,1}\to \bar x_0\times_X\PP T_{X_0}$
(cf. the proof of Lemma \ref{lem:m-star-minimal}).
It will be enough to show that the linear projection corresponding
to the dashed arrow
is an isomorphism, i.e. that the image of the tangent morphism
is linearly nondegenerate in $\bar x_0^*\PP T_{X/S}$.
We apply Proppsition \ref{pro:linear-nondegeneracy} to $X_0$
and $\sM_*$. Note that $\ind(X_0)=\ind(G/P)$.
Hypothesis (1) is satisfied by Lemma \ref{lem:m-star-minimal}.
Hypothesis (2) is satisfied by smoothness of $\bar x_0 \times_{X_0}\sM_{*,1}$.
Hypothesis (3) is satisfied by Lemma \ref{lem:m2-separable}. Hypothesis (4) is satisfied
by Lemma \ref{lem:cominuscule-wedge2}, using the above factorisation.
\end{proof}

\subsection{Identification with model}

We now wish to extend Proposition \ref{pro:m1-tangent} to
a statement about the family of arcs through a formal neighbourhood
of the generic point, essentially saying that this family cannot
acquire `curvature' when specialising to $X_0$. The extension theorem of
Chapter \ref{chap:extension} will then apply immediately. 
 
It will be convenient to introduce the following notation:
$$ 
Y = G/P,\quad y \in Y\ \textrm{origin},\quad \hat Y = y\times_YY^\sharp\ \textrm{completion at $y$}
$$
$$
 \sN \subset\underline\Hom_\bir^n(\PP^1,Y)\ \textrm{lines},\quad
V = y\times_Y \sN_1 \hookrightarrow \PP T_{Y,y}\ \textrm{line tangents at $y$}. 
$$
In addition to the hypotheses of Proposition \ref{pro:m1-smooth},
we now assume those of Proposition \ref{pro:m1-tangent},
so that $$\Spec\OO_{X,x_0}\times_X \sM_1 \to \Spec\OO_{X,x_0}\times_X \PP T_{X/S}$$
is a closed immersion, \'etale locally isomorphic to the embedding
$$V \to \PP T_{Y,y}.$$
\begin{lem}\label{lem:v-trivial}
There is a faithfully flat base-change $T \to \Spec\OO_{X,x_0}$ such that
$T$ is the spectrum of a discrete valuation ring over $\kappa(\bar x_0)$,
with the latter as its residue field,
and there is a commutative diagram
$$\begin{CD}
T\times_X\sM_1 @>>> T\times_X \PP T_{X/S} \\
@VV{\simeq}V @VV{\simeq}V \\
T\times V @>>> T\times \PP T_{Y,y}
\end{CD}$$
where the vertical arrows are isomorphisms, and the horizontal
arrows are the natural tangent embeddings.
\end{lem}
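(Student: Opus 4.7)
The plan is to promote the étale-local trivialization implicit in Proposition \ref{pro:m1-tangent} to a global trivialization after a well-chosen DVR base change. The key geometric observation is that the pair $V \subset \PP T_{Y,y}$ is acted on transitively by the image $G$ of $L_\alpha^\ss$ (or, in the Segre and Veronese cases, of the evident products of $\PGL$'s) inside $\PGL(T_{Y,y})$. In each case of Corollary \ref{cor:cominuscule-lines}, this $G$ is a smooth linear algebraic group over $k$, and the contents of Proposition \ref{pro:m1-tangent} amount to exhibiting the pair
\[ (\Spec \OO_{X,x_0} \times_X \sM_1,\ \Spec \OO_{X,x_0} \times_X \PP T_{X/S}) \]
as étale-locally constant with model $(V, \PP T_{Y,y})$. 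Equivalently, this data packages into a $G$-torsor $\sT$ over $\Spec \OO_{X,x_0}$, and the conclusion of the lemma is equivalent to trivializing $\sT$ after pullback to $T$.

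I would then construct $T$ explicitly. Since $X \to S$ is smooth and $x_0$ lies over $s_0$, the local ring $\OO_{X,x_0}$ is a discrete valuation ring sharing a uniformizer with $\OO_S$ and with residue field $\kappa(x_0)$. In equal characteristic $p > 0$ the Cohen structure theorem provides a coefficient field, hence an identification $\hat\OO_{X,x_0} \simeq \kappa(x_0)[[t]]$. Define $T = \Spec \kappa(\bar x_0)[[t]]$, with the flat map $\Spec \OO_{X,x_0} \leftarrow T$ given by the composition $\OO_{X,x_0} \to \hat\OO_{X,x_0} \hookrightarrow \kappa(\bar x_0)[[t]]$. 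This is faithfully flat, $T$ is a DVR over $\kappa(\bar x_0)$ with residue field $\kappa(\bar x_0)$, as required by the statement.

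Finally, it remains to trivialize the pullback $\sT_T$. Its restriction to the closed point of $T$ is a $G$-torsor over $\Spec \kappa(\bar x_0)$, and a trivialization is provided by the geometric-generic identification $\bar x_0 \times_X \sM_1 \simeq V \otimes \kappa(\bar x_0)$ already established inside the proof of Proposition \ref{pro:rigidity-m1}. Since $T$ is a complete (hence Henselian) discrete valuation ring with algebraically closed residue field and $G$ is smooth, Hensel's lemma lifts this closed-point section to a global section of $\sT_T \to T$, producing the desired isomorphisms in the commutative square. The main technical obstacle is the bookkeeping in the first paragraph: namely, identifying in each case of Corollary \ref{cor:cominuscule-lines} the correct smooth group $G$ realizing the automorphisms of the pair $V \subset \PP T_{Y,y}$, and checking that the étale-local trivialization from Proposition \ref{pro:m1-tangent} genuinely defines a $G$-torsor. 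This is verified by direct inspection, using that the stabilizer of $V$ in $\PGL(T_{Y,y})$ is reductive with smooth identity component, which coincides with $G$.
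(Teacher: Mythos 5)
Your argument is correct in outline and expands the paper's one-line ``Immediate by the preceding paragraph'' into a torsor-plus-Hensel proof, arriving at the same $T = \Spec\kappa(\bar x_0)[[t]]$ via Cohen's structure theorem. The paper's intended route is slightly more direct: the preceding paragraph asserts that $\Spec\OO_{X,x_0}\times_X\sM_1 \to \Spec\OO_{X,x_0}\times_X\PP T_{X/S}$ is \'etale-locally isomorphic to the model $V \to \PP T_{Y,y}$, and since $T$ is the spectrum of a complete DVR with separably closed residue field, it lifts along any such \'etale neighbourhood and pulls back the trivialization. You instead encode the \'etale-local triviality as a torsor under $G = \Aut(V\subset\PP T_{Y,y})$ and trivialize by lifting a closed-point section; this is a reasonable, arguably cleaner, repackaging, and both routes converge. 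There are, however, two points you should not gloss over. First, the torsor you construct is \emph{a priori} only fppf-locally trivial --- that is exactly what ``isotrivial $V$-bundle'' in Proposition~\ref{pro:rigidity-m1} gives --- and to upgrade to an \'etale torsor, or equivalently to invoke Hensel, you must know that $G$ is a smooth group scheme. You dispatch this as ``verified by direct inspection,'' but in characteristic $p$ smoothness of automorphism group schemes of homogeneous spaces is a genuine assertion (they can be non-reduced in small characteristic); the paper also leans on this silently (e.g.\ ``since the group $\underline\Aut(G/P)$ is smooth'' before Definition~\ref{defn:rigidity}), so at minimum you should say why the running bounds on $p$ are large enough to guarantee it. Second, the closed-point trivialization you invoke must identify the \emph{embedded} pair $(\bar x_0\times_X\sM_1 \subset \bar x_0\times_X\PP T_{X_0})$ with the base-changed model, not merely produce an abstract isomorphism $\bar x_0\times_X\sM_1 \simeq V\otimes\kappa(\bar x_0)$; the passage from the latter to the former is exactly the content of Proposition~\ref{pro:m1-tangent} (the tangent map is the complete linear system of $\sL$), which you cite in passing but should make explicit at that step.
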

\begin{proof}
Immediate by the preceding paragraph.
\end{proof}

Since $\Spec\OO_{X,x_0}\times_X\sM_1
= \Spec\OO_{X,x_0}\times_X\sM_1^\arc$ (Proposition \ref{pro:kebekus}),
it follows that the tangent map, a closed immersion, 
factors through the arc space:
$$\begin{diagram}
\node{} \node{\Spec\OO_{X,x_0}\times_X\Arc_{X/S}}\arrow{s} \\
\node{\Spec\OO_{X,x_0}\times_X\sM_1}\arrow{e}\arrow{ne}
\node{\Spec\OO_{X,x_0}\times_X\PP T_{X/S}}
\end{diagram}$$
where the diagonal arrow is an isomorphism onto $\Spec\OO_{X,x_0}\times_X
\hat\sM_1$.
Since $\Spec\OO_{X,x_0}\times_X\sM_1$ is flat over
$\Spec\OO_{X,x_0}$, the above diagram defines a section
$$ \sigma_{X} : \Spec\OO_{X,x_0} \to \Spec\OO_{X,x_0}\times_X \ArcHilb_{X/S}. $$
Note that $\sN_1$ defines a corresponding section
$\sigma_{Y} : Y \to \ArcHilb_{Y}$.
\begin{pro}
\label{pro:v-trivial}
There is a faithfully flat base-change $T \to \Spec\OO_{X,x_0}$ such that
$T$ is the spectrum of a discrete valuation ring over $\kappa(\bar x_0)$,
with the latter as its residue field,
and there is a commutative diagram
$$\begin{CD}
T\times_X(X/S)^\sharp @>{\id_T \times \sigma_X}>> T\times_X(X/S)^\sharp\times_X\ArcHilb_{X/S} \\
@VV{\simeq}V @VV{\simeq}V \\
T\times \hat Y @>{\id_T\times \sigma_Y}>> T\times \hat Y\times_{Y}\ArcHilb_{Y}
\end{CD}$$
where the vertical arrows are isomorphisms.
\end{pro}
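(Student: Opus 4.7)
The plan is to find, after faithfully flat base change $T \to \Spec\OO_{X,x_0}$, a trivialization $T\times_X(X/S)^\sharp \simeq T\times \hat Y$ that carries $\sigma_X$ to $\sigma_Y$. By Zariski-local triviality of $(X/S)^\sharp$ (Lemma~\ref{lem:completion-at-section}) combined with Lemma~\ref{lem:v-trivial}, after such a base change we can choose some trivialization whose induced tangent-level isomorphism $T\times_X \PP T_{X/S} \simeq T\times \PP T_{Y,y}$ sends $T\times_X\sM_1$ onto $T\times V$, with $V \subset \PP T_{Y,y}$ the variety of line tangents. Under this trivialization, $\sigma_X$ becomes a morphism $T \to \prod(\ArcHilb_{\hat Y}/\hat Y)$ whose composite with the projection to $\Hilb\,\PP T_{\hat Y,y}$ is constant at $[V]$; its image therefore lies in the fibre $F_V$, and $\sigma_Y$ gives a constant morphism to a $k$-point $q_Y \in F_V(k)$. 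The remaining freedom in the trivialization is the subgroup $\Gamma \subset \underline\Aut(\hat Y,y)$ whose $\GL(T_{Y,y})$-image stabilizes $[V]$, an extension of $\Stab_{\GL}(V)$ by the pro-unipotent kernel $\sR_u\underline\Aut(\hat Y,y)$.

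Next, the generic fibre is isomorphic to $Y\otimes F$ by an isomorphism carrying $\sM$ to $\sN$; after extending $T$ so that its fraction field contains $F$ and fixing such an isomorphism, we obtain an element $h_1 \in \Gamma(\kappa(t_1))$ with $h_1 \cdot \sigma_X|_{t_1} = \sigma_Y|_{t_1}$. Using smoothness of $\Stab_{\GL}(V)$ and the fact that $\kappa(\bar x_0)$ is algebraically closed, we extend the $\GL$-component of $h_1$ to a $T$-point of $\Stab_{\GL}(V)$, lift it to $\Gamma(T)$ via the splitting $\GL(T_{Y,y}) \hookrightarrow \underline\Aut(\hat Y,y)$, and modify the trivialization accordingly. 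The resulting trivialization still satisfies Lemma~\ref{lem:v-trivial} (up to postcomposition by an automorphism of $V$), and now the comparison element $h_1$ at $t_1$ lies in $\sR_u\underline\Aut(\hat Y,y)(\kappa(t_1))$. Equivalently, the image of $t_1$ in $F_V$ belongs to the $\sR_u\underline\Aut(\hat Y,y)$-orbit $O$ of $q_Y$; by Proposition~\ref{pro:closed-orbits} this orbit is closed, so by specialization the image of $t_0$ is in $O$ as well, and the whole morphism $T \to F_V$ factors set-theoretically through $O$.

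The main obstacle is the remaining lifting step: we must promote the topological factorization $T \to O$ to an actual morphism $T \to \sR_u\underline\Aut(\hat Y,y)$ through the orbit map $g \mapsto g\cdot q_Y$. Using the description following Proposition~\ref{pro:closed-orbits}, write $\sR_u\underline\Aut(\hat Y,y) = \varprojlim U_i$ and $F_V = \varprojlim F_V^{(i)}$ as compatible inverse limits with $U_i$ unipotent algebraic groups acting on affine schemes of finite type, so that each orbit map $U_i \to O^{(i)} \subset F_V^{(i)}$ is a torsor for its (unipotent) stabilizer, hence smooth surjective. Smooth surjections admit sections over a DVR after an algebraic residue-field extension---trivial here since $\kappa(\bar x_0)$ is algebraically closed---and assembling compatible lifts through the tower produces, after possibly a further faithfully flat base change, an element $g \in \sR_u\underline\Aut(\hat Y,y)(T)$. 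Applying $g^{-1}$ to modify the trivialization one last time yields the diagram in the statement.
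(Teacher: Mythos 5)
Your proof follows the same overall strategy as the paper's: push $\sigma_X$ and $\sigma_Y$ through a trivialization to obtain a pair of morphisms $T \rightrightarrows \prod(\ArcHilb_{\hat Y}/\hat Y)$, argue that they agree at the generic point of $T$ up to the $\sR_u\underline\Aut(\hat Y,y)$-action, invoke Proposition~\ref{pro:closed-orbits} to propagate this to the closed point, and lift through the orbit map after a further flat base change. Your explicit treatment of the final lifting, via the inverse-limit presentation $\sR_u\underline\Aut(\hat Y,y)=\varprojlim U_i$, is a useful elaboration of the paper's terse ``possibly after a further faithfully flat base change''; note, however, that in characteristic $p$ the stabilizer of $q_Y^{(i)}$ in $U_i$ need not be smooth (unipotent groups admit infinitesimal subgroup schemes such as $\alpha_p$), so the orbit map $U_i\to O^{(i)}$ is in general only flat, finite type and surjective. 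This is precisely why the paper passes to a further fppf cover rather than an \'etale one, and your claim that the orbit maps are ``smooth surjective'' overstates what is available.

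The genuinely new ingredient in your argument is the intermediate normalization: you observe that the generic-fibre discrepancy $h_1$ a priori lies only in the parabolic-type subgroup $\Gamma\subset\underline\Aut(\hat Y,y)$ lying over $\Stab_{\GL(T_{Y,y})}(V)$, not in $\sR_u\underline\Aut(\hat Y,y)$, and you try to modify the trivialization to kill the $\GL$-component before invoking the closed-orbit theorem. This attends to a real subtlety that the paper compresses into ``by construction,'' since $\Gamma$-orbits (unlike $\sR_u$-orbits) are not closed and being in a $\Gamma$-orbit at $t_1$ need not specialize. Unfortunately the justification you give for the key step --- ``using smoothness of $\Stab_{\GL}(V)$ and the fact that $\kappa(\bar x_0)$ is algebraically closed, we extend the $\GL$-component of $h_1$ to a $T$-point of $\Stab_{\GL}(V)$'' --- does not hold. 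A $\kappa(t_1)$-point of a smooth affine group scheme over $T$ does not in general extend to a $T$-point: the valuative criterion fails for non-proper schemes, and neither smoothness nor algebraic closedness of the residue field helps (for $\GG_m$ over a DVR, a uniformizer of $\kappa(t_1)^\times$ does not extend). Nor does a ``further faithfully flat base change'' rescue this, since you also need to preserve the residue field $\kappa(\bar x_0)$, which is already algebraically closed. To carry out your normalization one has to choose $T$ and the trivialization $\bar\phi$ \emph{together} --- for instance, taking $T$ to dominate the closure, inside the frame bundle of the isotrivial $V$-bundle from Proposition~\ref{pro:m1-tangent}, of the canonical generic-fibre trivialization --- rather than fixing an arbitrary $T$ as in Lemma~\ref{lem:v-trivial} and then attempting to adjust it.
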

\begin{proof}
Let $T$ be as in Lemma \ref{lem:v-trivial}, together with the isomorphism
$$\bar\phi : T\times_X \PP T_{X/S} \to T\times \PP T_{Y,y}$$
identifying pullbacks of $\sM_1$ and $V$.
Let
$$\phi : T\times_X (X/S)^\sharp \to T\times \hat Y$$ be any
isomorphism of bundles of formal discs whose restriction 
to the first infinitesimal neighbourhood
of $T$ in $T\times_X(X/S)^\sharp$, viewed as an isomorphism
of pullbacks of tangent bundles, projectivises to $\bar\phi$.
There is a natural lift of $\phi$ to a commutative diagram
$$\begin{CD}
T\times_X(X/S)^\sharp\times_X\ArcHilb_{X/S} @>{\tilde\phi}>>
T\times \hat Y\times_{Y}\ArcHilb_{Y} \\
@VVV @VVV \\
T\times_X(X/S)^\sharp @>{\phi}>> T\times \hat Y
\end{CD}$$
where both horizontal arrows are isomorphisms. The section $\sigma_X$
then induces a commutative diagram
$$\begin{CD}
T\times_X(X/S)^\sharp\times_X\ArcHilb_{X/S} @>{\tilde\phi}>>
T\times \hat Y\times_{Y}\ArcHilb_{Y} \\
@A{\id_T\times\sigma_X}AA @AA{\sigma_X^\phi}A \\
T\times_X(X/S)^\sharp @>{\phi}>> T\times \hat Y
\end{CD}$$
where both vertical arrows are sections. We now have a pair of sections
$$
T \times \hat Y
\overset{\sigma_X^\phi}{\underset{\id_T\times\sigma_{Y}}{\rightrightarrows}}
T\times \hat Y\times_{Y}\ArcHilb_{Y}
$$
thus defining a pair morphisms
$$
T \overset{t_X}{\underset{t_{Y}}{\rightrightarrows}} \prod(\ArcHilb_{\hat Y}/\hat Y)
$$
where $t_{Y}$ is constant, i.e. factors through a $k$-point, which
we will denote with the same symbol.
By construction, the restriction of $t_X$ to the generic point of $T$
factors through the $\sR_u\underline\Aut(\hat Y,y)$-orbit of $t_{Y}$.
Hence, by Proposition \ref{pro:closed-orbits}, $t_X$ itself factors
through the same orbit. It follows that, possibly after a further
faithfully flat base change, there is
$g : T \to \sR_u\underline\Aut(\hat Y,y)$ such that
$g\phi$ gives the desired isomorphism.
\end{proof}

\begin{cor}\label{cor:x-is-y}
Under the hypotheses of Propositions \ref{pro:m1-smooth} and \ref{pro:rigidity-m1}, $X_0$
is isomorphic to $G/P$.
\end{cor}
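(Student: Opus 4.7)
The plan is to deduce the corollary as a direct application of Corollary \ref{cor:extension} to the pair $(X_0,\sM_*)$ on the degeneration side and $(Y,\sN) = (G/P, \text{lines})$ on the model side, with the required geometric point and compatible formal identification supplied by Proposition \ref{pro:v-trivial}. All the real work has already been done; what remains is a packaging step, checking that every hypothesis of the extension theorem is in place and that the data produced by Proposition \ref{pro:v-trivial} are exactly the data that Corollary \ref{cor:extension} consumes.

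First I would verify the standing hypotheses of Corollary \ref{cor:extension}. The fact that $X_0$ and $G/P$ are simply-connected, nonsingular, projective Fano varieties of Picard number $1$ and equal dimension was established at the start of this chapter for $X_0$ and is classical for $G/P$. The family $\sN$ of lines on $G/P$ is a good family by the results of Section 2.3 of Chapter \ref{chap:tools}: the generic line is minimal (hence unramified at $0$), and under our hypothesis $V \to \PP T_{Y,y}$ is positive-dimensional and irreducible. To see that $\sM_*$ is a good family on $X_0$, I would assemble: irreducibility of $\sM_*$ as a component of $\underline\Hom_\bir^n(\PP^1,X_0)$ with free generic member (Lemma \ref{lem:m-star}); minimality of the generic $\sM_*$-curve (Lemma \ref{lem:m-star-minimal}), which in turn forces unramifiedness at $0$ via the $\OO(2)$ summand; and geometric irreducibility together with positive-dimensionality of the generic fibre of $\sM_{*,1}^\free \to X_0$, both consequences of the fact that, by Proposition \ref{pro:rigidity-m1}, this fibre is \'etale-locally isomorphic to $V$.

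Next I would extract the geometric identification. Proposition \ref{pro:v-trivial} provides, after a faithfully flat base change $T \to \Spec \OO_{X,x_0}$ whose residue field is $\kappa(\bar x_0)$, an isomorphism of bundles of formal discs
$$ \phi : T \times_X (X/S)^\sharp \xrightarrow{\ \simeq\ } T \times \hat Y $$
intertwining the sections $\sigma_X$ and $\sigma_Y$ into $\ArcHilb$. Restricting $\phi$ to the closed point of $T$ (which lies over $\bar x_0$ on $X_0$ and over $y$ on $Y$) produces an isomorphism $\bar x_0^* X^\sharp \simeq \bar y^{\,*} Y^\sharp$ between the formal neighbourhoods. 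Because $\phi$ carries $\sigma_X$ to $\sigma_Y$, and $\sigma_X$, $\sigma_Y$ encode precisely the $\ArcHilb$-sections cut out by $\sM_{*,1} \subset \PP T_{X_0}$ and $\sN_1 \subset \PP T_Y$, this identification transports $\bar x_0^* X^\sharp \times_{X_0} \hat\sM_{*,1}$ onto $\bar y^{\,*} Y^\sharp \times_Y \hat\sN_1$. This is exactly the hypothesis of Corollary \ref{cor:extension}, which therefore yields an isomorphism $X_0 \simeq G/P$ identifying $\sM_*$ with $\sN$, as required.

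The main obstacle is not in this corollary itself but in the cumulative infrastructure: the nontrivial content is in Proposition \ref{pro:v-trivial}, whose proof uses Proposition \ref{pro:closed-orbits} (the pro-algebraic Borel fixed point theorem) to propagate the generic-fibre triviality of the arc-family structure across the special fibre, and in Proposition \ref{pro:rigidity-m1}, whose type-by-type argument produces the isotrivial $V$-bundle structure on $\Spec\OO_{X,x_0}\times_X \sM_1$ in the first place. Once those inputs are granted, the only concern in writing up Corollary \ref{cor:x-is-y} is bookkeeping — matching the data $\sigma_X$, $\sigma_Y$ produced there to the subschemes $\hat\sM_{*,1}$, $\hat\sN_1 \subset \Arc$ demanded by the extension theorem, and descending from the algebraically closed field of the geometric point to $k$ exactly as in the proof of Corollary \ref{cor:extension}.
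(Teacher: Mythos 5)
Your proposal is correct and takes essentially the same route as the paper's proof: apply Corollary \ref{cor:extension} to $(X_0,\sM_*)$ and $(G/P,\sN)$, with the required formal-neighbourhood identification (compatible with $\hat\sM_{*,1}$ and $\hat\sN_1$) supplied by restricting the output of Proposition \ref{pro:v-trivial} to the closed point of $T$. You spell out the good-family checks and the translation between $\sigma_X,\sigma_Y$-sections of $\ArcHilb$ and the subschemes $\hat\sM_{*,1},\hat\sN_1\subset\Arc$ more explicitly than the paper does, but this is the same bookkeeping the paper compresses into a single paragraph.
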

\begin{proof}
By the above Proposition, there is an isomorphism $\bar x_0^*X_0^\sharp \to \bar x_0
\times \hat Y$ identifying $\bar x_0^*X_0^\sharp\times_X\hat\sM_1$ with
$\bar x_0\times \hat Y\times_Y\hat\sN_1$. Let $\bar y_0 : \Spec \kappa(\bar x_0) \to Y$
be the geometric point factoring through $y$. Note that $\sM_*$ and $\sN$
are \emph{good families} in the language of Chapter \ref{chap:extension}. Hence,
applying Corollary \ref{cor:extension}
to the pair $(X_0,\sM_*)$, $(Y,\sN)$, and geometric points
$\bar x_0$, $\bar y_0$, we have an isomorphism $X \simeq Y$. 
\end{proof}

\section{Conclusion}
Corollary \ref{cor:x-is-y} essentially concludes the proof of a rigidity
theorem for $G/P$, establishing its $d$-rigidity under the hypotheses 
we have been gradually introducing. Combining these, and abstracting
from the particular situation $X \to S$, we can restate the main result of this chapter as follows. 

\begin{thm}\label{thm:rigidity}
Let $G/P$ be a cominuscule homogeneous variety, and $d>0$ an integer. Assume one of the following
holds:
\begin{enumerate}
\item
$G$ is of type $A_n$, $P$ is associated to $\alpha_i$, and
\begin{align*}
 p > \max\{&
\left(\frac{d(d+1)}{2}\right)^{n-1}{n-1 \choose  i-1}
\left(\left(\frac{d(d+1)}{2}\right)^{n-1}{n-1 \choose  i-1}-1\right)^{n-1},
\\
& n+1,\ \delta_{G/P}(i(n+1-i)) \}
\end{align*}
\item
$G$ is of type $C_n$,
$$ p > \max\{
\left(d(d+1)\right)^{n-1}\left(\left(d(d+1)\right)^{n-1}-1\right)^{n-1},
\
n+1,\ \delta_{G/P}(n(n+1)/2)
\}
$$
\item
$G$ is of one of remaining types,
its variety $V$ of line tangents at the origin is $\frac{d(d+1)}{2}$-rigid, and
$$ p > \max\{
\left(\frac{d(d+1)}{2}\right)^{m}\delta_V\left(\left(\frac{d(d+1)}{2}\right)^m\delta_V-1\right)^m,
\
\ind(G/P),\ \delta_{G/P}(\dim (G/P))
\}$$
where $m=\dim V$.
\end{enumerate}
Then $G/P$ is $d$-rigid.
\end{thm}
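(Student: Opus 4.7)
The plan is to synthesize the chapter's machinery: given a smooth projective degeneration $X\to S$ of $G/P$ with a very ample sheaf extending $\OO_{G/P}(d)$, I want to show the special fibre $X_0$ is isomorphic to $G/P$, whence isotriviality follows because $\underline\Aut(G/P)$ is smooth. By faithfully flat base change I may assume the generic fibre is actually trivial, so the setup of Section~4.1 applies. The three cases of the bound on $p$ are designed precisely to match (1) the numerical hypothesis of Proposition~\ref{pro:rigidity-m1} needed to conclude that $\Spec\OO_{X,x_0}\times_X\sM_1\to\Spec\OO_{X,x_0}$ is an isotrivial $V$-bundle; (2) the bounds $p>\ind(G/P)$ or $p>n+1$ forcing $p>\ind(G/P)$, needed for Proposition~\ref{pro:m1-tangent}; and (3) the bound $p>\delta_{G/P}(\cdot)$, needed for Lemma~\ref{lem:m2-separable}. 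The case division is only cosmetic---in types $A_n$ and $C_n$ the variety of line tangents is a Segre or degree-$2$ Veronese, for which the inductive $d$-rigidity hypothesis is replaced by the elementary Lemma~\ref{lem:segre} or classical rigidity of $\PP^{n-1}$; in the remaining types the hypothesis is inserted explicitly as $\tfrac{d(d+1)}{2}$-rigidity of $V$.

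First I would verify that Proposition~\ref{pro:rigidity-m1} applies. Its bound on $p$ corresponds to the first term in the maximum in each of the three cases of the Theorem; unravelling the expressions for $\dim V$, $\deg V$ (from Table~\ref{tab:vmrt}), and noting that in cases (1) and (2) the degree formulas for Segre and Veronese embeddings reduce to $\binom{n-1}{i-1}$ and $1$ respectively (with $\sL=\OO(2)$ accounting for the $(d(d+1))$ instead of $(d(d+1)/2)$), confirms the match. The conclusion is that $\bar x_0\times_X\sM_1$ is smooth and the universal morphism is an isotrivial $V$-bundle over $\Spec\OO_{X,x_0}$. Then Lemma~\ref{lem:m-star} produces a unique dominating component $\sM_*$ of $s_0\times_S\sM$, and Lemma~\ref{lem:m-star-minimal} shows its generic member is minimal (the argument uses $\deg\sL<p$, which is automatic from the bounds).

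Next I would check the hypotheses of Proposition~\ref{pro:m1-tangent}: the additional bounds $p>\delta_{G/P}(\dim(G/P))$ and $p>\ind(G/P)$ are built into the Theorem's maximum (in cases (1) and (2) via $p>n+1$, which equals or exceeds the index). This yields an identification of the tangent morphism $\Spec\OO_{X,x_0}\times_X\sM_1\to\Spec\OO_{X,x_0}\times_X\PP T_{X/S}$ with the model $V\hookrightarrow\PP T_{G/P,o}$, via Lemma~\ref{lem:m2-separable} (separable chain-connectedness, using $p>\delta_{G/P}(r_0)$ with $r_0\le\dim(G/P)$) and Proposition~\ref{pro:linear-nondegeneracy}. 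From here Proposition~\ref{pro:v-trivial} promotes the identification from the tangent level to an identification of families of arcs over a formal neighbourhood of $x_0$, using the key Proposition~\ref{pro:closed-orbits} on closed $\sR_u\underline\Aut$-orbits.

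Finally, with $(X_0,\sM_*)$ and $(G/P,\sN)$ both being good families of rational curves on simply-connected nonsingular projective Fano varieties of Picard number $1$ (verified by the preliminary lemma at the start of Section~4.1) and with matching formal-arc data at a geometric generic point, Corollary~\ref{cor:extension} supplies a global isomorphism $X_0\simeq G/P$. This is exactly Corollary~\ref{cor:x-is-y}, and combined with smoothness of $\underline\Aut(G/P)$ it gives isotriviality of $X\to S$, establishing $d$-rigidity. The main bookkeeping obstacle is verifying that the three-case structure in the Theorem statement correctly encodes the uniform hypothesis ``all bounds from Propositions~\ref{pro:rigidity-m1} and~\ref{pro:m1-tangent} hold,'' which is a straightforward but careful translation using Tables~\ref{tab:cominuscule} and~\ref{tab:vmrt}.
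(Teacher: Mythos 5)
Your proposal is correct and follows essentially the same route as the paper: reduce to degenerations with trivial generic fibre, verify the bounds feed into Propositions~\ref{pro:m1-smooth} and~\ref{pro:rigidity-m1}, and then invoke Corollary~\ref{cor:x-is-y} (which you unpack through Lemmas~\ref{lem:m-star},~\ref{lem:m-star-minimal},~\ref{lem:m2-separable}, Propositions~\ref{pro:m1-tangent},~\ref{pro:v-trivial}, and Corollary~\ref{cor:extension}). The paper's own proof is just a terse one-paragraph reference to these results, so your more detailed trace through the chain is a faithful expansion of the same argument.
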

\begin{proof}
It is enough to show isotriviality for every smooth projective degeneration $X \to S$
of $G/P$ with trivial generic fibre and very ample $\OO_X(d)$.
In such situation, the hypotheses of the Theorem imply those of Propositions
\ref{pro:m1-smooth} and \ref{pro:rigidity-m1}, so that $X_0 \simeq G/P$ by Corollary
\ref{cor:x-is-y}.
\end{proof}
Note that for $G$ not of type $A_n$, $C_n$, the Theorem derives
$d$-rigidity of $G/P$ from $\frac{d(d+1)}{2}$-rigidity of its
variety of line tangents, a cominuscule homogeneous variety of lower dimension
(i.e. with lower rank of the corresponding simple algebraic group). Given
an integer $d$, this allows one to obtain a lower bound on $p$ guaranteeing
$d$-rigidity by applying the Theorem inductively, eventually terminating
at a cominuscule homogeneous variety for a group of type $A_n$ or $C_n$.\footnote{
Unfortunately, the bounds obtained in subsequent steps of the induction
tend to grow due to replacing $d$ with $\frac{d(d+1)}{2}$, the expression 
appearing in Lemma \ref{lem:degree-m1}. This situation would be greatly
improved if one could prove the Lemma with $d$ in place of the former expression.
An observation due to David Jensen indicates that this is indeed possible.
}
 


\bibliography{thesis}{}
\bibliographystyle{abbrv}

\addcontentsline{toc}{chapter}{Bibliography}

\end{document}